\theoremstyle{plain}
\newtheorem{theorem}{Theorem}[section]
\newtheorem{lemma}[theorem]{Lemma}
\newtheorem{proposition}[theorem]{Proposition}
\newtheorem{claim}[theorem]{Claim}
\newtheorem{corollary}[theorem]{Corollary}
\theoremstyle{definition}
\newtheorem{example}[theorem]{Example}
\newtheorem{remark}[theorem]{Remark}
\newtheorem{definition}[theorem]{Definition}
\numberwithin{equation}{section}
\numberwithin{figure}{section}
\def \ie{i.e$.$ }
\def \mod{\ \operatorname{mod}\ }
\def \Z{\mathbb{Z}}
\def \Q{\mathbb{Q}}
\def \R{\mathbb{R}}
\def \rhoLMO{\varrho^{\Ztilde}}
\def \thetaLMO{\theta^{\Ztilde}}
\def \tauLMO{\tau^{\Ztilde}}
\def \Ztilde{\widetilde{Z}}
\def \ZtildeY{\widetilde{Z}^Y}
\def \ZtildeYt{\widetilde{Z}^{Y,t}}
\def \A{\mathcal{A}}
\def \tsA{{}^{ts}\!\!\mathcal{A}}
\def \homotopy{\mathcal{H}}
\def \Lie{\mathfrak{L}}
\def \Liehat{\widehat{\mathfrak{L}}}
\def \MGroup{\mathsf{M}}
\def \MLie{\mathfrak{m}}
\def \T{\mathcal{T}}
\def \Filt{\mathcal{F}}
\def \bottomK{\mathcal{B}}
\def \stringK{\mathcal{S}}
\def \cyl{\mathcal{IC}}
\def \Cob{\mathcal{C}ob}
\def \I{\mathcal{I}}
\def \LCob{\mathcal{LC}ob}
\def \Aut{\operatorname{Aut}}
\def \can{\operatorname{canonical}}
\def \comm{\operatorname{comm}}
\def \col{\operatorname{col}}
\def \deg{\operatorname{deg}}
\def \D{\operatorname{D}}
\def \Der{\operatorname{Der}}
\def \GLike{\operatorname{GLike}}
\def \Gr{\operatorname{Gr}}
\def \Hom{\operatorname{Hom}}
\def \Id{\operatorname{Id}}
\def \ideg{\operatorname{i-deg}}
\def \interior{\operatorname{int}}
\def \K{\operatorname{K}}
\def \Ker{\operatorname{Ker}}
\def \IAut{\operatorname{IAut}}
\def \Img{\operatorname{Im}}
\def \IO{\operatorname{IO}}
\def \P{\operatorname{P}}
\def \Prim{\operatorname{Prim}}
\def \Tens{\operatorname{T}}
\def \Tenshat{\widehat{\operatorname{T}}}
\def \Tor{\operatorname{Tor}}
\def \tore{\operatorname{t}}
\def \U{\operatorname{U}}
\def \Uhat{\widehat{\operatorname{U}}}
\def \longrightleftarrows{\begin{array}{c}\longrightarrow \\[-0.3cm] \longleftarrow \end{array}}
\newcommand{\set}[1]{\lfloor #1\rceil}
\newcommand{\gp}[1]{\mathsf{#1}}
\newcommand{\strutgraph}[2]
{ \! \! \begin{array}{c} 
\phantom{.}\\[-10pt]
\labellist \small \hair 2pt 
\pinlabel {\scriptsize $#1$} [l] at 37 18
\pinlabel {\scriptsize $#2$} [l] at 37 170
\endlabellist
\includegraphics[scale=0.1]{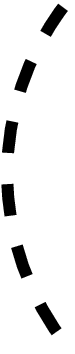} 
\end{array} \  \ }
\newcommand{\Ygraphbottoptop}[3]
{  \begin{array}{c} 
\hphantom{.}\\
\labellist \small \hair 2pt 
\pinlabel {\scriptsize $#1$} [t] at 56 0
\pinlabel {\scriptsize $#2$} [b] at 0 173
\pinlabel {\scriptsize $#3$} [b] at 122 173
\endlabellist
\includegraphics[scale=0.1]{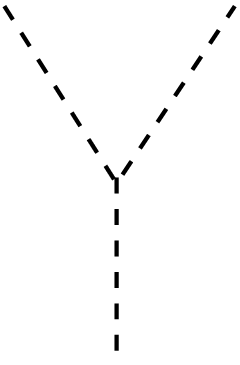}\\
\hphantom{.}
\end{array}  }
\begin{document}

\title[]{Infinitesimal Morita homomorphisms\\and the tree-level of the LMO invariant}

\date{December 13, 2011}

\author[]{Gw\'ena\"el Massuyeau}
\address{Institut de Recherche Math\'ematique Avanc\'ee, Universit\'e de Strasbourg \& CNRS,
7 rue Ren\'e Descartes, 67084 Strasbourg, France}
\email{massuyeau@math.unistra.fr}



\thanks{Partially supported by the French ANR research project ANR-08-JCJC-0114-01.}

\begin{abstract}
Let $\Sigma$ be a compact connected oriented surface with one boundary component,
and let $\pi$ be the fundamental group of $\Sigma$.
The Johnson filtration is a decreasing sequence of subgroups of the Torelli group of $\Sigma$,
whose $k$-th term consists of the self-homeomorphisms of $\Sigma$ 
that act trivially at the level of the $k$-th nilpotent quotient of $\pi$.
Morita defined a homomorphism from the $k$-th term of the Johnson filtration 
to the third homology group of the $k$-th nilpotent quotient of $\pi$.

In this paper, we replace groups by their Malcev Lie algebras 
and we study the ``infinitesimal'' version of the $k$-th Morita homomorphism,
which is shown to correspond to the original version by a canonical isomorphism.
We provide a diagrammatic description of the $k$-th infinitesimal Morita homomorphism
and, given an expansion of the free group $\pi$ that is ``symplectic'' in some sense,
we show how to compute it from Kawazumi's ``total Johnson map''. 

Besides, we give a topological interpretation of the full tree-reduction of the LMO homomorphism,
which is a diagrammatic representation of the Torelli group derived from the Le--Murakami--Ohtsuki invariant of $3$-manifolds.
More precisely, a  symplectic expansion of $\pi$ is constructed from the LMO invariant,
and it is shown  that the tree-level of the LMO homomorphism
is equivalent to the total Johnson map induced by this specific expansion.
It follows that the $k$-th infinitesimal Morita homomorphism coincides 
with the degree $[k,2k[$ part of the tree-reduction of the LMO homomorphism.  
Our results also apply to the monoid of homology cylinders over $\Sigma$.
\end{abstract}

\maketitle


%
%
%
\section*{Introduction}

Nilpotent homotopy types of  $3$-manifolds have been introduced by Turaev \cite{Turaev}.
They are defined by elementary tools from algebraic topology as follows. 
We fix an integer $k\geq 1$ and an abstract group $G$ of nilpotency class $k$, 
which means that commutators of length $(k+1)$ are trivial in $G$.
Let $M$ be a closed connected oriented $3$-manifold, 
whose $k$-th nilpotent quotient of the fundamental group is parametrized by the group $G$:
$$
\psi: G \stackrel{\simeq}{\longrightarrow} \pi_1(M)/\Gamma_{k+1} \pi_1(M).
$$
Then, the \emph{$k$-th nilpotent homotopy type} of the pair $(M,\psi)$ 
is the homology class
$$
\mu_k(M,\psi) :=  f^\psi_*\left([M]\right)  \in H_3(G;\Z)
$$
where $f^\psi: M \to \K(G,1)$ induces the composition 
$\pi_1(M) \to \pi_1(M)/ \Gamma_{k+1} \pi_1(M) \stackrel{\psi^{-1}}{\to} G$ at the level of fundamental groups.
For example, for $k=1$, we are considering the \emph{abelian homotopy type} of $3$-manifolds
which, by the work of Cochran, Gerges and Orr \cite{CGO}, 
is very well understood: $\mu_1(M,\psi)$ determines the cohomology ring of $M$ 
together with its linking pairing, and vice versa. 

As suggested to the author by Turaev, one way to study the invariant $\mu_k$ for higher $k$ is to study its behaviour under surgery. 
This method particularly applies if one wishes to understand nilpotent homotopy types 
from the point of view of finite-type invariants, which was our initial motivation.
Note that,  to compare the $k$-th nilpotent homotopy type of a manifold after surgery
with that of the manifold before surgery, we can only admit surgeries that preserve 
the $k$-th nilpotent quotient of the fundamental group (up to isomorphism).
The following type of surgery  is admissible in that  sense.
We consider a compact connected oriented surface
$S \subset M$ with one boundary component, and a homeomorphism 
$s: S \to S$ whose restriction to $\partial S$ is the identity
and which acts trivially at the level of $\pi_1(S)/ \Gamma_{k+1} \pi_1(S)$.
Then, we can ``twist'' $M$ along $S$ by $s$ to obtain the new manifold
$$
M_S := \left(M \setminus \interior(S \times [-1,1])\right) 
\cup_{(s\times 1) \cup (\Id \times (-1))} S \times [-1,1].
$$
The Seifert--Van Kampen theorem shows the existence of a canonical isomorphism
$$
\pi_1(M)/\Gamma_{k+1} \pi_1(M) \stackrel{\simeq}{\longrightarrow}
\pi_1(M_S)/\Gamma_{k+1} \pi_1(M_S) 
$$
which is defined by the following commutative diagram:
$$
\xymatrix{
&{\frac{\pi_1\left(M \setminus \interior(S \times [-1,1])\right)}
{\Gamma_{k+1}\pi_1\left(M \setminus \interior(S \times [-1,1])\right)}} 
\ar@{->>}[ld] \ar@{->>}[rd] & \\
{\frac{\pi_1(M)}{\Gamma_{k+1} \pi_1(M)}} \ar@{-->}[rr]_-\simeq^-{\exists !}
& & {\frac{\pi_1(M_S)}{\Gamma_{k+1} \pi_1(M_S)}}.
}
$$
By composing it with $\psi$, we obtain a parametrization
$$
\psi_S: G \stackrel{\simeq}{\longrightarrow} \pi_1(M_S)/\Gamma_{k+1} \pi_1(M_S)
$$
of the $k$-th nilpotent quotient of $\pi_1(M_S)$.
In order to compare $\mu_k(M,\psi)$ with $\mu_k(M_S,\psi_S)$, 
we consider the mapping torus of $s$
$$
\tore(s) := (S \times [-1,1]\ /\! \sim) \cup \left(S^1 \times D^2\right).
$$
Here, the equivalence relation $\sim$ identifies $s(x) \times 1$ with $x \times (-1)$,
and the meridian $1 \times \partial D^2$ of the solid torus $S^1 \times D^2$ 
is glued along the circle $* \times [-1,1]\ /\! \sim$ (where $*\in \partial S$) 
while the longitude $S^1 \times 1$ is glued along $\partial S \times 1$. 
We note that $\tore(s)$ is a closed connected oriented $3$-manifold,
and that the inclusion $S= S \times 1 \subset \tore(s)$ defines an isomorphism
$$
\varphi_s: \pi_1(S)/\Gamma_{k+1} \pi_1(S)
\stackrel{\simeq}{\longrightarrow}  \pi_1(\tore(s))/ \Gamma_{k+1} \pi_1(\tore(s)). 
$$
Besides, the inclusion $S \subset M$ defines a homomorphism
$$
i: \pi_1(S)/\Gamma_{k+1} \pi_1(S) \longrightarrow \pi_1(M)/\Gamma_{k+1} \pi_1(M).
$$
Then, $\mu_k$ varies as follows\footnote{This can be proved by a simple homological computation
in a singular $3$-manifold that contains the three of $M$, $M_S$ and $\tore(s)$. 
Similar formulas are shown in \cite[Theorem 2]{GL} and \cite[Theorem 5.2]{Heap} by cobordism arguments.} 
under the surgery $M \leadsto M_S$:
\begin{equation}
\label{eq:gluing_formula}
\mu_k(M_S,\psi_S) - \mu_k(M,\psi) = \psi_*^{-1} i_*\big( \mu_k(\tore(s),\varphi_s)\big) \ \in H_3(G;\Z).
\end{equation}

This variation formula suggests the following construction,
relative to a compact connected oriented surface $\Sigma$ with one boundary component.
Let $\I(\Sigma)$ be the Torelli group of $\Sigma$ 
and let $\pi:= \pi_1(\Sigma,*)$ be the fundamental group of $\Sigma$, where $* \in \partial \Sigma$.
Let
$$
\I(\Sigma)= \I(\Sigma)[1] \supset \I(\Sigma)[2] \supset \I(\Sigma)[3] \supset \cdots
$$
be the \emph{Johnson filtration} of $\I(\Sigma)$,  whose $k$-th subgroup $\I(\Sigma)[k]$ 
consists of (the isotopy classes of) the homeomorphisms $s:\Sigma \to \Sigma$ 
that act trivially at the level of $\pi/\Gamma_{k+1} \pi$.
The previous discussion shows that the map
$$
M_k: \I(\Sigma)[k] \longrightarrow H_3\left(\pi/\Gamma_{k+1} \pi;\Z\right),
\ s \longmapsto \mu_k(\tore(s),\varphi_s)
$$
plays a crucial role in the study of nilpotent homotopy types,
and formula (\ref{eq:gluing_formula}) shows that it is a group homomorphism.
The homomorphism $M_k$ has been studied by Heap in \cite{Heap}.
By considering the simplicial model of $\K\left(\pi/\Gamma_{k+1} \pi,1\right)$,
he proves that $M_k$ is equal to Morita's refinement of the $k$-th Johnson homomorphism,
whose definition is purely algebraic and involves the bar complex of a group \cite{Morita}.
Thus, in the sequel, we will refer to $M_k$ as the \emph{$k$-th Morita homomorphism}.

Since Lie algebra homology is simpler than group homology,
one would like to replace the group $\pi/\Gamma_{k+1}\pi$ by its Malcev Lie algebra 
$\MLie(\pi/\Gamma_{k+1}\pi)$ in the above discussion.
Thus, one defines a Lie analogue of the $k$-th Morita homomorphism
$$
m_k: \I(\Sigma)[k] \longrightarrow 
H_3\big(\MLie\left(\pi/\Gamma_{k+1} \pi\right);\Q\big)
$$
by imitating Morita's original definition of $M_k$ \cite{Morita}, 
the bar complex of a group being simply replaced by the Koszul complex of its Malcev Lie algebra.
The homomorphism $m_k$ and, in particular its relationship with the theory of finite-type invariants,
is the main subject of this paper whose contents we now describe.\\

First of all, let us recall that the Lie algebra $\MLie\left(\pi/\Gamma_{k+1} \pi\right)$
is free nilpotent of class $k$. More precisely, if we set $H:= H_1(\Sigma;\Q)$ and if we denote
by $\Lie(H)$ the free Lie algebra generated by $H$, then we have a (non-canonical) isomorphism
\begin{equation}
\label{eq:non-canonical}
\Lie(H)/\Gamma_{k+1} \Lie(H) \simeq \MLie\left(\pi/\Gamma_{k+1} \pi\right).
\end{equation}
In \S \ref{sec:fission}, we start by describing the third homology group
of a free nilpotent Lie algebra in terms of Jacobi diagrams, 
which are commonly encountered in the theory of finite-type invariants \cite{Ohtsuki}.
To be more explicit, let 
$$
\T(H) = \bigoplus_{d=1}^{+ \infty} \T_d(H) 
$$ 
be the graded vector space of Jacobi diagrams that are tree-shaped, connected, $H$-colored, 
and subject to the usual AS, IHX and multilinearity relations. 
The \emph{internal degree} $d\geq 1$ of such a diagram is the number of trivalent vertices. 
We define an explicit linear map
$$
\Phi: \bigoplus_{d=k}^{2k-1} \T_d(H) \longrightarrow 
H_3\left(\frac{\Lie(H)}{\Gamma_{k+1} \Lie(H)};\Q\right)
$$
and, thanks to prior computations of Igusa and Orr \cite{IO}, 
we show that the map $\Phi$ is an isomorphism (Theorem \ref{th:fission_iso}).

Next, \S \ref{sec:expansions} is mainly expositional and deals with \emph{expansions} of the free group $\pi$.
These are generalizations of the classical ``Magnus expansion'' of $\pi$ and, essentially, they are algebra isomorphisms
\begin{equation}
\label{eq:algebra_iso}
\widehat{\Q}[\pi] \simeq \Tenshat(H)
\end{equation}
between the $I$-adic completion of the group algebra of $\pi$ and the complete tensor algebra over $H$. 
Expansions have been studied by Lin in the context of Vassiliev invariants 
and Milnor's $\mu$ invariants \cite{Lin}, and by Kawazumi in connection with Johnson homomorphisms and
the cohomology of mapping class groups \cite{Kawazumi}. 
If the identification (\ref{eq:algebra_iso}) is required to be a Hopf algebra isomorphism,
then the expansion is said to be \emph{group-like} and it induces a Lie algebra isomorphism
\begin{equation}
\label{eq:Lie_algebras}
\MLie(\pi) \simeq \Liehat(H)
\end{equation}
between the Malcev Lie algebra of $\pi$ and the complete Lie algebra over $H$.
Then, each group-like expansion induces an isomorphism (\ref{eq:non-canonical}) for all $k\geq 1$.
We also introduce \emph{symplectic} expansions which relate the boundary of $\Sigma$ 
to the symplectic form of $H$ defined by the intersection pairing.

In \S \ref{sec:total_Johnson}, we review Johnson homomorphisms and
the ``total Johnson map'' defined by Kawazumi in \cite{Kawazumi}.
This is a way of encoding the \emph{Dehn--Nielsen representation} of the Torelli group
$$
\rho: \I(\Sigma) \longrightarrow \Aut(\pi), \ h \longmapsto h_*
$$
which depends on the choice of a group-like expansion $\theta$. 
More precisely, by passing to the Malcev Lie algebra of $\pi$ and by using the identification (\ref{eq:Lie_algebras}) induced by $\theta$, 
$\rho$ translates into a group homomorphism 
$$
\varrho^\theta: \I(\Sigma) \longrightarrow \Aut(\Liehat(H)).
$$ 
Since $h$ acts trivially in homology, there is no loss of information in defining
$$
\tau^\theta(h):= \varrho^\theta(h)|_{H} - \Id_H \ \in \Hom(H,\Liehat_{\geq 2}(H))
$$
where $\Liehat_{\geq 2}(H)$ denotes the degree $\geq 2$ part of $\Liehat(H)$.
Then, the \emph{total Johnson map} relative to $\theta$ is the map
\vspace{-0.4cm}
$$
\tau^\theta:  \I(\Sigma) \longrightarrow \Hom(H,\Liehat_{\geq 2}(H)) 
\simeq H^* \otimes \Liehat_{\geq 2}(H) 
\stackrel{\begin{array}{c} \hbox{\scriptsize Poincar\'e}\\[-0.1cm] \hbox{\scriptsize duality}\end{array}}{\simeq} 
H \otimes \Liehat_{\geq 2}(H).
$$
By degree truncation and by restriction, one obtains a map
$$
\tau^\theta_{[k,2k[}: \I(\Sigma)[k] \longrightarrow \bigoplus_{d=k}^{2k-1} H \otimes \Lie_{d+1}(H)
$$
whose degree $k$ part $\I(\Sigma)[k] \to H \otimes \Lie_{k+1}(H)$ 
is the $k$-th Johnson homomorphism \cite{Kawazumi}.
We observe two properties for this restriction of the total Johnson map. 
First, $\tau^\theta_{[k,2k[}$ is a group homomorphism (Proposition \ref{prop:Kawazumi_homomorphism}).
Second, the values of $\tau^\theta_{[k,2k[}$ can be expressed in terms of Jacobi diagrams,
provided the expansion $\theta$ is symplectic (Proposition \ref{prop:Kawazumi_diagrammatic}):
$$
\xymatrix{
{\I(\Sigma)[k]} \ar[r]^-{\tau^\theta_{[k,2k[}} \ar@{-->}[d]
& {\displaystyle \bigoplus_{d=k}^{2k-1} H \otimes \Lie_{d+1}(H)} \\
{\displaystyle \bigoplus_{d=k}^{2k-1} \T_d(H)} \ar@{>->}[ur]_-{\eta}& 
}
$$
Here, $\eta$ is the usual map that gives rise to diagrammatic descriptions 
of Milnor's $\mu$ invariants \cite{HM} and Johnson homomorphisms \cite{GL}.

The infinitesimal versions of Morita's homomorphisms are introduced and studied in \S \ref{sec:Morita}.
As already evoked, the precise definition of $m_k$  is obtained from the original definition of $M_k$ \cite{Morita} 
by replacing each bar complex of a group by the Koszul complex of its Malcev Lie algebra.
A similar passing from groups to Malcev Lie algebras already appears in Day's work \cite{Day},
where $M_k$ (with real coefficients) is extended to a crossed homomorphism on the full mapping class group of $\Sigma$. 
Whereas his construction uses methods of differential topology, our definition of $m_k$ is purely algebraic.
Yet, the two approaches should be connected since Pickel's isomorphism \cite{Pickel} 
$$
\P: H_3\left(\pi/\Gamma_{k+1}\pi;\Q\right) \stackrel{\simeq}{\longrightarrow} 
H_3\big(\MLie(\pi/\Gamma_{k+1}\pi);\Q\big)
$$
connects $M_k$ (with rational coefficients) to $m_k$, as shown in Proposition \ref{prop:original_to_infinitesimal}.
The main result of \S \ref{sec:Morita} is Theorem \ref{th:Kawazumi_to_Morita},
which asserts that $m_k$ coincides (up to a sign) 
with the degree $[k,2k[$ truncation of Kawazumi's total Johnson map $\tau^\theta$
relative to a symplectic expansion $\theta$. This relation between $m_k$ and $\tau^\theta_{[k,2k[}$
needs their diagrammatic descriptions, which are given by the maps $\Phi$ and $\eta$ respectively.
Then, we recover two properties for $M_k$ by proving them for $m_k$: 
first, $M_k$ determines the $k$-th Johnson homomorphism \cite{Morita}
and, second, its kernel coincides with the $2k$-th term of the Johnson filtration \cite{Heap}.

In \S \ref{sec:LMO}, we come back to our initial motivation
which was connecting nilpotent homotopy types of $3$-manifolds to their finite-type invariants. 
Let us recall that Le, Murakami and Ohtsuki  have constructed
in \cite{LMO} a universal finite-type invariant of homology $3$-spheres.
More recently, the LMO invariant has been extended by Cheptea, Habiro and the author 
to a functor $\Ztilde$ from a category of cobordisms to a category of diagrams \cite{CHM}.
In particular, the functor $\Ztilde$ defines a monoid homomorphism
whose source is the Torelli group and whose target is a certain algebra of Jacobi diagrams. 
This diagrammatic representation of the Torelli group is called the \emph{LMO homomorphism}.
The main result of \S \ref{sec:LMO} is a topological interpretation of its \emph{tree-reduction},
which is obtained by killing all Jacobi diagrams that are looped.
More precisely, we start by showing 
that the functor $\Ztilde$ defines a symplectic expansion of $\pi$, which we denote by $\thetaLMO$.
Then, we show that the total Johnson map relative to $\thetaLMO$ is determined by 
the tree-reduction of the LMO homomorphism (Theorem \ref{th:LMO_to_Kawazumi}) and vice versa (Theorem \ref{th:Kawazumi_to_LMO}).
Theorem \ref{th:LMO_to_Kawazumi} is inspired by the ``global formula''  of Habegger and Masbaum giving 
\emph{all} Milnor's $\mu$ invariants of a pure braid from the tree-reduction of its Kontsevich integral \cite{HM}. 
Finally, we conclude that the degree $[k,2k[$ part of the tree-reduction of the LMO homomorphism
coincides with $m_k$ through the isomorphisms
$$
\bigoplus_{d=k}^{2k-1} \T_d(H) \stackrel{\Phi}{\simeq} 
H_3\left(\frac{\Lie(H)}{\Gamma_{k+1} \Lie(H)};\Q\right)
\stackrel{\thetaLMO}{\simeq} H_3\big(\MLie(\pi/\Gamma_{k+1} \pi);\Q\big).
$$
Consequently $M_k$ splits as a sum of $k$ finite-type invariants, whose degrees range from $k$ to $2k-1$.
In the lowest degree, this decomposition of $M_k$ interprets the $k$-th Johnson homomorphism
as a finite-type invariant of degree $k$ which is  already known \cite{Habiro, GL, Habegger}.

To close this introduction, let us recall that the Torelli group of $\Sigma$ embeds 
into the monoid $\cyl(\Sigma)$ of homology cylinders over $\Sigma$.
Here a \emph{homology cylinder} over $\Sigma$ is a compact oriented $3$-manifold $C$, 
whose boundary is parametrized by an orientation-preserving homeomorphism
$c: \partial(\Sigma \times [-1,1]) \to\partial C$
in such a way that $(C,c)$ has the same homology type as $(\Sigma \times [-1,1],\Id)$.
Homology cylinders play a key role in the works of Goussarov and Habiro 
on finite-type invariants \cite{Goussarov_note,Habiro}.
All the constructions that we have previously mentioned for the group $\I(\Sigma)$ 
can be extended to the monoid $\cyl(\Sigma)$.
For example, Sakasai considers Morita's homomorphisms for homology cylinders in \cite{Sakasai}. 
Our results are proved in this general framework.\\[-0.1cm]

\noindent
\textbf{Acknowledgements.}
The author thanks Benjamin Enriquez for helpful and stimulating discussions,
and he is grateful to Alex Bene, Nariya Kawazumi and Yusuke Kuno for their comments.
This work has been finalized during a stay at the CTQM (Aarhus University) whose hospitality is gratefully acknowledged.\\

\noindent
\textbf{Conventions.}
In the sequel, all vector spaces, Lie algebras, homology groups, etc. 
are considered with rational coefficients. 
Equivalence classes are always denoted by curly brackets $\{-\}$, 
except for homology classes which are denoted by straight brackets $[-]$.
The lower central series of a group $G$ (respectively of a Lie algebra $\mathfrak{g}$) is denoted by 
$$
G=\Gamma_1 G \supset  \Gamma_2 G \supset \Gamma_3 G \supset \cdots 
\quad (\hbox{respectively } \mathfrak{g}=\Gamma_1 \mathfrak{g} \supset  
\Gamma_2 \mathfrak{g} \supset  \Gamma_3 \mathfrak{g} \supset \cdots ),
$$
and our notation for  commutators in $G$ is $[\gp{x},\gp{y}] := \gp{x}\gp{y} \gp{x}^{-1}\gp{y}^{-1}$.

\vspace{0.5cm}

\section{The third homology group of a free nilpotent Lie algebra}

\label{sec:fission}

We define in this section, for each nilpotent Lie algebra $\mathfrak{g}$,
a linear map from a space of tree-shaped $\mathfrak{g}$-colored Jacobi diagrams to $H_3(\mathfrak{g})$.
When $\mathfrak{g}$ is free nilpotent and finite-dimensional,  
we deduce from the computation of $H_3(\mathfrak{g})$  by Igusa and Orr \cite{IO}
that our diagrammatic map is an isomorphism.

\subsection{The fission map}

\label{subsec:fission}

Let $V$ be a vector space.
A \emph{Jacobi diagram} is a unitrivalent finite graph whose trivalent vertices are oriented
(\ie edges are cyclically ordered around each trivalent vertex).
The \emph{internal degree} of such a diagram is the number of trivalent vertices, and is denoted by $\ideg$.
A Jacobi diagram is said to be \emph{$V$-colored} if it comes with a map
from the set of its univalent vertices to $V$. For example, 

$$
{\labellist \small \hair 2pt
\pinlabel {$v_1$} [tr] at 0 0
\pinlabel {$v_2$} [br] at 0 59
\pinlabel {$v_3$} [b] at 69 76
\pinlabel {$v_4$} [bl] at 138 63
\pinlabel {$v_5$} [tl] at 135 0
\endlabellist}
\includegraphics[scale=0.3]{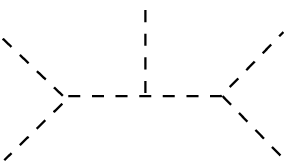}
\quad \quad \quad \quad  (\hbox{with } v_1,\dots,v_5 \in V)
$$
\vspace{-0.3cm}

\noindent
is a $V$-colored Jacobi diagram of internal degree $3$ where, 
by convention, vertex orientations are given by the trigonometric orientation of the plan.
Let
$$
\T(V) = \bigoplus_{d=1}^{+ \infty} \T_d(V) 
$$ 
be the graded vector space of Jacobi diagrams which are tree-shaped, connected and $V$-colored, 
modulo the AS, IHX and multilinearity relations: \\[0.2cm]

\begin{center}
\labellist \small \hair 2pt
\pinlabel {AS} [t] at 102 -5
\pinlabel {IHX} [t] at 543 -5
\pinlabel {multinearity} [t] at 1036 -5
\pinlabel {$= \ -$}  at 102 46
\pinlabel {$-$} at 484 46
\pinlabel {$+$} at 606 46
\pinlabel {$=0$} at 721 46 
\pinlabel {$+$} at 1106 46
\pinlabel {$=$} at 961 46
\pinlabel{$v_1+v_2$} [b] at 881 89
\pinlabel{$v_1$} [b] at 1042 89
\pinlabel{$v_2$} [b] at 1170 89
\endlabellist
\centering
\includegraphics[scale=0.32]{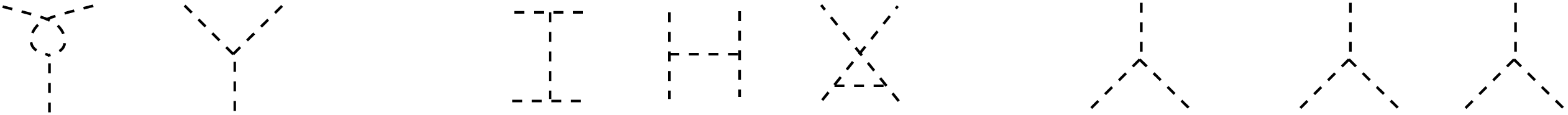}
\end{center}
\vspace{0.5cm}

\noindent
In the sequel, we assume that $V$ is a Lie algebra $\mathfrak{g}$.

Let $T$ be a connected tree-shaped $\mathfrak{g}$-colored Jacobi diagram. For each trivalent vertex $r$, 
$T$ can be seen as the union of three trees ``rooted'' at $r$, 
which are denoted by $T_r^{(1)}$, $T_r^{(2)}$ and $T_r^{(3)}$ 
so that the numbering $1,2,3$ gives the vertex-orientation around $r$.
Any connected tree-shaped Jacobi diagram $A$, 
all of whose univalent vertices are $\mathfrak{g}$-colored
apart from one which is denoted by $r$, defines an element $\comm(A)$ of $\mathfrak{g}$. For example, we have
\begin{equation}
\label{eq:tree_to_commutator}
\comm\Bigg(\begin{array}{c}
\labellist \small \hair 2pt
\pinlabel {$g_1$} [b] at 2 162
\pinlabel {$g_2$} [b] at 111 162
\pinlabel {$g_3$} [b] at 167 162
\pinlabel {$g_4$} [b] at 222 162
\pinlabel {$r$} [t] at 108 0
\endlabellist
\includegraphics[scale=0.3]{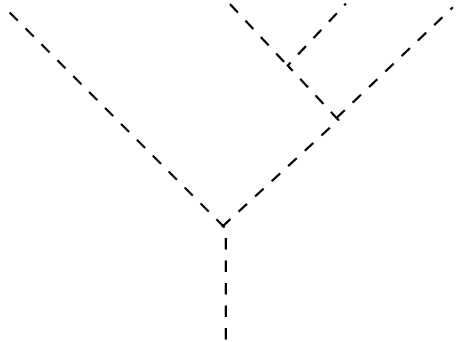}
\end{array}
\Bigg)= [g_1,[[g_2,g_3],g_4]].
\end{equation}

\noindent
Then, the \emph{fission} of $T$ is defined by
\begin{equation}
\label{eq:fission}
\phi(T) :=  \sum_{r} \comm(T_r^{(3)}) \wedge \comm(T_r^{(2)}) \wedge \comm(T_r^{(1)}) 
\ \in \Lambda^3 \mathfrak{g},
\end{equation}
where the sum is indexed by the trivalent vertices $r$ of $T$.
For example, we have 
$$
\phi\Bigg(\quad \begin{array}{c}
{\labellist \small \hair 2pt
\pinlabel {$g_1$} [tr] at 0 0
\pinlabel {$g_2$} [br] at 0 59
\pinlabel {$g_3$} [b] at 69 74
\pinlabel {$g_4$} [bl] at 138 63
\pinlabel {$g_5$} [tl] at 135 0
\endlabellist}
\includegraphics[scale=0.25]{fission} 
\end{array} \quad \Bigg) =
g_1 \wedge g_2 \wedge [g_3,[g_4,g_5]] + g_3 \wedge [g_4,g_5] \wedge [g_1,g_2] 
+ g_5 \wedge [[g_1,g_2],g_3] \wedge  g_4.
$$

We recall that the \emph{Koszul complex} of $\mathfrak{g}$ (with trivial coefficients)
is the chain complex $\left(\Lambda \mathfrak{g}, \partial\right)$ 
whose boundary operator $\partial_n: \Lambda^n \mathfrak{g} \to \Lambda^{n-1} \mathfrak{g}$ is defined by
$$
\partial_n(g_1 \wedge \cdots \wedge g_n) = \sum_{i<j} 
(-1)^{i+j}\cdot [g_i,g_j] \wedge g_1\wedge \cdots \widehat{g_i} \cdots \widehat{g_j} \cdots \wedge g_n.
$$
Its homology gives the homology of the Lie algebra $\mathfrak{g}$ (with trivial coefficients).

\begin{lemma}
Fission of tree diagrams induces a linear map
$$
\Phi: \T(\mathfrak{g}) \longrightarrow \Lambda^3 \mathfrak{g}/ \Img(\partial_4).
$$
\end{lemma}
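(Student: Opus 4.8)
The plan is to set $\Phi(T):=\{\phi(T)\}$, the class of $\phi(T)$ modulo $\Img(\partial_4)$, and to check that this assignment is compatible with the three families of relations—AS, IHX and multilinearity—that define $\T(\mathfrak{g})$. Since $\phi$ is already linear on the free vector space generated by tree diagrams, the only real issue is well-definedness, i.e. that $\phi$ sends each relator into $\Img(\partial_4)$. I expect the multilinearity and AS relators to be killed \emph{on the nose} (already in $\Lambda^3\mathfrak{g}$), and the quotient by $\Img(\partial_4)$ to be needed \emph{only} for the IHX relators; this will also explain the shape of the target.

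For multilinearity, I would observe that $\comm$ is linear in the colour of each univalent vertex and that $\wedge$ is multilinear, so in the sum (\ref{eq:fission}) a given colour enters linearly through exactly one of the three rooted commutators attached to each trivalent vertex $r$; hence $\phi$ respects multilinearity exactly. For AS, I would reverse the vertex-orientation at a single trivalent vertex $r_0$ and check that every summand of $\phi$ changes sign: the summand indexed by $r_0$ does so because reversing the cyclic order permutes the three wedge factors by a transposition, and every summand indexed by $r\neq r_0$ does so because $r_0$ lies in exactly one rooted subtree $T_r^{(i)}$ and reversing its orientation flips the corresponding iterated bracket $\comm(T_r^{(i)})$ by the antisymmetry of the Lie bracket.

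The crux is the IHX relation, and here I would work locally. An IHX relator is supported on an internal edge $e$ whose endpoints $p,q$ carry four hanging subtrees; write $\alpha,\beta,\gamma,\delta\in\mathfrak{g}$ for their commutators, so that the three diagrams $T_H,T_I,T_X$ differ only in how $\alpha,\beta,\gamma,\delta$ are paired across $e$. I would split the sum (\ref{eq:fission}) into the trivalent vertices $r\notin\{p,q\}$ and the two vertices $r\in\{p,q\}$. For $r\notin\{p,q\}$ the whole IHX region sits inside a single rooted subtree $T_r^{(i_0)}$, and the alternating sum of $\comm(T_r^{(i_0)})$ over $T_H,T_I,T_X$ is an instance of the Jacobi identity in $\mathfrak{g}$ (the two unaffected wedge factors being constant), so these terms cancel. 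For $r\in\{p,q\}$ the six surviving summands are, up to orientation signs, $\alpha\wedge\beta\wedge[\gamma,\delta]$, $\gamma\wedge\delta\wedge[\alpha,\beta]$ and their analogues, and a direct comparison with the Koszul differential identifies their alternating sum with $\pm\,\partial_4(\alpha\wedge\beta\wedge\gamma\wedge\delta)$. Thus $\phi$ of an IHX relator lies in $\Img(\partial_4)$ but need not vanish, which is exactly why the target is the quotient $\Lambda^3\mathfrak{g}/\Img(\partial_4)$.

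The main obstacle is essentially bookkeeping: pinning down all the orientation signs so that the diagrammatic IHX signs, the Jacobi signs for $\comm$ in the $r\notin\{p,q\}$ terms, and the Koszul signs in $\partial_4$ are mutually consistent. I would fix these once and for all by choosing explicit conventions for the cyclic orientations at $p$ and $q$ and for the through-$e$ rooted commutators, and then read off the six terms of the $r\in\{p,q\}$ contribution directly against the expansion of $\partial_4(\alpha\wedge\beta\wedge\gamma\wedge\delta)$.
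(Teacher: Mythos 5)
Your proposal is correct and follows essentially the same route as the paper: AS and multilinearity are killed on the nose by the antisymmetry and bilinearity of the bracket and of the wedge, while the IHX relator lands in $\Img(\partial_4)$ via the local identity $\phi(\mathrm{IHX}) = \partial_4(g\wedge h\wedge k\wedge l)$ combined with the Jacobi identity for the trivalent vertices away from the IHX edge. The only difference is that you spell out the reduction of a general IHX relator to the elementary four-leg case (splitting the fission sum over $r\in\{p,q\}$ versus $r\notin\{p,q\}$), which the paper leaves as a one-line remark.
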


\begin{proof}
The fission map $\phi$ can be extended by linearity to linear combinations of Jacobi diagrams.
Then, the map $\phi$ vanishes on the AS relations by the antisymmetry of the Lie bracket
of $\mathfrak{g}$ and by the antisymmetry in $\Lambda^3 \mathfrak{g}$. 
A similar argument applies to the multilinearity relations. 
Next,  a straightforward computation shows that

$$
\phi\left( \quad \begin{array}{c}
{\labellist \small \hair 2pt
\pinlabel {$g$} [r] at 0 0
\pinlabel {$h$} [r] at 0 47
\pinlabel {$k$} [l] at 63 47
\pinlabel {$l$} [l] at 63 0 
\pinlabel {$g$} [t] at 104 2
\pinlabel {$h$} [b] at 104 49
\pinlabel {$k$} [b] at 148 49
\pinlabel {$l$} [t] at 149 2
\pinlabel {$g$} [rt] at 191 0
\pinlabel {$h$} [rb] at 192 49
\pinlabel {$k$} [lb] at 241 48
\pinlabel {$l$} [lt] at 241 0
\pinlabel {$-$}  at 85 25
\pinlabel {$+$} at 173 25
\endlabellist}
\includegraphics[scale=0.7]{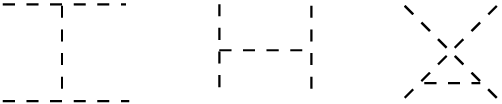} 
\end{array} \quad \right) 
= \partial_4(g\wedge h \wedge  k \wedge l).
$$
\vspace{0cm}

\noindent
It follows from this identity and the Jacobi relation in $\mathfrak{g}$,
that $\phi$ sends the IHX relations to the subspace $\Img(\partial_4)$ of $\Lambda^3 \mathfrak{g}$.
So, $\phi$ induces a linear map $\Phi:\T(\mathfrak{g}) \to \Lambda^3 \mathfrak{g}/ \Img(\partial_4)$.
\end{proof}

\begin{lemma}
\label{lem:d_3}
Let $T$ be a connected tree-shaped $\mathfrak{g}$-colored Jacobi diagram. 
Then, we have 
$$
\partial_3(\phi(T))= \sum_{v} \col(v) \wedge \comm(T_v) \ \in \Lambda^2 \mathfrak{g}
$$
where the sum is over all univalent vertices $v$ of $T$ 
with color $\col(v)$, and where $T_v$ is the tree $T$ ``rooted'' at $v$.
\end{lemma}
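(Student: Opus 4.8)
The plan is to compute $\partial_3(\phi(T))$ directly from the definition (\ref{eq:fission}) and then to reorganize the resulting terms according to the edges of $T$. Writing $c^r_i := \comm(T^{(i)}_r)$ for each trivalent vertex $r$, the contribution of $r$ to $\phi(T)$ is $c^r_3 \wedge c^r_2 \wedge c^r_1$, and applying $\partial_3$ produces three terms, each of the shape $\pm\,[c^r_i,c^r_j]\wedge c^r_k$ with $\{i,j,k\}=\{1,2,3\}$. First I would observe that such a term is naturally indexed by the pair $(r,e)$, where $e$ is the edge of $T$ emanating from $r$ in the direction of the branch $k$: indeed $c^r_k=\comm(T^{(k)}_r)$ is the commutator of the component of $T$ lying beyond $e$, whereas $[c^r_i,c^r_j]$ is exactly the commutator of the remaining part of $T$ (the two other branches joined at $r$), rooted at the edge $e$. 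Thus every term of $\partial_3(\phi(T))$ has the form $\comm(T'_e)\wedge\comm(T''_e)$, where $T'_e,T''_e$ denote the two components obtained by cutting $T$ along $e$, each rooted at the severed end of $e$.

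Next I would split the edges of $T$ into internal edges (joining two trivalent vertices) and external edges (incident to a univalent vertex). An internal edge $e=rr'$ is counted once from $r$ and once from $r'$; the two corresponding terms are $\comm(T'_e)\wedge\comm(T''_e)$ and $\comm(T''_e)\wedge\comm(T'_e)$, where $T'_e$ and $T''_e$ are the two sides of $e$. A check of the vertex-orientations and of the signs in the definition of $\partial_3$ shows that the two contributions appear with matching signs, so that their sum is of the form $\alpha\wedge\beta+\beta\wedge\alpha=0$ in $\Lambda^2\mathfrak{g}$; hence all internal edges cancel.

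There remain the external edges. If $e$ joins a univalent vertex $v$ to a trivalent vertex $r$, then one side of $e$ is the single vertex $v$, whose commutator is its color $\col(v)$, while the other side, rooted at the severed end of $e$, is precisely $T$ rooted at $v$, i.e. the tree $T_v$ of the statement. Its contribution is therefore $\pm\,\col(v)\wedge\comm(T_v)$, and summing over all univalent vertices $v$ yields the asserted right-hand side, the global sign being fixed by the orientation conventions adopted for (\ref{eq:fission}) and for $\comm$.

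The main obstacle is the orientation and sign bookkeeping: one must verify that, with the cyclic orderings used to define the $\comm(T^{(i)}_r)$ and the signs $(-1)^{i+j}$ in $\partial_3$, the three terms coming from a single $r$ really are the edge-rooted products described above, and in particular that the two halves of each internal edge carry matching signs. I expect this to reduce to a finite check at one trivalent vertex — the local model $c_3\wedge c_2\wedge c_1\mapsto\partial_3(c_3\wedge c_2\wedge c_1)$ — after which the cancellation of internal edges and the emergence of the leaf-rooted sum follow formally. Alternatively, the identity can be proved by induction on $\ideg(T)$: collapsing a ``cherry'' (a trivalent vertex with two univalent neighbours colored $a,b$) to a single univalent vertex colored $[a,b]$ reduces $T$ to a tree $T'$ with $\ideg(T')=\ideg(T)-1$, and one checks that both sides transform compatibly under this reduction, the base case $\ideg(T)=1$ being the direct computation for the tripod.
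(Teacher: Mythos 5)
Your proposal is correct, and your primary route is genuinely different from the one taken in the paper. The paper proves Lemma \ref{lem:d_3} by induction on the internal degree: it picks a trivalent vertex adjacent to two univalent vertices (your ``cherry''), writes $T$ as a tree $A$ rooted there with two radicals colored $g$ and $h$, uses $\phi(T)=\phi(A')+g\wedge h\wedge\comm(A)$ where $A'$ is $A$ with its root colored by $[g,h]$, and applies the induction hypothesis to $A'$ --- which is exactly the alternative you sketch in your final sentences. Your main argument instead expands $\partial_3(\phi(T))$ globally and indexes the resulting terms by pairs consisting of a trivalent vertex and an incident edge: writing the local contribution of $r$ as $c_1\wedge[c_3,c_2]+c_2\wedge[c_1,c_3]+c_3\wedge[c_2,c_1]$, each summand is $\comm(P_e)\wedge\comm(Q_e)$ for the two halves $P_e,Q_e$ of $T$ cut along the corresponding edge $e$, each rooted at its severed end; this uses precisely the convention that a rooted tree whose first trivalent vertex carries sub-branches $B',B''$ in the cyclic order $(\mathrm{root},B',B'')$ has commutator $[\comm(B''),\comm(B')]$. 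Internal edges are then counted once from each end and contribute $\comm(P)\wedge\comm(Q)+\comm(Q)\wedge\comm(P)=0$, while external edges survive as $\col(v)\wedge\comm(T_v)$. The sign check you flag does work out: it reduces to the local model at one trivalent vertex, and the cyclic invariance of $c_3\wedge c_2\wedge c_1$ guarantees the three terms are treated symmetrically (one can confirm the whole mechanism on the degree-two tree). Your approach buys a more structural explanation of the formula --- the boundary of the fission is a sum over edges of $T$ in which only the boundary edges survive --- at the cost of a one-time careful verification of the orientation conventions; the paper's induction confines all sign bookkeeping to the tripod base case and one short algebraic manipulation, which makes it quicker to write down rigorously.
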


\begin{proof}
We proceed by induction on the internal degree $d$ of $T$. 
If $d=1$, then $T$ is a $Y$-shaped diagram whose univalent vertices 
are denoted by $v_1,v_2,v_3$ in accordance with the cyclic order:
\begin{eqnarray*}
&&\partial_3(\phi(T))\\
&=& \partial_3( \col(v_3) \wedge \col(v_2) \wedge \col(v_1)) \\
&=& -[\col(v_3),\col(v_2)] \wedge \col(v_1) + [\col(v_3),\col(v_1)] \wedge \col(v_2)
- [\col(v_2),\col(v_1)] \wedge \col(v_3)\\
&=&  \col(v_1) \wedge \comm(T_{v_1}) + \col(v_2) \wedge \comm(T_{v_2}) + \col(v_3) \wedge \comm(T_{v_3}).
\end{eqnarray*}
Assume now that $T$ has internal degree $d+1$, 
and let $r$ be a trivalent vertex of $T$ which is adjacent to two univalent vertices.
Then, $T$ is the union of a tree $A$ ``rooted'' at $r$ with two ``radicals'' colored by $g$ and $h$ respectively: 
$$
{\labellist \small \hair 2pt
\pinlabel {$T=\quad $} [r] at 0 45
\pinlabel {$A$}  at  45 45
\pinlabel {$r$} [br] at 148 51
\pinlabel {$g$} [bl] at 198 92 
\pinlabel {$h$} [lt] at 198 0
\endlabellist}
\includegraphics[scale=0.5]{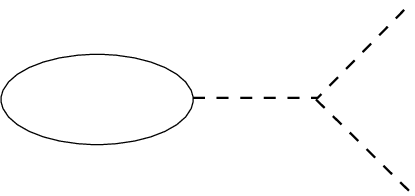}
$$
We denote by $A'$ the $\mathfrak{g}$-colored Jacobi diagram obtained from $A$ 
by coloring its root $r$ with $[g,h]$. Then, we have
\begin{eqnarray*}
\partial_3\left(\phi(T)\right)&=& \partial_3\left( \Phi(A') +  g \wedge h \wedge \comm(A)\right)\\
&=&  \sum_{a\neq r} \col(a) \wedge \comm(A'_{a}) + [g,h]\wedge \comm(A) +\\
&& \left( -[g,h]\wedge \comm(A) + [g,\comm(A)]\wedge h  - [h,\comm(A)]\wedge g \right).\\
&=& \sum_{a\neq r} \col(a) \wedge \comm(A'_{a}) + g \wedge [h,\comm(A)] + h \wedge [\comm(A),g],
\end{eqnarray*}
where the sums range over all univalent vertices $a\neq r$ of the tree $A$. This proves the inductive step.
\end{proof}

\begin{proposition}
\label{prop:fission}
Assume that the Lie algebra $\mathfrak{g}$ is nilpotent of class $k$, i.e$.$ $\Gamma_{k+1} \mathfrak{g}=\{0\}$.
Then, the map $\Phi$ restricts to a linear map
$$
\Phi:\bigoplus_{d=k}^{+ \infty} \T_d( \mathfrak{g}) \longrightarrow H_3(\mathfrak{g}).
$$
Moreover, this map is trivial in degree $d \geq 2k$.
\end{proposition}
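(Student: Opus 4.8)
The first assertion is a direct consequence of Lemma \ref{lem:d_3}. The plan is to note that, for a connected tree-shaped diagram $T$ of internal degree $d$ and any univalent vertex $v$, the rooted tree $T_v$ still carries $d$ trivalent vertices, so $\comm(T_v)$ is an iterated Lie bracket of $d+1$ colors and hence lies in $\Gamma_{d+1}\mathfrak{g}$. When $d\geq k$ we have $\Gamma_{d+1}\mathfrak{g}\subseteq \Gamma_{k+1}\mathfrak{g}=\{0\}$, so the formula of Lemma \ref{lem:d_3} yields $\partial_3(\phi(T))=0$. Thus $\phi(T)$ is a Koszul $3$-cycle and $\Phi$ takes values in $H_3(\mathfrak{g})=\Ker\partial_3/\Img\partial_4$ on $\bigoplus_{d\geq k}\T_d(\mathfrak{g})$.

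For the second assertion I must show $\phi(T)\in\Img\partial_4$ whenever $\ideg(T)=d\geq 2k$. Each summand of (\ref{eq:fission}) has the form $u_3\wedge u_2\wedge u_1$ with $u_i=\comm(T_r^{(i)})\in\Gamma_{d_i+1}\mathfrak{g}$ and $d_1+d_2+d_3=d-1\geq 2k-1$; discarding the summands that vanish, I may assume each $d_i\leq k-1$, whence every pairwise sum satisfies $d_i+d_j=(d-1)-d_\ell\geq k$ and therefore $[u_i,u_j]\in\Gamma_{d_i+d_j+2}\mathfrak{g}\subseteq\Gamma_{k+1}\mathfrak{g}=\{0\}$. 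The naive idea of writing $u_1=[u_A,u_B]$ and recognising $u_1\wedge u_2\wedge u_3$ as the leading part of $\partial_4(u_A\wedge u_B\wedge u_2\wedge u_3)$ fails, because the remaining Koszul terms (such as $[u_A,u_2]\wedge u_B\wedge u_3$) need not vanish. I expect organising these corrections to be the main obstacle.

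To control the cancellations I would pass to the associated graded. The lower central series equips the Koszul complex $(\Lambda^\bullet\mathfrak{g},\partial)$ with the decreasing, bounded filtration $F_w\Lambda^n\mathfrak{g}=\sum_{w_1+\cdots+w_n\geq w}\Gamma_{w_1}\mathfrak{g}\wedge\cdots\wedge\Gamma_{w_n}\mathfrak{g}$, for which $\partial$ is filtration-preserving, and whose associated graded is the Koszul complex of the graded Lie algebra $\overline{\mathfrak{g}}:=\mathrm{gr}\,\mathfrak{g}$, generated in degree $1$ and nilpotent of class $k$. Since $\phi(T)\in F_{d+2}\Lambda^3\mathfrak{g}$ is a $3$-cycle and the filtration is bounded (weights in degree $3$ run only up to $3k$), a downward induction on the weight reduces the claim to the vanishing $H_3(\overline{\mathfrak{g}})_w=0$ for all $w\geq 2k+2$: granting this, one lifts a bounding $4$-chain one weight at a time until the top weight $3k$ is exhausted.

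It remains to prove $H_3(\overline{\mathfrak{g}})_w=0$ for $w\geq 2k+2$. Here I would use that $\mathfrak{a}:=\bigoplus_{i>k/2}\overline{\mathfrak{g}}_i$ is an abelian ideal, since two of its elements bracket into $\overline{\mathfrak{g}}_{>k}=\{0\}$. In weight $w\geq 2k+2$ every monomial $x\wedge y\wedge z\in\Lambda^3\overline{\mathfrak{g}}$ has all pairwise brackets zero, hence is automatically a cycle, and a weight count (each factor has weight $\leq k$) forces at least two factors, say $y$ and $z$, to lie in $\mathfrak{a}$ while the third has weight $\geq 2$. Writing $x=\sum_i[a_i,c_i]$ with $a_i\in\overline{\mathfrak{g}}_1$, I would compute $\partial_4(a_i\wedge c_i\wedge y\wedge z)$ and check that, because $[y,z]=0$ and because any correction term whose distinguished (non-$\mathfrak{a}$) factor has weight $1$ is forced by $w\geq 2k+2$ to vanish, the surviving corrections again carry two factors in $\mathfrak{a}$ and a distinguished factor of strictly smaller weight. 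An induction on the weight of that distinguished factor then exhibits $x\wedge y\wedge z$ as a boundary, which proves the vanishing and completes the argument. Alternatively, when $\mathfrak{g}$ is finite-dimensional one may deduce this vanishing from the Igusa--Orr computation invoked in Theorem \ref{th:fission_iso}, together with the naturality of fission under a surjection from a free nilpotent Lie algebra of class $k$.
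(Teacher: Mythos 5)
Your proof of the first assertion is the paper's own: Lemma \ref{lem:d_3} together with the observation that $\comm(T_v)$ is a commutator of length $d+1$, hence lies in $\Gamma_{k+1}\mathfrak{g}=\{0\}$ once $d\geq k$. Nothing to add there.

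For the second assertion you take a genuinely different, and much heavier, route than the paper. The paper's argument is a short combinatorial trick that your ``naive idea'' discussion skips past: by the IHX relation one may assume that \emph{every} trivalent vertex $r$ of $T$ is adjacent to at least one univalent vertex; then one of the three rooted subtrees at $r$ is a single leaf, so the remaining two share $d-1\geq 2k-1$ trivalent vertices, one of them has at least $k$ of them, and the corresponding $\comm(T_r^{(j)})$ is a commutator of length $\geq k+1$, hence zero. Every summand of (\ref{eq:fission}) therefore vanishes and $\phi(T)=0$ as a chain --- no homological algebra needed. Your argument instead proves only the weaker (but sufficient) statement $\phi(T)\in\Img(\partial_4)$, by filtering the Koszul complex by total weight, identifying the associated graded with the Koszul complex of $\Gr\mathfrak{g}$, and running a downward induction on the weight; this correctly reduces everything to the vanishing $H_3(\Gr\mathfrak{g})_w=0$ for $w\geq 2k+2$. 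I checked your sketched proof of that vanishing: in weight $\geq 2k+2$ all of $\Lambda^3$ consists of cycles, any monomial with a weight-$1$ factor vanishes outright (the other two factors would have to carry total weight $\geq 2k+1$, forcing one into $\Gamma_{k+1}$), and the expansion of $\partial_4(a_i\wedge c_i\wedge y\wedge z)$ with $[y,z]=0$ does produce, besides $-x\wedge y\wedge z$, only terms that either contain a weight-$1$ factor or have a distinguished factor of strictly smaller weight, so the descent closes up. What your route buys is a self-contained re-proof of a piece of the Igusa--Orr computation (vanishing of $H_3$ in weights $\geq 2k+2$ for any graded Lie algebra generated in degree $1$ and of class $k$), valid without the normal form for trees modulo IHX; what it costs is length and an inductive step left as a sketch. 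Both are correct, but you should at least record the IHX normalization, since it collapses the entire second half to one paragraph.
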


\begin{proof}
Let $T$ be a connected $\mathfrak{g}$-colored tree-shaped Jacobi diagram of internal degree $d$.
Then, for each univalent vertex $v$ of $T$, the rooted tree $T_v$ has $d+1$ leaves, 
so that the length of the commutator $\comm(T_d)$  is $(d+1)$. 
So, Lemma \ref{lem:d_3} implies that $\phi(T)$ is a $3$-cycle if $d \geq k$.

Assume now that $d \geq 2k$. By the IHX relation, 
we can assume that each trivalent vertex $r$ of $T$ is adjacent to, at least, one univalent vertex.
Let $T_r^{(1)}$, $T_r^{(2)}$ and $T_r^{(3)}$ be the three subtrees of $T$ rooted at $r$. 
There exists $j\in \{1,2,3\}$ such that $T_r^{(j)}$ has at least $k$ trivalent vertices, 
so that $\comm(T_r^{(j)})$ belongs to $\Gamma_{k+1} \mathfrak{g}=\{0\}$.
Then, formula (\ref{eq:fission}) shows that the $3$-chain $\phi(T)$ is trivial.
\end{proof}

\subsection{The case of a free nilpotent Lie algebra}

\label{subsec:free_nilpotent_Lie}

Let $H$ be a finite-dimensional vector space, and let $\Lie(H)$ be the free Lie algebra generated by $H$.
The length of commutators defines a grading on $\Lie(H)$: 
$$
\Lie(H) = \bigoplus_{k=1}^{+ \infty} \Lie_k(H).
$$
To simplify the notation, we will often write $\Lie$ instead of $\Lie(H)$.
Since $\Lie_{\geq k+1}$ coincides with $\Gamma_{k+1}\Lie$,
the Lie algebra $\Lie/\Lie_{\geq k+1}$ is the free nilpotent Lie algebra generated by $H$ of nilpotency class $k$. 
Being graded, its Koszul complex $\Lambda(\Lie/\Lie_{\geq k+1})$ has a grading, and so has its homology:
$$
H_*\left(\Lie/\Lie_{\geq k+1}\right) 
= \bigoplus_{d=1}^{+ \infty} H_{*}\left(\Lie/\Lie_{\geq k+1}\right)_d.
$$
Those homology groups can be computed for low homological degrees as follows.
First of all, the isomorphism
$$
H \stackrel{\simeq}{\longrightarrow} H_1\left(\Lie/\Lie_{\geq k+1}\right), \
h \longmapsto [\{h\}]
$$
shows that $H_1\left(\Lie/\Lie_{\geq k+1}\right)$ is concentrated in degree $1$.
Next, by Hopf's theorem, we have an isomorphism
\begin{equation}
\label{eq:Hopf_theorem}
\Lie_{k+1} \stackrel{\simeq}\longrightarrow H_2\left(\Lie/\Lie_{\geq k+1}\right), \
[a,b] \longmapsto [ \{a\} \wedge \{b\} ]
\end{equation}
where $a$ and $b$ belong to $\Lie_i$ and $\Lie_j$, respectively, for some $i,j$ such that $i+j=k+1$. 
Thus, $H_2\left(\Lie/\Lie_{\geq k+1}\right)$ is concentrated in degree $k+1$.

The computation of $H_3\left(\Lie/\Lie_{\geq k+1}\right)$ is done by Igusa and Orr in \cite[\S 5]{IO}.
They apply the Hochschild--Serre spectral sequence to the central extension of graded Lie algebras
\begin{equation}
\label{eq:extension}
0 \to \Lie_{k} \to \Lie/\Lie_{\geq k+1} \to  \Lie/\Lie_{\geq k} \to 1,
\end{equation}
which gives
\begin{equation}
\label{eq:HS}
E^r_{p,q} \stackrel{r\to + \infty}{\longrightarrow} H_{p+q}(\Lie/\Lie_{\geq k+1} )
\quad \hbox{where} \quad
E^2_{p,q}  \simeq H_p\left(\Lie/\Lie_{\geq k} \right) \otimes \Lambda^q  \Lie_{k}.
\end{equation}
Their result can be summarized as follows in terms of the kernel of the bracket map
$$
\D_k(H) := \Ker\left([-,-]:H\otimes \Lie_{k-1}(H) \longrightarrow \Lie_{k}(H) \right),
$$
which we also simply denote by $\D_k$.

\begin{theorem}[Igusa--Orr]
\label{th:Igusa-Orr}
There is an isomorphism of graded vector spaces
$$
\IO: H_3(\Lie/\Lie_{\geq k+1}) \stackrel{\simeq}{\longrightarrow}
\bigoplus_{d=k+2}^{2k+1} \D_d
$$
such that, for all $m\geq k$, the diagram 
$$
\xymatrix{
{H_3(\Lie/\Lie_{\geq m+1}) } \ar[rr]^-{\IO}_-\simeq \ar[d]_-{\can} & &{\bigoplus_{d=m+2}^{2m+1} \D_d} \ar[d]^-{\can}\\
{H_3(\Lie/\Lie_{\geq k+1}) } \ar[rr]_-{\IO}^-\simeq & &{\bigoplus_{d=k+2}^{2k+1} \D_d}
}
$$
commutes, and such that the composition
$$
\xymatrix{
{H_3(\Lie/\Lie_{\geq k})} \ar@{->>}[rr]^-\can & & {H_{3}(\Lie/\Lie_{\geq k})_{k+1}}
\ar[rr]^-{\IO_{k+1}}_-\simeq & & {\D_{k+1} \subset H \otimes \Lie_{k}}
}
$$
coincides with the differential $d^2_{3,0}:{E^2_{3,0}} \to {E^2_{1,1}}$
of the spectral sequence (\ref{eq:HS}).
\end{theorem}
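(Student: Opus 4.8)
The plan is to reprove the theorem following Igusa and Orr, by induction on $k$ through the Hochschild--Serre spectral sequence (\ref{eq:HS}) of the central extension (\ref{eq:extension}). The base case $k=1$ is the abelian one: $\Lie/\Lie_{\geq 2}=H$, so $H_3(\Lie/\Lie_{\geq 2})=\Lambda^3 H$, which one identifies with $\D_3=\bigoplus_{d=3}^{3}\D_d$ by hand. For the inductive step one feeds in the groups $H_1(\Lie/\Lie_{\geq k})\simeq H$ (concentrated in internal degree $1$), $H_2(\Lie/\Lie_{\geq k})\simeq\Lie_k$ (concentrated in degree $k$, by Hopf's theorem (\ref{eq:Hopf_theorem})), and the inductive value $H_3(\Lie/\Lie_{\geq k})\simeq\bigoplus_{d=k+1}^{2k-1}\D_d$.

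The decisive feature is that (\ref{eq:extension}) is an extension of \emph{graded} Lie algebras, so (\ref{eq:HS}) splits along the internal (commutator-length) grading and every differential preserves it; moreover the page-two differentials are induced by the bracket cocycle $\omega\colon\Lambda^2(\Lie/\Lie_{\geq k})\to\Lie_k$ of the extension, namely the degree-$k$ part of the Lie bracket. On the antidiagonal $p+q=3$ the relevant terms are
$$E^2_{3,0}=H_3(\Lie/\Lie_{\geq k}),\quad E^2_{2,1}\simeq\Lie_k\otimes\Lie_k,\quad E^2_{1,2}\simeq H\otimes\Lambda^2\Lie_k,\quad E^2_{0,3}=\Lambda^3\Lie_k,$$
whose internal degrees are respectively $k+1,\dots,2k-1$; then $2k$; then $2k+1$; and $3k$. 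Tracking these degrees forces almost all higher differentials to vanish and pins each surviving term to a single internal degree. I would then compute the differentials term by term. The map $d^2_{3,0}\colon E^2_{3,0}\to E^2_{1,1}=H\otimes\Lie_k$ can be non-zero only on the degree-$(k+1)$ summand, where the claim is that it is injective with image $\D_{k+1}$; as $d^3_{3,0}$ and higher vanish by degree, this gives $E^\infty_{3,0}=\bigoplus_{d=k+2}^{2k-1}\D_d$. The terms $E^\infty_{2,1}$ and $E^\infty_{1,2}$ are to be identified with $\D_{2k}$ and $\D_{2k+1}$ by computing $\Ker(d^2_{2,1})$ and the corresponding quotient of $E^2_{1,2}$, and dividing out by the images of the incoming differentials (from $E_{4,0}$ and from $E_{3,1}$). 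Finally $E^\infty_{0,3}=0$, its only possible predecessor $d^2_{2,2}\colon\Lie_k\otimes\Lambda^2\Lie_k\to\Lambda^3\Lie_k$ being surjective (for $k\geq 2$ its degree $3k$ lies above the range $[k+2,2k+1]$ anyway). Since we work over $\Q$ with everything graded, the filtration on $H_3(\Lie/\Lie_{\geq k+1})$ carries no extension problem, and the surviving terms reassemble into $\bigoplus_{d=k+2}^{2k+1}\D_d$; tracking internal degrees simultaneously shows $\IO$ is an isomorphism of graded vector spaces.

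The last two assertions are then bookkeeping on top of this computation. The identification of the composite $H_3(\Lie/\Lie_{\geq k})\twoheadrightarrow H_3(\Lie/\Lie_{\geq k})_{k+1}\xrightarrow{\IO_{k+1}}\D_{k+1}$ with $d^2_{3,0}$ is exactly the degree-$(k+1)$ computation above, and it becomes tautological once $\IO$ is defined through the edge maps of the spectral sequence. The commutativity of the $\can$-square for $m\geq k$ follows from the functoriality of the Hochschild--Serre construction: the projections $\Lie/\Lie_{\geq m+1}\to\Lie/\Lie_{\geq k+1}$ are compatible with the successive central extensions (\ref{eq:extension}), and under the induced maps of spectral sequences the surviving $E^\infty$-terms map by the evident projections of their $\D_d$-summands. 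Organizing the whole induction so that $\IO$ is \emph{defined} by these edge maps makes the square commute by construction.

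The step I expect to be the genuine obstacle is the explicit evaluation of the two ``middle'' differentials governing $E_{2,1}$ and $E_{1,2}$ (the iterated contractions with the bracket cocycle $\omega$), together with the control of $H_4(\Lie/\Lie_{\geq k})$ that is needed to describe the images $\Img(d^2_{4,0})\subset E^2_{2,1}$ and $\Img(d^3_{4,0})\subset E^3_{1,2}$. The real content of Igusa and Orr lies precisely in showing that the resulting subquotients are canonically the bracket-kernels $\D_{2k}$ and $\D_{2k+1}$; matching $\Ker(d^2_{2,1})/\Img(d^2_{4,0})$ and the corresponding quotient of $E^2_{1,2}$ with $\Ker([-,-]\colon H\otimes\Lie_{d-1}\to\Lie_d)$ for $d=2k,2k+1$ is the technical heart of the argument and is where essentially all of the work resides.
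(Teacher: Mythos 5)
This theorem is not proved in the paper at all: it is quoted from Igusa and Orr's work \cite{IO}, and the text surrounding it only records which spectral sequence they use. So there is no in-paper proof to compare against; your proposal has to stand on its own as a reconstruction of the cited argument. As a roadmap it is faithful to what Igusa and Orr actually do: the induction on $k$ through the Hochschild--Serre spectral sequence of the central extension (\ref{eq:extension}), the use of the internal grading to kill higher differentials and to trivialize the extension problem on the abutment, the identification of $d^2_{3,0}$ with the bracket cocycle (which is exactly how the last assertion of the theorem is meant to be read), and the observation that defining $\IO$ via the edge maps makes the compatibility square for $m\geq k$ commute by functoriality. The degree bookkeeping you carry out for $E^2_{3,0}$, $E^2_{2,1}$, $E^2_{1,2}$, $E^2_{0,3}$ is correct.

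However, as a proof the proposal has a genuine gap, and you name it yourself: everything that makes the theorem true is deferred. You assert without argument that $d^2_{3,0}$ is injective on the degree-$(k+1)$ summand with image $\D_{k+1}$, and you leave entirely open the identifications $E^\infty_{2,1}\simeq \D_{2k}$ and $E^\infty_{1,2}\simeq \D_{2k+1}$, which require computing $\Ker(d^2_{2,1})$, controlling $H_4(\Lie/\Lie_{\geq k})$ well enough to describe $\Img(d^2_{4,0})$ and the image of the incoming $d^2_{3,1}$, and then matching the resulting subquotients of $\Lie_k\otimes\Lie_k$ and $H\otimes\Lambda^2\Lie_k$ with bracket kernels. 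Note also that these two groups are not literally of the form $\Ker\bigl([-,-]\colon H\otimes\Lie_{d-1}\to\Lie_d\bigr)$ on the nose --- one must construct the comparison maps, which is precisely the content of \cite[\S 5]{IO}. A writeup whose ``technical heart'' is announced rather than executed is an outline, not a proof; to make this a complete argument you would either have to carry out those computations or state plainly that you are citing \cite{IO} for them, as the paper itself does.
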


Let us recall that the space $\D_k$ can be described in terms of tree diagrams,
which leads to diagrammatic descriptions for Milnor's $\mu$ invariants of string links \cite{HM} 
or, similarly, for Johnson homomorphisms of homology cylinders \cite{GL}. 
Indeed, there is a linear map
$$
\eta_k: \T_k(H) \stackrel{\simeq}{\longrightarrow}  \D_{k+2}(H) \subset H\otimes \Lie_{k+1}(H)
$$
defined, for all connected tree-shaped $H$-colored Jacobi diagrams $T$, by
\begin{equation}
\label{eq:eta}
\eta_k(T) := \sum_{v} \col(v) \otimes \comm(T_v)
\end{equation}
where the sum is over all univalent vertices $v$ of $T$. 
See \cite{Levine_addendum} for more details.
Thus, one can expect from Theorem \ref{th:Igusa-Orr} a diagrammatic description of  $H_3(\Lie/\Lie_{\geq k+1})$,
and this is achieved by the fission map.

\begin{theorem}
\label{th:fission_iso}
Fission of tree diagrams defines a linear isomorphism
$$
\Phi: \bigoplus_{d=k}^{2k-1} \T_d(H) \stackrel{\simeq}{\longrightarrow} H_3(\Lie/\Lie_{\geq k+1})
$$
which shifts the degree by $+2$.
\end{theorem}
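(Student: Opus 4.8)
The plan is to prove that $\Phi$ is a graded isomorphism by computing it degree by degree against the Igusa--Orr description of $H_3(\Lie/\Lie_{\geq k+1})$. First I would pin down the degree shift: a connected tree of internal degree $d$ has $d+2$ univalent vertices, so the three commutators in (\ref{eq:fission}) have lengths summing to $d+2$, whence $\phi(T)$ is homogeneous of length degree $d+2$ in $\Lambda^3 \Lie$. Combined with Proposition \ref{prop:fission} (which makes $\phi(T)$ a $3$-cycle for $d\geq k$ and kills it for $d\geq 2k$), this shows that $\Phi$ is a well-defined graded map of degree $+2$ carrying $\T_d(H)$ into $H_3(\Lie/\Lie_{\geq k+1})_{d+2}$ for $k\leq d\leq 2k-1$. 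By Theorem \ref{th:Igusa-Orr} the target is supported precisely in length degrees $k+2,\dots,2k+1$, with $\IO$ identifying the degree-$(d+2)$ part with $\D_{d+2}$. Since $\eta_d\colon \T_d(H)\xrightarrow{\simeq}\D_{d+2}$ is already an isomorphism, it is enough to prove, for each $d$ in range, that the graded piece $\Phi_d\colon \T_d(H)\to H_3(\Lie/\Lie_{\geq k+1})_{d+2}$ satisfies $\IO_{d+2}\circ\Phi_d=\pm\,\eta_d$.

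The key reduction is to replace $\Lie/\Lie_{\geq k+1}$ by the free nilpotent Lie algebra of exact class $d$. Because $k\leq d\leq 2k-1$, the length degree $d+2$ lies in the range $[k+2,2k+1]$ as well as in $[d+2,2d+1]$, so the compatibility of $\IO$ with the canonical projections (the first commutative square of Theorem \ref{th:Igusa-Orr}, applied to $m=d\geq k$) shows that $\IO_{d+2}$ computed in $\Lie/\Lie_{\geq k+1}$ agrees, on the class $[\phi(T)]$, with $\IO_{d+2}$ computed in $\Lie/\Lie_{\geq d+1}$. This is legitimate because the individual commutators $\comm(T_r^{(i)})$ all have length $\leq d$ and hence survive in $\Lie/\Lie_{\geq d+1}$, so that $[\phi(T)]$ in $H_3(\Lie/\Lie_{\geq k+1})$ is the image, under the canonical projection, of the corresponding class in $H_3(\Lie/\Lie_{\geq d+1})$. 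In the latter group the class now sits in the \emph{lowest} length degree $d+2$, where Theorem \ref{th:Igusa-Orr} identifies $\IO_{d+2}$ with the spectral sequence differential $d^2_{3,0}$ of the central extension $0\to \Lie_{d+1}\to \Lie/\Lie_{\geq d+2}\to \Lie/\Lie_{\geq d+1}\to 1$.

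It then remains to evaluate this transgression on $[\phi(T)]$, and here I would invoke Lemma \ref{lem:d_3}. Viewing $\phi(T)$ as a chain in $\Lambda^3(\Lie/\Lie_{\geq d+2})$ — which is a genuine lift of the cycle $\phi(T)\in\Lambda^3(\Lie/\Lie_{\geq d+1})$, the commutators still having length $\leq d<d+2$ — its Koszul boundary is $\partial_3(\phi(T))=\sum_v \col(v)\wedge\comm(T_v)$, with $\col(v)\in\Lie_1$ and $\comm(T_v)\in\Lie_{d+1}$. This element lies entirely in $\Lie_1\wedge\Lie_{d+1}$, i.e.\ in the component that computes $d^2_{3,0}$, and under the identification $E^2_{1,1}\simeq H\otimes\Lie_{d+1}$ its class is $\sum_v \col(v)\otimes\comm(T_v)=\eta_d(T)$, up to the sign relating the antisymmetric wedge to the tensor. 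This establishes $\IO_{d+2}\circ\Phi_d=\pm\,\eta_d$; as both factors are isomorphisms, so is $\Phi_d$, and assembling over $k\leq d\leq 2k-1$ yields the theorem.

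The step I expect to be the main obstacle is the transgression computation of the last paragraph: one must justify carefully that the standard lift-and-boundary recipe for $d^2_{3,0}$ is applicable to the explicit chain $\phi(T)$, verify that $\partial_3(\phi(T))$ has no spurious $\Lambda^2\Lie_{d+1}$ or $\mathfrak{q}$-components that would obstruct reading it off in $E^2_{1,1}$, and reconcile all of the sign and normalization conventions (the $\Lambda$-versus-$\otimes$ identification, the orientation conventions in $\eta_d$, and Igusa--Orr's own choices) so that the comparison $\IO_{d+2}\circ\Phi_d=\pm\,\eta_d$ is exact rather than merely up to an unknown automorphism; the sign itself is immaterial for the isomorphism statement.
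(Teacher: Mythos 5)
Your proposal is correct and follows essentially the same route as the paper: establish the degree shift, reduce via the Igusa--Orr compatibility with canonical projections to the lowest-degree case in $\Lie/\Lie_{\geq d+1}$, identify $\IO_{d+2}$ there with the transgression $d^2_{3,0}$ of the Hochschild--Serre spectral sequence, and use Lemma \ref{lem:d_3} to compute $d^2_{3,0}[\phi(T)]=\eta_d(T)$ (the paper's conventions in fact give exact equality, with no sign ambiguity, since $\partial_3\phi(T)=\sum_v\col(v)\wedge\comm(T_v)$ matches the identification $H\otimes\Lie_{d+1}\simeq E^2_{1,1}$, $h\otimes y\mapsto\{h\wedge y\}$, on the nose). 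The worry you flag at the end is precisely the content of the paper's Claim \ref{claim:Phi_eta}, and it is resolved exactly as you propose.
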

\begin{proof}
Since $\T(H)$ embeds into $\T(\Lie/\Lie_{\geq k+1})$, we can take the map $\Phi$  given by Proposition \ref{prop:fission}.
It shifts the degree by $+2$ because a connected tree-shaped Jacobi diagram 
of internal degree $d$ has $d+2$ univalent vertices.
\begin{claim}
\label{claim:Phi_eta}
For all  $m\geq 1$, the following diagram commutes
$$
\xymatrix{
{\T_m(H)} \ar[r]^-{\Phi_m} \ar[rd]_-{\eta_m}^-\simeq & H_{3}(\Lie/\Lie_{\geq m+1})_{m+2}\ar[d]^-{\IO_{m+2}}_-\simeq \\
& {\D_{m+2}}
}
$$
\end{claim}

\noindent
It follows from Claim  \ref{claim:Phi_eta} that  the map $\Phi$ is an isomorphism in the lowest degree. 
Then, for all $d=k,\dots, 2k-1$, the commutative diagram
$$
\xymatrix{
{H_{3}(\Lie/\Lie_{\geq k+1})_{d+2} } & {H_{3}(\Lie/\Lie_{\geq k+2})_{d+2}  } \ar[l] &
{\cdots}  \ar[l]  & {H_{3}(\Lie/\Lie_{\geq d+1})_{d+2} , } \ar[l] \\
&&&\\
\T_{d}(H) \ar[uu]^-{\Phi_d}  \ar[ruu]^-{\Phi_d}  \ar[rrruu]^-{\Phi_d}_-\simeq  &&&
}
$$
(whose horizontal maps are isomorphisms by Theorem \ref{th:Igusa-Orr})
shows that the map $\Phi$ is bijective in degree $d$. 
Thus, it is enough to prove Claim \ref{claim:Phi_eta}, 
i.e$.$ to check the commutativity of the diagram
\begin{equation}
\label{eq:eta_Phi}
\xymatrix{
{\T_m(H)} \ar[r]^-{\Phi_m} \ar[rd]_-{\eta_m} & H_{3}(\Lie/\Lie_{\geq m+1}) \ar[d]^-{d^2_{3,0}} \\
& {H\otimes \Lie_{m+1}.}
}
\end{equation}

For this, we recall that the Hochschild--Serre spectral sequence associated to a central extension of Lie algebras
$$
0 \to \mathfrak{h} \to \mathfrak{g} \to \mathfrak{g}/\mathfrak{h} \to 1
$$
is the spectral sequence associated to the chain complex $C:= (\Lambda \mathfrak{g},\partial)$ filtered by 
$$
 C_n = \Filt_n C_n \supset \Filt_{n-1} C_n \supset \cdots \supset \Filt_0 C_n \supset \Filt_{-1} C_n =\{0\}
$$
where
$$
\Filt_p C_n := \Img\left( \mathfrak{h}^{\otimes(n-p)} \otimes \mathfrak{g}^{\otimes p} 
\to \mathfrak{g}^{\otimes n} \to \Lambda^n \mathfrak{g} \right) \subset \Lambda^n \mathfrak{g} =C_n.
$$
At the second stage of this spectral sequence, we have
$$
E^2_{p,q} = \frac{\Filt_p C_{p+q} \cap d^{-1}(\Filt_{p-2}C_{p+q-1})}
{\Filt_p C_{p+q} \cap d(\Filt_{p+1} C_{p+q+1}) + \Filt_{p-1} C_{p+q} \cap d^{-1}( \Filt_{p-2} C_{p+q-1})},
$$
the differential $d^2_{pq}: E^2_{p,q} \to E^2_{p-2,q+1}$ 
is induced by the boundary operator $\partial_{p+q}$ of $C$ and there is an isomorphism
$$
H_p(\mathfrak{g}/\mathfrak{h}) \otimes \Lambda^q \mathfrak{h} 
\stackrel{\simeq}{\longrightarrow} E^2_{p,q}
$$
defined by $[\{x\}] \otimes y \mapsto \{ x \wedge y \}$ for all $y\in \Lambda^q \mathfrak{h}$
and  $x \in \Lambda^p \mathfrak{g}$ such that $\partial_p(x) \equiv 0$ mod $\mathfrak{h}$.
 
We now take $\mathfrak{g} := \Lie/\Lie_{\geq m+2}$, 
$\mathfrak{h} := \Lie_{m+1}$, $p=3$ and $q=0$. 
Let $T$ be a tree-shaped connected $H$-colored Jacobi diagram of internal degree $m$. 
Let $\phi(T) \in \Lambda^3 \Lie$ be the $3$-chain for the free Lie algebra $\Lie$ obtained from $T$ by fission. 
Then, we have
$$
\Phi_m(T) = [ \{ \phi(T) \}] \in  H_{3}(\Lie/\Lie_{\geq m+1})
$$
where the inner $\{-\}$ denotes the reduction $\Lambda^3 \Lie \to \Lambda^3(\Lie/\Lie_{\geq m+1})$.
So, $d^2_{3,0}  \Phi_m (T)\in E^2_{1,1}$ is the class of $\partial_3\phi(T) \in \Lambda^2 \Lie$,
for which Lemma \ref{lem:d_3} gives an explicit formula.
By comparing this formula with the definition (\ref{eq:eta}) of $\eta_m$, we see that
$d^2_{3,0}  \Phi_m (T)\in E^2_{1,1} \simeq H \otimes \Lie_{m+1}$ coincides with $\eta_m(T)$. 
\end{proof}

\begin{remark}
Let $H$ be a finitely-generated free $\Z$-module, and let $\Lie(H)$ be the free Lie ring generated by $H$.
Igusa and Orr's result (Theorem \ref{th:Igusa-Orr}) is also valid for integer coefficients \cite{IO} and,
besides, the map
$$
\Phi: \bigoplus_{d=k}^{2k-1} \T_d(H) \stackrel{\simeq}{\longrightarrow} H_3(\Lie(H)/\Lie_{\geq k+1}(H)),
$$
is defined with integer coefficients as well. 
However, $\Phi$ is not bijective since $\eta$ is not\footnote{ 
According to Levine \cite{Levine_addendum,Levine_quasi-Lie}, 
$\eta_{2k}$ is not surjective and $\eta_{2k+1}$ is not injective.} an isomorphism over $\Z$.
\end{remark}

\vspace{0.5cm}

\section{Expansions of the free group}

\label{sec:expansions}

In this section, we review expansions of the free group \cite{Lin,Kawazumi}
and we focus on ``group-like'' expansions which are, essentially,
identifications between the Malcev Lie algebra of the free group and the complete free Lie algebra. 
Finally, we introduce ``symplectic'' expansions of the fundamental group 
of a compact connected oriented surface, with one boundary component.

\subsection{Review of the Malcev Lie algebra of a group}

\label{subsec:Malcev}

Let $G$ be a group. As shown by Jennings \cite{Jennings} and Quillen \cite{Quillen_rht},
the Malcev completion and the Malcev Lie algebra of $G$ 
can be constructed from its group algebra $\Q[G]$. 
The reader is referred to \cite{Quillen_rht} for full details of their construction, which is only outlined below.
We denote by $I$ the augmentation ideal of $\Q[G]$. The $I$-adic completion of $\Q[G]$
$$
\widehat{\Q}[G] := \varprojlim_{k} \Q[G]/ I^k
$$
equipped with the filtration 
$$
\widehat{I^j} := \varprojlim_{k\geq j} I^j/ I^k, \quad \forall j\geq 0
$$
is a complete Hopf algebra in the sense of Quillen \cite{Quillen_rht}. 
Let $\widehat \Delta$ be the coproduct. 

\begin{definition}
The \emph{Malcev completion} of $G$ is the group of group-like elements of $\widehat{\Q}[G]$
$$
\MGroup(G) := \GLike(\widehat{\Q}[G]) = 
\left\{x \in \widehat{\Q}[G]: \widehat\Delta(x)= x \widehat{\otimes} x, x\neq 0 \right\}
$$
equipped with the filtration
$$
\widehat{\Gamma}_j \MGroup(G)  
:= \MGroup(G) \cap \left(1+ \widehat{I^j}\right), \quad  \forall j\geq 1.
$$
The \emph{Malcev Lie algebra} of $G$ is the Lie algebra of primitive elements of $\widehat{\Q}[G]$
$$
\MLie(G) :=  \Prim(\widehat{\Q}[G]) = 
\left\{x \in \widehat{\Q}[G]: \widehat\Delta(x)= x \widehat{\otimes} 1 + 1 \widehat{\otimes} x \right\}
$$
equipped with the filtration
$$
\widehat{\Gamma}_j \MLie(G)  
:= \MLie(G) \cap \widehat{I^j}, \quad \forall  j\geq 1.
$$
\end{definition}

\begin{remark}
Our notation for the filtration of $\MLie(G)$ is justified by the fact
that, for all $j\geq 1$, the $j$-th term of this filtration is the closure of $\Gamma_j \MLie(G)$ 
for the topology that it defines. A similar remark applies to $\MGroup(G)$.
\end{remark}

The Malcev completion and the Malcev Lie algebra of a group $G$ 
are equivalent objects derived from $G$. Indeed, as a general fact in a complete Hopf algebra, 
the primitive and the group-like elements 
are in one-to-one correspondence via the exponential and logarithmic series:
$$
\MGroup(G) \subset 1 + \widehat{I} 
\overset{\log}{\underset{\exp}{\longrightleftarrows}} \widehat{I} \supset \MLie(G).
$$
The inclusion $G \subset \Q[G]$ induces a canonical map
$\iota: G \longrightarrow \MGroup(G)$. 
It is injective if and only if $G$ is residually torsion-free nilpotent
or, equivalently, if and only if the rational lower central series of $G$ has a trivial intersection:
$$
\bigcap_{k\geq 1} \big\{ \gp{g} \in G : \exists n\geq 1, \gp{g}^n \in \Gamma_k G \big\} = \{1\}.
$$
In such a case, we will omit the map $\iota$ to simplify notations. 
For example, free groups and free nilpotent groups are residually torsion-free nilpotent.

Classically, the ``Malcev completion'' of a nilpotent group $N$ 
refers to its uniquely-divisible closure. 
It has been proved by Jennings in the finitely generated case \cite{Jennings}
and by Quillen in general \cite{Quillen_rht} that $\MGroup(N)$ is a realization of this closure.
To be more specific, let us recall that a \emph{uniquely-divisible closure} 
of a nilpotent group $N$ is a pair $(D,i)$, where 
\begin{itemize}
\item $D$ is nilpotent and is uniquely-divisible: 
$\forall \gp{y}\in D, \forall k\geq 1, \exists ! \gp{x}\in D, \gp{x}^k =\gp{y}$,
\item $i:N\to D$ is a group homomorphism whose kernel is the torsion subgroup of $N$,
\item $\forall \gp{x}\in D, \exists k\geq 1, \gp{x}^k \in i(N)$.
\end{itemize}
Malcev proved that the uniquely-divisible closure of a nilpotent group $N$
always exists and is essentially unique. (See \cite{KM} for instance.) 
It is usually denoted by $N\otimes \Q$.

\begin{theorem}[Jennings, Quillen]
\label{th:divisible_closure}
The canonical map $\iota:G \to \MGroup(G)$ induces a group isomorphism
$$
\iota: \varprojlim_{k} \left( \left(G/\Gamma_k G\right) \otimes \Q \right)
\stackrel{\simeq}{\longrightarrow} \MGroup(G).
$$
\end{theorem}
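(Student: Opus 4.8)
The plan is to realize the stated map explicitly, as the map induced by $\iota$ after passing to filtration quotients, and then to prove it is an isomorphism by comparing associated graded Lie algebras; the genuine content will be the Jennings--Quillen identification of the associated graded of the $I$-adic filtration, which I would import rather than reprove.

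First I would check that $\iota$ is filtered. For $\gp{g}\in\Gamma_k G$ one has $\iota(\gp{g})-1=\gp{g}-1\in I^k$, which follows by induction from the identity $[\gp{x},\gp{y}]-1=\gp{x}^{-1}\gp{y}^{-1}\big((\gp{x}-1)(\gp{y}-1)-(\gp{y}-1)(\gp{x}-1)\big)$, showing that $\Gamma_a G-1$ and $\Gamma_b G-1$ bracket into $I^{a+b}$. Hence $\iota(\Gamma_k G)\subset\widehat{\Gamma}_k\MGroup(G)$, and $\iota$ descends, for every $k$, to a homomorphism $G/\Gamma_k G\to\MGroup(G)/\widehat{\Gamma}_k\MGroup(G)$. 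Each target is nilpotent of class $<k$ and uniquely divisible: the logarithm carries it isomorphically onto the $\Q$-Lie algebra $\MLie(G)/\widehat{\Gamma}_k\MLie(G)$, and the $\bch$ formula transports the group law to this rational vector space, so $k$-th roots exist and are unique. By Malcev's universal property of $(G/\Gamma_k G)\otimes\Q$, each of these maps extends uniquely to $(G/\Gamma_k G)\otimes\Q\to\MGroup(G)/\widehat{\Gamma}_k\MGroup(G)$, compatibly in $k$. Since $\bigcap_k\widehat{I^k}=0$, the group $\MGroup(G)$ is Hausdorff and complete, i.e$.$ $\MGroup(G)=\varprojlim_k\MGroup(G)/\widehat{\Gamma}_k\MGroup(G)$, so passing to the inverse limit produces the homomorphism $\varprojlim_k(G/\Gamma_k G)\otimes\Q\to\MGroup(G)$ of the statement.

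It then remains to prove bijectivity. Both the source and $\MGroup(G)$ are complete with respect to their filtrations, so it suffices to show that the map is an isomorphism on each graded quotient $\widehat{\Gamma}_k/\widehat{\Gamma}_{k+1}$. Via the logarithm this becomes a statement about the associated graded Lie algebras: the associated graded of the source is the rational lower central series Lie algebra $\bigoplus_k(\Gamma_k G/\Gamma_{k+1}G)\otimes\Q$, and under the Jennings--Quillen isomorphism $\Gr\MLie(G)\cong\bigoplus_k(\Gamma_k G/\Gamma_{k+1}G)\otimes\Q$ the induced map is, degree by degree, the identity. The hard part is exactly this last identification, which is the rational dimension-subgroup computation of Jennings and Quillen: the associated graded of the $I$-adic filtration of $\Q[G]$ is the enveloping algebra $\U\big(\bigoplus_k(\Gamma_k G/\Gamma_{k+1}G)\otimes\Q\big)$, whose primitives recover $\Gr\MLie(G)$. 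Granting this input from \cite{Jennings,Quillen_rht}, the map is an isomorphism in every degree, hence $\iota$ induces an isomorphism on the inverse limits.
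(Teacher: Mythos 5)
The paper offers no proof of this theorem: it is stated as an imported result of Jennings and Quillen, with the relevant content delegated to \cite{Jennings} and \cite{Quillen_rht}. So there is no in-paper argument to compare against, and your proposal should be judged on its own. As a reconstruction it is sound and follows the standard route: the filtration estimate $\Gamma_k G - 1 \subset I^k$ via the commutator identity, the unique-divisibility of $\MGroup(G)/\widehat{\Gamma}_k\MGroup(G)$ transported from $\MLie(G)/\widehat{\Gamma}_k\MLie(G)$ by $\log$ and $\bch$, the universal property of $N\otimes\Q$ to produce the compatible system of maps, and then reduction to the associated graded, where the real content is exactly Quillen's computation $\Gr\widehat{\Q}[G]\simeq \U(\Gr G\otimes\Q)$ --- the same input the paper invokes for its Theorem \ref{th:Quillen}. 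Two points you pass over silently but which deserve at least a citation: (i) identifying the associated graded of the source as $\bigoplus_k(\Gamma_k G/\Gamma_{k+1}G)\otimes\Q$ requires the exactness of Malcev completion on central extensions of nilpotent groups, i.e$.$ that $1\to (\Gamma_kG/\Gamma_{k+1}G)\otimes\Q \to (G/\Gamma_{k+1}G)\otimes\Q\to (G/\Gamma_kG)\otimes\Q\to 1$ is exact; and (ii) the completeness argument at the end needs both filtrations to be exhaustive, separated and complete before ``isomorphism on graded quotients'' upgrades to an isomorphism of the limits. Both are standard and fixable in a line each, so I see no genuine gap; the structure of your argument is the accepted one.
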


The Malcev Lie algebra of a group $G$ being canonically filtered, 
there is a graded Lie algebra $\Gr \MLie(G)$ associated to it.
This graded Lie algebra has been identified by Quillen.

\begin{theorem}[Quillen]
\label{th:Quillen}
The map $\log \iota: G \to \MLie(G)$ preserves the filtrations 
($G$ being filtered by the lower central series),
and it induces a graded Lie algebra isomorphism:
$$
(\Gr \log \iota)\otimes \Q:  \Gr G \otimes \Q \stackrel{\simeq}{\longrightarrow} \Gr \MLie(G).
$$
\end{theorem}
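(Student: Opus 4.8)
The plan is to transport everything to the associated graded of the augmentation filtration and to recognize the target as the space of primitives of Quillen's model for $\Gr_I\Q[G]$.

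First I would check that $\log\iota$ preserves the filtrations. The essential input is the inclusion $\Gamma_j G\subset 1+I^j$, i.e. $\gp{g}\in\Gamma_j G\Rightarrow \gp{g}-1\in I^j$, which follows by induction on $j$ from the algebra identity $[\gp{g},\gp{h}]-1=\big((\gp{g}-1)(\gp{h}-1)-(\gp{h}-1)(\gp{g}-1)\big)\gp{g}^{-1}\gp{h}^{-1}$: it forces $[\gp{g},\gp{h}]-1\in I^{i+j}$ whenever $\gp{g}-1\in I^i$ and $\gp{h}-1\in I^j$. Consequently, for $\gp{g}\in\Gamma_j G$ every term of $\log(\gp{g})=\sum_{n\geq 1}\frac{(-1)^{n-1}}{n}(\gp{g}-1)^n$ lies in $\widehat{I^{j}}$, and since $\log\iota(\gp g)$ is primitive (being the logarithm of the group-like element $\iota(\gp g)$) we get $\log\iota(\gp{g})\in\MLie(G)\cap\widehat{I^j}=\widehat{\Gamma}_j\MLie(G)$. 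This yields the induced graded map $f:=(\Gr\log\iota)\otimes\Q:\Gr G\otimes\Q\to\Gr\MLie(G)$, where the factorization through $\otimes\Q$ is automatic because the target is a $\Q$-vector space.

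Next I would pin down $f$ inside $\Gr\widehat{\Q}[G]$. As $(\gp{g}-1)^n\in\widehat{I^{2j}}\subset\widehat{I^{j+1}}$ for $n\geq 2$ when $\gp{g}\in\Gamma_j G$, one has $\log(\gp{g})\equiv \gp{g}-1 \pmod{\widehat{I^{j+1}}}$, so under the canonical identification $\Gr\widehat{\Q}[G]\cong\Gr_I\Q[G]$ (completion does not change the associated graded) the map $\Gr\log\iota$ agrees with the natural assignment $\{\gp{g}\}\mapsto\{\gp{g}-1\}$. By Quillen's theorem this assignment extends to a graded Hopf algebra isomorphism $\U(\Gr G\otimes\Q)\stackrel{\simeq}{\to}\Gr_I\Q[G]$, and, since we work over $\Q$, the Milnor--Moore theorem (via PBW) identifies the primitives of the source with $\Gr G\otimes\Q$ itself. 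Hence $\Prim(\Gr\widehat{\Q}[G])\cong\Gr G\otimes\Q$, with $\{\gp{g}-1\}$ corresponding to $\{\gp{g}\}$.

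Finally I would assemble these into the stated isomorphism by a short degreewise chase. Let $g:\Gr\MLie(G)\to\Prim(\Gr\widehat{\Q}[G])\cong\Gr G\otimes\Q$ be induced by the filtered inclusion $\MLie(G)\subset\widehat{\Q}[G]$ (its image lands in the primitives since the coproduct is filtered). The computation above shows $g\circ f=\Id$ on $\Gr G\otimes\Q$. Moreover $g$ is injective in each degree: if $x\in\widehat{\Gamma}_j\MLie(G)$ maps to $0$ in $\widehat{I^j}/\widehat{I^{j+1}}$, then $x\in\MLie(G)\cap\widehat{I^{j+1}}=\widehat{\Gamma}_{j+1}\MLie(G)$, so its class already vanishes. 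From $g$ injective together with $g\circ f=\Id$ it follows that both $f$ and $g$ are isomorphisms. Bracket compatibility is automatic, because under the identifications above the commutator of $\{\gp{g}-1\}$ and $\{\gp{h}-1\}$ is $\{[\gp{g},\gp{h}]-1\}$, matching the bracket on $\Gr G\otimes\Q$ coming from group commutators. The main obstacle is the external input $\Gr_I\Q[G]\cong\U(\Gr G\otimes\Q)$: a self-contained argument needs the rational dimension-subgroup statement that the degree-$j$ part of the augmentation filtration meets $G$ exactly in the rational radical of $\Gamma_j G$, which is the delicate heart of Quillen's work and which I would invoke by citation rather than reprove.
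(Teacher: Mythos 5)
Your proof is correct and takes essentially the same route as the paper, which proves nothing directly but simply invokes \cite[Theorem 2.14]{Quillen_rht} together with the main result of \cite{Quillen_graded} (the graded Hopf algebra isomorphism $\U(\Gr G\otimes\Q)\simeq \Gr_I\Q[G]$); your argument is a correct unwinding of exactly those references, identifying $\Gr\MLie(G)$ with the primitives of $\Gr\widehat{\Q}[G]$ via Milnor--Moore and the retraction $g$ with $g\circ f=\Id$. The only cosmetic difference is in the filtration-preservation step, where the paper appeals to the Baker--Campbell--Hausdorff formula while you use the inclusion $\Gamma_j G\subset 1+I^j$, which is equally valid and arguably more direct given that $\widehat{\Gamma}_j\MLie(G)$ is defined as $\MLie(G)\cap\widehat{I^j}$.
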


\noindent
The first statement is an  application of the Baker--Campbell--Hausdorff formula,
and the second statement follows from \cite[Theorem 2.14]{Quillen_rht} 
and  the main result of \cite{Quillen_graded}.

\subsection{Group-like expansions}

Let $F$ be a finitely-generated free group, and let $H$ be the abelianization of $F$ with rational coefficients:
$$
H:= \left( F/ \Gamma_2 F\right) \otimes \Q.
$$
We denote by $\Tens(H)$ the tensor algebra of $H$, and by $\Tenshat(H)$ its degree completion.
If one forgets its addition, one can regard $\Tenshat(H)$ just as a monoid.

\begin{definition}
An \emph{expansion} of the free group $F$ is a monoid map
$\theta: F \to \Tenshat(H)$ such that 
$\theta(\gp{x}) = 1 + \{\gp{x}\} + (\deg \geq 2)$ for all $\gp{x} \in F$.
\end{definition}

Expansions have been studied by Lin in his work on Milnor's $\mu$ invariants \cite{Lin}, 
and by Kawazumi in his study of Johnson homomorphisms \cite{Kawazumi}.

\begin{example}
\label{ex:Magnus}
Assume that $F$ comes with a preferred basis $\gp{b}=(\gp{b}_1,\dots,\gp{b}_n)$.
The \emph{Magnus expansion} of $F$ \emph{relative} to $\gp{b}$ is the unique expansion defined by
$$
\forall i=1,\dots,n, \quad \theta^{\Z}_\gp{b}(\gp{b}_i) := 1 + \{\gp{b}_i\}.
$$
This expansion plays an important role in combinatorial group theory and low-dimensional topology,
and it has the peculiarity to exist with integer coefficients.
\end{example}

It is well-known that there is a canonical graded algebra isomorphism
$$
 \Gr \widehat{\Q}[F] = \bigoplus_{k\geq 0} I^k/I^{k+1} \stackrel{\simeq}{\longrightarrow}  \Tens(H)
$$
defined by $I/I^2 \ni \{\gp{x} -1\} \mapsto \{\gp{x}\} \in H$:
See, for example, \cite{Bourbaki} or \cite{MKS}. 
Thus, we can identify $\Gr \widehat{\Q}[F]$ with $\Tens(H) =\Gr \Tenshat(H)$.

\begin{proposition}[Lin, Kawazumi]
\label{prop:Lin-Kawazumi}
An expansion $\theta$ of $F$  extends to a unique filtered algebra isomorphism
$$
\theta: \widehat{\Q}[F] \stackrel{\simeq}{\longrightarrow} \Tenshat(H)
$$ 
which is the identity at the graded level. 
Conversely, any such isomorphism $\theta$ restricts to an expansion $\theta: F \to \Tenshat(H)$.
\end{proposition}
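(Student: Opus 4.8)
The plan is to build the extension from the universal property of the group algebra and then promote it to an isomorphism by the filtered/graded formalism. First I would extend $\theta$ from $F$ to $\Q[F]$: since $\theta$ is a monoid map into the multiplicative monoid of $\Tenshat(H)$ (in particular $\theta(1)=1$), the universal property of the group algebra produces a unique $\Q$-algebra homomorphism $\theta:\Q[F]\to\Tenshat(H)$ with $\theta(\sum_g a_g \gp{g})=\sum_g a_g\theta(\gp{g})$. The defining condition $\theta(\gp{x})=1+\{\gp{x}\}+(\deg\geq 2)$ gives $\theta(\gp{x}-1)\in\Tenshat_{\geq 1}(H)$, and since $I^k$ is spanned by products $(\gp{x}_1-1)\cdots(\gp{x}_k-1)$, multiplicativity yields $\theta(I^k)\subseteq\Tenshat_{\geq k}(H)$. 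Thus $\theta$ is a filtered algebra map.

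Next I would compute the associated graded map $\Gr\theta$ under the canonical identification $\Gr\widehat{\Q}[F]\simeq\Tens(H)$ recalled above. In degree one it sends the class $\{\gp{x}-1\}\in I/I^2$ to the degree-one part of $\theta(\gp{x})$, namely $\{\gp{x}\}\in H$, which under the identification is exactly the identity of $H$. Because $\Gr\theta$ is a graded algebra endomorphism of $\Tens(H)$ and $\Tens(H)$ is generated in degree one, it follows that $\Gr\theta=\Id_{\Tens(H)}$. Since $\theta$ is filtered and $\Tenshat(H)$ is complete, $\theta$ induces compatible maps on the filtration quotients $\Q[F]/I^k\to\Tens(H)/\Tens_{\geq k}(H)$ and hence extends uniquely to a continuous filtered algebra map $\widehat{\Q}[F]\to\Tenshat(H)$, whose associated graded is still the identity.

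The crux is then the standard fact that a filtered morphism between complete filtered vector spaces which induces an isomorphism on associated graded objects is itself a filtered isomorphism; applied here with $\Gr\theta=\Id$, it shows that $\theta:\widehat{\Q}[F]\to\Tenshat(H)$ is a filtered algebra isomorphism which is the identity at the graded level. This completeness argument, constructing the inverse degree by degree by successive approximation, is the one genuinely non-formal step; everything surrounding it is bookkeeping. Uniqueness is immediate: any filtered (hence continuous) algebra isomorphism extending $\theta$ agrees with it on the dense subalgebra $\Q[F]$, where it is determined by its values on $F$, so the two agree everywhere.

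For the converse, I would simply restrict a given filtered algebra isomorphism $\theta$ to $F\subset\widehat{\Q}[F]$. Unitality and multiplicativity of $\theta$ make $\theta|_F:F\to\Tenshat(H)$ a monoid map. The filtered property gives $\theta(\gp{x})-1=\theta(\gp{x}-1)\in\Tenshat_{\geq 1}(H)$, while the hypothesis $\Gr\theta=\Id$ forces the degree-one component of $\theta(\gp{x})$ to be $\{\gp{x}\}$, so $\theta(\gp{x})=1+\{\gp{x}\}+(\deg\geq 2)$. Hence $\theta|_F$ is an expansion, completing the proof.
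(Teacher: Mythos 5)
Your proposal is correct and follows essentially the same route as the paper's proof: extend $\theta$ to $\Q[F]$ by the universal property, check that the filtration is preserved so that $\theta$ passes to the completion, observe that $\Gr\theta$ is the identity in degree one and hence in all degrees since $\Tens(H)$ is generated in degree one, and conclude by completeness that $\theta$ is an isomorphism; the converse is the same restriction argument. You merely spell out the details (the successive-approximation inverse, the uniqueness via density of $\Q[F]$) that the paper leaves implicit.
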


\noindent
It follows that, for any two expansions $\theta$ and $\theta'$ of $F$, 
there exists a unique filtered algebra automorphism $\psi: \Tenshat(H) \to  \Tenshat(H)$ 
inducing the identity at the graded level and such that $\psi \circ \theta = \theta'$ \cite{Kawazumi,Lin}.  

\begin{proof}[Proof of Proposition \ref{prop:Lin-Kawazumi}]
The monoid homomorphism $\theta: F \to \Tenshat(H)$ induces a unique algebra map
$\theta: \Q[F] \to \Tenshat(H)$ which sends the augmentation ideal $I$ to the ideal $\Tenshat_{\geq 1}(H)$ 
and, so, preserves the filtrations. Hence we have a filtered algebra homomorphism
$$
\theta: \widehat{\Q}[F] \longrightarrow \Tenshat(H).
$$
Clearly, $\Gr \theta$ is the identity in degree $1$ and, so, is the identity in any degree. 
By completeness, it follows that $\theta: \widehat{\Q}[F] \to \Tenshat(H)$ is an isomorphism. 

Conversely, given a  filtered algebra isomorphism $\theta: \widehat{\Q}[F] \to \Tenshat(H)$ 
that induces the identity at the graded level,
we get an expansion $\theta: F \to \Tenshat(H)$ by composition 
with the canonical monomorphism $\iota: F \to  \widehat{\Q}[F]$.
\end{proof}

Among expansions of the free group $F$, we prefer those with the following property.

\begin{definition} 
An expansion $\theta:F \to \Tenshat(H)$ is \emph{group-like} 
if it takes values in the group of group-like elements of $\Tenshat(H)$.
\end{definition}

\noindent
For instance, the Magnus expansion from Example \ref{ex:Magnus} is not group-like.

\begin{example}
\label{ex:basis}
Let $\gp{b}=(\gp{b}_1,\dots,\gp{b}_n)$ be a basis of $F$.
The \emph{group-like expansion} of $F$ \emph{relative} to $\gp{b}$ is the unique expansion defined by
$$
\forall i=1,\dots,n, \quad \theta_\gp{b}(\gp{b}_i) :=  \exp(\{\gp{b}_i \}).
$$
\end{example}

The following analogue of Proposition \ref{prop:Lin-Kawazumi} is proved along the same lines.

\begin{proposition}
\label{prop:group-like}
A group-like expansion $\theta$ of $F$ extends to a unique complete Hopf algebra isomorphism
$$
\theta: \widehat{\Q}[F] \stackrel{\simeq}{\longrightarrow} \Tenshat(H)
$$ 
which is the identity at the graded level. 
Conversely, any such isomorphism $\theta$ restricts to a group-like expansion $\theta: F \to \Tenshat(H)$.
\end{proposition}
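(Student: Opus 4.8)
The plan is to lean on Proposition \ref{prop:Lin-Kawazumi}, which already supplies the underlying filtered \emph{algebra} isomorphism, and to upgrade it to a Hopf algebra isomorphism by checking compatibility with the coproducts. Since a group-like expansion is in particular an expansion, Proposition \ref{prop:Lin-Kawazumi} produces a unique filtered algebra isomorphism $\theta: \widehat{\Q}[F] \to \Tenshat(H)$ which is the identity at the graded level and which restricts to the given $\theta$ on $F$. Thus the only thing left to establish in the forward direction is that this $\theta$ intertwines the two coproducts, i.e. that $\widehat{\Delta}_{\Tenshat(H)} \circ \theta = (\theta \widehat{\otimes} \theta) \circ \widehat{\Delta}_{\widehat{\Q}[F]}$.

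To verify this identity I would observe that both of its sides are continuous $\Q$-linear maps $\widehat{\Q}[F] \to \Tenshat(H) \widehat{\otimes} \Tenshat(H)$, each being a composite of filtration-preserving maps; hence it suffices to check that they agree on the set $F$, which spans the dense subalgebra $\Q[F]$. For $\gp{x} \in F$, the element $\gp{x}$ is group-like in $\widehat{\Q}[F]$, so $(\theta \widehat{\otimes} \theta)\big(\widehat{\Delta}(\gp{x})\big) = \theta(\gp{x}) \widehat{\otimes} \theta(\gp{x})$; on the other hand, $\theta(\gp{x})$ is group-like in $\Tenshat(H)$ precisely because $\theta$ is a \emph{group-like} expansion, so $\widehat{\Delta}\big(\theta(\gp{x})\big) = \theta(\gp{x}) \widehat{\otimes} \theta(\gp{x})$ as well. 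The two maps therefore coincide on $F$, hence by linearity on $\Q[F]$ and by continuity on all of $\widehat{\Q}[F]$. Consequently $\theta$ is a morphism of complete bialgebras, thus of Hopf algebras, and being already an algebra isomorphism it is a Hopf algebra isomorphism.

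For the converse I would start from a filtered Hopf algebra isomorphism $\theta$ inducing the identity at the graded level, apply the converse half of Proposition \ref{prop:Lin-Kawazumi} to conclude that $\theta$ restricts to an expansion of $F$, and then use that a coalgebra map carries group-like elements to group-like elements: since each $\gp{x} \in F$ is group-like in $\widehat{\Q}[F]$, its image $\theta(\gp{x})$ is group-like in $\Tenshat(H)$, so the resulting expansion is group-like. The only point that requires any care is the density argument that agreement on $F$ forces agreement on $\widehat{\Q}[F]$; this is routine once one notes that both candidate maps are filtration-preserving, hence continuous, and that $F$ spans the dense subspace $\Q[F]$. I do not expect a genuine obstacle here—the content is entirely in the single computation on group elements, where the definitions of ``group-like element'' and ``group-like expansion'' match up exactly.
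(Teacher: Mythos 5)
Your proposal is correct and follows exactly the route the paper intends: the paper gives no separate argument for this proposition, stating only that it is ``proved along the same lines'' as Proposition \ref{prop:Lin-Kawazumi}, and your argument is precisely that — invoke the algebra-level statement and then check coproduct compatibility on the group-like elements of $F$, extending by linearity and completeness. The one detail you gloss over (that a morphism of complete bialgebras automatically commutes with the antipode) is standard and harmless.
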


\noindent
Let us restate this characterization of group-like expansions in terms of Lie algebras.
For this, we need the canonical isomorphisms
$$
\xymatrix{
{\Lie(H)} \ar@{-->}[rr] \ar[rd]_-\simeq & &{\Gr \MLie(F)}. \\
&{ \Gr F \otimes \Q} \ar[ru]_-\simeq & 
}
$$
The left-hand isomorphism is defined to be the identity in degree $1$ \cite{Bourbaki,MKS}, 
while the right-hand isomorphism is given by Theorem \ref{th:Quillen}.
Thus, we can identify $\Gr \MLie(F)$ with $\Lie(H)=\Gr \Liehat(H)$.

\begin{corollary}
\label{cor:Lie}
A group-like expansion $\theta$ of $F$ induces a unique filtered Lie algebra isomorphism
$$
\theta: \MLie(F) \stackrel{\simeq}{\longrightarrow} \Liehat(H)
$$ 
which is the identity at the graded level. 
Conversely, any such isomorphism $\theta$ induces a unique group-like expansion $\theta: F \to \Tenshat(H)$.
\end{corollary}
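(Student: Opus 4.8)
The plan is to deduce this corollary from Proposition \ref{prop:group-like} together with the fact that, in Quillen's category of complete Hopf algebras, the Lie algebra of primitives and the completed universal enveloping algebra are mutually inverse constructions. Recall that by definition $\MLie(F) = \Prim(\widehat{\Q}[F])$, while the primitive elements of the complete tensor algebra are exactly the complete free Lie algebra, $\Prim(\Tenshat(H)) = \Liehat(H)$ (the completed form of the classical description of the primitives of a tensor algebra). These two identifications are what will let me transfer isomorphisms back and forth between the Hopf-algebraic and the Lie-algebraic pictures.

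For the first assertion, I would begin with a group-like expansion $\theta$ and invoke Proposition \ref{prop:group-like} to extend it to a filtered Hopf algebra isomorphism $\theta: \widehat{\Q}[F] \to \Tenshat(H)$ that is the identity at the graded level. Since any Hopf algebra isomorphism intertwines the coproducts, it carries primitive elements to primitive elements bijectively, hence restricts to a Lie algebra isomorphism $\theta: \MLie(F) \to \Liehat(H)$. This restriction is filtered because the filtration of each Malcev Lie algebra is by definition induced by that of the ambient complete Hopf algebra (and $\theta$ sends $\widehat{I^j}$ to $\Tenshat_{\geq j}(H)$), and it is the identity at the graded level because $\Gr \theta = \Id$ on $\Gr \widehat{\Q}[F] = \Tens(H)$ and the canonical identifications $\Gr \MLie(F) \cong \Lie(H) \cong \Gr \Liehat(H)$ are compatible with the inclusions of primitives into the graded Hopf algebras. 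Uniqueness of this Lie algebra isomorphism is inherited from the uniqueness of the Hopf algebra extension in Proposition \ref{prop:group-like}.

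For the converse, I would travel the same correspondence in the opposite direction. Given a filtered Lie algebra isomorphism $\theta: \MLie(F) \to \Liehat(H)$ inducing the identity on the associated graded, I would apply the completed universal enveloping algebra functor. Using the canonical identifications $\widehat{\Q}[F] \cong \Uhat(\MLie(F))$ and $\Tenshat(H) \cong \Uhat(\Liehat(H))$ — the latter being the standard fact that the completed tensor algebra is the enveloping algebra of the free Lie algebra — functoriality produces a filtered Hopf algebra isomorphism $\widehat{\Q}[F] \to \Tenshat(H)$ that is again the identity at the graded level. Composing with the canonical monomorphism $\iota: F \to \widehat{\Q}[F]$ and applying Proposition \ref{prop:group-like} then yields the desired group-like expansion, whose uniqueness again follows from that proposition.

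The technical heart, and the step I expect to demand the most care, is the converse: one must know that a complete Hopf algebra is faithfully reconstructed as the completed enveloping algebra of its primitives — the complete form of the Milnor--Moore and Poincar\'e--Birkhoff--Witt theorems in Quillen's setting — so that a filtered isomorphism of primitives extends uniquely to a filtered Hopf algebra isomorphism. Once this equivalence between $\Prim$ and $\Uhat$ is in place, the remainder is a matter of matching the filtration conventions of \S\ref{subsec:Malcev} with the degree filtration of $\Liehat(H)$ and verifying the graded-level normalization, which is routine.
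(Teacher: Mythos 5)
Your proposal is correct and follows essentially the same route as the paper: the paper's proof of Corollary \ref{cor:Lie} consists precisely of invoking Proposition \ref{prop:group-like} together with the identifications $\MLie(F) = \Prim(\widehat{\Q}[F])$, $\Liehat(H) = \Prim(\Tenshat(H))$ in one direction and $\widehat{\Q}[F] = \Uhat(\MLie(F))$, $\Tenshat(H) = \Uhat(\Liehat(H))$ in the other. The step you single out as the technical heart (reconstructing the complete Hopf algebra from its primitives via $\Uhat$) is exactly the fact the paper asserts and relies on, so you have simply made explicit what the paper leaves terse.
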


\begin{proof}
This follows from Proposition \ref{prop:group-like} since we have
$$
\MLie(F) = \Prim( \widehat{\Q}[F]) \quad \hbox{and} \quad
\Liehat(H) = \Prim (\Tenshat(H))
$$
and since we have, conversely,
$$
\widehat{\Q}[F] = \Uhat(\MLie(F))  \quad \hbox{and} \quad
\Tenshat(H) = \Uhat(\Liehat(H)).
$$
Here,  for $\mathfrak{g}$ a Lie algebra, $\Uhat(\mathfrak{g})$ denotes the $J$-adic completion 
of the universal enveloping algebra  of $\mathfrak{g}$, 
where $J$ is the ideal of  $\U(\mathfrak{g})$ generated by $\mathfrak{g}$.
\end{proof}

Group-like expansions are useful to compute the Malcev Lie algebra of a finitely generated group. 
Indeed, we have the following statement. 

\begin{theorem}
\label{th:finitely_presented_group}
Let $R$ be a normal  subgroup of $F$ and let $\theta$ be a group-like expansion of $F$.
We denote by $\langle \langle \log \theta(R) \rangle \rangle$
the closed ideal of $\Liehat(H)$ generated by $\log \theta(R)$.
Then, $\theta:\MLie(F) \to \Liehat(H)$ 
induces a unique filtered Lie algebra isomorphism
$$
\theta: \MLie(F/R) \stackrel{\simeq}{\longrightarrow} \Liehat(H)/ \langle \langle \log \theta(R) \rangle \rangle.
$$
\end{theorem}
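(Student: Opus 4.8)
The plan is to work entirely at the level of complete Hopf algebras and to take primitive elements only at the very end, using the identifications $\widehat{\Q}[F] = \Uhat(\MLie(F))$ and $\Tenshat(H) = \Uhat(\Liehat(H))$ recorded in the proof of Corollary \ref{cor:Lie}, together with $\MLie(-) = \Prim(\widehat{\Q}[-])$ from the Definition of the Malcev Lie algebra. Let $p\colon F \to F/R$ be the projection and let $J \subset \Q[F]$ be the two-sided ideal generated by $\{\gp{r}-1 : \gp{r}\in R\}$, so that $\Q[F]/J = \Q[F/R]$ as Hopf algebras. The computation $\widehat{\Delta}(\gp{r}-1) = (\gp{r}-1)\otimes 1 + \gp{r}\otimes(\gp{r}-1)$ shows that $J$ is a Hopf ideal, and $\varepsilon(\gp{r}-1)=0$, $S(\gp{r}-1)=\gp{r}^{-1}-1 \in J$ confirm this.

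First I would identify the completed group algebra of the quotient. Since $p$ is surjective and respects augmentations, $p(I_F)=I_{F/R}$ and hence $p(I_F^{\,k})=I_{F/R}^{\,k}$, giving $\Q[F/R]/I_{F/R}^{\,k} = \Q[F]/(J + I_F^{\,k})$ for all $k$. The short exact sequences $0 \to (I_F^{\,k}+J)/I_F^{\,k} \to \Q[F]/I_F^{\,k} \to \Q[F]/(J+I_F^{\,k}) \to 0$ form an inverse system whose left-hand terms have surjective transition maps, hence satisfy the Mittag--Leffler condition; passing to the limit keeps the sequence exact. Therefore one obtains a canonical isomorphism of complete Hopf algebras
\[
\widehat{\Q}[F/R] \cong \widehat{\Q}[F]/\overline{J}, \qquad \overline{J} = \varprojlim_k (J+I_F^{\,k})/I_F^{\,k},
\]
where $\overline{J}$ is the closure in $\widehat{\Q}[F]$ of the closed two-sided ideal generated by $\{\gp{r}-1\}$.

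Next I would transport this along the Hopf algebra isomorphism $\theta\colon \widehat{\Q}[F] \to \Tenshat(H)$, yielding $\widehat{\Q}[F/R] \cong \Tenshat(H)/\overline{K}$, where $\overline{K}$ is the closed two-sided ideal generated by $\{\theta(\gp{r})-1 : \gp{r}\in R\}$. Because $\theta$ is group-like, each $\theta(\gp{r})$ is group-like, so $\theta(\gp{r}) = \exp(x_{\gp{r}})$ with $x_{\gp{r}} := \log\theta(\gp{r}) \in \Liehat(H)$ primitive; the factorization $\exp(x_{\gp{r}})-1 = x_{\gp{r}}\cdot(1 + x_{\gp{r}}/2 + \cdots)$ exhibits $\theta(\gp{r})-1$ and $x_{\gp{r}}$ as associates up to a unit, so $\overline{K}$ is equally the closed two-sided ideal generated by $\{x_{\gp{r}}\}$. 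Since the $x_{\gp{r}}$ are primitive and $[a,b]=ab-ba$, this associative ideal coincides with the closed two-sided ideal generated by the closed Lie ideal $\mathfrak{a} := \langle\langle \log\theta(R)\rangle\rangle$; as $\mathfrak{a}$ is a Lie ideal one has $\mathfrak{a}\,\Tenshat(H) = \Tenshat(H)\,\mathfrak{a}$, so $\overline{K} = \overline{\Tenshat(H)\cdot\mathfrak{a}}$.

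Finally, using $\Tenshat(H)=\Uhat(\Liehat(H))$ and the completed analogue of the standard isomorphism $\U(\mathfrak{g})/\U(\mathfrak{g})\mathfrak{a} \cong \U(\mathfrak{g}/\mathfrak{a})$ for a Lie ideal $\mathfrak{a}$, I would identify $\Tenshat(H)/\overline{K} \cong \Uhat(\Liehat(H)/\mathfrak{a})$ as complete Hopf algebras; here it is essential that $\mathfrak{a}$ is closed, so that $\Liehat(H)/\mathfrak{a}$ is again a complete filtered Lie algebra. Taking primitive elements and invoking $\Prim(\Uhat(\mathfrak{l})) = \mathfrak{l}$ for such $\mathfrak{l}$ (as used for $\Liehat(H)$ itself in Corollary \ref{cor:Lie}) gives the desired filtered Lie algebra isomorphism $\theta\colon \MLie(F/R) \stackrel{\simeq}{\longrightarrow} \Liehat(H)/\langle\langle\log\theta(R)\rangle\rangle$; it respects filtrations because every identification above is filtered, and it is the unique map induced by $\theta$ since $\MLie(F)\to\MLie(F/R)$ is surjective (the projection $\Liehat(H)\to\Liehat(H)/\mathfrak{a}$ being surjective on primitives) while $\theta\colon\MLie(F)\to\Liehat(H)$ is an isomorphism. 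The hard part will be the careful bookkeeping of completions: establishing the exactness giving $\widehat{\Q}[F/R]\cong\widehat{\Q}[F]/\overline{J}$, and verifying the completed enveloping-algebra and Milnor--Moore identifications, where one must check that closures of ideals behave as expected and that $\Prim$ commutes with the relevant quotient.
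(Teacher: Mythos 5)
Your proof is correct, but it follows a genuinely different route from the paper's. The paper stays entirely at the Lie algebra level: it uses functoriality of $\MLie$ to produce a filtered map $p\colon \MLie(F)/\langle\langle\log(R)\rangle\rangle \to \MLie(F/R)$, shows that $\Gr p$ is an isomorphism by comparing both sides with Quillen's identification $\Gr G\otimes\Q\simeq\Gr\MLie(G)$ (Theorem \ref{th:Quillen}) applied to $G=F$ and $G=F/R$, and then invokes completeness; the desired map is $\overline{\theta}\circ p^{-1}$. You instead work one level up, in the category of complete Hopf algebras: you present $\widehat{\Q}[F/R]$ as $\widehat{\Q}[F]/\overline{J}$ via a Mittag--Leffler argument, transport through the Hopf isomorphism $\theta$, trade the generators $\theta(\gp{r})-1$ for their logarithms, identify the quotient with $\Uhat(\Liehat(H)/\mathfrak{a})$ by completed Poincar\'e--Birkhoff--Witt, and finally take primitives using Quillen's equivalence $\Prim\,\Uhat(\mathfrak{l})=\mathfrak{l}$. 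Each step checks out: the transition maps on the kernels $(J+I_F^k)/I_F^k$ are surjective, the unit factorization $\exp(x)-1=x(1+x/2+\cdots)$ does show the two generating sets give the same closed ideal, and the passage from the associative ideal generated by $\{x_{\gp{r}}\}$ to the one generated by the Lie ideal $\mathfrak{a}$ is the standard fact that $[\mathfrak{g},\cdot]$ preserves associative ideals. What the paper's argument buys is brevity and minimal machinery -- it never has to manipulate closures of ideals in completed associative algebras, only the associated graded of Lie algebras. What your argument buys is a stronger intermediate result: an explicit presentation $\widehat{\Q}[F/R]\cong\Tenshat(H)/\overline{K}$ of the completed group algebra itself, from which the Lie statement falls out by taking primitives; the price is the completion bookkeeping you flag (exactness of the inverse limit, the completed enveloping-algebra isomorphism, and $\Prim$ of a quotient), all of which is standard but must be done carefully, essentially by reducing to the finite nilpotent quotients $\mathfrak{g}/F_n\mathfrak{g}$ where uncompleted PBW applies.
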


\noindent
In particular, if $R$ is normally generated by some $\gp{r}_1,\dots,\gp{r}_s$,
we can see from the Baker--Campbell--Hausdorff formula that
$$
\langle \langle \log \theta(R) \rangle \rangle = 
\langle \langle \log\theta(\gp{r}_1),\dots,\log \theta(\gp{r}_s)  \rangle \rangle.
$$
Thus, Theorem \ref{th:finitely_presented_group} 
is a recipe to compute the Malcev Lie algebra of a finitely presented group.
This is well-known for the group-like  expansion $\theta$ relative to a basis of $F$ \cite{Papadima}.

\begin{proof}[Proof of Theorem \ref{th:finitely_presented_group}]
The canonical projection $F \to F/R$ induces a filtered Lie algebra homomorphism
$\MLie(F) \to \MLie(F/R)$ which, obviously, vanishes on $\log(\gp{r})$ for all $\gp{r}\in R$.
So, we have a filtered Lie algebra map
$$
p:\MLie(F)/ \langle \langle \log(R) \rangle \rangle \longrightarrow \MLie(F/R).
$$
\begin{claim}
\label{claim:p}
$p$ is an isomorphism at the graded level.
\end{claim}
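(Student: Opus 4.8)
The plan is to compute the two associated graded Lie algebras and compare their kernels, the whole point being to \emph{avoid} having to describe the associated graded of the closed ideal $\langle\langle \log(R)\rangle\rangle$ by generators. Observe first that $p$ factors the morphism $\MLie(F)\to\MLie(F/R)$ induced by the projection $q:F\to F/R$, so that at the graded level one has a factorisation
$$
\Lie(H) = \Gr\MLie(F)\ \stackrel{\pi}{\twoheadrightarrow}\ \Gr\big(\MLie(F)/\langle\langle \log(R)\rangle\rangle\big)\ \stackrel{\Gr p}{\longrightarrow}\ \Gr\MLie(F/R),
$$
where the left identification is the one of \S\ref{subsec:Malcev} (Quillen's Theorem \ref{th:Quillen}), and $\pi$ is surjective with kernel $A := \Gr\langle\langle \log(R)\rangle\rangle$ (the image of the filtration induced on the ideal). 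Since $q$ is onto, $\Gr p\circ\pi$ is surjective, hence so is $\Gr p$; it thus remains to prove that $\Gr p$ is injective, i.e. that $A = \Ker(\Gr p\circ\pi)$. The inclusion $A\subseteq \Ker(\Gr p\circ\pi)$ is clear since $\Gr p\circ\pi$ factors through $\pi$, so only the reverse inclusion is at stake.

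I would first identify $K := \Ker(\Gr p\circ\pi)$ with the $\Q$-span of the leading terms of the elements of $R$. By naturality of Quillen's isomorphism, $\Gr p\circ\pi$ is identified with $(\Gr q)\otimes\Q : \Gr F\otimes\Q\to \Gr(F/R)\otimes\Q$. A direct computation of the lower central series of the quotient group gives, for every $n\geq 1$,
$$
\Gr_n(F/R)\ \cong\ \Gr_n F\big/\Img\big(\Gamma_n F\cap R\to \Gr_n F\big),
$$
the isomorphism $\Gamma_n(F/R)/\Gamma_{n+1}(F/R)\cong \Gamma_n F/\big(\Gamma_{n+1}F\,(\Gamma_n F\cap R)\big)$ being a consequence of Dedekind's modular law applied to $\Gamma_{n+1}F\subseteq\Gamma_n F$ and $R$. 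Since $\otimes\Q$ is exact, it follows that $K_n = \Img(\Gamma_n F\cap R\to\Gr_n F)\otimes\Q$, which is exactly the $\Q$-span of the leading terms $\operatorname{in}(\gp{r})\in\Lie_n(H)$ of those $\gp{r}\in R$ that lie in $\Gamma_n F\setminus\Gamma_{n+1}F$. Hence $K$ is spanned by $\{\operatorname{in}(\gp{r}):\gp{r}\in R\}$.

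It then suffices to check that each leading term $\operatorname{in}(\gp{r})$ already belongs to $A$, for this yields $K\subseteq A$ and therefore $A=K$, completing the proof. But for $\gp{r}\in R$ the element $\log(\gp{r})=\log\iota(\gp{r})$ is a generator of $\langle\langle\log(R)\rangle\rangle$, so its leading term lies in $A = \Gr\langle\langle\log(R)\rangle\rangle$ by the very definition of the induced filtration on the ideal; and Quillen's Theorem \ref{th:Quillen} says precisely that $\log\iota:F\to\MLie(F)$ induces the identification $\Gr F\otimes\Q\cong\Gr\MLie(F)=\Lie(H)$, whence $\operatorname{in}(\log(\gp{r}))=\operatorname{in}(\gp{r})$ in $\Lie(H)$. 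Thus every spanning vector of $K$ lies in $A$, as required.

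The delicate point to get right is conceptual rather than computational: one must resist trying to prove that $A$ is \emph{generated} as an ideal by the leading terms of the $\log(\gp{r})$ --- a ``Gröbner-type'' statement that need not hold for closed ideals, since cancellation among infinitely many generators can create elements of strictly smaller filtration degree. The argument circumvents this entirely by obtaining $K$ not as an ideal but as an explicit \emph{linear span} of leading terms, so that the trivial containment ``leading term of a generator lies in the graded ideal'' is all one needs. A secondary, routine matter is to carry out the lower central series computation with rational coefficients throughout, so that Quillen's Theorem \ref{th:Quillen} applies verbatim on both $F$ and $F/R$.
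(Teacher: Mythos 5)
Your argument is correct, and it takes a route that is organized differently from the paper's, although both rest on the same inputs. The paper proves the claim by constructing a diagonal map $\Gr(F/R)\otimes\Q\to\Gr\big(\MLie(F)/\langle \langle \log(R) \rangle \rangle\big)$, $\{\gp{x}\}\otimes 1\mapsto\{\log(\gp{x})\}$, fitting into a commutative diagram with the two vertical isomorphisms of Theorem \ref{th:Quillen}: the left triangle makes the diagonal surjective, the right triangle makes it injective, and $\Gr p$ is then forced to be an isomorphism. The entire content of that argument is concentrated in the well-definedness of the diagonal map (a Baker--Campbell--Hausdorff-type verification that the paper leaves implicit). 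You instead compare kernels: you identify $\Ker(\Gr p\circ\pi)$ (your projection $\pi$) with the $\Q$-span of the leading terms of elements of $R$, via the Dedekind modular law applied to the lower central series of $F/R$ together with the exactness of $-\otimes\Q$, and then check that each such leading term lies in $\Gr\langle \langle \log(R) \rangle \rangle$ because $\operatorname{in}(\log(\gp{r}))=\operatorname{in}(\gp{r})$ by Theorem \ref{th:Quillen}. This is essentially the same mathematics --- the inclusion of kernels you prove is equivalent to the well-definedness of the paper's diagonal map --- but your version makes explicit the computation of $\Gr(F/R)$ that the paper absorbs into the identification $\Gr(F/R)\otimes\Q=\bigoplus_n\left(\Gamma_n F\cdot R/\Gamma_{n+1}F\cdot R\right)\otimes\Q$, and your closing remark correctly isolates the point both arguments are designed to sidestep, namely any attempt to describe $\Gr\langle \langle \log(R) \rangle \rangle$ by generators. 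The paper's version is shorter on the page; yours reduces every step to a routine verification once the kernel is exhibited as a linear span rather than as an ideal.
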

\noindent
By completeness, it follows that $p$ is an isomorphism.
Besides, the filtered Lie algebra isomorphism $\theta: \MLie(F) \to \Liehat(H)$
given by Corollary \ref{cor:Lie} induces
$$
\overline{\theta}: \MLie(F)/ \langle \langle \log(R) \rangle \rangle
\longrightarrow \Liehat(H)/ \langle \langle \log \theta(R) \rangle \rangle.
$$
Then, the composition $\overline{\theta} \circ p^{-1}: \MLie(F/R) \to 
\Liehat(H)/ \langle \langle \log \theta(R) \rangle \rangle $
is our filtered Lie algebra isomorphism $\theta$.

To prove Claim \ref{claim:p}, we look at the following diagram:
$$
\xymatrix{
{\Gr \MLie(F)} \ar@{->>}[r] & 
{\Gr\left(\MLie(F)/\langle \langle \log(R) \rangle \rangle\right)} \ar[r]^-{\Gr p} & 
{\Gr \MLie(F/R)}\\
{\Gr F \otimes \Q} \ar@{->>}[rr] \ar[u]^-\simeq & & 
{\Gr(F/R) \otimes \Q.} \ar[u]_-\simeq \ar[lu]
}
$$
Here, the vertical maps are given by Theorem \ref{th:Quillen}, 
the horizontal maps are induced by canonical projections and the diagonal map
$$
\bigoplus_{n\geq 1} \frac{\Gamma_n F \cdot R}{\Gamma_{n+1} F \cdot R} \otimes \Q 
= \Gr \frac{F}{R} \otimes \Q \longrightarrow 
\Gr\frac{\MLie(F)}{\langle \langle \log(R) \rangle \rangle}=
\bigoplus_{n\geq 1}\frac{\widehat{\Gamma}_{n} \MLie(F) + \langle \langle \log(R) \rangle \rangle}
{\widehat{\Gamma}_{n+1} \MLie(F) + \langle \langle \log(R) \rangle \rangle}
$$
is defined by $\{\gp{x}\} \otimes 1 \mapsto \{\log(\gp{x})\}$ 
for all $\gp{x} \in \Gamma_n F \cdot R$. 
We deduce from that diagram that $\Gr p$ is an isomorphism.
\end{proof}

We can deduce a nilpotent version of Corollary \ref{cor:Lie}.

\begin{corollary}
\label{cor:Lie_nilpotent}
Let $m\geq 1$ and let $\theta$ be a group-like expansion of $F$.
Then, the isomorphism $\theta: \MLie(F) \to \Liehat(H)$ 
induces a unique filtered Lie algebra isomorphism
$$
\theta: \MLie(F/\Gamma_{m+1} F) \stackrel{\simeq}{\longrightarrow} \Lie(H)/\Lie_{\geq m+1}(H).
$$ 
\end{corollary}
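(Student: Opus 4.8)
The plan is to deduce this corollary from Theorem \ref{th:finitely_presented_group} applied to the normal subgroup $R:=\Gamma_{m+1} F$, and then to identify the resulting quotient of $\Liehat(H)$ with $\Lie(H)/\Lie_{\geq m+1}(H)$. Since $F/\Gamma_{m+1}F$ is literally $F/R$, Theorem \ref{th:finitely_presented_group} immediately furnishes a unique filtered Lie algebra isomorphism
$$
\theta: \MLie(F/\Gamma_{m+1} F) \stackrel{\simeq}{\longrightarrow} \Liehat(H)/\langle \langle \log \theta(\Gamma_{m+1} F) \rangle \rangle ,
$$
and the uniqueness clause we need is inherited directly from that theorem. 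Thus everything reduces to proving the equality of closed ideals
$$
\langle \langle \log \theta(\Gamma_{m+1} F) \rangle \rangle = \Liehat_{\geq m+1}(H),
$$
after which the evident identification $\Liehat(H)/\Liehat_{\geq m+1}(H) = \Lie(H)/\Lie_{\geq m+1}(H)$ finishes the argument.

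For the inclusion $\subseteq$, I would argue by filtrations. By Theorem \ref{th:Quillen} the map $\log \iota: F \to \MLie(F)$ preserves filtrations, $F$ being filtered by its lower central series, so $\log \iota(\Gamma_{m+1} F) \subseteq \widehat{\Gamma}_{m+1} \MLie(F)$. By Corollary \ref{cor:Lie} the isomorphism $\theta: \MLie(F) \to \Liehat(H)$ is filtered and the identity at the graded level, hence it carries $\widehat{\Gamma}_{m+1} \MLie(F)$ bijectively onto the degree $\geq m+1$ part $\Liehat_{\geq m+1}(H)$. Therefore $\log \theta(\Gamma_{m+1} F) \subseteq \Liehat_{\geq m+1}(H)$, and since the latter is a closed ideal, so is the closed ideal it generates contained in it.

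For the reverse inclusion $\supseteq$, the key point is to pass to associated graded Lie algebras. Writing $\mathfrak{a} := \langle \langle \log \theta(\Gamma_{m+1} F) \rangle \rangle$, I would first observe that the degree-$(m+1)$ leading symbols of the elements $\log \theta(\gp{r})$, for $\gp{r} \in \Gamma_{m+1} F$, span all of $\Lie_{m+1}(H)$: the projection $\Gamma_{m+1} F \to \Gamma_{m+1} F/\Gamma_{m+2} F$ is surjective and, via Theorem \ref{th:Quillen} together with the identification $\Gr \MLie(F) = \Lie(H)$, it is exactly the leading-term map onto $\Gr_{m+1} \MLie(F) = \Lie_{m+1}(H)$, which $\theta$ preserves. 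Consequently $\Gr \mathfrak{a}$ is an ideal of $\Gr \Liehat(H) = \Lie(H)$ containing $\Lie_{m+1}(H)$. Since in the free Lie algebra one has $\Lie_n(H) = [H, \Lie_{n-1}(H)]$ for every $n \geq 2$, the ideal generated by $\Lie_{m+1}(H)$ is precisely $\Lie_{\geq m+1}(H)$; hence $\Gr \mathfrak{a} = \Lie_{\geq m+1}(H) = \Gr \Liehat_{\geq m+1}(H)$. As $\mathfrak{a}$ and $\Liehat_{\geq m+1}(H)$ are both closed with $\mathfrak{a} \subseteq \Liehat_{\geq m+1}(H)$ and equal associated graded, completeness forces $\mathfrak{a} = \Liehat_{\geq m+1}(H)$.

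I expect the only delicate step to be this reverse inclusion, i.e. controlling the closed ideal $\mathfrak{a}$ through its associated graded. The points to check with care are that $\Gr$ of a closed ideal is again an ideal of the graded Lie algebra, that the leading symbols of a generating family indeed lie in $\Gr \mathfrak{a}$, and the standard completeness lemma that a closed subspace with full associated graded inside another closed subspace must coincide with it. As a streamlining, one may replace $\Gamma_{m+1} F$ by the finitely many left-normed weight-$(m+1)$ commutators in a basis of $F$ that normally generate it, invoking the remark following Theorem \ref{th:finitely_presented_group}; this reduces the bookkeeping but leaves the graded argument unchanged, since one still only needs that the corresponding Lie brackets span $\Lie_{m+1}(H)$.
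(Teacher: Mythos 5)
Your proposal is correct and follows essentially the same route as the paper: both apply Theorem \ref{th:finitely_presented_group} to $R=\Gamma_{m+1}F$ and reduce everything to the identity $\langle\langle \log\theta(\Gamma_{m+1}F)\rangle\rangle = \Liehat_{\geq m+1}(H)$. The paper establishes that identity by the Baker--Campbell--Hausdorff congruence $\log([\gp{b}_{i_1},[\dots,\gp{b}_{i_r}]\cdots]) \equiv [b_{i_1},[\dots,b_{i_r}]\cdots] \bmod \widehat{\Gamma}_{r+1}\MLie(F)$ applied to basic commutators, which is exactly the statement that the leading symbols of your generators are the corresponding Lie brackets; your associated-graded-plus-completeness argument packages the same computation slightly differently and is sound.
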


\begin{proof}
Let $(\gp{b}_1,\dots,\gp{b}_n)$ be a basis of $F$. By Example \ref{ex:basis}, 
$\MLie(F)$ is the complete free Lie algebra generated by $({b}_1,\dots,{b}_n)$ where $b_i:= \log(\gp{b}_i)$.
The Baker--Campbell--Hausdorff formula  implies that, for all $r\geq 1$ and
for all $i_1,\dots,i_r \in \{1,\dots,n\}$, 
$$
\log\left([\gp{b}_{i_1}, [\gp{b}_{i_2},[ \dots, \gp{b}_{i_r}] \cdots ]] \right) \equiv
[b_{i_1}, [b_{i_2},[ \dots, b_{i_r}] \cdots ]]
\mod \widehat{\Gamma}_{r+1} \MLie(F).
$$
We deduce that 
$\langle \langle \log(\Gamma_{m+1}F) \rangle\rangle = \widehat{\Gamma}_{m+1} \MLie(F)$
and we conclude by applying Theorem \ref{th:finitely_presented_group} to $R=\Gamma_{m+1}F$.
\end{proof}

\subsection{Symplectic expansions}

Let $\Sigma$ be a compact connected oriented surface of genus $g$ with one boundary component.
The fundamental group of $\Sigma$ relative to a point $* \in \partial \Sigma$
$$
\pi := \pi_1(\Sigma,*)
$$
is a free group of rank $2g$. The oriented boundary curve defines a special element 
$$
\zeta \in \pi.
$$
The first homology group of $\Sigma$ with rational coefficients
$$
H:=H_1(\Sigma)
$$
is a vector space of dimension $2g$. The intersection pairing of $\Sigma$ defines
a symplectic form $\omega$ on $H$ and, so, it gives a duality
$H \stackrel{\simeq}{\to} H^*$ defined by $h \mapsto \omega(h,-)$.
The bivector dual to $\omega\in \Lambda^2H^*$ is still denoted by
$$
\omega \in \Lambda^2 H \simeq \Lie_2(H).
$$

Every system of meridians and parallels $(\alpha_1, \dots, \alpha_g, \beta_1,\dots,\beta_g)$
on the surface $\Sigma$, as shown on Figure \ref{fig:surface}, 
defines a basis $(\gp{a}_1,\dots, \gp{a}_g,\gp{b}_1, \dots,\gp{b}_g)$ of $\pi$,
as well as a basis $(a_1,\dots,a_g,b_1,\dots,b_g)$ of $H$. 
In terms of these basis, $\zeta^{-1}$ and $\omega$ write
$$
\zeta^{-1} = \prod_{i=1}^g \left[\gp{b}_i^{-1},\gp{a}_i\right] \ \in \pi
\quad \quad \hbox{and} \quad \quad 
\omega = \sum_{i=1}^g [a_i, b_i]  \in \Lie_2(H).
$$

\begin{figure}[h]
\begin{center}
{\labellist \small \hair 0pt 
\pinlabel {\Large $*$} at 567 3
\pinlabel {$\zeta$} [l] at 645 100
\pinlabel {$\alpha_1$} [l] at 310 105
\pinlabel {$\alpha_g$} [l] at 568 65
\pinlabel {$\beta_1$} [b] at 216 80
\pinlabel {$\beta_g$} [b] at 472 69
\pinlabel {$+$} at 614 141
\endlabellist}
\includegraphics[scale=0.5]{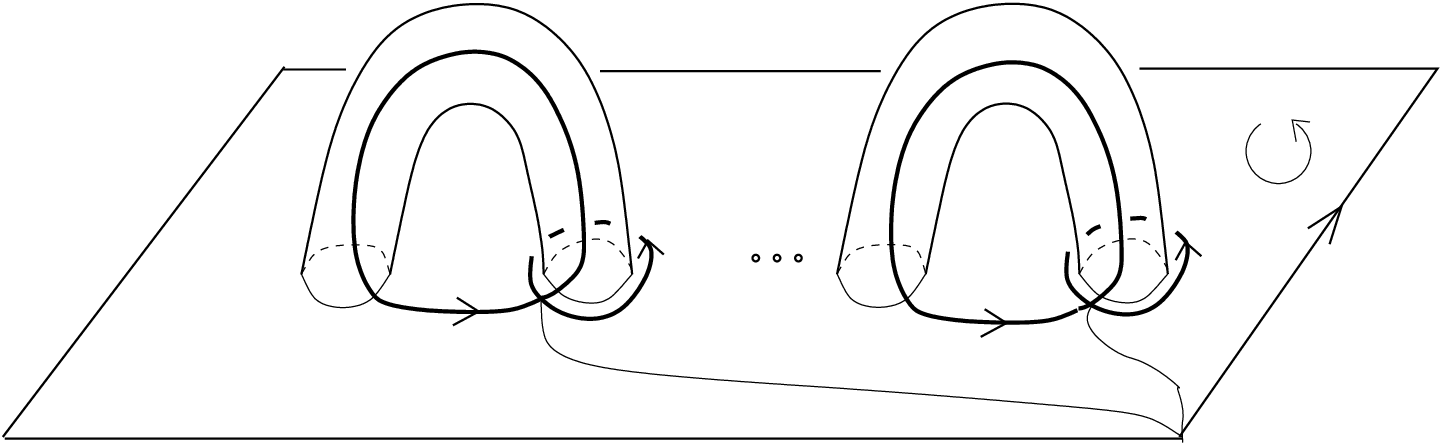}
\end{center}
\caption{The surface $\Sigma_{g,1}$ and a system of meridians and parallels $(\alpha,\beta)$.}
\label{fig:surface}
\end{figure}
 
\begin{definition}
An expansion $\theta: \pi \to \Tenshat(H)$ is \emph{symplectic} if it is group-like and if it sends
$\zeta^{-1}$ to $\exp(\omega)$.
\end{definition}

Bene, Kawazumi and Penner show in \cite{BKP} how to build a group-like expansion of  $\pi$
from any ``fatgraph presentation'' of the surface $\Sigma$, but this kind of expansion does not seem to be symplectic \cite[\S 6]{BKP}.
The group-like expansion $\theta_{(\gp{a},\gp{b})}$ relative to the basis $(\gp{a},\gp{b})$ of $\pi$ 
(see Example \ref{ex:basis}) is not either. Nevertheless, $\theta_{(\gp{a},\gp{b})}$ can be ``deformed'' to a symplectic expansion as the next proof shows.

\begin{lemma}
\label{lem:existence}
Symplectic expansions do exist.
\end{lemma}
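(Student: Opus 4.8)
The plan is to deform the group-like expansion $\theta_0 := \theta_{(\gp{a},\gp{b})}$ relative to the basis $(\gp{a},\gp{b})$ until it sends $\zeta$ to $\exp(-\omega)$. By Corollary \ref{cor:Lie}, a group-like expansion of $\pi$ is the same datum as a filtered Lie algebra isomorphism $\MLie(\pi) \to \Liehat(H)$ inducing the identity at the graded level, and left-composing such an isomorphism with a filtered Lie algebra automorphism $\psi$ of $\Liehat(H)$ that is the identity at the graded level again produces a group-like expansion. Writing $z_0 := \log\theta_0(\zeta) \in \Liehat(H)$, the group-like expansion attached to $\psi \circ \theta_0$ sends $\zeta$ to $\exp\big(\psi(z_0)\big)$. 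Hence it suffices to construct a filtered Lie algebra automorphism $\psi$ of $\Liehat(H)$, the identity at the graded level, with $\psi(z_0) = -\omega$.

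First I would record the low-degree behaviour of $z_0$. Since $\zeta$ is a product of commutators, its degree-$1$ part in $H$ vanishes; and the degree-$2$ part of $z_0$ depends only on the degree-$1$ parts of the images of the generators, so it equals $\sum_i [a_i,b_i] = \omega$ up to sign — a direct computation from $\zeta^{-1} = \prod_i [\gp{b}_i^{-1},\gp{a}_i]$ gives exactly $-\omega$, and this holds for \emph{every} group-like expansion. Thus $z_0 = -\omega + (\deg \geq 3)$, and only the terms of degree $\geq 3$ have to be corrected.

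The correction is carried out degree by degree. Suppose that, after finitely many steps, we have reached an element of the form $z = -\omega + u + (\deg > n)$ with $n \geq 3$ and $u \in \Lie_n(H)$. I would kill $u$ by post-composing with $\psi_n := \exp(D)$, where $D$ is a derivation of $\Liehat(H)$ of degree $n-2$. Such a $D$ is freely determined by its values on the generators, and $D \mapsto D(\omega) = \sum_i\big([Da_i,b_i] + [a_i,Db_i]\big)$ identifies, under the symplectic duality, with the bracket map $H \otimes \Lie_{n-1}(H) \to \Lie_n(H)$. Because the free Lie algebra satisfies $\Lie_n(H) = [H,\Lie_{n-1}(H)]$, this map is surjective, so one can choose $D$ with $D(\omega) = u$. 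Since $D$ raises degrees by $n-2 \geq 1$, the automorphism $\psi_n$ is the identity at the graded level, it fixes every term of $z$ of degree $< n$, and its effect in degree $n$ is $u + D(-\omega) = u - u = 0$, so that $\psi_n(z) = -\omega + (\deg > n)$. Iterating from $z_0$ and passing to the infinite composite $\psi := \cdots \psi_n \cdots \psi_4\,\psi_3$ then yields $\psi(z_0) = -\omega$, and $\theta := \psi \circ \theta_0$ is the desired symplectic expansion.

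The individual computations here are routine; the one genuine input is the surjectivity of the bracket map $H \otimes \Lie_{n-1}(H) \to \Lie_n(H)$, equivalently the elementary identity $\Lie_n(H) = [H,\Lie_{n-1}(H)]$ for a free Lie algebra. The only point that needs a word of care is the well-definedness of the infinite composite $\psi$: for each fixed degree $d$, the factor $\psi_m$ raises degrees by $m-2$, hence acts as the identity on the truncation to degrees $\leq d$ as soon as $m > d+1$, so only finitely many factors act nontrivially in each degree and the composite converges in the filtration topology of $\Liehat(H)$.
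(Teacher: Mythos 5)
Your proof is correct and follows essentially the same route as the paper: both deform the group-like expansion $\theta_{(\gp{a},\gp{b})}$ by a filtered automorphism of $\Liehat(H)$ inducing the identity on the graded level, built degree by degree from the single algebraic input that every element of $\Lie_n(H)$ can be written as $\sum_i\bigl([a_i,v_i]+[u_i,b_i]\bigr)$ (surjectivity of the bracket map). The only difference is packaging — you compose exponentials of derivations $\exp(D)$ correcting one degree at a time, whereas the paper inductively rewrites $\log\theta_{(\gp{a},\gp{b})}(\zeta^{-1})$ as $\sum_i[a_i+\sum_j u_i^{(j)},\,b_i+\sum_j v_i^{(j)}]$ and then defines $\psi$ on the generators in one stroke.
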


\begin{proof}
By Corollary \ref{cor:Lie}, proving the existence of symplectic expansions is equivalent 
to proving the existence of a filtration-preserving isomorphism 
$\theta: \MLie(\pi) \to \Liehat(H)$ which induces the identity at the graded level 
and satisfies $\theta(\log(\zeta^{-1}))=  \omega$.
The isomorphism $\theta_{(\gp{a},\gp{b})}: \MLie(\pi) \to \Liehat(H)$ 
induced by the expansion $\theta_{(\gp{a},\gp{b})}$ of $\pi$ satisfies
$$
\theta_{(\gp{a},\gp{b})}(\log(\zeta^{-1})) = 
\log(\theta_{(\gp{a},\gp{b})}(\zeta^{-1})) = \widetilde{\omega}
$$
where we set
$$
\widetilde{\omega} := 
\log\left(\prod_{i=1}^g \exp(-b_i)  \otimes \exp(a_i) \otimes \exp(b_i) \otimes \exp(-a_i)  \right)
\ \in \Liehat(H).
$$
So, it is enough to show that there exists a filtration-preserving Lie algebra automorphism 
$\psi: \Liehat(H) \to \Liehat(H)$ which is the identity at the graded level and sends $\omega$ to $\widetilde{\omega}$:
then, the Lie algebra isomorphism $\theta:= \psi^{-1} \circ \theta_{(\gp{a},\gp{b})}$ will have the desired properties.

\begin{claim}
\label{claim:Lie_words}
Let $n\geq 1$. For all $i=1,\dots,g$ and for all $j=2,\dots,n$, there exist
some $u_i^{(j)},v_i^{(j)} \in \Lie_{j}$ such that
$$
\widetilde{\omega} \equiv 
 \sum_{i=1}^g \left[ a_i + \sum_{j=2}^n u_i^{(j)}, b_i + \sum_{j=2}^n v_i^{(j)}\right]
\mod \Liehat_{\geq n+2}.
$$
\end{claim}

This statement is proved by induction on $n\geq 1$. For $n=1$, Claim \ref{claim:Lie_words} holds because
the Baker--Campbell--Hausdorff formula shows that
$$
\widetilde{\omega} \equiv \sum_{i=1}^g [a_i,b_i] \mod \Liehat_{\geq 3}.
$$
If Claim \ref{claim:Lie_words} holds at step $n$, then the Lie series
$$
d:= \widetilde{\omega} - \sum_{i=1}^g \left[ a_i + \sum_{j=2}^n u_i^{(j)}, b_i + \sum_{j=2}^n v_i^{(j)}\right] 
$$
starts in degree $n+2$. Using the Jacobi identity, we can write the degree $n+2$ part of  the series $d$ as
$$
\sum_{i=1}^g \left( \left[a_i,v_i^{(n+1)}\right] + \left[u_i^{(n+1)},b_i\right] \right) \in \Lie_{n+2}
$$
for some $u_i^{(n+1)}, v_i^{(n+1)} \in \Lie_{n+1}$. We then have
\begin{eqnarray*}
&& \widetilde{\omega} - \sum_{i=1}^g \left[ a_i + \sum_{j=2}^{n+1} u_i^{(j)}, b_i + \sum_{j=2}^{n+1} v_i^{(j)}\right]\\
&\equiv & d - \sum_{i=1}^g \left( \left[a_i,v_i^{(n+1)}\right] + \left[u_i^{(n+1)},b_i\right] \right) 
\quad \equiv\quad  0   \mod \Liehat_{\geq n+3},
\end{eqnarray*}
which proves Claim \ref{claim:Lie_words} at step $n+1$.

Of course, those Lie words $u_i^{(j)},v_i^{(j)}$ are not unique 
but, as the above induction shows, we can choose those words at step $n+1$ 
in a way compatible with those chosen at step $n$. 
Then, we define a filtration-preserving Lie algebra endomorphism $\psi: \Liehat(H) \to \Liehat(H)$ 
by the formulas
$$
\psi(a_i) := a_i + \sum_{j\geq 2} u_i^{(j)} 
\quad \hbox{and} \quad
\psi(b_i) := b_i + \sum_{j\geq 2} v_i^{(j)}. 
$$
Clearly, $\psi$ induces the identity at the graded level and so, by completeness, $\psi$ is an isomorphism. 
Moreover, it satisfies 
$$
\psi(\omega) = \sum_{i=1}^g \left[\psi(a_i),\psi(b_i)\right] \equiv
\sum_{i=1}^g \left[ a_i + \sum_{j=2}^n u_i^{(j)}, b_i + \sum_{j=2}^n v_i^{(j)}\right]
\equiv \widetilde{\omega} \mod \Liehat_{\geq n+2}
$$
for all $n\geq 1$, so that we have $\psi(\omega)= \widetilde{\omega}$.
\end{proof}

If one allows coefficients to be in $\R$ rather than in $\Q$, 
then the ``harmonic expansions'' considered by Kawazumi in \cite{Kawazumi_harmonic} are symplectic.
Symplectic expansions with real coefficients also appear implicitely in \cite{Lin},
where the following proposition is proved using Chen's iterated integrals.
 
\begin{proposition}
Let $\wideparen{\Sigma}$ be the closed connected oriented surface of genus $g$, 
which is obtained from $\Sigma$ by gluing a $2$-disk along its boundary.
Then, any symplectic expansion $\theta$ of $\pi$ 
induces an isomorphism of filtered Lie algebras
$$
\theta: \MLie\left(\pi_1\left(\wideparen{\Sigma},*\right)\right)
 \stackrel{\simeq}{\longrightarrow} \Liehat(H)/ {\langle \langle \omega \rangle \rangle}
$$
where ${\langle \langle \omega \rangle \rangle}$ denotes the closed ideal of $\Liehat(H)$ generated by $\omega$.
\end{proposition}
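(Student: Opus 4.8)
The plan is to realize $\pi_1(S,*)$ as an explicit quotient of the free group $\pi$ and then to feed this quotient into Theorem~\ref{th:finitely_presented_group}, which already packages the passage from a group presentation to the corresponding Malcev Lie algebra.

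First I would identify $\pi_1(S,*)$ topologically. Gluing a $2$-disk along $\partial\Sigma$ amounts to attaching a $2$-cell along the oriented boundary curve of $\Sigma$, whose based homotopy class is precisely the distinguished element $\zeta\in\pi$. The Seifert--Van Kampen theorem then yields a canonical isomorphism $\pi_1(S,*)\cong \pi/\langle\langle\zeta\rangle\rangle$, where $\langle\langle\zeta\rangle\rangle$ denotes the normal closure of $\zeta$ in $\pi$.

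Next I would apply Theorem~\ref{th:finitely_presented_group} to $F=\pi$ and the normal subgroup $R=\langle\langle\zeta\rangle\rangle$. Since a symplectic expansion is by definition group-like, the hypotheses of the theorem are met, and it produces a filtered Lie algebra isomorphism
$$
\theta:\MLie\big(\pi/\langle\langle\zeta\rangle\rangle\big)\stackrel{\simeq}{\longrightarrow}\Liehat(H)/\langle\langle\log\theta(R)\rangle\rangle.
$$
Because $R$ is normally generated by the single element $\zeta$, the Baker--Campbell--Hausdorff computation recorded just after Theorem~\ref{th:finitely_presented_group} identifies the target ideal with $\langle\langle\log\theta(\zeta)\rangle\rangle$. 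Finally, the defining property of a symplectic expansion is $\theta(\zeta)=\exp(-\omega)$, whence $\log\theta(\zeta)=-\omega$; since the closed ideal generated by $-\omega$ coincides with the one generated by $\omega$, we get $\langle\langle\log\theta(\zeta)\rangle\rangle=\langle\langle\omega\rangle\rangle$. Composing the two identifications delivers the asserted isomorphism $\theta:\MLie(\pi_1(S,*))\stackrel{\simeq}{\longrightarrow}\Liehat(H)/\langle\langle\omega\rangle\rangle$.

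I do not anticipate a serious obstacle. Essentially all the analytic content of the original argument---Lin's approach via Chen's iterated integrals over $\R$---has here been absorbed into the purely algebraic Theorem~\ref{th:finitely_presented_group}, whose proof itself rests on Quillen's identification of the associated graded (Theorem~\ref{th:Quillen}) and on completeness. The only points demanding a little care are the topological identification $\pi_1(S,*)\cong\pi/\langle\langle\zeta\rangle\rangle$ and the harmless sign discrepancy between $\log\theta(\zeta)=-\omega$ and the chosen generator $\omega$ of the ideal.
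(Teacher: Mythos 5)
Your proposal is correct and follows exactly the paper's own argument: present $\pi_1(S,*)$ as $\pi/\langle\langle\zeta\rangle\rangle$ and apply Theorem \ref{th:finitely_presented_group}, using $\log\theta(\zeta)=-\omega$ to identify the quotient ideal. You have merely spelled out the sign remark and the Seifert--Van Kampen step that the paper leaves implicit.
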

 
\begin{proof}
The fundamental group of $\wideparen{\Sigma}$ has the following presentation: 
$$
\pi_1\left(\wideparen{\Sigma},*\right) = \left\langle \gp{a}_1,\dots,\gp{a}_g,\gp{b}_1,\dots,\gp{b}_g \left| \zeta \right. \right\rangle. 
$$
So, according to Theorem \ref{th:finitely_presented_group}, 
any symplectic expansion $\theta$ of $\pi$ induces an isomorphism
$\theta: \MLie(\pi_1(\wideparen{\Sigma},*)) \to \Liehat(H)/ {\langle \langle \omega \rangle\rangle}$.
\end{proof}

It does not seem easy to describe by a closed formula an instance of a symplectic expansion.
Nevertheless, we can still figure out a symplectic expansion up to some finite low degree with the help of a computer.

\begin{example}
There is a symplectic expansion $\theta$ which, in degree $\leq 4$,  is given by
\begin{eqnarray*}
\log \theta(\gp{a_i}) & =&  a_i - \frac{1}{2}[a_i,b_i] + \frac{1}{12} [[a_i,b_i],b_i] - \frac{1}{2} \sum_{j<i}  [[a_j,b_j],a_i] \\
&&- \frac{1}{24} [a_i,[a_i,[a_i,b_i]]] + \frac{1}{4} \sum_{j<i} [[a_j,b_j],[a_i,b_i]] + (\deg \geq 5),\\[0.1cm]
\log \theta(\gp{b_i}) & =&  b_i - \frac{1}{2}[a_i,b_i] + \frac{1}{12} [a_i,[a_i,b_i]] + \frac{1}{4} [[a_i,b_i],b_i] 
 + \frac{1}{2} \sum_{j<i} [b_i,[a_j,b_j]] \\
&& -\frac{1}{24} [[[a_i,b_i],b_i],b_i]  + \frac{1}{4} \sum_{j<i} [[a_j,b_j],[a_i,b_i]] +  (\deg \geq 5).
\end{eqnarray*}
This example has been found using the computer algebra software \texttt{Axiom}.
More precisely, we have written a small program which delivers this in genus $g=3$.
To conclude that an expansion having that form in degree $\leq 4$ is symplectic up to order $5$ for any genus $g\geq1$,
it  has been enough to write another small program which checks this assertion in genus $g=5$.
(The \texttt{.input} files are available on the author's webpage.)
\end{example}

\vspace{0.5cm}

\section{The total Johnson map}

\label{sec:total_Johnson}

Let $\Sigma$ be a compact connected oriented surface of genus $g$, with one boundary component.
The Torelli group of $\Sigma$ is denoted by
$$
\I := \I(\Sigma).
$$ 
In this section, we review the total Johnson map relative to a group-like expansion, 
which contains all Johnson homomorphisms and has been introduced by Kawazumi in \cite{Kawazumi}.
The total Johnson map is originally defined on $\I$, but it is easily extended to the monoid
$$
\cyl := \cyl(\Sigma)
$$
of homology cylinders over $\Sigma$.
Next, we consider certain truncations of this map which define homomorphisms on subgroups of the Johnson filtration,
and have diagrammatic descriptions if the choosen expansion is symplectic.

\subsection{The Dehn--Nielsen representation}

The canonical action of  the Torelli group of $\Sigma$ on its fundamental group  defines a group homomorphism
$$
\rho: \I \longrightarrow \Aut(\pi)
$$
which, by a classical result of Dehn and Nielsen, allows one to regard $\I$ 
as a subgroup of the automorphism group of  a free group.

\begin{theorem}[Dehn--Nielsen]
\label{th:Dehn-Nielsen}
The homomorphism $\rho$ is injective, and its image is the group 
$$
\IAut_\zeta(\pi)
$$
of automorphisms of $\pi$ that fix $\zeta=[\partial \Sigma]$ and induce the identity at the abelianized level.
\end{theorem}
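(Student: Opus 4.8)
The plan is to deduce the statement from the classical Dehn--Nielsen--Baer realization theorem and then to carve out the Torelli part by an elementary homological observation. I would split the claim into three parts: the easy inclusion $\rho(\I) \subseteq \IAut_\zeta(\pi)$, the injectivity of $\rho$, and the surjectivity of $\rho$ onto $\IAut_\zeta(\pi)$. The first is immediate: a homeomorphism $h$ representing a class of $\I$ is required to fix $\partial\Sigma$ pointwise, so it preserves the based boundary loop and $\rho(h)(\zeta)=\zeta$; and by the very definition of the Torelli group $h$ acts trivially on $H=H_1(\Sigma)$, so $\rho(h)$ induces the identity at the abelianized level. Hence $\rho(h)\in\IAut_\zeta(\pi)$.

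For injectivity I would use that $\Sigma$ has negative Euler characteristic and is therefore aspherical, i.e.\ a $\K(\pi,1)$, together with the fact that the basepoint $*\in\partial\Sigma$ is fixed by every representative homeomorphism. A class in $\Ker\rho$ is then represented by some $h$ inducing the identity on $\pi_1(\Sigma,*)$; asphericity forces $h$ to be homotopic to the identity rel $*$, and Baer's theorem --- homotopic homeomorphisms of a surface that agree on the boundary are isotopic rel boundary --- upgrades the homotopy to an isotopy. Thus $h$ is trivial in the mapping class group and $\rho$ is injective.

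The substantive step is surjectivity. Here I would invoke the realization half of Dehn--Nielsen--Baer for $\Sigma_{g,1}$: every automorphism $\varphi$ of the free group $\pi$ satisfying $\varphi(\zeta)=\zeta$ is of the form $\rho(h)$ for some mapping class $h$. Concretely, one represents $\pi$ by a spine of $\Sigma$ (a wedge of $2g$ circles), realizes $\varphi$ by a homotopy self-equivalence of this spine, and thickens it to a self-homeomorphism of $\Sigma$; the hypothesis $\varphi(\zeta)=\zeta$ is exactly what allows the construction to restrict to the identity on $\partial\Sigma$, so that it defines a genuine class in the mapping class group. Granting this, if furthermore $\varphi\in\IAut_\zeta(\pi)$ then $\varphi$ induces the identity on $H$, whence the realizing $h$ acts trivially on $H_1(\Sigma)$ and lies in $\I$. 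Combined with the first step, this gives $\rho(\I)=\IAut_\zeta(\pi)$.

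The main obstacle is precisely this realization step: manufacturing an honest homeomorphism from an algebraically prescribed automorphism while keeping control of the boundary behaviour. This is the geometric heart of Dehn--Nielsen--Baer, and in a self-contained treatment it is where essentially all of the work sits; by contrast the injectivity and the passage from the full mapping class group to the Torelli subgroup are formal once that theorem is in hand.
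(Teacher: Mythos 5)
The paper does not actually prove this statement: Theorem \ref{th:Dehn-Nielsen} is quoted as ``a classical result of Dehn and Nielsen'' with no argument supplied, so there is no in-paper proof to compare against. Your decomposition (easy inclusion, injectivity via asphericity, surjectivity via the realization half of Dehn--Nielsen--Baer) is the standard route to the classical theorem, and the first and third parts are fine as sketches; you are also right that the realization step is where all the genuine work sits.

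There is, however, one step stated in a form that is literally false and would need repair in a self-contained write-up. The assertion ``homotopic homeomorphisms of a surface that agree on the boundary are isotopic rel boundary'' fails: the Dehn twist $T_\partial$ about a curve parallel to $\partial\Sigma$ restricts to the identity on $\partial\Sigma$ and is freely homotopic to the identity (it acts on $\pi$ by an inner automorphism), yet it is a nontrivial, infinite-order, central element of the mapping class group of $\Sigma_{g,1}$. The correct input is: a homeomorphism fixing $\partial\Sigma$ pointwise and inducing the identity on $\pi_1(\Sigma,*)$ for a basepoint $*\in\partial\Sigma$ \emph{on the nose} (not merely up to inner automorphism) is isotopic to the identity rel $\partial\Sigma$. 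Your hypothesis does put you in this situation --- $T_\partial$ acts on $\pi$ by conjugation by $\zeta\neq 1$, hence is detected by $\rho$ since a free group has trivial center --- so the conclusion of your injectivity step is correct, but the lemma you invoke must be formulated rel boundary (or rel the boundary basepoint) rather than for free homotopy, as the boundary twist is precisely the counterexample to the version you wrote down.
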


We wish to consider an ``infinitesimal'' version of the Dehn--Nielsen representation $\rho$. For this and further purposes, 
let us recall how, in general,  an action 
$$
A: \mathcal{M} \longrightarrow \Aut(G)
$$
of a monoid $\mathcal{M}$ on a group $G$ can be transported to an action of $\mathcal{M}$ on the Malcev Lie algebra $\MLie(G)$:
$$
a: \mathcal{M} \longrightarrow \Aut\left(\MLie(G)\right).
$$
Each automorphism $\Psi$ of $G$ induces an automorphism $\widehat{\Q}[\Psi]$ of the complete Hopf algebra $\widehat{\Q}[G]$. 
So, by restricting to the primitive part, we get a filtered Lie algebra isomorphism
$\MLie(\Psi): \MLie(G) \to\MLie(G)$.
Thus, we obtain a group homomorphism
$$
\MLie: \Aut(G) \longrightarrow \Aut(\MLie(G))
$$
with values in the group of filtration-preserving automorphisms of $\MLie(G)$.
Then, we define the ``infinitesimal'' version of $A$ to be $a:=\MLie \circ A$.

\begin{lemma}
\label{lem:MLie_functorial}
If $G$ is residually torsion-free nilpotent, then the map
$$
\MLie: \Aut(G) \longrightarrow \Aut(\MLie(G))
$$
is an isomorphism onto the subgroup 
$$
\Aut_{\log(G)}(\MLie(G)) := \{\psi \in \Aut(\MLie(G)): \psi(\log(G)) = \log(G)\}.
$$ 
\end{lemma}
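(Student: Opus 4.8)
The plan is to factor the map $\MLie$ through the group of filtered Hopf algebra automorphisms of $\widehat{\Q}[G]$ and to exhibit, compatibly, three isomorphic groups. Since $G$ is residually torsion-free nilpotent, the canonical map $\iota$ is injective, so I regard $G \subset \MGroup(G) \subset \widehat{\Q}[G]$; recall that $\MLie(G) = \Prim(\widehat{\Q}[G])$ and $\MGroup(G) = \GLike(\widehat{\Q}[G])$ correspond under the mutually inverse bijections $\exp$ and $\log$, and that $\log(\iota(\gp{g})) \in \MLie(G)$ for every $\gp{g} \in G$. By construction $\MLie$ sends $\Psi \in \Aut(G)$ first to the induced filtered Hopf algebra automorphism $\widehat{\Q}[\Psi]$, and then to its restriction $\widehat{\Q}[\Psi]|_{\Prim}$ to primitives. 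I would show that $\Psi \mapsto \widehat{\Q}[\Psi]$ identifies $\Aut(G)$ with the subgroup of filtered Hopf automorphisms $\Theta$ satisfying $\Theta(\iota(G)) = \iota(G)$, and that restriction to primitives then carries this subgroup isomorphically onto $\Aut_{\log(G)}(\MLie(G))$.

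Two standard facts drive everything. First, $\iota(G)$ linearly spans the dense subalgebra $\Q[G]$ of $\widehat{\Q}[G]$, so any two filtered algebra endomorphisms that agree on $\iota(G)$ coincide. Second, by Quillen's theory \cite{Quillen_rht} we have $\widehat{\Q}[G] = \Uhat(\MLie(G))$, so that restriction to primitives and the functor $\Uhat$ are mutually inverse bijections between filtered Hopf algebra automorphisms of $\widehat{\Q}[G]$ and filtered Lie algebra automorphisms of $\MLie(G)$; moreover any such automorphism, being a continuous algebra map, commutes with $\exp$ and $\log$ and so preserves $\GLike$ and $\Prim$. With these in hand, injectivity of $\MLie$ is immediate: if $\widehat{\Q}[\Psi]|_{\Prim} = \Id$ then $\widehat{\Q}[\Psi] = \Uhat(\Id) = \Id$, whence $\iota \circ \Psi = \widehat{\Q}[\Psi] \circ \iota = \iota$ and $\Psi = \Id$ by injectivity of $\iota$. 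Likewise the image lands in $\Aut_{\log(G)}(\MLie(G))$: from $\widehat{\Q}[\Psi] \circ \iota = \iota \circ \Psi$ we get $\widehat{\Q}[\Psi](\iota(G)) = \iota(G)$, and commuting with $\log$ gives $\widehat{\Q}[\Psi]|_{\Prim}(\log(G)) = \log(\widehat{\Q}[\Psi](\iota(G))) = \log(G)$.

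The main point, and the step I expect to be the real obstacle, is surjectivity onto $\Aut_{\log(G)}(\MLie(G))$, because it requires manufacturing a genuine automorphism of the \emph{group} $G$ out of a filtered Lie automorphism $\psi$ of $\MLie(G)$. Given such a $\psi$ with $\psi(\log(G)) = \log(G)$, I would set $\widehat{\psi} := \Uhat(\psi)$, a filtered Hopf algebra automorphism of $\widehat{\Q}[G]$. As it preserves $\GLike = \MGroup(G)$ and commutes with $\exp$, the hypothesis yields $\widehat{\psi}(\iota(G)) = \widehat{\psi}(\exp(\log(G))) = \exp(\psi(\log(G))) = \exp(\log(G)) = \iota(G)$; applying the same reasoning to $\psi^{-1}$ shows $\widehat{\psi}$ restricts to a bijection of $\iota(G)$, hence, being multiplicative, to a group automorphism $\Psi$ of $G$. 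Finally, $\widehat{\Q}[\Psi]$ and $\widehat{\psi}$ are filtered Hopf automorphisms agreeing on $\iota(G)$, so they are equal by the density fact; restricting to primitives gives $\MLie(\Psi) = \widehat{\psi}|_{\Prim} = \psi$. The only delicate bookkeeping is to keep the three descriptions---on $G$, on $\MGroup(G)$, and on $\MLie(G)$---consistently intertwined by $\iota$, $\exp$ and $\log$; once that is set up, each verification reduces to the universal property of $\Uhat$ and the $\exp/\log$ correspondence, together with the Baker--Campbell--Hausdorff formula ensuring that $\psi$ respects the group law $\bch$ transported onto $\Prim$.
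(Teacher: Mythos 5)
Your proof is correct, and it reaches the same pivotal formula as the paper --- the candidate inverse image of $\psi$ is the map $\gp{g} \mapsto \exp\psi\log(\gp{g})$ --- but it justifies the key verifications by a genuinely different mechanism. The paper works entirely at the level of $G$ and $\MLie(G)$: it records the identity $\MLie(\Psi)(\log \gp{g}) = \log\Psi(\gp{g})$ (which gives both injectivity and the fact that the image lies in $\Aut_{\log(G)}(\MLie(G))$), then defines $\Psi := \exp\circ\psi\circ\log$ and $\Psi'$ from $\psi^{-1}$, checks $\Psi\Psi' = \Psi'\Psi = \Id_G$ directly, and verifies multiplicativity of $\Psi$ by hand via the Baker--Campbell--Hausdorff formula. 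You instead interpose the complete Hopf algebra $\widehat{\Q}[G] = \Uhat(\MLie(G))$ and use Quillen's equivalence between filtered Hopf automorphisms and filtered Lie automorphisms (via $\Uhat$ and $\Prim$), together with density of $\Q[G]$ and the fact that continuous algebra automorphisms commute with $\exp$ and $\log$. This outsources the BCH computation: multiplicativity of $\Psi$ becomes automatic because $\Uhat(\psi)$ is an algebra map preserving $\GLike = \MGroup(G)$ and hence $\iota(G)$, and injectivity of $\MLie$ follows from faithfulness of $\Uhat$ plus injectivity of $\iota$. What your route buys is conceptual uniformity (all three groups --- automorphisms of $G$, of $\widehat{\Q}[G]$, and of $\MLie(G)$ --- are identified at once) at the cost of invoking the full $\Uhat/\Prim$ correspondence, which the paper does use elsewhere (in the proof of Corollary \ref{cor:Lie}) but keeps out of this particular argument; the paper's version is more elementary and self-contained. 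Both are complete proofs.
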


\begin{proof}
By assumption on $G$, the canonical map $\iota: G \to \MGroup(G)$
defines an inclusion $G \subset \MGroup(G)$.
For all $\Psi \in \Aut(G)$, we have
\begin{equation}
\label{eq:MLie_f}
\forall \gp{g} \in G, \ \MLie(\Psi)(\log(\gp{g}))= \log \Psi(\gp{g})
\end{equation}
so that $\MLie(\Psi)$ belongs to $\Aut_{\log (G)}(\MLie(G))$.
Conversely, given $\psi \in \Aut_{\log (G)}(\MLie(G))$,
we define a map $\Psi:G \to G$ by
$$
\forall \gp{g} \in G, \  \Psi(\gp{g}) := \exp \psi \log  (\gp{g}).
$$
A map $\Psi':G \to G$ is defined similarly from $\psi^{-1}$.
It is easily checked that $\Psi \Psi'= \Psi' \Psi = \Id_G$
and, using the Baker--Campbell--Hausdorff formula, that
$\Psi$ is a group homomorphism. 
So, $\Psi$ is a group automorphism of $G$ which satisfies $\MLie(\Psi) = \psi$.
Thus, the surjectivity of $\MLie$ onto $\Aut_{\log(G)}(\MLie(G))$ is proved.
Its injectivity follows from (\ref{eq:MLie_f}).
\end{proof}

We come back to the fundamental group $\pi$ of $\Sigma$.
By the previous discussion, we obtain an ``infinitesimal'' version of the Dehn--Nielsen representation
\begin{equation}
\label{eq:infinitesimal_Dehn-Nielsen}
\varrho: \I \longrightarrow \IAut_{\log(\zeta)}\left(\MLie(\pi)\right)
\end{equation}
defined by $\varrho:= \MLie \circ \rho$ and with values in the group of filtration-preserving automorphisms of $\MLie(\pi)$
that fix $\log(\zeta)$ and induce the identity at the graded level.
As an application of Lemma \ref{lem:MLie_functorial}, 
we obtain an ``infinitesimal'' formulation of Theorem \ref{th:Dehn-Nielsen}.

\begin{theorem}
The map $\varrho: \I \to \IAut_{\log(\zeta)}\left(\MLie(\pi)\right)$ is injective, and its image is
$$
\varrho(\I) =\big\{\psi \in \IAut_{\log(\zeta)}(\MLie(\pi)) : \psi(\log(\pi)) = \log(\pi)\big\}.
$$
\end{theorem}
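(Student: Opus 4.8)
The plan is to derive this infinitesimal Dehn--Nielsen statement formally from its classical version (Theorem \ref{th:Dehn-Nielsen}) together with the functoriality of the Malcev Lie algebra construction (Lemma \ref{lem:MLie_functorial}), exploiting that $\varrho = \MLie \circ \rho$ by definition. Since $\pi$ is free it is residually torsion-free nilpotent, so Lemma \ref{lem:MLie_functorial} applies: $\MLie$ is injective on $\Aut(\pi)$ with image exactly $\Aut_{\log(\pi)}(\MLie(\pi)) = \{\psi : \psi(\log(\pi)) = \log(\pi)\}$. Injectivity of $\varrho$ is then immediate, being a composition of the injection $\rho$ (Theorem \ref{th:Dehn-Nielsen}) with the injection $\MLie$.

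For the image, I would start from $\varrho(\I) = \MLie(\rho(\I)) = \MLie(\IAut_\zeta(\pi))$, using Theorem \ref{th:Dehn-Nielsen}, and then transport the two defining conditions of $\IAut_\zeta(\pi)$ across the bijection $\MLie$. The relation (\ref{eq:MLie_f}), $\MLie(\Psi)(\log(\gp{g})) = \log(\Psi(\gp{g}))$, combined with the injectivity of $\log$ on $\pi \subset \MGroup(\pi)$, shows that $\Psi(\zeta) = \zeta$ holds if and only if $\MLie(\Psi)$ fixes $\log(\zeta)$. For the other condition I would invoke the identification $\Gr \MLie(\pi) \cong \Lie(H)$ coming from Theorem \ref{th:Quillen}, under which the degree-one part of $\Gr \MLie(\Psi)$ is exactly the action of $\Psi$ on the abelianization $H$. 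Feeding in also the membership $\psi(\log(\pi)) = \log(\pi)$ (automatic from Lemma \ref{lem:MLie_functorial}), these two translations characterize $\MLie(\IAut_\zeta(\pi))$ as precisely the asserted subset, and the converse inclusion follows by running the same equivalences backwards: any $\psi$ in that subset preserves $\log(\pi)$, hence equals $\MLie(\Psi)$ for a unique $\Psi \in \Aut(\pi)$, which one checks lies in $\IAut_\zeta(\pi) = \rho(\I)$.

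The single step that is not purely formal is the upgrade from the degree-one condition to all degrees. The classical Torelli condition only records that $\Psi$ acts trivially on the abelianization $H$, whereas the infinitesimal target requires $\MLie(\Psi)$ to be the identity at \emph{every} graded level. The resolution is that $\Gr \MLie(\Psi)$ is a graded \emph{Lie algebra} automorphism of the free Lie algebra $\Lie(H)$, which is generated in degree one; hence triviality on the degree-one generators propagates to all brackets, and thus to every degree. This is the one place where the freeness of $\pi$ is genuinely used, and I expect it to be the main (though brief) obstacle.
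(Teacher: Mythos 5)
Your proposal is correct and follows exactly the route the paper intends: the theorem is stated there as an immediate application of Lemma \ref{lem:MLie_functorial} combined with Theorem \ref{th:Dehn-Nielsen}, with no further proof given. The details you supply — transporting the two conditions of $\IAut_\zeta(\pi)$ through the bijection $\MLie$ via (\ref{eq:MLie_f}), and upgrading triviality in degree one to triviality at every graded level using that $\Lie(H)\cong\Gr\MLie(\pi)$ is generated in degree one — are precisely the (brief) verifications the paper leaves implicit, and they are handled correctly.
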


Finally, assume that we are given a group-like expansion $\theta$ of $\pi$. Then, we denote by
$$
\IAut_{\theta \log(\zeta)}(\Liehat)
$$ 
the group of filtration-preserving automorphisms 
of $\Liehat = \Liehat(H)$ that induce the identity at the graded level and fix the element $\theta \log(\zeta)$.
The Dehn--Nielsen representation of the Torelli group is equivalent to the group homomorphism
$$
\varrho^\theta: \I \longrightarrow \IAut_{\theta \log(\zeta)}(\Liehat)
$$
defined by 
\begin{equation}
\label{eq:rho_theta}
\varrho^\theta(f) := \theta \circ \varrho(f) \circ \theta^{-1} =\theta \circ \MLie(f_*) \circ \theta^{-1}
\end{equation}
where $\theta: \MLie(\pi) \to \Liehat$ is the Lie algebra isomorphism given by Corollary \ref{cor:Lie}.

\subsection{Johnson homomorphisms and the total Johnson map}

Let us now  review Johnson homomorphisms in a few lines, the reader being referred to \cite{Johnson, Morita} for details.
For each integer $k\geq 1$, the Dehn--Nielsen representation $\rho$ induces a group homomorphism
$$
\rho_k: \I \longrightarrow \IAut_{\{\zeta\}}(\pi/\Gamma_{k+1} \pi)
$$
with values in the group of automorphisms of $\pi/\Gamma_{k+1} \pi$
that fix $\{\zeta\}$ and induce the identity at the abelianized level.
Let $\I[k]$ be the kernel of $\rho_k$. The sequence of subgroups
$$
\I = \I[1] \supset \I[2] \supset \I[3] \supset \cdots 
$$
is called the \emph{Johnson filtration} of $\I$. 
To define Johnson homomorphisms, one considers the short exact sequence
$$
1 \to \Hom\left(\pi/\Gamma_2 \pi, \Gamma_{k+1} \pi/ \Gamma_{k+2} \pi\right)
\to \Aut(\pi/\Gamma_{k+2} \pi) \to \Aut(\pi/\Gamma_{k+1} \pi)
$$
where  a group homomorphism $t: \pi/\Gamma_2 \pi \to \Gamma_{k+1}\pi/\Gamma_{k+2} \pi$
goes to the automorphism of $\pi/\Gamma_{k+2} \pi$ defined by $\{\gp{x}\} \mapsto \{\gp{x} \cdot t(\{\gp{x}\})\}$.
Thus, the map $\rho_{k+1}$ restricts to a  homomorphism
$$
\tau_k: \I[k] \longrightarrow \Hom(\pi/\Gamma_2 \pi,\Gamma_{k+1} \pi/\Gamma_{k+2} \pi) \otimes \Q
\simeq  \Hom(H, \Lie_{k+1}(H)) \simeq H \otimes \Lie_{k+1}(H).
$$
Here, the first isomorphism comes from the canonical identification between
$\Lie_n(H)$ and $\left(\Gamma_n \pi/\Gamma_{n+1} \pi \right)\otimes \Q$
and the second isomorphism is induced by the duality $H^*\simeq H$ defined by the intersection pairing $\omega$.
The map $\tau_k$ is known as the $k$-th \emph{Johnson homomorphism} and is considered, here, with rational coefficients.

Next, following \cite{Kawazumi}, we  give an equivalent description of the infinitesimal Dehn--Nielsen representation $\varrho$.
For this, we fix a group-like expansion $\theta$ of $\pi$. 

\begin{definition}[Kawazumi]
The \emph{total Johnson map} relative to the group-like expansion $\theta$  is the map 
$$
\tau^\theta: \I \longrightarrow \Hom(H,\Liehat_{\geq 2}) , \  f \longmapsto \left. \varrho^\theta(f)\right|_H - \Id_H.
$$
\end{definition}

The total Johnson map can be decomposed as follows:
$$
\tau^\theta = \sum_{m\geq 1} \tau_m^\theta
\ \in \prod_{m\geq 1} H \otimes  \Lie_{m+1}  \simeq \Hom(H,\Liehat_{\geq 2}).
$$
Such notation and terminology are justified by the following result from \cite{Kawazumi},
whose proof is given here for the sake of completeness.

\begin{theorem}[Kawazumi]
\label{th:Kawazumi_to_Johnson}
The degree $k$ part of the total Johnson map, restricted to the $k$-th term of the Johnson filtration,
coincides with the $k$-th Johnson homomorphism:
$$
\left. \tau^\theta_k\right|_{\I[k]} = \tau_k \ \in \Hom(\I[k], H\otimes \Lie_{k+1}).
$$
\end{theorem}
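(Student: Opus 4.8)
The plan is to compute both $\tau_k^\theta(f)$ and $\tau_k(f)$, for $f \in \I[k]$, as the degree-$(k+1)$ coefficient of one and the same element of $\Liehat$, namely $\theta(\log f_*(\gp{x})) - \theta(\log \gp{x})$ for a suitable lift $\gp{x} \in \pi$ of a homology class $h \in H$. First I would record two consequences of $f \in \I[k]$. Since $f_*$ acts trivially on $\pi/\Gamma_{k+1}\pi$, Corollary \ref{cor:Lie_nilpotent} shows that $\varrho^\theta(f)$ reduces to the identity on $\Liehat/\Liehat_{\geq k+1}$; thus $\tau_m^\theta(f) = 0$ for $m < k$, and $\tau_k^\theta(f)$ is precisely the degree-$(k+1)$ part of $\varrho^\theta(f)|_H - \Id_H$. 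Moreover, an easy induction on degree --- using that $\varrho^\theta(f)$ is a Lie algebra automorphism, that $\Liehat$ is generated by $H$, and that $\varrho^\theta(f)(h) - h \in \Liehat_{\geq k+1}$ for $h \in H$ --- shows that $\varrho^\theta(f)(u) - u \in \Liehat_{\geq d+k}$ for every homogeneous $u \in \Lie_d$.

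Next I would exploit the defining identity (\ref{eq:rho_theta}) together with (\ref{eq:MLie_f}), which give $\varrho^\theta(f)(\theta(\log \gp{x})) = \theta(\log f_*(\gp{x}))$ for all $\gp{x} \in \pi$. Fix $h \in H$ and a lift $\gp{x}$ with $\{\gp{x}\} = h$, and write $\theta(\log \gp{x}) = h + w$ with $w \in \Liehat_{\geq 2}$ (using $\theta(\gp{x}) = 1 + h + (\deg \geq 2)$). Applying $\varrho^\theta(f)$ and separating $h$ from $w$, the degree bound above gives $\varrho^\theta(f)(w) \equiv w \mod \Liehat_{\geq k+2}$, whence
$$
\theta(\log f_*(\gp{x})) - \theta(\log \gp{x}) = \big(\varrho^\theta(f)(h) - h\big) + \big(\varrho^\theta(f)(w) - w\big) \equiv \tau_k^\theta(f)(h) \mod \Liehat_{\geq k+2}.
$$
In other words, the degree-$(k+1)$ part of $\theta(\log f_*(\gp{x})) - \theta(\log \gp{x})$ is exactly $\tau_k^\theta(f)(h)$.

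Finally I would evaluate the same quantity through the Johnson homomorphism. By definition of $\tau_k$, for $f \in \I[k]$ one has $f_*(\gp{x}) = \gp{x} \cdot c$ in $\pi/\Gamma_{k+2}\pi$ with $c \in \Gamma_{k+1}\pi$ and $\{c\} = \tau_k(f)(h)$ under the canonical identification $\Gamma_{k+1}\pi/\Gamma_{k+2}\pi \otimes \Q \simeq \Lie_{k+1}(H)$. Since $\log \gp{x} \in \widehat{\Gamma}_1 \MLie(\pi)$ and $\log c \in \widehat{\Gamma}_{k+1}\MLie(\pi)$, the Baker--Campbell--Hausdorff formula yields $\log f_*(\gp{x}) \equiv \log \gp{x} + \log c \mod \widehat{\Gamma}_{k+2}\MLie(\pi)$, so applying the filtration-preserving map $\theta$ gives $\theta(\log f_*(\gp{x})) - \theta(\log \gp{x}) \equiv \theta(\log c) \mod \Liehat_{\geq k+2}$. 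Because $\theta$ is the identity at the graded level, $\Gr \theta$ is precisely the canonical identification $\Gr \MLie(\pi) \simeq \Lie(H)$ used to define $\tau_k$; hence the degree-$(k+1)$ part of $\theta(\log c)$ equals $\tau_k(f)(h)$. Comparing with the previous paragraph gives $\tau_k^\theta(f)(h) = \tau_k(f)(h)$ for all $h \in H$, which is the claim. The main subtlety is the second paragraph: one must check that the higher-degree tail $w$ of the expansion contributes nothing in degree $k+1$, which is exactly where the degree-raising bound $\varrho^\theta(f)(u) - u \in \Liehat_{\geq d+k}$ is essential; the rest is bookkeeping with BCH and with the fact that the lowest-degree behaviour of $\theta$ is expansion-independent (this is why the higher maps $\tau_m^\theta$ depend on $\theta$ but $\tau_k$ does not).
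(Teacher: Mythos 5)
Your proof is correct and follows essentially the same route as the paper: both arguments compare the degree-$(k+1)$ part of $\theta(\log f_*(\gp{x}))-\theta(\log \gp{x})$ computed once via $\varrho^\theta(f)$ (splitting off the degree-$\geq 2$ tail of the expansion and using the degree-raising property of $\varrho^\theta(f)$) and once via the Baker--Campbell--Hausdorff formula applied to $f_*(\gp{x})=\gp{x}\cdot\gp{q}$ with $\gp{q}\in\Gamma_{k+1}\pi$, together with the fact that $\theta$ is the identity at the graded level. The only cosmetic difference is that you work with the tail $w=\theta(\log\gp{x})-h$ in $\Liehat(H)$ and prove the bound $\varrho^\theta(f)(u)-u\in\Liehat_{\geq d+k}$ explicitly by induction, whereas the paper works with $x_2=\theta^{-1}(\{\gp{x}\})-\log(\gp{x})$ in $\MLie(\pi)$ and asserts the corresponding bound without comment.
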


\begin{proof}
Let $f\in \I[k]$ and let $\{\gp{x}\} \in H$ be represented by $\gp{x}\in \pi$. We set 
$$
\gp{q} := \gp{x}^{-1} \cdot f_*(\gp{x}) \ \in \Gamma_{k+1} \pi
$$ 
so that, by definition of $\tau_k$, we have
\begin{equation}
\label{eq:value_of_Johnson}
\tau_k(f)(\{\gp{x}\}) = \{\gp{q}\} \ \in \Gamma_{k+1}\pi/ \Gamma_{k+2}\pi\otimes \Q \simeq \Lie_{k+1}.
\end{equation}
We also set 
$$
x_2 := \theta^{-1}(\{\gp{x}\}) - \log(\gp{x}) \ \in \MLie(\pi).
$$ 
Since $\theta(\gp{x}) = 1+ \{\gp{x}\} +(\deg \geq 2)$, 
$x_2$ belongs to $\widehat{\Gamma}_2 \MLie(\pi)$. 
Then, we have 
\begin{eqnarray*}
\theta \MLie(f_*) \theta^{-1}(\{\gp{x}\}) &=& \theta \MLie(f_*) ( \log(\gp{x}) + x_2) \\
&=& \theta ( \log (f_*(\gp{x})) + x_2 + x_{k+2}) \\
&=& \theta( \log (\gp{x}\cdot \gp{q}) + x_2 + x_{k+2}) \\
&=& \theta( \log(\gp{x}) + \log(\gp{q}) + x'_{k+2} + x_2 + x_{k+2}) \\
&=& \theta( \theta^{-1}(\{\gp{x}\}) +  \log(\gp{q}) ) +  \theta(x_{k+2} + x'_{k+2})\\
&=& \{\gp{x}\} +  \theta \log(\gp{q}) + \theta(x_{k+2} + x'_{k+2}).
\end{eqnarray*}
Here, $x_{k+2}$ and $x'_{k+2}$ are some elements of $\widehat{\Gamma}_{k+2} \MLie(\pi)$
and the fourth identity follows from the Baker--Campbell--Hausdorff formula. 
Since $\theta(x_{k+2} + x'_{k+2})$ belongs to $\Liehat_{\geq k+2}$, we deduce that 
$$
\tau^\theta_k(f)(\{\gp{x}\}) =  \{\theta \log(\gp{q})\} 
\ \in \Liehat_{\geq k+1}/\Liehat_{\geq k+2} \simeq \Lie_{k+1}.
$$
Since $\theta: \MLie(\pi) \to \Liehat$ induces the identity at the graded level, we conclude that
$$
\tau^\theta_k(f)(\{\gp{x}\}) = \{ \gp{q}\}
\stackrel{(\ref{eq:value_of_Johnson})}{=} \tau_k(f)(\{\gp{x}\}) 
\ \in \Gamma_{k+1}\pi/ \Gamma_{k+2}\pi \otimes \Q \simeq \Lie_{k+1}.
$$
\end{proof}

\begin{remark}
In fact, Kawazumi considers in \cite{Kawazumi} expansions which are not necessarily group-like,
so that he works with $\Tenshat(H)$ instead of $\Liehat(H)$.
He proves Theorem \ref{th:Kawazumi_to_Johnson} in this more general context.
\end{remark}

\subsection{Extension to the monoid of homology cylinders}

\label{subsec:extension}

As shown in \cite{GL}, the Johnson homomorphisms can be extended 
from the Torelli group $\I$ to the monoid of homology cylinders $\cyl$.
Indeed, by definition, a homology cylinder $C$ comes with a parametrization of its boundary
$$
c: \partial(\Sigma \times [-1,1]) \stackrel{\cong_+}{\longrightarrow} \partial C.
$$
This map $c$ splits into $c_+$ and $c_-$ where
$c_\pm := c|_{\Sigma \times \{\pm 1\} }: \Sigma \to C$.
The map $c_\pm$ is a homological equivalence so that, by Stalling's theorem \cite{Stallings}, 
it induces an isomorphism at the level of the $k$-th nilpotent quotient for every integer $k\geq 1$.
Thus, one gets a monoid homomorphism
$$
\rho_k: \cyl \longrightarrow \IAut_{\{\zeta\}}(\pi/\Gamma_{k+1} \pi), \
C \longmapsto (c_{-,*})^{-1} \circ c_{+,*}.
$$
Let $\cyl[k]$ be the kernel of $\rho_k$. The sequence of submonoids
$$
\cyl = \cyl[1] \supset \cyl[2] \supset \cyl[3] \supset \cdots 
$$
is called the \emph{Johnson filtration} of $\cyl$. Then, as in the case of the Torelli group, 
the map $\rho_{k+1}$ restricts to a monoid homomorphism
$$
\tau_k: \cyl[k] \longrightarrow H \otimes \Lie_{k+1}(H).
$$

The total Johnson map relative to a group-like expansion $\theta$ can also be extended from $\I$ to $\cyl$ in the following way. 
For all $k\geq1$, there is an ``infinitesimal'' version of the homomorphism $\rho_k$ 
\begin{equation}
\label{eq:infinitesimal_Stallings}
\varrho_k: \cyl \longrightarrow \IAut_{\log(\{\zeta\})} \left(\MLie(\pi/\Gamma_{k+1} \pi)\right)
\end{equation}
defined by $\varrho_k := \MLie \circ \rho_k$. Lemma \ref{lem:MLie_functorial} implies that 
\begin{equation}
\label{eq:Ker_rho_k}
\Ker(\varrho_k) = \cyl[k].
\end{equation}
By conjugating with the Lie algebra isomorphism $\theta$ from Corollary \ref{cor:Lie_nilpotent}, we obtain a monoid homomorphism
$$
\varrho_k^\theta: \cyl \longrightarrow \IAut_{\theta \log(\{\zeta\})}(\Lie/\Lie_{\geq k+1}).
$$
Equivalently, we can consider the map
$$
\tau^\theta_{[1,k[}: \cyl \longrightarrow \Hom(H,\Lie_{\geq 2}/\Lie_{\geq k+1}), \
C \longmapsto \left. \varrho^\theta_k(C)\right|_H - \Id_H.
$$
For all $l\geq k\geq 1$, we have the following commutative triangle
$$
\xymatrix{
{\cyl} \ar[rr]^-{\tau^\theta_{[1,l[}}  \ar[rrd]_-{\tau^\theta_{[1,k[}} & & 
{ \Hom(H,\Lie_{\geq 2}/\Lie_{\geq l+1})} \ar@{->>}[d]\\
& & {\Hom(H,\Lie_{\geq 2}/\Lie_{\geq k+1})} 
}
$$
where the vertical map is induced by the canonical projection 
$\Lie_{\geq 2}/\Lie_{\geq l+1} \to \Lie_{\geq 2}/\Lie_{\geq k+1}$.
Therefore, we can take the inverse limit as $k\to + \infty$ of the maps $\tau^\theta_{[1,k[}$ to obtain a map
$$
\tau^\theta: \cyl \longrightarrow \Hom(H,\Liehat_{\geq 2})
$$
whose restriction to $\I$ coincides with Kawazumi's total Johnson map.
Theorem \ref{th:Kawazumi_to_Johnson} and its proof can be extended without difficulty to homology cylinders.

As an alternative to $\tau^\theta$, we can equivalently consider the monoid homomorphism 
$$
\varrho^\theta: \cyl \longrightarrow \IAut_{\theta \log(\zeta)}(\Liehat)
$$
which sends any homology cylinder $C$ to the unique filtration-preserving automorphism of $\Liehat$ 
whose restriction to $H$ is $\Id_H + \tau^\theta(C)$.
If restricted to the Torelli group, this definition agrees with (\ref{eq:rho_theta}).

\subsection{Truncations of the total Johnson map}

In the next sections, we will be mostly interested
in certain truncations of the total Johnson map $\tau^\theta$.
These are introduced in the next statement.

\begin{proposition}
\label{prop:Kawazumi_homomorphism}
The degree $[k,2k[$ truncation of the total Johnson map $\tau^\theta$,
restricted to the $k$-th term of the Johnson filtration,
$$
\tau^\theta_{[k,2k[} := \sum_{m=k}^{2k-1} \tau^\theta_m:
\cyl[k] \longrightarrow \bigoplus_{m=k}^{2k-1} H \otimes \Lie_{m+1} 
\simeq \Hom(H,\Lie_{\geq k+1}/\Lie_{\geq 2k+1}) 
$$
is a monoid homomorphism. Moreover, its kernel is $\cyl[2k]$.
\end{proposition}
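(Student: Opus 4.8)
The plan is to deduce both assertions from the single fact that $\varrho^\theta : \cyl \to \IAut_{\theta\log\zeta}(\Liehat)$ is a monoid homomorphism (\S\ref{subsec:extension}), combined with careful bookkeeping of internal degrees. Recall that $\varrho^\theta(C)$ is the filtration-preserving Lie algebra automorphism of $\Liehat$ whose restriction to $H$ is $\Id_H + \tau^\theta(C)$, and that, since the summands $\tau^\theta_m(C) \in \Hom(H,\Lie_{m+1})$ sit in distinct homogeneous degrees, the condition $\varrho^\theta_k(C) = \Id$ on $\Lie/\Lie_{\geq k+1}$ is equivalent to $\tau^\theta_m(C) = 0$ for all $1 \leq m \leq k-1$. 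Together with (\ref{eq:Ker_rho_k}), this says that a homology cylinder $C$ lies in $\cyl[k]$ if and only if $\tau^\theta(C)(h) \in \Liehat_{\geq k+1}$ for every $h \in H$.

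First I would prove the homomorphism property by computing $\tau^\theta(CC')$ for $C, C' \in \cyl[k]$. Writing $\psi = \varrho^\theta(C)$, the homomorphism property of $\varrho^\theta$ and the linearity of $\psi$ give, for $h \in H$,
$$
\varrho^\theta(CC')(h) = \psi\bigl(h + \tau^\theta(C')(h)\bigr) = h + \tau^\theta(C)(h) + \psi\bigl(\tau^\theta(C')(h)\bigr).
$$
The whole point is to control the last term. Since $C' \in \cyl[k]$, the series $\tau^\theta(C')(h)$ lies in $\Liehat_{\geq k+1}$; and since $\psi$ is the identity at the graded level and sends each generator $h_i$ to $h_i + \tau^\theta(C)(h_i)$ with $\tau^\theta(C)(h_i) \in \Liehat_{\geq k+1}$, substituting these corrections into a Lie word of degree $k+1$ raises its degree by at least $k$. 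Hence $\psi\bigl(\tau^\theta(C')(h)\bigr) - \tau^\theta(C')(h) \in \Liehat_{\geq 2k+1}$, so that
$$
\tau^\theta(CC')(h) \equiv \tau^\theta(C)(h) + \tau^\theta(C')(h) \mod \Liehat_{\geq 2k+1}.
$$
Comparing degree-$m$ parts for $k \leq m \leq 2k-1$ yields $\tau^\theta_m(CC') = \tau^\theta_m(C) + \tau^\theta_m(C')$, which is precisely the additivity asserting that $\tau^\theta_{[k,2k[}$ is a monoid homomorphism.

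For the kernel I would argue by degree separation alone. Because the targets $\Hom(H,\Lie_{m+1})$ are homogeneous of distinct degrees, $\tau^\theta_{[k,2k[}(C) = 0$ holds if and only if $\tau^\theta_m(C) = 0$ for every $m$ with $k \leq m \leq 2k-1$. Combined with the standing hypothesis $C \in \cyl[k]$, which already forces $\tau^\theta_m(C) = 0$ for $m < k$, this is equivalent to the vanishing of $\tau^\theta_m(C)$ for all $m < 2k$, i.e.\ to $\varrho^\theta_{2k}(C) = \Id$ on $\Lie/\Lie_{\geq 2k+1}$. By (\ref{eq:Ker_rho_k}) applied with $2k$ in place of $k$, this says exactly that $C \in \cyl[2k]$, establishing $\Ker\bigl(\tau^\theta_{[k,2k[}\bigr) = \cyl[2k]$.

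The main obstacle is the degree estimate of the second paragraph: one must check that the only genuinely bilinear (``cross'') contribution to $\tau^\theta(CC')$ is pushed into degree $\geq 2k$. This is what makes the truncation additive, and it is precisely why the interval $[k,2k[$ — rather than any wider range — is the natural domain on which $\tau^\theta$ restricts to a homomorphism with kernel $\cyl[2k]$.
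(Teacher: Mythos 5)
Your proposal is correct and follows essentially the same route as the paper: both arguments rest on the multiplicativity of $\varrho^\theta$ together with the observation that composing with an automorphism which is the identity modulo degree $\geq k+1$ does not change an element of $\Liehat_{\geq k+1}$ below degree $2k+1$ (your "cross-term" estimate is exactly the paper's elementary fact that $d\circ c=c$ in $\Hom(H,\Lie_{\geq k+1}/\Lie_{\geq 2k+1})$), and the kernel statement is in both cases read off from (\ref{eq:Ker_rho_k}). The only cosmetic difference is that you work in the complete Lie algebra $\Liehat$ and truncate at the end, whereas the paper works with $\varrho^\theta_{2k}$ in $\Aut(\Lie/\Lie_{\geq 2k+1})$ throughout.
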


\begin{proof}
For all  $C \in \cyl[k]$, $\tau^\theta_{[k,2k[}(C)$ 
can be computed from $\varrho_{2k}^\theta(C)\in \Aut(\Lie/\Lie_{\geq 2k+1})$, and vice versa, thanks to the following equation:
\begin{equation}
\label{eq:equivalence}
\left.\varrho_{2k}^\theta(C)\right|_H 
= \Id_H + \tau_{[1,2k[}^\theta(C) 
= \Id_H + \tau_{[k,2k[}^\theta(C) 
\ \in \Hom(H, \Lie/\Lie_{\geq 2k+1}).
\end{equation}
Here, the second identity follows from the fact that $\varrho_k^\theta(C)$ is the identity, 
so that  $\tau_{[1,2k[}^\theta(C)$ starts in degree $k$. So, for all $C,D \in \cyl[k]$, we have
\begin{eqnarray*}
\left.\varrho_{2k}^\theta(D C)\right|_H  &=&  
\varrho_{2k}^\theta (D) \circ \left.\varrho_{2k}^\theta(C)\right|_H \\
&=&  \left.\varrho_{2k}^\theta(D)\right|_H + \varrho_{2k}^\theta(D) \circ \tau_{[k,2k[}^\theta(C)\\
&=& \Id_H + \tau_{[k,2k[}^\theta(D) + \varrho_{2k}^\theta(D) \circ \tau_{[k,2k[}^\theta(C)\\
&=& \Id_H + \tau_{[k,2k[}^\theta(D) + \tau_{[k,2k[}^\theta(C).
\end{eqnarray*}
Here, the last identity is an instance of the following elementary fact:
for all $c \in \Hom(H,\Lie_{\geq k+1}/\Lie_{\geq 2k+1})$ and for all 
$d\in \Aut(\Lie/\Lie_{\geq 2k+1})$ that reduces to the identity modulo $\Lie_{\geq k+1}/\Lie_{\geq 2k+1}$,
we have $d\circ c = c$. Thus, we conclude that
$$
\tau_{[k,2k[}^\theta(DC)
= \tau_{[k,2k[}^\theta(D)  + \tau_{[k,2k[}^\theta(C).
$$
Finally, we also deduce from (\ref{eq:equivalence}) that $\tau_{[k,2k[}^\theta(C)$ is trivial if and only if
$\varrho_{2k}^\theta(C)$ is the identity, which amounts to say that $C$ belongs to $\cyl[2k]$ by (\ref{eq:Ker_rho_k}).
\end{proof}

When the expansion $\theta$ is symplectic, the homomorphism $\tau^\theta_{[k,2k[}$ 
defined in Proposition \ref{prop:Kawazumi_homomorphism} has a diagrammatic description.

\begin{proposition}
\label{prop:Kawazumi_diagrammatic}
Assume that the expansion $\theta$ is symplectic.
Then, for all $C\in \cyl[k]$ and for all $j\in \{k, \dots, 2k-1\}$,
the Lie bracket of $\tau_j^\theta(C)\in H\otimes \Lie_{j+1}$ is trivial. Therefore, we have
$$
\eta^{-1} \tau_j^\theta(C) \in \T_{j}(H)
$$
where the diagrammatic space $\T_{j}(H)$  and the map $\eta$ 
have been introduced in \S \ref{subsec:fission} and \S \ref{subsec:free_nilpotent_Lie} respectively.
\end{proposition}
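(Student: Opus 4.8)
The plan is to exploit the single extra piece of structure that the symplectic hypothesis provides. Since $\theta$ is symplectic we have $\theta(\zeta)=\exp(-\omega)$, hence $\theta\log(\zeta)=\log\theta(\zeta)=-\omega$, so the automorphism $\psi:=\varrho^\theta(C)\in\IAut_{\theta\log(\zeta)}(\Liehat)$ fixes $\omega=\sum_{i=1}^g[a_i,b_i]\in\Lie_2(H)$. I would combine this with two facts already at hand: first, $\psi$ is a filtration-preserving Lie algebra automorphism with $\psi|_H=\Id_H+\tau^\theta(C)$; second, since $C\in\cyl[k]$ we have $\varrho_k^\theta(C)=\Id$, so (exactly as in the proof of Proposition \ref{prop:Kawazumi_homomorphism}) the map $\tau^\theta(C)=\sum_{m\geq 1}\tau_m^\theta(C)$ starts in degree $k$, i.e. $\tau_m^\theta(C)=0$ for $m<k$ and $\tau^\theta(C)(h)\in\Liehat_{\geq k+1}$ for every $h\in H$.

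Next I would write out the identity $\psi(\omega)=\omega$ explicitly. Because $\psi$ is a Lie homomorphism,
$$
\psi(\omega)=\sum_{i=1}^g\big[\,a_i+\tau^\theta(C)(a_i),\ b_i+\tau^\theta(C)(b_i)\,\big],
$$
and expanding the brackets and subtracting $\omega=\sum_i[a_i,b_i]$ gives a ``linear'' part $\sum_i\big([a_i,\tau^\theta(C)(b_i)]+[\tau^\theta(C)(a_i),b_i]\big)$ together with a ``quadratic'' part $\sum_i[\tau^\theta(C)(a_i),\tau^\theta(C)(b_i)]$. The key numerical observation is that, since each $\tau^\theta(C)(h)$ lies in $\Liehat_{\geq k+1}$, the quadratic part lies in $\Liehat_{\geq 2k+2}$. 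Hence for every $j$ with $k\leq j\leq 2k-1$ the degree $j+2$ component of $\psi(\omega)-\omega=0$ involves only the linear part, and it reads
$$
\sum_{i=1}^g\big([a_i,\tau_j^\theta(C)(b_i)]+[\tau_j^\theta(C)(a_i),b_i]\big)=0\ \in\Lie_{j+2}(H).
$$

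Finally, I would identify this vanishing with the vanishing of the Lie bracket of $\tau_j^\theta(C)$ regarded as an element of $H\otimes\Lie_{j+1}$. Tracing the Poincar\'e-duality isomorphism $\Hom(H,\Lie_{j+1})\simeq H^*\otimes\Lie_{j+1}\simeq H\otimes\Lie_{j+1}$ induced by $h\mapsto\omega(h,-)$ (which on the basis sends $a_i^*\mapsto-b_i$ and $b_i^*\mapsto a_i$), one finds that $\tau_j^\theta(C)$ corresponds to $\sum_i\big(a_i\otimes\tau_j^\theta(C)(b_i)-b_i\otimes\tau_j^\theta(C)(a_i)\big)$, and applying the bracket map $[-,-]\colon H\otimes\Lie_{j+1}\to\Lie_{j+2}$ reproduces exactly the left-hand side of the displayed equation. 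Thus $\tau_j^\theta(C)\in\D_{j+2}(H)=\Ker[-,-]$, and since $\eta_j\colon\T_j(H)\to\D_{j+2}(H)$ is the isomorphism recalled in \S\ref{subsec:free_nilpotent_Lie}, the element $\eta^{-1}\tau_j^\theta(C)\in\T_j(H)$ is well defined. I expect the only delicate point to be this last bookkeeping: pinning down the duality conventions and signs so that the bracket map matches the linear part of $\psi(\omega)-\omega$ on the nose; the remaining ingredients are the elementary degree count isolating the range $k\leq j\leq 2k-1$ and the already-established fact that $\varrho^\theta(C)$ fixes $\omega$.
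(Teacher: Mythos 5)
Your argument is correct and is essentially the paper's own proof: both extract from the symplecticity condition $\varrho^\theta(C)(\omega)=\omega$ the degree-by-degree vanishing of the linear part $\sum_i\bigl([a_i,\tau_j^\theta(C)(b_i)]+[\tau_j^\theta(C)(a_i),b_i]\bigr)$ in degrees $j+2\leq 2k+1$, where the quadratic cross-terms cannot yet contribute, and then identify this expression with the bracket of $\tau_j^\theta(C)\in H\otimes\Lie_{j+1}$ under the Poincar\'e duality conventions. Your sign bookkeeping for the duality ($a_i^*\mapsto -b_i$, $b_i^*\mapsto a_i$) matches the paper's formula $\tau_j^\theta(C)=\sum_i\bigl(-b_i\otimes u_i^{(j+1)}+a_i\otimes v_i^{(j+1)}\bigr)$, so nothing is missing.
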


\begin{proof}
Let $(\alpha,\beta)$ be a system of meridians and parallels for the surface $\Sigma$,
and let $(a,b)$ be the corresponding basis of $H$.
Since $C$ acts trivially on $\MLie(\pi/\Gamma_{k+1}\pi)$, we have
$$
\varrho^\theta(C)(a_i) \equiv  a_i + \sum_{j=k+1}^{2k} u_i^{(j)} \mod \Liehat_{\geq 2k+1}
\quad \hbox{and} \quad
\varrho^\theta(C)(b_i) \equiv  b_i + \sum_{j=k+1}^{2k} v_i^{(j)} \mod \Liehat_{\geq 2k+1}
$$
where $u_i^{(j)}, v_i^{(j)} \in \Lie_{j}$ for all $j=k+1,\dots, 2k$.
Then, we obtain
$$
\varrho^\theta(C)(\omega) = \varrho^\theta(C)\left(\sum_{i=1}^g [a_i,b_i]\right) \equiv
\omega + \sum_{i=1}^g \sum_{j=k+1}^{2k} \left([a_i,v_i^{(j)}] + [u_i^{(j)},b_i] \right)
\mod \Liehat_{\geq 2k+2}.
$$
Since $\theta$ is symplectic, $\varrho^\theta(C)$ fixes $\omega$ and we deduce that
$$
\sum_{i=1}^g  \left([a_i,v_i^{(j)}] + [u_i^{(j)},b_i] \right) = 0 \in \Lie_{j+1},
\quad \forall j=k+1,\dots, 2k.
$$
Since $\tau_j^\theta(C) \in H\otimes \Lie_{j+1}$ is given by
$$
\tau_j^\theta(C) = 
\sum_{i=1}^g \left( -b_i \otimes u_i^{(j+1)} + a_i \otimes v_i^{(j+1)}\right),
$$
for all $j=k, \dots, 2k-1$, we conclude that its Lie bracket is zero.
\end{proof}

\vspace{0.5cm}

\section{Infinitesimal Morita homomorphisms}

\label{sec:Morita}

In this section, we define the Lie version $m_k$ of the $k$-th Morita homomorphism $M_k$,
and we show the equivalence between $m_k$ and $M_k$.
Next, we relate $m_k$ to the degree $[k,2k[$ truncation of the total Johnson map
and we deduce some properties for $m_k$.

\subsection{Definition of the infinitesimal Morita homomorphisms}

\label{subsec:infinitesimal_Morita}

For each integer $k\geq 1$, we define a monoid homomorphism
$$
m_k: \cyl[k] \longrightarrow H_3\left(\MLie(\pi/\Gamma_{k+1} \pi)\right)
$$
in a way very similar to the original definition of $M_k$ \cite{Morita,Sakasai},
the bar complex of a group being replaced by the Koszul complex of a Lie algebra. 
Details are as follows and need the following preliminary.

\begin{lemma}
\label{lem:H_trivial}
The linear map 
$$
H_2\left(\MLie(\pi/\Gamma_{n+1} \pi) \right) \longrightarrow H_2\left(\MLie(\pi/\Gamma_{m+1} \pi) \right),
$$
induced by the canonical Lie algebra homomorphism $\MLie(\pi/\Gamma_{n+1} \pi) \to \MLie(\pi/\Gamma_{m+1} \pi)$, 
is trivial for all $n>m$. Besides, the linear map
$$
H_3\left(\MLie(\pi/\Gamma_{n+1} \pi) \right) \longrightarrow H_3\left(\MLie(\pi/\Gamma_{m+1} \pi) \right)
$$
is trivial for all $n\geq 2m$.
\end{lemma}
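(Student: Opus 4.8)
The plan is to fix once and for all a group-like expansion $\theta$ of $\pi$ (one exists, e.g. the group-like expansion relative to a basis, Example \ref{ex:basis}) and use it to replace the filtered Malcev Lie algebras by genuinely \emph{graded} free nilpotent Lie algebras, where both Hopf's theorem and the Igusa--Orr computation are directly available. By Corollary \ref{cor:Lie_nilpotent}, $\theta$ induces for every $\ell \geq 1$ an isomorphism $\MLie(\pi/\Gamma_{\ell+1}\pi) \cong \Lie/\Lie_{\geq \ell+1}$; since these all come from the single map $\theta: \MLie(\pi) \to \Liehat(H)$ of Corollary \ref{cor:Lie} by passing to quotients, they are compatible with the canonical projections. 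Hence the canonical homomorphism $\MLie(\pi/\Gamma_{n+1}\pi) \to \MLie(\pi/\Gamma_{m+1}\pi)$ is identified with the graded projection $p: \Lie/\Lie_{\geq n+1} \to \Lie/\Lie_{\geq m+1}$, and it suffices to show that $H_2(p)$ and $H_3(p)$ vanish under the stated hypotheses. As $p$ is a morphism of graded Lie algebras, the induced maps on homology are degree-preserving, which is what makes the following degree arguments go through.

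For the degree-$2$ statement I would invoke Hopf's theorem in the form (\ref{eq:Hopf_theorem}): it identifies $H_2(\Lie/\Lie_{\geq n+1})$ with $\Lie_{n+1}$, concentrated in internal degree $n+1$, and likewise concentrates $H_2(\Lie/\Lie_{\geq m+1})$ in degree $m+1$. Since $H_2(p)$ preserves the grading and $n+1 > m+1$ whenever $n > m$, the whole of $H_2(\Lie/\Lie_{\geq n+1})$ must land in the degree-$(n+1)$ part of the target, which is zero. Hence $H_2(p) = 0$.

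For the degree-$3$ statement I would feed $p$ into the commutative square of Theorem \ref{th:Igusa-Orr}, taken with $n$ in the role of ``$m$'' and $m$ in the role of ``$k$'' (legitimate since $n \geq 2m \geq m$). That square identifies $H_3(p)$, through the two isomorphisms $\IO$, with the canonical projection $\bigoplus_{d=n+2}^{2n+1}\D_d \to \bigoplus_{d=m+2}^{2m+1}\D_d$, which is the identity on common graded summands and kills the rest. The source degrees form the interval $[n+2,2n+1]$ and the target degrees the interval $[m+2,2m+1]$; these are disjoint precisely when $n+2 > 2m+1$, i.e. when $n \geq 2m$. Under this hypothesis every source summand is annihilated, so the projection is zero and therefore $H_3(p) = 0$.

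The two ingredients doing the real work are Hopf's theorem and the Igusa--Orr computation, both already at our disposal; the only genuine obstacle is the degree-$3$ case, and there the difficulty is entirely in matching the lemma's map with the left-hand vertical arrow of the Igusa--Orr square and then carrying out the interval bookkeeping that pins the threshold at exactly $n \geq 2m$. It is worth noting that this threshold is sharp: at $n = 2m-1$ the two intervals share the degree $2m+1$, so both the argument and the conclusion would fail, which is exactly why the hypothesis reads $n \geq 2m$.
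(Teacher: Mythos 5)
Your proof is correct and follows essentially the same route as the paper: the paper's proof likewise fixes the group-like expansion relative to a basis, invokes Corollary \ref{cor:Lie_nilpotent} to identify the canonical map with the graded projection $\Lie/\Lie_{\geq n+1} \to \Lie/\Lie_{\geq m+1}$, and then concludes from the degree concentrations given by (\ref{eq:Hopf_theorem}) and Theorem \ref{th:Igusa-Orr}. Your write-up simply spells out the interval bookkeeping that the paper leaves implicit.
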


\begin{proof}
Let $(\alpha,\beta)$ be a system of meridians and parallels for $\Sigma$, 
and let $(\gp{a},\gp{b})$ be the corresponding basis of $\pi$ 
which defines a group-like expansion $\theta_{(\gp{a},\gp{b})}$ of $\pi$ (as we saw in Example \ref{ex:basis}).
We also denote by $(a,b)$ the basis of $H$ defined by $(\gp{a},\gp{b})$.
According to Corollary \ref{cor:Lie_nilpotent}, 
$\theta_{(\gp{a},\gp{b})}$ induces for all $m\geq 1$ an isomorphism 
$$
 \Lie/\Lie_{\geq m+1} \stackrel{\simeq}{\longrightarrow} \MLie(\pi/\Gamma_{m+1} \pi)
$$
defined by $\{a_i\} \mapsto \log(\{\gp{a}_i\})$ and $\{b_i\} \mapsto \log(\{\gp{b}_i\})$.
Moreover, this isomorphism is compatible with the reduction maps
$$
\Lie/\Lie_{\geq n+1} \to \Lie/\Lie_{\geq m+1} \quad \hbox{and} \quad 
\MLie(\pi/\Gamma_{n+1} \pi) \to \MLie(\pi/\Gamma_{m+1} \pi)
$$
for all $n\geq m$.
Therefore, the lemma follows from (\ref{eq:Hopf_theorem}) and from Theorem \ref{th:Igusa-Orr}.
\end{proof}

We \emph{choose} $z \in \Lambda^2 \MLie(\pi/\Gamma_{2k+2} \pi)$
such that $\partial_2( z) = -\log(\{\zeta\})$ in the Koszul complex of the Lie algebra $\MLie(\pi/\Gamma_{2k+2} \pi)$. 
Such a $z$ exists since, by the Baker--Campbell--Hausdorff formula, we have
$$ 
\{\zeta\} \in \Gamma_2(\pi/\Gamma_{2k+2} \pi)
\Longrightarrow
\log(\{\zeta\}) \in \Gamma_2 \MLie(\pi/\Gamma_{2k+2} \pi).
$$
We denote by $\{z\}\in \Lambda^2 \MLie(\pi/\Gamma_{2k+1} \pi)$ the reduction of $z$. 
Let $C \in \cyl[k]$ for which we wish to define $m_k(C)$. We have 
$$
\partial_2\big(\{z\} - \varrho_{2k}(C)(\{z\}) \big)
= -\log \{\zeta\} + \varrho_{2k}(C)(\log\{\zeta\}) =0 \ \in \MLie(\pi/\Gamma_{2k+1} \pi).
$$
Thus, $\{z\} - \varrho_{2k}(C)(\{z\})$ is a $2$-cycle which, by Lemma \ref{lem:H_trivial},
is null-homologous. So, we can \emph{choose} a $t_C \in \Lambda^3 \MLie(\pi/\Gamma_{2k+1} \pi)$ such that
$$
\partial_3(t_C) = \{z\} - \varrho_{2k}(C)(\{z\}) \in \Lambda^2 \MLie(\pi/\Gamma_{2k+1} \pi).
$$
Observing that the reduction $\{t_C\} \in \Lambda^3 \MLie(\pi/\Gamma_{k+1} \pi)$ 
is a $3$-cycle since $\varrho_k(C)$ is the identity, we set
$$
m_k(C) := \left[ \{t_C\} \right] \in H_3\left(\MLie(\pi/\Gamma_{k+1} \pi)\right).
$$

\begin{lemma}
The above discussion defines a monoid homomorphism
$$
m_k: \cyl[k] \longrightarrow H_3\left(\MLie(\pi/\Gamma_{k+1} \pi)\right).
$$
\end{lemma}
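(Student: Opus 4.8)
The plan is to verify two things: that the homology class $m_k(C) = [\{t_C\}]$ is independent of the auxiliary choices entering its construction, and that the resulting assignment is additive. Both reduce to the vanishing statements of Lemma \ref{lem:H_trivial}, to the identity $\Ker(\varrho_k) = \cyl[k]$ from (\ref{eq:Ker_rho_k}), and to the elementary fact that a filtration-preserving Lie algebra automorphism induces a chain automorphism of the Koszul complex which commutes with the reduction maps between nilpotent quotients.

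First I would settle well-definedness with respect to $t_C$. If $t_C$ and $t_C'$ both satisfy $\partial_3(\,\cdot\,) = \{z\} - \varrho_{2k}(C)(\{z\})$, then their difference is a $3$-cycle of $\Lambda^3 \MLie(\pi/\Gamma_{2k+1} \pi)$, and $[\{t_C\}] - [\{t_C'\}]$ is its image under the reduction $H_3(\MLie(\pi/\Gamma_{2k+1} \pi)) \to H_3(\MLie(\pi/\Gamma_{k+1} \pi))$; this map is trivial by Lemma \ref{lem:H_trivial} (the case $n = 2k \geq 2m = 2k$), so $m_k(C)$ is unambiguous. Independence of the global choice of $z$ follows along the same lines and accounts for the passage through level $2k+2$: two admissible choices of $z$ differ by a $2$-cycle at level $2k+2$, which becomes null-homologous after reduction to level $2k+1$ by the $H_2$-part of Lemma \ref{lem:H_trivial} (the case $n = 2k+1 > m = 2k$). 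Writing that boundary as $\partial_3 s$ and using $\varrho_k(C) = \Id$ to kill $\{s - \varrho_{2k}(C)(s)\}$ after reduction to level $k+1$, one again lands in the trivial reduction map on $H_3$.

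The core computation is additivity. Given $C,D \in \cyl[k]$, write $\Psi_C := \varrho_{2k}(C)$; since $\varrho_{2k}$ is a monoid homomorphism one has $\Psi_{DC} = \Psi_D \circ \Psi_C$, and since $\Psi_D$ is a chain automorphism one computes
\[
\partial_3\big(t_D + \Psi_D(t_C)\big) = \big(\{z\} - \Psi_D\{z\}\big) + \Psi_D\big(\{z\} - \Psi_C\{z\}\big) = \{z\} - \Psi_{DC}\{z\}.
\]
Hence $t_D + \Psi_D(t_C)$ is an admissible choice of $t_{DC}$, and by the well-definedness just established, $m_k(DC) = [\{t_D\}] + [\{\Psi_D(t_C)\}]$. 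Now the reduction to level $k+1$ intertwines $\Psi_D = \varrho_{2k}(D)$ with $\varrho_k(D)$, which is the identity because $D \in \cyl[k] = \Ker(\varrho_k)$; therefore $\{\Psi_D(t_C)\} = \{t_C\}$ and $m_k(DC) = m_k(D) + m_k(C)$. Finally the trivial cylinder gives $\varrho_{2k} = \Id$, so one may take $t = 0$ and $m_k$ sends the unit to $0$.

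I expect the only genuine subtlety to be the bookkeeping across the three nilpotent levels $2k+2$, $2k+1$ and $k+1$: one must keep each chain at the level where it is defined and apply the two regimes of Lemma \ref{lem:H_trivial} (namely $n > m$ for $H_2$ and $n \geq 2m$ for $H_3$) to the correct reduction maps. Once this is organized, both the independence of choices and the additivity reduce to the single mechanism that a residual cycle at level $2k+1$ dies in homology at level $k+1$, while $\varrho$ becomes the identity after reduction to level $k+1$.
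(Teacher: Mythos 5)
Your proposal is correct and follows essentially the same route as the paper: independence of $t_C$ via the vanishing of $H_3(\MLie(\pi/\Gamma_{2k+1}\pi))\to H_3(\MLie(\pi/\Gamma_{k+1}\pi))$, independence of $z$ via the vanishing of the $H_2$ reduction together with the correction term $-s+\varrho_{2k}(C)(s)$ dying at level $k+1$ because $\varrho_k(C)=\Id$, and additivity via the explicit cochain $t_D+\varrho_{2k}(D)(t_C)$ (the paper writes the same identity with the roles of $C$ and $D$ exchanged). The bookkeeping of nilpotency levels and the two regimes of Lemma \ref{lem:H_trivial} is exactly as in the paper's argument.
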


\begin{proof}
First, assume that a different choice of $t_C$, say $t_C'$, has been done in the above discussion. 
Then, the difference $t_C-t_C' \in \Lambda^3 \MLie(\pi/\Gamma_{2k+1} \pi)$
is a $3$-cycle whose reduction 
$$
\{ t_C-t_C' \} = \{t_C\}-\{t'_C\} \ \in \Lambda^3 \MLie(\pi/\Gamma_{k+1} \pi)
$$
must be null-homologous by Lemma \ref{lem:H_trivial}. So, the choice of $t_C$ is irrelevant.
Next, assume that a different choice of $z$, say $z'$, has been done. 
The difference 
$$
\delta := z - z' \ \in \Lambda^2 \MLie(\pi/\Gamma_{2k+2} \pi)
$$
is then a $2$-cycle whose reduction
$\{\delta\} \in \Lambda^2 \MLie(\pi/\Gamma_{2k+1} \pi)$
must be null-homologous by Lemma \ref{lem:H_trivial}. 
Let $\varepsilon \in \Lambda^3 \MLie(\pi/\Gamma_{2k+1} \pi)$ be such that $\partial_3(\varepsilon) =\{\delta\}$.
The $3$-chain
$$
t'_C := t_C - \varepsilon + \varrho_{2k}(C)(\varepsilon) \ \in \Lambda^3 \MLie(\pi/\Gamma_{2k+1} \pi) 
$$
satisfies
$$
\partial_3(t_C') = \{z'\} - \varrho_{2k}(C)(\{z'\}) \in \Lambda^2 \MLie(\pi/\Gamma_{2k+1} \pi),
$$
and we have
$$
\{t'_C\} = \{t_C\} -\{\varepsilon\} + \varrho_k(C)(\{\varepsilon\}) = \{t_C\}
\ \in \Lambda^3 \MLie(\pi/\Gamma_{k+1} \pi).
$$
We conclude that $\left[ \{t_C\} \right]$ only depends on $C$, 
so that the map $m_k$ is well-defined.

Let $D\in \cyl[k]$ be another homology cylinder, for which 
we choose $t_D \in \Lambda^3 \MLie(\pi/\Gamma_{2k+1} \pi)$ satisfying
$$
\partial_3(t_D) = \{z\} - \varrho_{2k}(D)(\{z\}) \in \Lambda^2 \MLie(\pi/\Gamma_{2k+1} \pi).
$$
Thus, we have $m_k(D) = [\{t_D\}]$. 
The $3$-chain $t := t_C + \varrho_{2k}(C)(t_D) \in \Lambda^3 \MLie(\pi/\Gamma_{2k+1} \pi)$ satisfies
$$
\partial_3(t) = \partial_3(t_C) + \varrho_{2k}(C)\left( \partial_3(t_D)\right)
=  \{z\} - \varrho_{2k}(C \circ D)(\{z\}).
$$
Therefore, we have
$$
m_k(C\circ D)  = [\{t\}] = [\{t_C\} + \varrho_{k}(C)(\{t_D\})] = m_k(C) + m_k(D)
$$
and we conclude that the map $m_k$ is a monoid homomorphism.
\end{proof}

\subsection{Pickel's isomorphism}

Let $G$ be a finitely generated torsion-free nilpotent group.
Let us recall how Pickel relates the homology of $G$ to the homology of $\MLie(G)$ in \cite{Pickel}.
First, he shows that $\widehat{\Q}[G]$ is flat as a $\Q[G]$-module
and that, similarly, $\Uhat(\MLie(G))$ is flat as an $\U(\MLie(G))$-module.
Next, he deduces from \cite{Jennings} that the inclusion 
$\MLie(G) \subset \widehat{\Q}[G]$ induces an algebra isomorphism
$$
\Uhat(\MLie(G)) \simeq \widehat{\Q}[G].
$$
Finally, he considers, for all $n\geq 1$, the following sequence of isomorphisms:
$$
\xymatrix{
{\Tor_n^{\Q[G]}(\Q,\Q)} \ar[r]^-\simeq &
{\Tor_n^{\widehat{\Q}[G]}(\Q,\Q) \simeq   \Tor_n^{\Uhat(\MLie(G))}(\Q,\Q)} &
 {\Tor_n^{\U(\MLie(G))}(\Q,\Q)} \ar[l]_-\simeq  \\
{H_n(G)}  \ar@{=}[u] \ar@{-->}_-{\P} [rr] & & {H_n(\MLie(G)).} \ar@{=}[u]
}
$$
In dimension $n=3$ and for $G= \pi/\Gamma_{k+1} \pi$, 
Pickel's isomorphism links the $k$-th Morita homomorphism to its infinitesimal version.

\begin{proposition}
\label{prop:original_to_infinitesimal}
The following diagram is commutative:
$$
\xymatrix{
{\cyl[k]} \ar[r]^-{M_k} \ar[dr]_-{m_k} & {H_3(\pi/\Gamma_{k+1} \pi)} \ar[d]^-{\P}_-\simeq  \\
& {H_3\left(\MLie(\pi/\Gamma_{k+1} \pi)\right)}.  
}
$$
\end{proposition}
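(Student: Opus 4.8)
The plan is to lift Pickel's isomorphism to the chain level and then transport, verbatim, the construction of $M_k$ onto that of $m_k$. Recall that $M_k$ is obtained from exactly the recipe of \S\ref{subsec:infinitesimal_Morita}, with the Koszul complex $\Lambda^*\MLie(\pi/\Gamma_{j+1}\pi)$ replaced everywhere by the (normalized) bar complex $B_*(\pi/\Gamma_{j+1}\pi)$, the element $\log\{\zeta\}$ replaced by the $1$-chain $\{\zeta\}$, and the infinitesimal action $\varrho$ replaced by $\rho$. Concretely, a cylinder $C\in\cyl[k]$ yields a group-level $2$-chain $z\in B_2(\pi/\Gamma_{2k+2}\pi)$ with $\partial_2 z=-\{\zeta\}$, then a $3$-chain $t_C\in B_3(\pi/\Gamma_{2k+1}\pi)$ with $\partial_3 t_C=\{z\}-\rho_{2k}(C)_*(\{z\})$, and finally $M_k(C)=\big[\{t_C\}\big]\in H_3(\pi/\Gamma_{k+1}\pi)$.

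First I would realize $\P$ by a chain map. Pickel's isomorphism arises from the chain of $\Tor$-isomorphisms recalled before the proposition, all of which are natural in the finitely generated torsion-free nilpotent group $G$. By the comparison theorem for free resolutions of $\Q$ over the completed Hopf algebra $\widehat{\Q}[G]\cong\Uhat(\MLie(G))$, these isomorphisms are induced by a chain map
$$
\lambda_*:B_*(G)\longrightarrow\Lambda^*\MLie(G),
$$
natural with respect to group homomorphisms, which I normalize (adjusting by a chain homotopy) so that $\lambda_1(\{\gp{g}\})=\log(\gp{g})$ for all $\gp{g}\in G$; this is compatible with inducing $\P$ because $\P$ acts on $H_1$ precisely as the abelianized logarithm $G_{ab}\otimes\Q\to\MLie(G)/\overline{[\MLie(G),\MLie(G)]}$. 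Applying naturality to the reductions $\pi/\Gamma_{n+1}\pi\to\pi/\Gamma_{m+1}\pi$ and to the automorphisms $\rho_{2k}(C)$ (whose Malcev Lie version is $\varrho_{2k}(C)=\MLie(\rho_{2k}(C))$), I obtain that $\lambda_*$ commutes with all the reductions $\{-\}$ and satisfies $\lambda_*\circ\rho_{2k}(C)_*=\varrho_{2k}(C)\circ\lambda_*$.

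Next I would transport the data. Since $m_k$ is independent of the auxiliary choices (as proved above), I may compute it using $\lambda_2(z)$ as the Koszul $2$-chain and $\lambda_3(t_C)$ as the Koszul $3$-chain. These are legitimate: from $\partial_2 z=-\{\zeta\}$ and $\lambda_1(\{\zeta\})=\log\{\zeta\}$ one gets $\partial_2\lambda_2(z)=\lambda_1(\partial_2 z)=-\log\{\zeta\}$; and from the intertwining together with compatibility with reduction one gets
$$
\partial_3\lambda_3(t_C)=\lambda_2(\partial_3 t_C)=\lambda_2\big(\{z\}-\rho_{2k}(C)_*(\{z\})\big)=\{\lambda_2(z)\}-\varrho_{2k}(C)\big(\{\lambda_2(z)\}\big).
$$
Hence
$$
m_k(C)=\big[\{\lambda_3(t_C)\}\big]=\big[\lambda_3(\{t_C\})\big]=\P\big([\{t_C\}]\big)=\P(M_k(C)),
$$
where the middle equalities use naturality of $\lambda_*$ under the reduction to $\pi/\Gamma_{k+1}\pi$ and the fact that $\lambda_*$ induces $\P$ on $H_3$. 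This is exactly the asserted commutativity.

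The main obstacle is the second step: producing a single natural chain-level model of $\P$ that is simultaneously compatible with the reductions, intertwines $\rho$ with $\varrho$, and is normalized so that $\lambda_1=\log$. I would carry this out inside the completed algebra, where $\widehat{\Q}[G]$ and $\Uhat(\MLie(G))$ are identified and both the bar and the Koszul complexes furnish free resolutions of $\Q$; Pickel's flatness statements guarantee that passing to the completion does not change $\Tor$, so a degree-$1$-normalized comparison map between these resolutions, which exists and is unique up to natural homotopy by the comparison theorem, induces $\P$ and enjoys the required naturality. It suffices to work on the nilpotent quotients $\pi/\Gamma_{j+1}\pi$ with $j\le 2k$, where $\MLie(\pi/\Gamma_{j+1}\pi)$ is finite-dimensional and nilpotent, so that the degree-$1$ normalization is achieved by finite Baker--Campbell--Hausdorff corrections, exactly as in the correspondence between $\{\zeta\}$ and $\log\{\zeta\}$.
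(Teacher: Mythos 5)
Your overall strategy is the same as the paper's: realize Pickel's isomorphism by a chain map from the bar complex to the Koszul complex that is functorial in the nilpotent group and equal to the logarithm in degree $1$, and then transport the defining data of $M_k$ (the $2$-chain $z$, the $3$-chain $t_C$, the reductions, and the action of $\rho_{2k}(C)$) term by term onto the defining data of $m_k$. Your transport computation is correct and is in fact spelled out in more detail than in the paper.

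The gap is in the one place where real work is needed: the existence of a single chain map $\lambda_*$ that simultaneously (i) induces $\P$, (ii) is natural in $G$ (so that it commutes both with the surjections $\pi/\Gamma_{n+1}\pi \to \pi/\Gamma_{m+1}\pi$ and with the automorphisms $\rho_{2k}(C)$, intertwining them with $\varrho_{2k}(C)$), and (iii) satisfies $\lambda_1(\{\gp{g}\})=\log(\gp{g})$ on the nose. You justify this by the comparison theorem for free resolutions, but that theorem only produces, for each fixed $G$, a chain map unique up to homotopy; it gives no functoriality in $G$, and ``adjusting by a chain homotopy'' to force the degree-$1$ normalization is done group by group and may destroy whatever naturality you had. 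Since every step of your transport argument (the identities $\lambda_2(\{z\})=\{\lambda_2(z)\}$ and $\lambda_2\circ\rho_{2k}(C)_*=\varrho_{2k}(C)\circ\lambda_2$, and the final reduction to $\pi/\Gamma_{k+1}\pi$) rests precisely on this naturality, the assertion cannot be left at the level of ``exists and is unique up to natural homotopy by the comparison theorem.'' The paper closes this gap by invoking the explicit homotopy equivalence constructed by Suslin and Wodzicki between the induced resolutions over $\widehat{\Q}[G]=\Uhat(\MLie(G))$: it is built from a contracting homotopy of the completed Koszul resolution defined via the Poincar\'e--Birkhoff--Witt isomorphism, and is therefore functorial in $G$ by construction and given in degree $1$ by $1\otimes \gp{g}\mapsto \sum_{n\geq 1}\frac{1}{n!}\log(\gp{g})^{n-1}\otimes\log(\gp{g})$, which after tensoring down to trivial coefficients is exactly the normalization $\lambda_1=\log$ you need. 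To complete your argument you must either cite such an explicit functorial construction or supply a natural contracting homotopy of the target resolution so that an acyclic-models-style induction produces $\lambda_*$ naturally.
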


\begin{proof}
We need to make Pickel's isomorphism explicit at the chain level.
Let $G$ be a finitely generated torsion-free nilpotent group, for which  we set
$$
R:= \widehat{\Q}[G] = \widehat{\U}(\MLie(G)).
$$
Let $B \to \Q \to 0$ be a free resolution of $\Q$ as a $\Q[G]$-module,
and let $K \to \Q \to 0$ be a free resolution of $\Q$ as an  $\U(\MLie(G))$-module.
Then, by tensoring and using that  $\widehat{\Q}[G]$ is flat as $\Q[G]$-module,   
we get a free resolution of $\Q$ as a $\widehat{\Q}[G]$-module:
$$
\widehat{\Q}[G] \otimes_{\Q[G]} B \longrightarrow \Q \longrightarrow 0.
$$
Similarly, we obtain a free resolution of $\Q$ as an $\widehat{\U}(\MLie(G))$-module:
$$
\widehat{\U}(\MLie(G)) \otimes_{\U(\MLie(G))} K \longrightarrow \Q \longrightarrow 0.
$$
Thus, there exists a homotopy equivalence of chain complexes over the ring $R$
\begin{equation}
\label{eq:homotopy_equivalence}
f: \widehat{\Q}[G] \otimes_{\Q[G]} B \longrightarrow \widehat{\U}(\MLie(G)) \otimes_{\U(\MLie(G))} K
\end{equation}
(which is unique up to homotopy). Therefore, we get a homotopy equivalence
\begin{equation}
\label{eq:trivial_coef}
\Q \otimes_{R} f:
\Q \otimes_{\Q[G]} B = \Q \otimes_R \left(R \otimes_{\Q[G]} B\right)
\longrightarrow
\Q \otimes_R  \left( R \otimes_{\U(\MLie(G))} K\right) = \Q \otimes_{\U(\MLie(G))} K
\end{equation}
and Pickel's isomorphism is
$$
P = H_n\left( \Q \otimes_{R} f \right):
H_n(G)= H_n(\Q \otimes_{\Q[G]} B) \longrightarrow 
H_n(\Q \otimes_{\U(\MLie(G))} K) = H_n(\MLie(G)).
$$
Let us now assume that $B$ is the bar resolution for $\Q[G]$:
$$
\xymatrix{
\cdots \ar[r] & B_2 \ar[r]^-{\partial_2} & B_1 \ar[r]^-{\partial_1} 
& B_0 \ar[r]^-{\varepsilon} & \Q \ar[r] & 0 
}
$$
where $B_n = \Q[G] \cdot G^{\times n}$,
$\varepsilon$ is the augmentation of $\Q[G]$ and
$$
\partial_n\left(\gp{g}_1| \cdots | \gp{g}_n  \right) = \gp{g}_1 \cdot (\gp{g}_2 | \cdots | \gp{g}_n)
+ \sum_{i=1}^{n-1} (-1)^i \cdot (\gp{g}_1 | \cdots |\gp{g}_i \gp{g}_{i+1}| \cdots |\gp{g}_n)
+ (-1)^n \cdot (\gp{g}_1 |\cdots| \gp{g}_{n-1}). 
$$
We also assume that $K$ is the Koszul resolution for $\U(\MLie(G))$:
$$
\xymatrix{
\cdots \ar[r] & K_2 \ar[r]^-{\partial_2} & K_1 \ar[r]^-{\partial_1} 
& K_0 \ar[r]^-{\eta} & \Q \ar[r] & 0 
}
$$
where $K_n = \U(\MLie(G)) \otimes \Lambda^n \MLie(G)$,
$\eta$ is the augmentation of $\U(\MLie(G))$ and
\begin{eqnarray*}
\partial_n\left(1 \otimes g_1 \wedge \cdots \wedge g_n \right) &=& 
\sum_{i=1}^n (-1)^{i+1} g_i \otimes g_1 \wedge \cdots \widehat{g_i} \cdots \wedge g_n \\
&& + \sum_{1 \leq i<j \leq n} (-1)^{i+j} \otimes [g_i,g_j] 
\wedge g_1 \wedge \cdots \widehat{g_i} \cdots \widehat{g_j} \cdots \wedge g_n.
\end{eqnarray*}
For these choices of resolutions $B$ and $K$, Suslin and Wodzicki construct in \cite[\S 5]{SW} 
a homotopy equivalence $f$ of the form (\ref{eq:homotopy_equivalence}).
This chain map $f$ is derived from a contracting homotopy of the free resolution $\widehat{\U}(\MLie(G)) \otimes_{\U(\MLie(G))} K$ of $\Q$,
which is itself defined by means of the Poincar\'e--Birkhoff--Witt isomorphism.
Besides the fact that it is the identity of $R$ in degree $0$, 
we record two properties of the homotopy equivalence $f$: first, it is functorial in $G$ and, second,
it is given in degree $1$ by
$$
f(1 \otimes g) = \sum_{n\geq 1} \frac{1}{n!} \log(g)^{n-1} \otimes \log(g).
$$
We deduce that there exists a homotopy equivalence of the form (\ref{eq:trivial_coef})
between the bar complex of $G$ with trivial coefficients and the Koszul complex of $\MLie(G)$ 
with trivial coefficients, which is functorial in $G$ and is the log map in degree $1$.
This is exactly what we need to conclude that Morita's definition of $M_k$ \cite{Morita,Sakasai}
corresponds to our definition of $m_k$ through Pickel's isomorphism.
\end{proof}

\subsection{Properties of the infinitesimal Morita homomorphisms}

The $k$-th infinitesimal Morita homomorphism corresponds (up to a minus sign)
to the degree $[k,2k[$ truncation of the total Johnson map, relative to a symplectic expansion.

\begin{theorem}
\label{th:Kawazumi_to_Morita}
Let $\theta$ be a symplectic expansion of $\pi$.
Then, the following diagram is commutative:
$$
\xymatrix{
{\cyl[k]} \ar[d]_-{\tau^\theta_{[k,2k[}} \ar[r]^-{-m_k} & 
{H_3(\MLie(\pi/\Gamma_{k+1} \pi))} \ar[r]^-{\theta_*}_-\simeq & {H_3\left(\Lie/\Lie_{\geq k+1}\right)} \\
{\displaystyle \bigoplus_{j=k}^{2k-1} \D_{j+2}(H)} & & \ar[ll]_-\simeq^-{\eta} 
\ {\displaystyle  \bigoplus_{j=k}^{2k-1} \T_j(H)} \ar[u]_-{\Phi}^-{\simeq} 
 }
$$
Here, $\theta_*$ is induced by the Lie algebra isomorphism 
$\theta: \MLie(\pi/\Gamma_{k+1} \pi) \to \Lie/\Lie_{\geq k+1}$  from Corollary \ref{cor:Lie_nilpotent}.
\end{theorem}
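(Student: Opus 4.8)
The plan is to transport the entire construction of $m_k$ to the free nilpotent Lie algebra along the symplectic expansion $\theta$, and there to produce an explicit primitive of the relevant $2$-cycle built directly out of fission. Since $\theta$ conjugates $\varrho_{2k}$ to $\varrho^\theta_{2k}$ and is compatible with all the truncation maps (Corollary \ref{cor:Lie_nilpotent}), the class $\theta_* m_k(C)$ is computed by the same recipe as $m_k(C)$ but performed inside the Koszul complexes of the $\Lie/\Lie_{\geq \bullet}$; and because $m_k$ is independent of the choices involved, I am free to make convenient ones. As $\theta$ is symplectic, $\theta(\log\{\zeta\})=-\omega$ is purely of degree $2$, so the primitive of $-\theta(\log\{\zeta\})=\omega$ may be taken to be $w:=\sum_i a_i\wedge b_i\in\Lambda^2 H$, which satisfies $\partial_2 w=\omega$.

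First I would compute the $2$-cycle $\xi_C:=\{w\}-\varrho^\theta_{2k}(C)(\{w\})\in\Lambda^2(\Lie/\Lie_{\geq 2k+1})$. Writing $\varrho^\theta(C)(a_i)\equiv a_i+\sum_{l=k+1}^{2k}u_i^{(l)}$ and $\varrho^\theta(C)(b_i)\equiv b_i+\sum_{l=k+1}^{2k}v_i^{(l)}$ modulo $\Liehat_{\geq 2k+1}$, exactly as in the proof of Proposition \ref{prop:Kawazumi_diagrammatic}, expanding $\sum_i \varrho^\theta(C)(a_i)\wedge\varrho^\theta(C)(b_i)$ gives
$$
\xi_C=-\sum_{j=k}^{2k-1}\sum_i\big(a_i\wedge v_i^{(j+1)}-b_i\wedge u_i^{(j+1)}\big)\ -\ \xi_C^{\mathrm{high}},
$$
where $\xi_C^{\mathrm{high}}$ gathers the terms $u_i^{(l)}\wedge v_i^{(l')}$, all of total degree $\geq 2k+2$. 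The first sum is supported in total degrees $[k+2,2k+1]$; denote it by $\xi_C^{\mathrm{low}}$, so that $\xi_C=\xi_C^{\mathrm{low}}-\xi_C^{\mathrm{high}}$ is the splitting into degrees $\leq 2k+1$ and $\geq 2k+2$.

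The key step is to identify $\xi_C^{\mathrm{low}}$ as a fission boundary. Let $T=\sum_{j=k}^{2k-1}T_j\in\bigoplus_j\T_j(H)$ be defined by $\eta(T)=\tau^\theta_{[k,2k[}(C)$. By Proposition \ref{prop:Kawazumi_diagrammatic} one has $\eta(T_j)=\tau_j^\theta(C)=\sum_i\big(a_i\otimes v_i^{(j+1)}-b_i\otimes u_i^{(j+1)}\big)$, and Lemma \ref{lem:d_3} computes $\partial_3\phi(T_j)=\sum_v\col(v)\wedge\comm((T_j)_v)$, which is precisely the image of $\eta(T_j)$ under $\otimes\mapsto\wedge$; thus $\partial_3\phi(T_j)=\sum_i\big(a_i\wedge v_i^{(j+1)}-b_i\wedge u_i^{(j+1)}\big)$. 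Summing over $j$ yields $\xi_C^{\mathrm{low}}=-\partial_3\phi(T)$ in $\Lambda^2(\Lie/\Lie_{\geq 2k+1})$. (Here $\phi(T)$ survives the reduction modulo $\Lie_{\geq 2k+1}$: for $T_j$ the three commutator factors of a fission term partition $j+2\leq 2k+1$ leaves into nonempty groups, so each has length $\leq j\leq 2k-1$.)

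Finally I would assemble the primitive. Since $\xi_C$ is a boundary (it is null-homologous, exactly as in the definition of $m_k$, by Lemma \ref{lem:H_trivial}) and $\xi_C^{\mathrm{low}}=\partial_3(-\phi(T))$ is a boundary, the homogeneous piece $\xi_C^{\mathrm{high}}$ is itself a boundary, say $\partial_3 s=\xi_C^{\mathrm{high}}$ with $s$ supported in degrees $\geq 2k+2$. Then $t:=-\phi(T)+s$ is an admissible choice in the free-Lie construction, whence $\theta_* m_k(C)=[\{t\}]$. Reducing to level $k+1$, both $\{t\}$ and $\{\phi(T)\}$ are cycles (the latter by Proposition \ref{prop:fission}, as $\ideg\geq k$), so $\{s\}$ is a cycle whose class lies in degrees $\geq 2k+2$, where $H_3(\Lie/\Lie_{\geq k+1})$ vanishes by Igusa--Orr (Theorem \ref{th:Igusa-Orr}); hence $[\{s\}]=0$. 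Therefore $\theta_* m_k(C)=-[\{\phi(T)\}]=-\Phi(T)$, that is $\Phi\,\eta^{-1}\,\tau^\theta_{[k,2k[}(C)=\theta_*\big(-m_k(C)\big)$, which is the asserted commutativity.

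I expect the main obstacle to be the careful bookkeeping across the three truncation levels $2k+2$, $2k+1$ and $k+1$, and in particular the justification that the degree $\geq 2k+2$ correction $s$ contributes trivially to the homology class. This is the only genuinely non-formal point, and it is exactly where the degree bound $j\leq 2k-1$ on the diagrams — equivalently, the vanishing of $H_3(\Lie/\Lie_{\geq k+1})$ above degree $2k+1$ — must be invoked.
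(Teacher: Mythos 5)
Your proposal is correct and follows essentially the same route as the paper's proof: the same choice of primitive $w=\sum_i a_i\wedge b_i$, the same splitting of $\{w\}-\varrho^\theta_{2k}(C)(\{w\})$ into the fission boundary $-\partial_3\phi(T)$ (via Lemma \ref{lem:d_3} and Proposition \ref{prop:Kawazumi_diagrammatic}) plus a degree $\geq 2k+2$ cycle, and the same appeal to the Igusa--Orr degree concentration to discard the high-degree correction. The only quibble is the sign of the correction term $s$ in $t=-\phi(T)+s$ (it should enter with the opposite sign given your decomposition $\xi_C=\xi_C^{\mathrm{low}}-\xi_C^{\mathrm{high}}$), but this is immaterial since $[\{s\}]=0$ in $H_3(\Lie/\Lie_{\geq k+1})$ anyway.
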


\begin{proof}
Let $m_k^\theta: \cyl[k] \to H_3\left(\Lie/\Lie_{\geq k+1}\right)$ be the composition $\theta_* \circ m_k$,
and let $C\in \cyl[k]$. We are asked to show that
\begin{equation}
\label{eq:goal}
-m_k^\theta(C) = \Phi \eta^{-1} \tau^\theta_{[k,2k[}(C).
\end{equation}
Since the isomorphism $\theta: \MLie(\pi/\Gamma_{m+1} \pi) \to \Lie/\Lie_{\geq m+1}$ 
is compatible with the canonical projections 
$$
\Lie/\Lie_{\geq n+1} \to \Lie/\Lie_{\geq m+1}
\quad \hbox{and} \quad 
\MLie(\pi/\Gamma_{n+1} \pi) \to \MLie(\pi/\Gamma_{m+1} \pi)
$$
for all $n\geq m$, we can compute $m_k^\theta$  
directly from $\varrho^\theta_{2k} \in \Aut(\Lie/\Lie_{\geq 2k+1})$ in the following way. 
First, we set
$$
w:= \sum_{i=1}^g a_i \wedge b_i \ \in \Lambda^2 \Lie
$$
and we denote by $\{w\}$ its reduction to $\Lambda^2( \Lie/\Lie_{\geq 2k+1})$.
The $2$-chain
$$
\{w\} - \varrho_{2k}^\theta(C)(\{w\}) \in  \Lambda^2( \Lie/\Lie_{\geq 2k+1})
$$
is a $2$-cycle and, so, is a boundary by Lemma \ref{lem:H_trivial}.
Let $t_C  \in  \Lambda^3(\Lie/\Lie_{\geq 2k+1})$ be one of its antecedents by $\partial_3$. 
Then, the definition of $m_k$ given in \S \ref{subsec:infinitesimal_Morita} 
implies that
$$
m_k^\theta(C) = [\{t_C\}] \in  H_3\left(\Lie/\Lie_{\geq k+1}\right)
$$
where $\{t_C\}$ is the reduction of $t_C$ modulo $\Lie_{\geq k+1}/\Lie_{\geq 2k+1}$.

Next, since $C$ belongs to $\cyl[k]$, we can write
\begin{equation}
\label{eq:formulas_for_rho_2k}
\varrho^\theta_{2k}(C)(\{a_i\}) =  \{a_i\} + \sum_{j=k+1}^{2k} \left\{u_i^{(j)}\right\} 
\quad \hbox{and} \quad
\varrho^\theta_{2k}(C)(\{b_i\}) =  \{b_i\} + \sum_{j=k+1}^{2k} \left\{v_i^{(j)}\right\}
\end{equation}
where $u_i^{(j)}, v_i^{(j)} \in \Lie_{j}$ for all $j=k+1,\dots, 2k$.
Then, by  definition of $\tau^\theta$, we have 
$$
\tau^\theta_{[k,2k[}(C) =
\sum_{i=1}^g \left( -b_i \otimes \sum_{j=k}^{2k-1} u_i^{(j+1)} 
+ a_i \otimes \sum_{j=k}^{2k-1}  v_i^{(j+1)}\right).
$$
In the sequel, we set $a_C := \eta^{-1}  \tau^\theta_{[k,2k[}(C)$
and we consider the canonical embedding
$$
\digamma: \bigoplus_{j=k}^{2k-1} H\otimes \Lie_{j+1} \longrightarrow \Lambda^2( \Lie/ \Lie_{\geq 2k+1}),
\ u\otimes v \longmapsto \{u\} \wedge \{v\}.
$$ 
It follows from Lemma \ref{lem:d_3} and from (\ref{eq:eta}) that
$$
\digamma \eta(a_C) = \partial_3 \phi(a_C)
$$
where $\phi(a_C) \in \Lambda^3 (\Lie /\Lie_{\geq 2k+1})$ is obtained 
from the linear combination of trees $a_C$ by fission.
On the other hand, a direct computation based on (\ref{eq:formulas_for_rho_2k}) gives
$$
-\digamma  \tau^\theta_{[k,2k[}(C) = \{w\} - \varrho_{2k}^\theta(C)(\{w\}) + \varepsilon 
\quad \in \Lambda^2( \Lie/ \Lie_{\geq 2k+1})
$$
where $\varepsilon$ is the following $2$-cycle of degree  at least $2k+2$:
$$
\varepsilon := \sum_{i=1}^g \sum_{h=k}^{2k-1} \sum_{j=k}^{2k-1}
u_i^{(h+1)} \wedge v_i^{(j+1)}. 
$$ 
Since $H_2( \Lie/ \Lie_{\geq 2k+1})$ is concentrated in degree $2k+1$ according to (\ref{eq:Hopf_theorem}),
there exists an $e\in \Lambda^3( \Lie/ \Lie_{\geq 2k+1})$ of degree at least $2k+2$ such that $\partial_3(e)=\varepsilon$.
Then, we have
$$
\partial_3 \left(-\phi(a_C) - e\right) = -\digamma \eta (a_C) - \varepsilon  = - \digamma \tau^\theta_{[k,2k[}(C) - \varepsilon 
=  \{w\} - \varrho_{2k}^\theta(C)(\{w\}),
$$
which shows that $\left(-\phi(a_C)-e\right)$ can play the role of $t_C$. We deduce that
$$
m_k^\theta(C) = [\{ - \phi(a_C) - e \} ] =  [-\{ \phi(a_C) \} - \{e \}].
$$
But, $\{e \} \in \Lambda^3(\Lie/  \Lie_{\geq k+1} )$ is a $3$-cycle
(since $\{\varepsilon\} \in \Lambda^2(\Lie/  \Lie_{\geq k+1} )$ vanishes) of degree  at least $2k+2$,
and so, is null-homologous (since $H_3(\Lie /  \Lie_{\geq k+1} )$
is concentrated in degrees $[k+2,2k+1]$ by Theorem \ref{th:Igusa-Orr}).
Thus, we obtain (\ref{eq:goal}).
\end{proof}

As applications of Theorem \ref{th:Kawazumi_to_Morita}, 
we recover two important properties for the homomorphism $M_k$ by proving them for $m_k$.
First, the  $k$-th Morita homomorphism $M_k$ 
is known to determine the $k$-th Johnson homomorphism \cite{Morita}. 
To obtain a similar fact for $m_k$, 
we need the central extension of Lie algebras
\begin{equation}
\label{eq:extension_MLie}
0 \to  \Lie_{k+1}(H) \to \MLie(\pi/\Gamma_{k+2} \pi) \to \MLie(\pi/\Gamma_{k+1} \pi) \to 1
\end{equation}
whose first map is the composition 
$$
\Lie_{k+1}(H) \stackrel{\simeq}{\longrightarrow} 
(\Gamma_{k+1}\pi/ \Gamma_{k+2} \pi) \otimes \Q 
\stackrel{\log \otimes \Q}{\longrightarrow} \MLie(\pi/\Gamma_{k+2} \pi).
$$

\begin{corollary}
\label{cor:Morita_to_Johnson}
We have the following commutative diagram
$$
\xymatrix{
\cyl[k] \ar[r]^-{m_k} \ar[rd]_-{ -\tau_k} & 
H_3\left(\MLie(\pi/\Gamma_{k+1} \pi)\right) \ar[d]^-{d^2_{3,0}}\\
& H\otimes \Lie_{k+1}(H)
}
$$
where the homomorphism $d^2_{3,0}$ is the second-stage differential 
of the Hochschild--Serre spectral sequence associated to (\ref{eq:extension_MLie}).
\end{corollary}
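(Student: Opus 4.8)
The plan is to deduce the statement from the relation between the fission map $\Phi$ and the map $\eta$ established in the proof of Theorem \ref{th:fission_iso}, after transporting the extension (\ref{eq:extension_MLie}) to the free nilpotent Lie algebra by means of a symplectic expansion. First I would fix a symplectic expansion $\theta$ of $\pi$ and, using the isomorphisms $\theta:\MLie(\pi/\Gamma_{m+1}\pi)\to\Lie/\Lie_{\geq m+1}$ of Corollary \ref{cor:Lie_nilpotent}, observe that (\ref{eq:extension_MLie}) is carried to the central extension $0\to\Lie_{k+1}(H)\to\Lie/\Lie_{\geq k+2}\to\Lie/\Lie_{\geq k+1}\to 1$, which is the extension (\ref{eq:extension}) of Theorem \ref{th:Igusa-Orr} with $k$ replaced by $k+1$. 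Since $\theta$ is the identity at the graded level, its left-hand component $\Lie_{k+1}(H)\to\Lie_{k+1}(H)$ is the identity, so $\theta$ is an isomorphism of short exact sequences; by naturality of the Hochschild--Serre spectral sequence it induces an isomorphism of spectral sequences that is $\theta_*$ on $E^2_{3,0}$ and the identity on $E^2_{1,1}\simeq H\otimes\Lie_{k+1}(H)$, and is therefore compatible with $d^2_{3,0}$. Hence it suffices to prove $d^2_{3,0}\big(m_k^\theta(C)\big)=-\tau_k(C)$ for $C\in\cyl[k]$, where $m_k^\theta:=\theta_*\circ m_k$ is the map appearing in the proof of Theorem \ref{th:Kawazumi_to_Morita}.

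Next I would combine the three available descriptions of the objects in play. Theorem \ref{th:Kawazumi_to_Morita} yields $m_k^\theta(C)=-\Phi\,\eta^{-1}\tau^\theta_{[k,2k[}(C)$, an element of $H_3(\Lie/\Lie_{\geq k+1})$ supported in degrees $[k+2,2k+1]$. The restatement of the Igusa--Orr computation in Theorem \ref{th:Igusa-Orr}, applied with $k+1$ in place of $k$, expresses $d^2_{3,0}$ as the composite $\IO_{k+2}\circ(\text{canonical projection onto }H_3(\Lie/\Lie_{\geq k+1})_{k+2})$. Because $\Phi$ raises the internal degree by $+2$, the only term of $-\Phi\,\eta^{-1}\tau^\theta_{[k,2k[}(C)$ surviving this projection is $-\Phi_k\,\eta_k^{-1}\big(\tau^\theta_k(C)\big)$, the image under $\Phi_k$ of the degree-$k$ tree $\eta_k^{-1}\big(\tau^\theta_k(C)\big)\in\T_k(H)$. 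Applying $\IO_{k+2}$ and invoking Claim \ref{claim:Phi_eta}, which gives $\IO_{k+2}\circ\Phi_k=\eta_k$, I obtain $d^2_{3,0}\big(m_k^\theta(C)\big)=-\eta_k\,\eta_k^{-1}\big(\tau^\theta_k(C)\big)=-\tau^\theta_k(C)$.

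Finally I would identify $\tau^\theta_k(C)$ with the genuine $k$-th Johnson homomorphism: Theorem \ref{th:Kawazumi_to_Johnson}, in the form extended to homology cylinders in \S \ref{subsec:extension}, asserts $\tau^\theta_k(C)=\tau_k(C)$ for $C\in\cyl[k]$, so that $d^2_{3,0}\big(m_k^\theta(C)\big)=-\tau_k(C)$ and the triangle commutes. The heart of the argument is really Theorem \ref{th:Kawazumi_to_Morita} together with Claim \ref{claim:Phi_eta}, both already available; the main point requiring care is the bookkeeping of gradings, namely verifying that $\Phi$ raises degree by exactly $+2$ so that $d^2_{3,0}$ detects precisely the degree-$k$ part $\tau^\theta_k(C)$ of $\tau^\theta_{[k,2k[}(C)$, and that the identification of the two spectral sequences through $\theta$ is compatible with the chosen isomorphism $E^2_{1,1}\simeq H\otimes\Lie_{k+1}(H)$. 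Once these identifications are pinned down, the computation is immediate.
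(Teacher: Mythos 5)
Your proof is correct and follows essentially the same route as the paper: transport the extension to $\Lie/\Lie_{\geq k+2}\to\Lie/\Lie_{\geq k+1}$ via a symplectic expansion, invoke naturality of the Hochschild--Serre spectral sequence, and then compute $d^2_{3,0}\circ m_k^\theta$ using the relation $d^2_{3,0}\circ\Phi=\eta$ from the proof of Theorem \ref{th:fission_iso} together with Theorems \ref{th:Kawazumi_to_Morita} and \ref{th:Kawazumi_to_Johnson}. Your explicit degree bookkeeping (only the degree $k+2$ component survives $d^2_{3,0}$, isolating $\tau^\theta_k$) makes precise a step the paper leaves implicit, but the argument is the same.
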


\begin{proof}
Let $\theta$ be a symplectic expansion of $\pi$.  We consider the diagram
\begin{equation}
\label{eq:two_central_extensions}
\xymatrix{
0 \ar[r] &  {\Lie_{k+1}(H)} \ar[r] \ar@{=}[d] & {\MLie(\pi/\Gamma_{k+2} \pi)} \ar[d]^-{\theta}_-\simeq \ar[r] 
& {\MLie(\pi/\Gamma_{k+1} \pi)} \ar[r] \ar[d]^-{\theta}_-\simeq  & 1 \\
0 \ar[r] &  {\Lie_{k+1}(H)} \ar[r] & {\Lie/\Lie_{\geq k+2}} \ar[r] & {\Lie/\Lie_{\geq k+1}} \ar[r] & 1
}
\end{equation}
whose vertical isomorphisms $\theta$ are given by Corollary \ref{cor:Lie_nilpotent}.
The commutativity of the diagram is a consequence of the fact that these isomorphisms are induced
by the Lie algebra isomorphism $\theta:\MLie(\pi) \to \Liehat(H)$ from Corollary \ref{cor:Lie}.
By naturality of the Hochschild--Serre spectral sequence, the corollary is equivalent to 
the commutativity of the  diagram
$$
\xymatrix{
\cyl[k] \ar[r]^-{m_k^\theta} \ar[rd]_-{-\tau_k} & 
H_3\left(\Lie/\Lie_{\geq k+1}\right) \ar[d]^-{d^2_{3,0}}\\
& H\otimes \Lie_{k+1}(H),
}
$$
where $m_k^\theta$ denotes $\theta_*\circ m_k$ 
and where $d^2_{3,0}$ refers now to the central extension 
given by the second line of (\ref{eq:two_central_extensions}).
We conclude using some previous results:
$$
d^2_{3,0} \circ m_k^\theta 
\stackrel{(\ref{eq:eta_Phi})}{=} \eta_k \circ \Phi^{-1}  \circ m_k^\theta 
\stackrel{\operatorname{Thm\ \ref{th:Kawazumi_to_Morita}}}{=} - \tau^\theta_k 
\stackrel{\operatorname{Thm\ \ref{th:Kawazumi_to_Johnson}}}{=} - \tau_k.
$$
\end{proof}

Second, the kernel of $M_k$ is known to be the $2k$-th term of the Johnson filtration:
this has been proved by Heap \cite{Heap} in the case of the Torelli group, 
and by Sakasai \cite{Sakasai} in the general case.
Alternatively, we can deduce this from the following result.

\begin{corollary}
The kernel of $m_k$ is $\cyl[2k]$.
\end{corollary}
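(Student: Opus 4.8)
The plan is to read this off almost immediately from Theorem~\ref{th:Kawazumi_to_Morita} together with Proposition~\ref{prop:Kawazumi_homomorphism}, so that essentially no new work is required. First I would fix once and for all a symplectic expansion $\theta$ of $\pi$, which exists by Lemma~\ref{lem:existence}. The point to stress is that the homomorphism $m_k$ was defined in \S\ref{subsec:infinitesimal_Morita} without any reference to an expansion; hence its kernel is an intrinsic object, and composing $m_k$ with the isomorphism $\theta_*$ from Corollary~\ref{cor:Lie_nilpotent} does not change that kernel, so I am free to compute it after passing to the chosen $\theta$.

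Next I would invoke the commutative square of Theorem~\ref{th:Kawazumi_to_Morita}, which for every $C\in\cyl[k]$ yields the factorization
$$
-\theta_*\, m_k(C) = \Phi\, \eta^{-1}\, \tau^\theta_{[k,2k[}(C)\ \in H_3(\Lie/\Lie_{\geq k+1}).
$$
Here $\eta^{-1}$ may legitimately be applied to $\tau^\theta_{[k,2k[}(C)$ because, $\theta$ being symplectic, Proposition~\ref{prop:Kawazumi_diagrammatic} guarantees that each component $\tau_j^\theta(C)$ lies in $\D_{j+2}(H)=\Img(\eta_j)$. Now all three maps appearing on the right are isomorphisms: $\theta_*$ by Corollary~\ref{cor:Lie_nilpotent}, $\Phi$ by Theorem~\ref{th:fission_iso}, and $\eta$ by its very definition in \S\ref{subsec:free_nilpotent_Lie}. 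Consequently $m_k(C)=0$ if and only if $\tau^\theta_{[k,2k[}(C)=0$, so that $m_k$ and $\tau^\theta_{[k,2k[}$ have exactly the same kernel as maps on $\cyl[k]$.

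Finally I would appeal to Proposition~\ref{prop:Kawazumi_homomorphism}, whose last sentence states precisely that the kernel of $\tau^\theta_{[k,2k[}$ is $\cyl[2k]$. Combining this with the previous paragraph gives $\Ker(m_k)=\cyl[2k]$, as desired. There is no genuine obstacle in this argument: the only points to handle with care are that the three arrows in the factorization really are bijective (so that they neither create nor destroy kernel elements) and that $\eta^{-1}$ is defined on the whole image of $\tau^\theta_{[k,2k[}$ — both of which have already been arranged by the hypothesis that $\theta$ is symplectic, via Propositions~\ref{prop:Kawazumi_diagrammatic} and~\ref{prop:Kawazumi_homomorphism} and Theorem~\ref{th:fission_iso}.
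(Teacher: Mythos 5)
Your argument is correct and is exactly the paper's own proof, which simply cites Theorem~\ref{th:Kawazumi_to_Morita} and Proposition~\ref{prop:Kawazumi_homomorphism}; you have merely spelled out the routine verification that the connecting maps $\theta_*$, $\Phi$ and $\eta$ are bijective. No issues.
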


\begin{proof}
This follows immediately from Theorem \ref{th:Kawazumi_to_Morita} and
Proposition \ref{prop:Kawazumi_homomorphism}.
\end{proof}

\vspace{0.5cm}

\section{The tree-reduction of the LMO homomorphism}

\label{sec:LMO}

In this last section, we prove that the LMO functor introduced in \cite{CHM} 
defines a symplectic expansion of $\pi$. 
Next, we consider the  LMO homomorphism,
which is the restriction of the LMO functor to the monoid of homology cylinders.
We show that the total Johnson map relative to that particular expansion 
is equivalent to the tree-reduction of the LMO homomorphism.
We deduce that the degree $[k,2k[$ part of the tree-reduction of the LMO homomorphism 
coincides with the $k$-th infinitesimal Morita homomorphism.

We assume that the reader is familiar enough with the LMO invariant \cite{LMO,BGRT1,BGRT2,Ohtsuki}
and, more specifically, with the constructions of  \cite{CHM}.
(The surface $\Sigma$ is denoted by $F_g$ in \cite{CHM},  
and  the monoid of homology cylinders $\cyl$ is denoted there by $\mathcal{C}yl$.)

\subsection{The monoid of bottom knots in a thickened surface}

An essential ingredient to derive a symplectic expansion from the LMO functor
is the notion of  ``bottom knot'' in $\Sigma \times [-1,1]$. 
We fix two distinct points $p,q$ in the interior of $\Sigma$
and, at each of them, we fix a non-zero tangent vector which will be implicit in the sequel.

\begin{definition}
A \emph{bottom knot} in $\Sigma \times [-1,1]$ is a connected framed oriented tangle, 
which starts from $q\times (-1)$ and ends at $p \times (-1)$. 
Two bottom knots are considered to be the same if they differ
by an ambient isotopy of $\Sigma \times [-1,1]$ relative to the boundary.
\end{definition}

\noindent
An example of bottom knot is shown on Figure \ref{fig:bottom_knot}.
Another example is the \emph{trivial} bottom knot which, 
together with an interval in $\Sigma \times (-1)$ that connects $p\times (-1)$ to $q\times (-1)$, 
bounds an embedded disk in $\Sigma \times [-1,1]$.
We denote by $\bottomK(\Sigma)$, or simply by $\bottomK$, 
the set of  bottom knots in $\Sigma \times [-1,1]$.

\begin{figure}[h]
\begin{center}
\labellist \small \hair 2pt
\pinlabel {$p$} [t] at 136 49
\pinlabel {$q$} [t] at 196 49
\endlabellist
\includegraphics[scale=0.3]{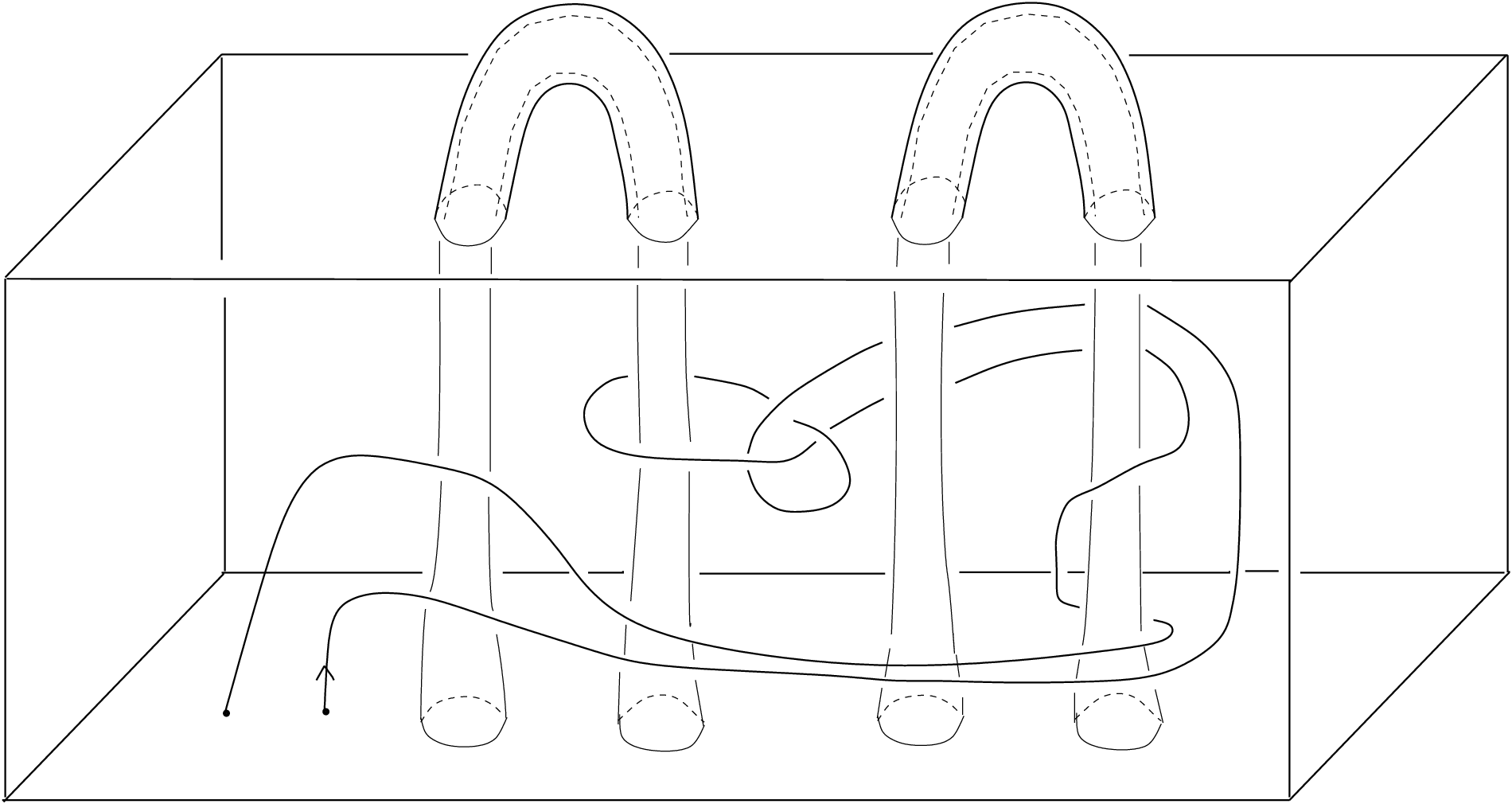}
\end{center}
\caption{An example of bottom knot in $\Sigma \times [-1,1]$ in genus $g=2$.
(The blackboard framing convention is used.)}
\label{fig:bottom_knot}
\end{figure}

The reader may be more familiar with the notion of \emph{string knot} in $\Sigma \times [-1,1]$,
which is a connected framed oriented tangle
starting from $q\times (-1)$ and ending at $q \times 1$.  
Let $\stringK(\Sigma)$ be the set of string knots in $\Sigma \times [-1,1]$.
There is a canonical bijection 
$$
b:\stringK(\Sigma) \stackrel{\simeq}{\longrightarrow} \bottomK(\Sigma)
$$
which is schematically defined by Figure \ref{fig:string_to_bottom}.
The set $\stringK(\Sigma)$ is a monoid, whose multiplication is defined by ``stacking'':
$$
K \cdot L  := \begin{array}{|c|}\hline L \\ \hline K\\ \hline \end{array}
\quad \quad \forall K,L \in \stringK(\Sigma)
$$
and whose identity element is the trivial string knot $(\Sigma \times [-1,1], q \times [-1,1])$. 
Therefore, the push-out by $b$ defines a monoid structure on $\bottomK(\Sigma)$,
whose identity element is the trivial bottom knot.

\begin{figure}[h]
\begin{center}
\labellist \small \hair 2pt
\pinlabel {$K$} [l] at 118 60
\pinlabel {$b(K)$} [l] at 32 110
\pinlabel {$\Sigma\times [-1,1]$} [tr] at 367 161
\pinlabel {$\Sigma\times [-1,1]$} [tr] at 412 208
\endlabellist
\includegraphics[scale=0.5]{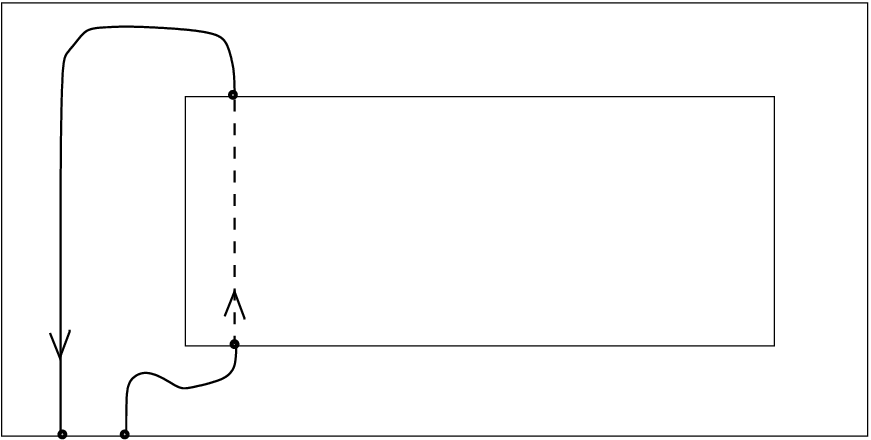}
\end{center}
\caption{How to transform a string knot into a bottom knot.}
\label{fig:string_to_bottom}
\end{figure}

\begin{definition}
Two bottom knots $K$ and $K'$ in $\Sigma\times [-1,1]$ are \emph{homotopic} if
$K$ can be transformed to $K'$ by a framing change and a finite number of crossing changes.
\end{definition}

\noindent
The homotopy relation, which we denote by $\simeq_h$, 
is an equivalence relation on $\bottomK$ which is compatible with its multiplication.

\begin{lemma}
\label{lem:knot_to_loop}
There is a canonical monoid isomorphism
$$
\ell:\bottomK(\Sigma)/\!\simeq_h\ \stackrel{\simeq}{\longrightarrow} \pi
$$
defined by assigning to each bottom knot $K$ a based loop $\ell(K)$ in $\Sigma \times [-1,1]$,
as shown in Figure \ref{fig:knot_to_loop}, and by identifying 
$\pi=\pi_1(\Sigma,*)$ with $\pi_1(\Sigma\times [-1,1], *)$.
\end{lemma}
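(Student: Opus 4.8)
The plan is to check directly that $\ell$ is a well-defined bijection and that it intertwines the two monoid structures, with the homotopy classification of arcs in a $3$-manifold serving as the analytic core of the argument. First I would make precise the definition of $\ell$ indicated by Figure \ref{fig:knot_to_loop}: to a bottom knot $K$, regarded as an arc from $q\times(-1)$ to $p\times(-1)$, one associates the based loop obtained by concatenating $K$ with two fixed arcs running along $\Sigma\times(-1)$, one from $*$ to $q\times(-1)$ and one from $p\times(-1)$ back to $*$. Its homotopy class lies in $\pi_1(\Sigma\times[-1,1],*)$, which I identify with $\pi=\pi_1(\Sigma,*)$ by means of the deformation retraction $\Sigma\times[-1,1]\to\Sigma$. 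To see that $\ell$ descends to $\bottomK/\!\simeq_h$, note that a crossing change is a homotopy of the underlying map $S^1\to\Sigma\times[-1,1]$ rel endpoints (one strand is pushed through another), while a framing change leaves the underlying loop untouched; hence $\ell$ is constant on each $\simeq_h$-class.

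Next I would verify that $\ell$ is multiplicative by transporting the computation through the bijection $b$. The closure construction sends a string knot from $q\times(-1)$ to $q\times 1$ to the loop obtained by projecting to $\Sigma$ and identifying its two endpoints, and stacking two string knots visibly concatenates the associated loops; the trivial string knot goes to the constant loop. Since the product on $\bottomK(\Sigma)$ is by definition the push-out along $b$ of the stacking product on $\stringK(\Sigma)$, it follows that $\ell$ is a monoid homomorphism sending the trivial bottom knot to $1$. Surjectivity is then immediate, as any element of $\pi$ admits an embedded representative loop which can be positioned as the closure of a bottom knot.

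Injectivity is the substantive point. Suppose $\ell(K)=\ell(K')$, so that the arcs $K$ and $K'$ from $q\times(-1)$ to $p\times(-1)$ are homotopic rel endpoints as maps into $\Sigma\times[-1,1]$. Here I would appeal to the homotopy classification of knotted arcs in a $3$-manifold: put a homotopy between $K$ and $K'$ in general position, so that at all but finitely many times it is an embedding, and at the finitely many exceptional times it presents a single transverse double point. Between consecutive exceptional times the homotopy is realised by an ambient isotopy, and passing through a transverse double point is exactly a crossing change. Hence $K$ and $K'$ are related by finitely many crossing changes and isotopies; absorbing any residual framing discrepancy into a framing change, we conclude $K\simeq_h K'$.

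The main obstacle is precisely this last transversality argument — that two arcs in a $3$-manifold which are homotopic rel endpoints differ by a finite sequence of crossing changes. It is classical, but it is the one step that genuinely uses dimension three and requires a careful general-position statement; the remaining verifications (well-definedness, multiplicativity, surjectivity) are essentially formal once $\ell$ is set up.
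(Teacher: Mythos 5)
Your proof is correct and follows essentially the same route as the paper: both reduce to the statement that $\ell\circ b$ identifies string knots up to homotopy with $\pi$. The only difference is that the paper dismisses this as ``well-known'' in a single sentence, whereas you supply the general-position argument (a generic homotopy of an arc in a $3$-manifold decomposes into isotopies and finitely many crossing changes) that underlies that classical fact.
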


\begin{figure}[h]
\begin{center}
\labellist \small \hair 2pt
\pinlabel {$K$} [b] at 209 213
\pinlabel {$*$}  at 778 2
\pinlabel {$\ell(K)$} [b] at 739 13
\endlabellist
\includegraphics[scale=0.3]{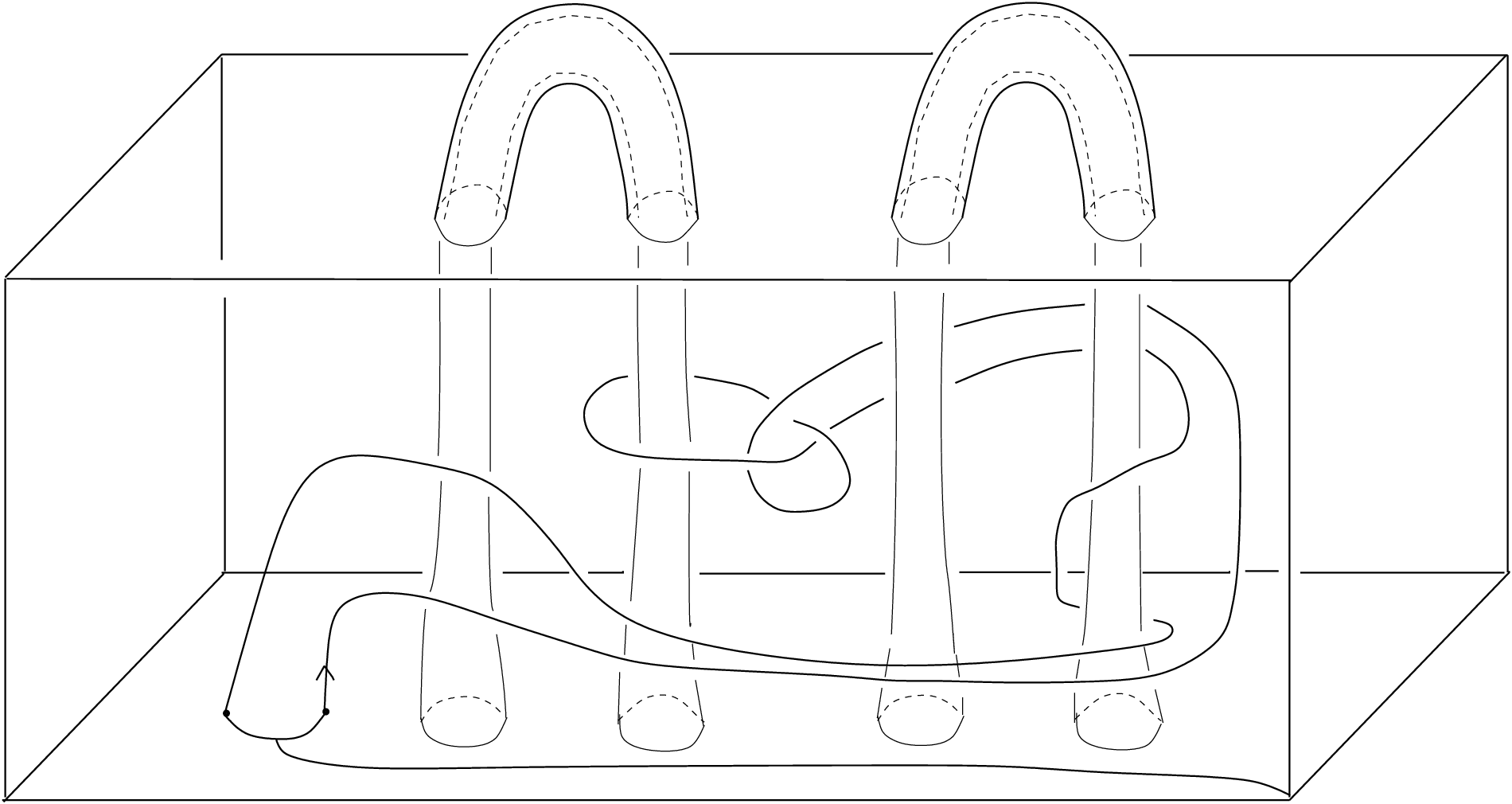}
\end{center}
\caption{How to transform a bottom knot into a based loop.}
\label{fig:knot_to_loop}
\end{figure}

\begin{proof}
The statement of the lemma certainly defines a map $\ell:\bottomK(\Sigma)/\!\simeq_h \to \pi$.
By composition with $b$, one gets a map
$\ell \circ b: \stringK(\Sigma)/\!\simeq_h\  \to \pi$
which is well-known to be a monoid isomorphism.
\end{proof}

Each bottom knot $K$ can be transformed into a cobordism by ``digging'' $\Sigma \times [-1,1]$ along $K$, 
and this cobordism is a ``Lagrangian'' cobordism from $F_g$ to $F_{g+1}$ in the sense of \cite{CHM}.  
To be more specific, the $1$-st handle of $F_{g+1}$ is identified with the boundary of 
a neighborhood of $K$ in $\Sigma \times [-1,1]$ while,  for all $i=2,\dots,g+1$,
the $i$-th handle of  $F_{g+1}$ corresponds to the $(i-1)$-st handle of $\Sigma \times (-1)$.
This construction is a special case of the way cobordisms are presented in \cite{CHM} 
in terms of ``bottom-top'' tangles. Thus, we get an inclusion
\begin{equation}
\label{eq:inclusion_bottom_knots}
\bottomK \subset \LCob(g,g+1).
\end{equation}
The monoid structure of $\bottomK$ can be defined in terms of the monoidal structure of $\LCob$.
For this, we recall that $\LCob$ is a subcategory of the category of cobordisms $\Cob$,
which is braided monoidal and for which the object $1$ is a Hopf algebra \cite{Kerler}.
Then, the multiplication of $\bottomK$ is given by 
\begin{equation}
\label{eq:product_bottom_knots}
\forall K,L \in \bottomK, \quad
K \cdot L  = (\mu \otimes \Id_g) \circ (\Id_1 \otimes K) \circ L,
\end{equation}
where $\mu \in \LCob(2,1)$ is the product of the Hopf algebra object $1$. 
The identity element of $\bottomK$ is then
\begin{equation}
\label{eq:identity_bottom_knots}
(\hbox{trivial bottom knot}) = \eta \otimes \Id_g
\end{equation}
where $\eta \in \LCob(0,1)$ is the unit of the Hopf algebra object $1$.

\subsection{The symplectic expansion defined by the LMO functor}

We now explain how the LMO functor defines 
a symplectic expansion of $\pi$ by considering bottom knots up to homotopy.
First, we recall the diagrammatic analogue of the homotopy relation introduced by Bar-Natan 
in the context of Milnor's $\mu$  invariants of string links \cite{Bar-Natan_homotopy}. 

Let $S$ be  a finite set, and let $r\in S$.
We denote by $\A(S)$ the space of Jacobi diagrams colored by $S$, and 
we denote by $\A(\uparrow^{S})$ the space of Jacobi diagrams 
based on the $1$-manifold $\uparrow^S$, 
which consists of one oriented interval $\uparrow^s$ for each element $s\in S$. 
Recall from \cite{Bar-Natan} that there is a diagrammatic analogue
of the Poincar\'e--Birkhoff--Witt isomorphism
$$
\chi: \mathcal{A}(S) \stackrel{\simeq}{\longrightarrow}  \mathcal{A}(\uparrow^S).
$$
Following \cite{Bar-Natan_homotopy}, we consider the subspace
$$
\homotopy(r) \subset  \A(S) 
$$
generated by Jacobi diagrams with at least one component that is looped
or that posseses at least two  univalent vertices colored by $r$.
Similarly, let 
$$
\homotopy(\uparrow^{r}) \subset  \A(\uparrow^{S}) 
$$
be the subspace generated by Jacobi diagrams with at least 
one dashed component that is looped or that posseses at least 
two univalent vertices attached to $\uparrow^{r}$.
The following statement is proved in \cite[Theorem 1]{Bar-Natan_homotopy}.

\begin{theorem}[Bar-Natan]
\label{th:homotopy}
For any finite set $S$ and for all $r \in S$, we have
$$
\chi\left(\homotopy(r)\right) = \homotopy(\uparrow^{r}).
$$
\end{theorem}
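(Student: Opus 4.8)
The plan is to prove the two inclusions $\chi(\homotopy(r)) \subseteq \homotopy(\uparrow^r)$ and $\homotopy(\uparrow^r) \subseteq \chi(\homotopy(r))$ separately; since $\chi$ is a linear isomorphism, their conjunction yields the claimed equality. The whole argument rests on a single structural feature of the symmetrization map $\chi$: it only modifies the way univalent vertices are attached to the skeleton $\uparrow^S$, and never alters the underlying dashed graph. First I would record this precisely. On a generator $D$ of $\A(S)$, the map $\chi$ acts color by color, replacing, for each $s \in S$, the univalent vertices of $D$ colored $s$ by attachment points distributed on the strand $\uparrow^s$, symmetrized over all orders. Consequently every diagram occurring in $\chi(D)$ has exactly the same dashed graph as $D$, so that its dashed connected components, their first Betti numbers (the ``loops''), and the multiset of colors of their univalent vertices all agree with those of the components of $D$.

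The forward inclusion is then immediate. If $D$ is a generator of $\homotopy(r)$, so that some component of $D$ is looped or carries at least two $r$-colored univalent vertices, then by the observation above each term of $\chi(D)$ possesses a dashed component that is looped or carries at least two univalent vertices attached to $\uparrow^r$; hence $\chi(D) \in \homotopy(\uparrow^r)$, and therefore $\chi(\homotopy(r)) \subseteq \homotopy(\uparrow^r)$.

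For the reverse inclusion I would argue that $\chi^{-1}$ sends each generator $E$ of $\homotopy(\uparrow^r)$ into $\homotopy(r)$. Here the key tool is the triangular form of $\chi$ with respect to the filtration by the number of univalent vertices: $\chi^{-1}(E)$ equals the diagram obtained from $E$ by simply detaching its univalent vertices from $\uparrow^S$, plus correction terms of strictly smaller leg-number, each produced by using the STU relation to merge a pair of adjacent same-colored attachment points into a single internal trivalent vertex. The detached leading term inherits the bad component of $E$ verbatim and so lies in $\homotopy(r)$; the point is to check that every correction term is bad as well. I would verify this by analysing a single STU-merge of two legs of a common color: if the two legs belong to the same dashed component, the merge joins them through a new trivalent vertex and thereby creates a cycle, producing a looped component; if they belong to different components, the merge fuses the two components while leaving a single residual attachment of that color, so that no existing loop is destroyed and the drop in the number of $r$-legs of the relevant component is compensated by fusion. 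Tracking the component that descends from the bad component of $E$ through all the merges then shows that it remains either looped or endowed with at least two $r$-legs, so every correction term stays in $\homotopy(r)$.

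The main obstacle is precisely this last bookkeeping for the correction terms: one must set up an invariant of a dashed component, for instance $\beta_1 + \max(0, (\#\,r\text{-legs}) - 1)$, and check that it is non-decreasing under every STU-merge and never drops below $1$ once the starting diagram $E$ is bad. Organising the induction so that this invariant is controlled uniformly across all terms of $\chi^{-1}(E)$, while keeping the color-$r$ merges (responsible for the leg count) separate from arbitrary-color merges (responsible only for connectivity and loops), is the delicate part; everything else reduces to the transparent fact that $\chi$ leaves the dashed graph untouched.
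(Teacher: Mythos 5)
The paper does not actually prove this statement: it is quoted with the remark that it ``is proved in \cite[Theorem 1]{Bar-Natan_homotopy}'', so there is no in-paper proof to compare against. Measured against the standard argument (which is essentially Bar-Natan's), your proposal is on the right track and, as far as I can see, correct. The forward inclusion is indeed immediate from the fact that $\chi$ symmetrizes leg attachments without touching the dashed graph. For the reverse inclusion, your description of $\chi^{-1}$ is the right one: inducting on the number of legs, $\chi^{-1}(E)$ is the detached diagram plus $\chi^{-1}$ of terms obtained from (reorderings of) $E$ by a single STU-merge, so it suffices to check that one merge preserves badness. The bookkeeping you flag as delicate does close up: with $I(c) = \beta_1(c) + \max(0,\, \#r\hbox{-legs}(c) - 1)$, a merge of two legs in the same component raises $\beta_1$ by $1$ while lowering the $r$-leg count by at most $1$ (and only when the merged legs are $r$-legs), and a merge across two components yields a fused component whose invariant is at least $\max(I(c_1), I(c_2))$, because when the merge is on the $r$-strand both components must each contribute at least one $r$-leg, so the net loss of one $r$-leg is absorbed. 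Hence the component descending from the bad component of $E$ keeps $I \geq 1$ throughout, every correction term lies in $\homotopy(r)$, and the induction goes through. The only point worth making explicit in a write-up is that the correction terms arise from reorderings of $E$, which share $E$'s dashed graph, so the single-merge analysis genuinely applies at every stage of the recursion.
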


By the inclusion (\ref{eq:inclusion_bottom_knots}), bottom knots are Lagrangian cobordisms.
They are promoted to Lagrangian $q$-cobordisms (in the sense of \cite{CHM})
if we agree to equip each of them with $r_g$ on the top surface 
and with $r_{g+1}$ on the bottom surface, where
$$
r_g:= (\bullet \cdots (\bullet (\bullet \bullet)) \cdots )
$$
is the length $g$ right-handed non-associative word in the single letter $\bullet$. 
Then, the LMO functor $\Ztilde$ introduced in \cite{CHM} can be applied to bottom knots:
$$
\Ztilde: \bottomK \longrightarrow \tsA(g,g+1).
$$
Recall that $\tsA(g,g+1)$ is a subspace of 
$$
\A(\set{g}^+ \cup \set{g+1}^-)
$$
where $\set{g}^+$ denotes the finite set $\{1^+, \dots, g^+\}$ and $\set{g+1}^- $
stands for the finite set $\{1^-, \dots, (g+1)^-\}$.
In the construction of the LMO functor, the color $i^+$ refers to the $i$-th handle of the top surface,
while the color $i^-$ refers to the $i$-th handle of the bottom surface.
Thus, for those cobordisms arising from bottom knots, it is natural to rename the colors as follows:
\begin{equation}
\label{eq:change_variables}
1^- \longmapsto r
\quad \quad \hbox{and} \quad \quad
i^- \longmapsto (i-1)^-, \ \forall i=2,\dots,(g+1)
\end{equation}
so that the variable $r$ refers to the bottom knot.
After this change of variables (which is often tacit in the sequel), the LMO functor gives a map
$$
\Ztilde: \bottomK \longrightarrow \A(\set{g}^+ \cup \set{g}^-\cup \{r\}).
$$

\begin{lemma}
\label{lem:homotopy}
For any bottom knot $K$ in $\Sigma \times [-1,1]$, 
$\Ztilde(K)$ mod $\homotopy(r)$ only depends on the homotopy class of $K$.
\end{lemma}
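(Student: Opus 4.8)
The plan is to use the fact that the homotopy relation $\simeq_h$ on $\bottomK$ is generated by two elementary moves — a framing change and a crossing change of the knot strand with itself — and to check that each such move leaves $\Ztilde(K) \bmod \homotopy(r)$ invariant. Recall from \cite{CHM} that $\Ztilde(K)$ is computed by choosing a tangle presentation of the Lagrangian cobordism associated to $K$ via (\ref{eq:inclusion_bottom_knots}), taking its Kontsevich integral, and performing a formal Gaussian integration over the internal colors arising from the handles that are glued. Throughout this process the knot strand is inert: it is never integrated out, and after the change of variables (\ref{eq:change_variables}) it is precisely the external color $r$. Hence it suffices to control the effect of the two elementary moves at the level of the Kontsevich integral of the tangle, and then to transport the conclusion through the remaining operations.

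First I would work on the $1$-manifold side, before the passage to the symmetric algebra. A framing change on the knot strand multiplies the Kontsevich integral by the exponential of an isolated self-chord on that strand, while a self-crossing change multiplies it locally by the difference of the $R$-matrix and its inverse, every term of which carries at least two univalent vertices on the knot strand. In either case, since $\homotopy(\uparrow^{r})$ is an ideal for the stacking product (connected components are never merged when stacking) and every component created by these local modifications is either looped or has at least two univalent vertices on $\uparrow^{r}$, the difference of Kontsevich integrals before and after the move lies in $\homotopy(\uparrow^{r})$.

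Next I would transfer this statement across the Poincar\'e--Birkhoff--Witt isomorphism. By Bar-Natan's Theorem \ref{th:homotopy}, $\chi$ carries $\homotopy(r)$ onto $\homotopy(\uparrow^{r})$, so the change computed above, which lives in $\homotopy(\uparrow^{r})$, corresponds on the symmetric side to an element of $\homotopy(r)$. It then remains to observe that the formal Gaussian integration and the auxiliary structural maps of \cite{CHM} are $\homotopy(r)$-linear: they act only on the integrated handle colors and never on $r$, so they can only merge components or pair up non-$r$ legs, operations that preserve the defining property of $\homotopy(r)$ (a component that is looped or bears at least two $r$-colored legs retains that property). Combining these observations shows that each elementary move changes $\Ztilde(K)$ by an element of $\homotopy(r)$, which proves the lemma.

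The main obstacle I anticipate is precisely this last bookkeeping: rigorously verifying that the \emph{entire} LMO normalization of \cite{CHM} — and not merely the naive Kontsevich integral — respects the subspace $\homotopy(r)$. One must confirm that the Gaussian integration cannot decrease the number of $r$-legs of a component nor ``unloop'' a looped component, and that none of the normalization factors or doubling/gluing maps introduce cancellations moving a $\homotopy(r)$-element out of the subspace. Once it is granted that $r$ is genuinely an external, un-integrated color, this reduces to checking each building block, but it is the step demanding the most care.
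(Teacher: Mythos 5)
Your reduction to the two elementary moves and your key computation---that a self-crossing change or framing change alters the Kontsevich integral of the presenting tangle by an element of $\homotopy(\uparrow^{r})$, transported back through the Poincar\'e--Birkhoff--Witt isomorphism via Bar-Natan's theorem---is exactly the heart of the paper's proof. Where you diverge is in how $\Ztilde(K)$ is related to that Kontsevich integral, and this is precisely where you (correctly) anticipate trouble. The paper sidesteps your ``main obstacle'' entirely: the Lagrangian cobordisms arising from bottom knots are \emph{special} in the sense of \cite{CHM}, meaning their bottom-top tangle presentations live in the standard cube with no surgery components, so \cite[Lemma~5.5]{CHM} expresses $\Ztilde(K)$ directly in terms of the Kontsevich integral $Z(L)$ of an appropriate framed oriented tangle $L$ in $[-1,1]^3$. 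Consequently there is no formal Gaussian integration to push $\homotopy(r)$ through, and the lemma follows immediately once one arranges $L$ and $L'$ to differ by a self-crossing change (respectively small kinks) on a single component $\uparrow^{l_0}$. Your proposed alternative---verifying that the full LMO normalization, including the Aarhus-type integration over internal handle colors, preserves the subspace $\homotopy(r)$ because $r$ is never integrated and gluing non-$r$ legs can only merge components or create loops---is plausible and would likely go through with care, but it is substantially more bookkeeping than the situation requires; the observation that the relevant cobordisms are special is the missing ingredient that collapses your hardest step to nothing.
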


\begin{proof}
Let us consider a crossing change (respectively, a framing change) $K \leadsto K'$.
The Lagrangian cobordisms defined by bottom knots are ``special'' in the sense of \cite{CHM}. 
So, we can apply \cite[Lemma 5.5]{CHM} to derive $\Ztilde(K)$ from
the Kontsevich integral $Z(L)$ of an appropriate framed oriented tangle $L$ in $[-1,1]^3$.
Similarly, we can reduce $\Ztilde(K')$ to the Kontsevich integral $Z(L')$ 
of an appropriate framed oriented tangle $L'$ in $[-1,1]^3$.
Furthermore, we can find such an $L$ and an $L'$ that only differ 
by a self-crossing change (respectively, by small kinks) of a component $\uparrow^{l_0}$. 
Then, the lemma follows from the fact that
$$
Z(L') - Z(L) \in \homotopy(\uparrow^{l_0}),
$$
which is well-known and is easily checked from the value of the Kontsevich integral 
on a crossing (respectively, on a small kink).  
\end{proof}

Let us now recall that, for any finite set $S$, $\A(S)$ is a Hopf algebra, 
whose product is given by the disjoint union $\sqcup$ and whose coproduct is defined by 
\begin{equation}
\label{eq:coproduct}
\Delta(D) = \sum_{D= D' \sqcup D''} D' \otimes D''
\end{equation}
for all diagrams $D$.  Since $\homotopy(r)$ is a Hopf ideal  of $\A(\set{g}^+ \cup \set{g}^-\cup \{r\})$,
the quotient space  $\A(\set{g}^+ \cup \set{g}^-\cup \{r\})/\homotopy(r)$ is a Hopf algebra. Besides,
the subspace of $\A(\set{g}^+ \cup \set{g}^-\cup \{r\})$ spanned by tree-shaped Jacobi diagrams,
with at most one $r$-colored vertex on each component, is a Hopf subalgebra of $\A(\set{g}^+ \cup \set{g}^-\cup \{r\})$.
This subspace is isomorphically mapped onto $\A(\set{g}^+ \cup \set{g}^-\cup \{r\})/\homotopy(r)$ 
by the canonical projection. Thus, in the sequel,  
$$
\A(\set{g}^+ \cup \set{g}^-\cup \{r\})/\homotopy(r)
$$  
will either denote a quotient Hopf algebra or a Hopf subalgebra of $\A(\set{g}^+ \cup \set{g}^-\cup \{r\})$.

Let $K$ be a bottom knot in $\Sigma \times [-1,1]$. By the previous paragraph,
$\Ztilde(K)$ mod $\homotopy(r)$ can be regarded as a series of tree-shaped 
Jacobi diagrams with at most one $r$-colored vertex on each component.
Using the functoriality of $\Ztilde$, it is easily checked that  
the subseries of $\Ztilde(K)$ mod $\homotopy(r)$ consisting only of  
tree-shaped Jacobi diagrams \emph{without} $r$-colored vertices, is $\Ztilde(\Sigma\times [-1,1])$.
This is the identity of the object $g$ in the category $\tsA$, namely
$$
\Id_g = \exp_\sqcup\left(\sum_{i=1}^g \strutgraph{i^-}{i^+}\right).
$$
Moreover, $\Ztilde(K)$ mod $\homotopy(r)$ is group-like since $\Ztilde(K)$ is so.
Thus, we deduce that
$$
\log_\sqcup\left(\Ztilde(K) \mod \homotopy(r)\right) - \sum_{i=1}^g  \strutgraph{i^-}{i^+}
$$
is a series of tree-shaped \emph{connected} Jacobi diagrams with \emph{exactly one} $r$-colored vertex. 
Such a  series can be interpreted as a Lie series in the variables $\set{g}^+\cup \set{g}^-$
as we did in (\ref{eq:tree_to_commutator}) and, so, as an element of  $\Liehat(H)$ after the following change of variables:
$$
i^+ \longmapsto b_i, \ \forall i=1,\dots,g \quad \quad  \hbox{and} \quad \quad i^- \longmapsto a_i, \ \forall i=1,\dots,g.
$$
Thus, we have an inclusion
\begin{equation}
\label{eq:inclusion_Lie}
\Liehat(H) \subset \A(\set{g}^+ \cup \set{g}^-\cup \{r\})
\end{equation}
and we can write
$$
\log_\sqcup\left(\Ztilde(K) \mod \homotopy(r)\right) - \sum_{i=1}^g  \strutgraph{i^-}{i^+} 
\quad \in \Liehat(H).
$$
Therefore, we can use Lemma \ref{lem:knot_to_loop} and Lemma \ref{lem:homotopy} to define a map
$$
\thetaLMO: \pi \longrightarrow \Tenshat(H)
$$
by the formula
\begin{equation}
\label{eq:theta_LMO}
\thetaLMO\left(\ell(K)\right) := 
\exp_\otimes 
\left({ \log_\sqcup\left(\Ztilde(K) \mod \homotopy(r)\right) - 
\sum_{i=1}^g \strutgraph{i^-}{i^+} }\right).
\end{equation}

\begin{proposition}
The map $\thetaLMO$ is  a symplectic expansion of $\pi$.
\end{proposition}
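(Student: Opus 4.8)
The plan is to verify in turn the three conditions defining a symplectic expansion: that $\thetaLMO$ is group-like, that it is a monoid homomorphism with the correct degree-one part (hence an expansion), and that it satisfies the normalization $\thetaLMO(\zeta)=\exp(-\omega)$.

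Group-likeness is essentially built into the definition \eqref{eq:theta_LMO}. Indeed, $\Ztilde(K)$ is group-like in $\A(\set{g}^+\cup\set{g}^-\cup\{r\})$, and since $\homotopy(r)$ is a Hopf ideal the class $\Ztilde(K)\bmod\homotopy(r)$ remains group-like; hence its $\log_\sqcup$ is primitive, and subtracting the primitive strut term of \eqref{eq:theta_LMO} leaves a primitive element which, under the inclusion \eqref{eq:inclusion_Lie}, lies in $\Liehat(H)=\Prim(\Tenshat(H))$. Writing $\lambda(K)\in\Liehat(H)$ for this Lie element, $\thetaLMO(\ell(K))=\exp_\otimes(\lambda(K))$ is the tensor-exponential of a primitive, hence group-like.

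For the expansion property I first establish multiplicativity. By Lemma~\ref{lem:knot_to_loop} the loop map $\ell$ is a monoid isomorphism $\bottomK/\!\simeq_h\,\to\pi$, and by Lemma~\ref{lem:homotopy} the class $\Ztilde(K)\bmod\homotopy(r)$ is a homotopy invariant, so it suffices to prove that $K\mapsto\thetaLMO(\ell(K))$ is multiplicative on $\bottomK$. I would feed the product formula \eqref{eq:product_bottom_knots} through the functoriality and monoidality of $\Ztilde$: the composition with $L$ and with $\Id_1\otimes K$ glues the two cobordisms, while the Hopf product $\mu$ merges the two bottom-knot strands into one. The crucial point is that modulo $\homotopy(r)$ each connected component carries at most one $r$-colored vertex, so $\mu$ acts on the single surviving $r$-leg exactly as the group multiplication of $\bottomK/\!\simeq_h$. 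Translating to the Lie part, this amounts to $\lambda(K\cdot L)=\bch(\lambda(K),\lambda(L))$, whence
$$
\thetaLMO(\ell(K)\ell(L))=\exp_\otimes\bch(\lambda(K),\lambda(L))=\exp_\otimes(\lambda(K))\otimes\exp_\otimes(\lambda(L)).
$$
The degree-one normalization $\thetaLMO(\gp x)=1+\{\gp x\}+(\deg\ge 2)$ then reduces to identifying the degree-one part of $\lambda(K)$ — read off from the single-edge trees joining $r$ to a handle — with the homology class $\{\ell(K)\}\in H$; this is the familiar fact that the lowest tree-degree part of the LMO functor records the linking numbers of $K$ with the handles, i.e. the coordinates of $\{\ell(K)\}$ in the basis $(a,b)$.

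Finally, symplecticity requires $\thetaLMO(\zeta)=\exp(-\omega)$. Realizing $\zeta=\ell(K_\zeta)$ for the bottom knot $K_\zeta$ that represents the oriented boundary curve, and using that $\zeta$ is null-homologous so that $\lambda(K_\zeta)$ starts in degree $2$, I must show that the degree-two tree part of $\Ztilde(K_\zeta)\bmod\homotopy(r)$ equals $-\sum_{i=1}^g[a_i,b_i]=-\omega$. I would obtain this either by a direct evaluation of $\Ztilde$ on the structural (Hopf-algebra) morphisms of the object $1$ in $\LCob$, where the intersection form of $\Sigma$ is encoded, or by combining the multiplicativity just established with the boundary relation $\zeta^{-1}=\prod_{i=1}^{g}[\gp b_i^{-1},\gp a_i]$ and the degree $\le 2$ behaviour of $\thetaLMO$ on the generators. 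I expect this last identity to be the genuine obstacle: group-likeness and multiplicativity follow rather formally from the functorial and Hopf-algebraic properties of $\Ztilde$, whereas pinning down the exact value $-\omega$ demands controlling the precise degree-two normalization of the LMO functor, which is where the geometry of the boundary and the symplectic pairing truly enter.
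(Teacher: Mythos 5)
Your overall architecture (group-likeness, multiplicativity plus degree-one normalization, then the boundary condition) matches the paper's, and your treatment of the first two points is essentially sound: group-likeness does follow formally because $\homotopy(r)$ is a Hopf ideal and $\exp_\otimes$ of a primitive is group-like, and your multiplicativity sketch — pushing the stacking formula through the functoriality of $\Ztilde$ and observing that $\mu$ acts on the single surviving $r$-leg as the product of $\bottomK/\!\simeq_h$ — is the paper's argument in outline (the paper still has to compute $\Ztilde(\mu)$ modulo $\homotopy(1^-)$ explicitly and dispose of the associator piece $P_{\bullet,\bullet,r_g}$ by a group-likeness/support argument, but these are fillable details). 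The degree-one normalization via the strut part recording the homology class is exactly \cite[Lemma 4.12]{CHM}, as you say.

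The genuine gap is in the symplecticity step. You reduce the claim to showing that \emph{the degree-two tree part} of $\Ztilde(K_\zeta)\bmod\homotopy(r)$ equals $-\omega$. That is strictly weaker than what is required: the definition demands $\thetaLMO(\zeta)=\exp(-\omega)$ \emph{exactly}, i.e.\ $\lambda(K_\zeta)=-\omega$ with no corrections in any degree $\geq 3$. Establishing only the degree-two part would show $\thetaLMO(\zeta)=\exp(-\omega+(\deg\geq 3))$, which every group-like expansion already satisfies (compare Claim \ref{claim:Lie_words} in the proof of Lemma \ref{lem:existence}: the whole difficulty of constructing symplectic expansions is precisely the higher-degree tail). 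Your fallback strategy — combining multiplicativity with $\zeta^{-1}=\prod_i[\gp{b}_i^{-1},\gp{a}_i]$ and the degree $\leq 2$ behaviour on generators — fails for the same reason: the unknown degree $\geq 2$ parts of $\thetaLMO(\gp{a}_i)$ and $\thetaLMO(\gp{b}_i)$ feed into the degree $\geq 3$ part of $\thetaLMO(\zeta)$ through the Baker--Campbell--Hausdorff formula, so nothing beyond degree $2$ is controlled. The paper instead represents $\zeta$ by an explicit bottom knot $B_g$, writes $\Ztilde(B_g)$ via the Kontsevich integral of a cabled tangle $L_g$, and carries out a full combinatorial resummation (expanding $g+1$ exponentials and collecting multinomial coefficients) to show that modulo $\homotopy(r)$ the \emph{entire} series collapses to $\exp_\sqcup\bigl(\sum_i\strutgraph{i^-}{i^+}\bigr)\sqcup\exp_\sqcup\bigl(\sum_i\Ygraphbottoptop{r}{i^+}{i^-}\bigr)$, whence $\varkappa(B_g)=-\omega$ on the nose. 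This all-orders computation is the actual content of the proposition and is missing from your plan.
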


\begin{proof}
First, we prove that $\thetaLMO$ is a monoid homomorphism. 
According to (\ref{eq:identity_bottom_knots}), 
the $q$-cobordism corresponding to the trivial bottom knot is $\eta \otimes \Id_{r_{g}}$,
and we have 
$$
\Ztilde(\eta \otimes \Id_{r_{g}}) = \Ztilde(\eta) \otimes \Ztilde(\Id_{r_{g}}) 
= \varnothing \otimes \Id_g = \exp_\sqcup\left(\sum_{i=1}^g \strutgraph{i^-}{i^+}\right).
$$
We deduce that $\thetaLMO(1) = \exp_\otimes(0) =1$. 
To prove the multiplicativity of $\thetaLMO$,
we consider two bottom knots $K$ and $L$. According to (\ref{eq:product_bottom_knots}),
the Lagrangian $q$-cobordism corresponding to $K\cdot L$ can be decomposed as
$$
K \cdot L = (\mu \otimes \Id_{r_g}) \circ P_{\bullet,\bullet,r_g} 
\circ (\Id_\bullet \otimes K) \circ L.
$$
Here, $P_{\bullet,\bullet,r_g}$ is the $q$-cobordism 
$(\bullet(\bullet r_g)) \to ((\bullet\bullet) r_g)$
whose associated cobordism is the identity of $(g+2)$. So, we obtain
$$
\Ztilde(K \cdot L) = (\Ztilde(\mu) \otimes \Id_g) \circ \Ztilde(P_{\bullet,\bullet,r_g}) 
\circ (\Id_1 \otimes \Ztilde(K)) \circ \Ztilde(L).
$$
An application of \cite[Lemma 5.5]{CHM} shows that $\Ztilde(\mu)$ is congruent modulo $\homotopy(1^-)$ to
$$
p:= \chi^{-1} \left(\begin{array}{c}
{\labellist \small \hair 2pt
\pinlabel {$1^-$} [l] at 182 10
\pinlabel {$1^+$} [b] at 33 150
\pinlabel {$2^+$} [b] at 153 150
\endlabellist
\includegraphics[scale=0.55]{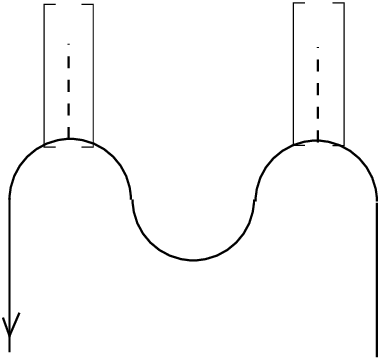}}
\end{array}\ \right)
$$
where the brackets $[-]$ stand for exponentials. Thus, we obtain that
$$
\Ztilde(K \cdot L) \equiv (p \otimes \Id_g) \circ \Ztilde(P_{\bullet,\bullet,r_g}) 
\circ (\Id_1 \otimes \Ztilde(K)) \circ \Ztilde(L) \mod \homotopy(1^-).
$$
Using the functoriality of $\Ztilde$, it is easily checked that
$$
\left(\Ztilde(P_{\bullet,\bullet,r_g})\left| 1^- \mapsto 0\right.\right)
= \varnothing \otimes \Id_{g+1}
\quad \hbox{and} \quad 
\left(\Ztilde(P_{\bullet,\bullet,r_g})\left| 2^- \mapsto 0\right.\right)
= \Id_1 \otimes \varnothing \otimes \Id_{g}
$$
from which it follows that
$$
\left(\ZtildeY(P_{\bullet,\bullet,r_g})\left| 1^- \mapsto 0\right.\right) = \varnothing 
\quad \hbox{and} \quad 
\left(\ZtildeY(P_{\bullet,\bullet,r_g})\left| 2^- \mapsto 0\right.\right) = \varnothing.
$$
Since $\ZtildeY(P_{\bullet,\bullet,r_g})$ is group-like, 
we deduce that each diagram of this series displays 
the color $1^-$ as well as the color $2^-$, on each of its components. Therefore, we have
\begin{eqnarray}
\label{eq:K_L}
&& \Ztilde(K \cdot L) \mod \homotopy(1^-)\\
\nonumber  &=& (p \otimes \Id_g) \circ (\Id_1 \otimes \Ztilde(K)) \circ \Ztilde(L) \mod \homotopy(1^-) \\
\nonumber &=&  (p \otimes \Id_g) \circ (\Id_1 \otimes (\Ztilde(K) \mod \homotopy(1^-))) 
\circ (\Ztilde(L) \mod \homotopy(1^-)) \mod \homotopy(1^-). 
\end{eqnarray}
For all $B\in \bottomK$, we set
$$
\varkappa(B):= \log_\sqcup\left(\Ztilde(B) \mod \homotopy(r)\right) - \sum_{i=1}^g  \strutgraph{i^-}{i^+}
 \quad \in \Liehat(H) 
$$
where $\Ztilde(B)$ is seen as an element of $\A(\set{g}^+ \cup \set{g}^-\cup \{ r\})$
by the change of variables (\ref{eq:change_variables})
and $\Liehat(H)$ is seen as a subspace of $\A(\set{g}^+ \cup \set{g}^-\cup \{ r\})$ by (\ref{eq:inclusion_Lie}).
By definition of $\thetaLMO$ we have
\begin{equation}
\label{eq:kappa_to_theta}
\thetaLMO(\ell(B)) = \exp_\otimes ({\varkappa(B)}).
\end{equation}
By expliciting the composition law in the category $\tsA$, we derive from (\ref{eq:K_L}) the following identity:
$$
\Ztilde(K \cdot L)  \mod \homotopy(r)
= \begin{array}{|c|} 
\hline
\left(\exp_\sqcup\varkappa(L)| r \mapsto 1^*\right) \sqcup 
\left(\exp_\sqcup\varkappa(K)| r \mapsto 2^*\right)    \\
\hdashline 
\{1^*,2^*\} \\
\hdashline 
\left(p\left| \begin{array}{l}1^+ \mapsto 1^*\\ 2^+ \mapsto 2^*\\ 1^-\mapsto r \end{array} \right.\right) \\
\hline
\end{array} \sqcup \Id_g.
$$
Here, the array means that the bottom row is ``contracted'' to the top row
with respect to the set of their common variables, which appears in the middle row. 
(This contraction is denoted by $\langle - , - \rangle_{\{1^*,2^*\}}$ in \cite{CHM}.) Therefore, we have
\begin{eqnarray*}
\Ztilde(K \cdot L) \mod \homotopy(r)&=& 
\chi^{-1}_r\left(\chi_r \exp_{\sqcup}\varkappa(K) \cdot \chi_r \exp_{\sqcup}\varkappa(L)  \right)\sqcup \Id_g  \\
& = & \chi^{-1}_r\left(\exp_{\cdot} \chi_r \varkappa(K) \cdot \exp_{\cdot} \chi_r \varkappa(L)  \right)\sqcup \Id_g 
\end{eqnarray*} 
where, in the last two terms, the dot $\cdot$ denotes the multiplication in $\A(\uparrow^{r},\set{g}^+\cup \set{g}^-)$ along $\uparrow^r$,
and $\chi_r$ is the  Poincar\'e--Birkhoff--Witt isomorphism from $\A(\set{g}^+\cup \set{g}^-\cup \{r\})$ 
to $\A(\uparrow^{r},\set{g}^+\cup \set{g}^-)$. We deduce that
$$
\exp_\sqcup \varkappa(K\cdot L) = 
\chi^{-1}_r\left(\exp_{\cdot} \chi_r \varkappa(K) \cdot \exp_{\cdot} \chi_r \varkappa(L)  \right)
$$
or, equivalently, that
$$
\exp_\cdot \chi_r \varkappa(K\cdot L) = 
\chi_r \exp_\sqcup \varkappa(K\cdot L) = \exp_{\cdot} \chi_r \varkappa(K) \cdot \exp_{\cdot} \chi_r \varkappa(L).
$$
By (\ref{eq:kappa_to_theta}), we conclude that
$$
\thetaLMO\left(\ell(K) \cdot \ell(L)\right) = \thetaLMO(\ell(K)) \otimes \thetaLMO(\ell(L))
$$ 
and that $\thetaLMO$ is a monoid homomorphism.

Next, for any bottom knot $B$, we deduce from \cite[Lemma 4.12]{CHM} that 
$$
\Ztilde(B) = \exp_\sqcup\left(\strutgraph{r}{b}+\sum_{i=1}^g\strutgraph{i^-}{i^+}\right) \sqcup \ZtildeY(B)
$$
where $b$ is the homology class of $B$ in $\Sigma \times [-1,1]$ written as
$$
H \ni  b = \sum_{i=1}^g (x_i \cdot a_i + y_i \cdot b_i) =
\sum_{i=1}^g (x_i \cdot i^- + y_i \cdot i^+)
$$ 
for some $x_1,y_1,\dots,x_g,y_g\in \Q$. So, we have
$$
\varkappa(B) = \strutgraph{r}{b} + (\ideg\geq 1) \mod \homotopy(r)
$$
and we deduce that
$$
\thetaLMO(\ell(B)) = \exp_\otimes\left(\varkappa(B)\right) = 1 + b + (\deg \geq 2).
$$

\begin{figure}[h]
\begin{center}
\labellist \small \hair 2pt
\pinlabel {$B_g$} [tr] at 129 55
\endlabellist
\includegraphics[scale=0.3]{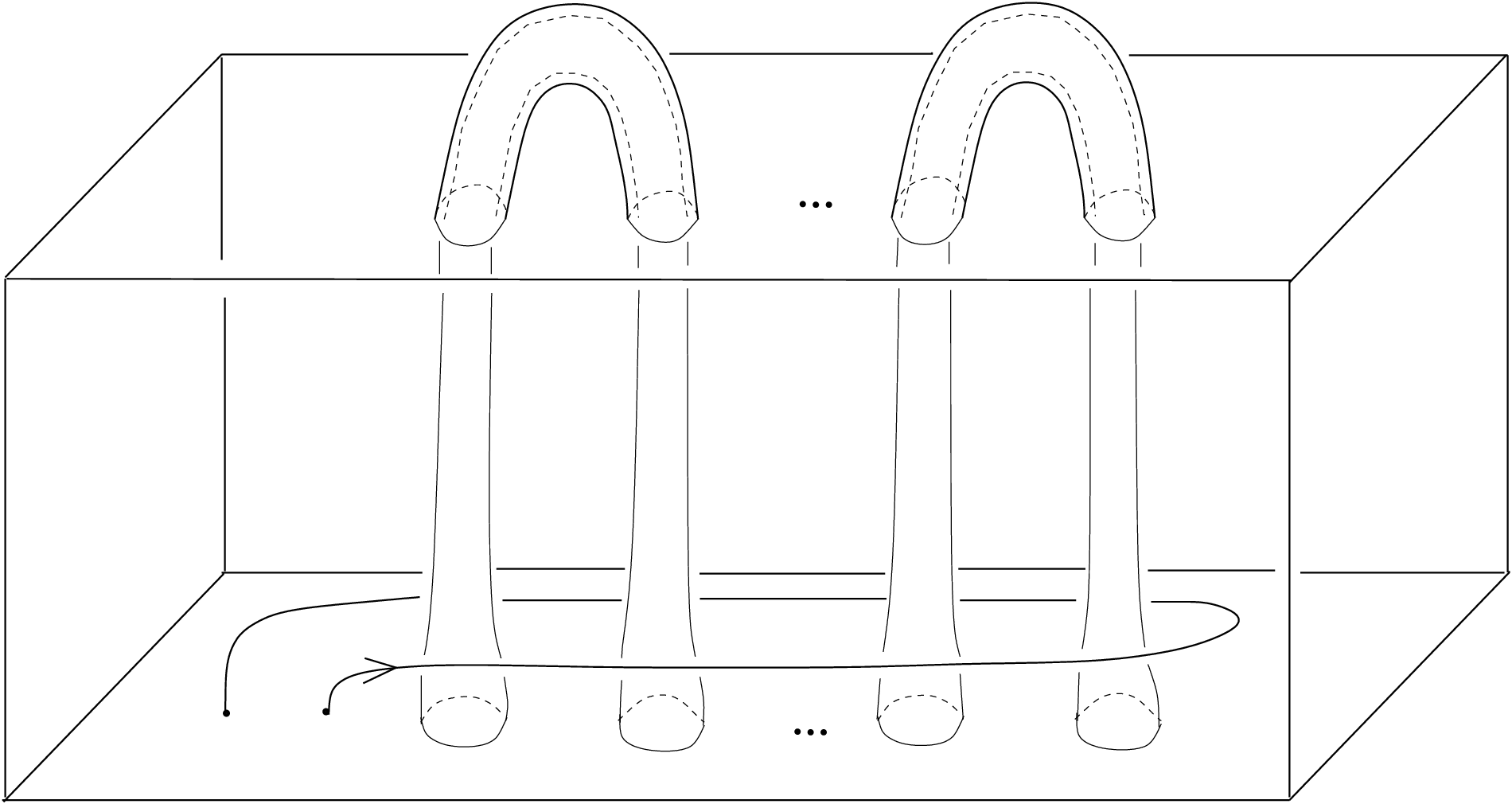}
\end{center}
\caption{A bottom knot $B_g$ such that $\ell(B_g)=\zeta$.}
\label{fig:boundary}
\end{figure}

It remains to prove that $\thetaLMO(\zeta)= \exp(-\omega)$. 
The element  $\zeta\in \pi$ can be represented by the bottom knot $B_g$ shown on Figure \ref{fig:boundary}.
Next,  an application of \cite[Lemma 5.5]{CHM} gives 
$$
\Ztilde(B_g) = \chi^{-1}\left( \begin{array}{c}
\labellist \small \hair 2pt
\pinlabel{$Z(L_g)$} at 156 50
\pinlabel {$\cdots$} at 158 128
\pinlabel {$1^+$}  at 58 227
\pinlabel {$g^+$}  at 258 227
\endlabellist
\includegraphics[scale=0.4]{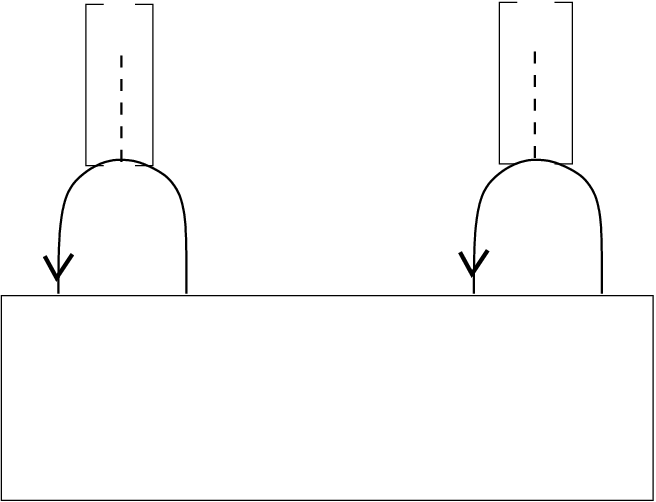}
\end{array}\right)
$$
where $Z(L_g)$ is the Kontsevich integral of the framed oriented $q$-tangle $L_g$ shown on Figure \ref{fig:Lg}.
The same figure states that the $q$-tangle  $L_g$ is obtained 
by cabling the vertical component of a certain $q$-tangle $L$. So, we have
$$
Z(L_g) = \Delta^+_{(r_g|\bullet \mapsto (+-))} \left(Z(L)\right)
$$ 
where, for any non-associative word $w$ in the letters $\{+,-\}$,
$\Delta^+_w$ denotes the usual ``doubling''/``orientation-reversal'' map as recalled in \cite[Notation 3.13]{CHM}.

\begin{figure}[h]
\begin{center}
\labellist \small \hair 2pt
\pinlabel {$\stackrel{\hbox{cabling}}{\leadsto}$} at 327 71
\pinlabel{$\cdots$} at 587 16
\pinlabel{$r$} [br] at 34 48
\pinlabel{$r$} [br] at 460 58
\pinlabel{$L$} [tl] at 5 141
\pinlabel{$L_g$} [tl] at 429 141
\pinlabel{$((+-)$} [t] at 38 0 
\pinlabel{$+)$} [t] at 160 0
\pinlabel{$(+)$} [b] at 156 146
\pinlabel{$((+-)$} [t] at 462 0 
\pinlabel{$(r_g|\bullet \mapsto (+-))$} [t] at 586 0 
\pinlabel{$(r_g|\bullet \mapsto (+-))$} [b] at 586 146
\endlabellist
\includegraphics[scale=0.5]{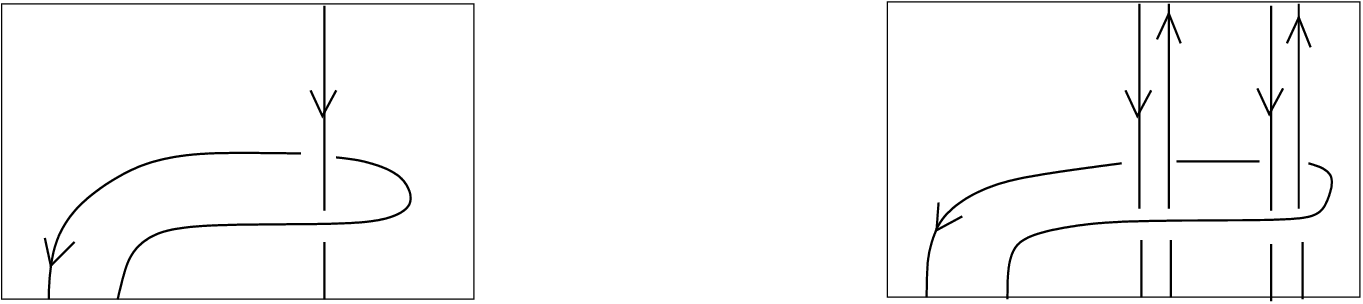}
\end{center}
\caption{The framed oriented $q$-tangle $L_g$ as a cabling.}
\label{fig:Lg}
\end{figure}

\noindent
By decomposing $L$ into elementary $q$-tangles, it is easily checked that
$$
Z(L) \equiv \begin{array}{c}
\labellist \small \hair 2pt
\pinlabel{$\shortmid$} at 161 120
\pinlabel {$r$} [r] at 0 48
\endlabellist
\includegraphics[scale=0.4]{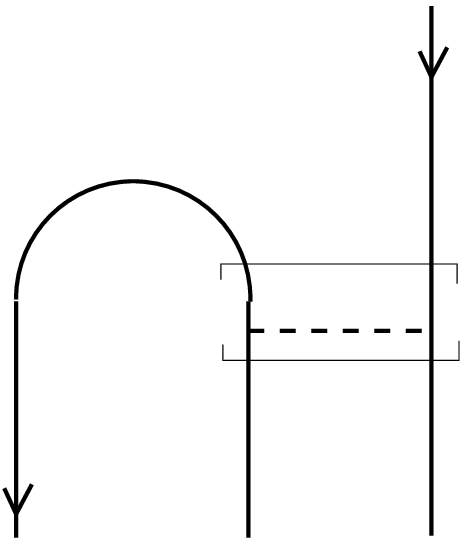}
\end{array}
\mod \homotopy(\curvearrowleft^r),
$$
where we notify of a (vertical) minus sign inside the exponential $[-]$ . We deduce that
\begin{equation}
\label{eq:}
\Ztilde(B_g) \equiv \chi^{-1}\left(\begin{array}{c}
\labellist \small \hair 2pt
\pinlabel{$\shortmid$} at 310 75
\pinlabel {$\cdots$}  at  310 180
\pinlabel {$r$} [t] at  66 0
\pinlabel{$1^-$} [t] at 237 3
\pinlabel{$g^-$} [t] at 436 3
\pinlabel{$1^+$} at 206 328
\pinlabel{$g^+$} at 406 328
\endlabellist
\includegraphics[scale=0.4]{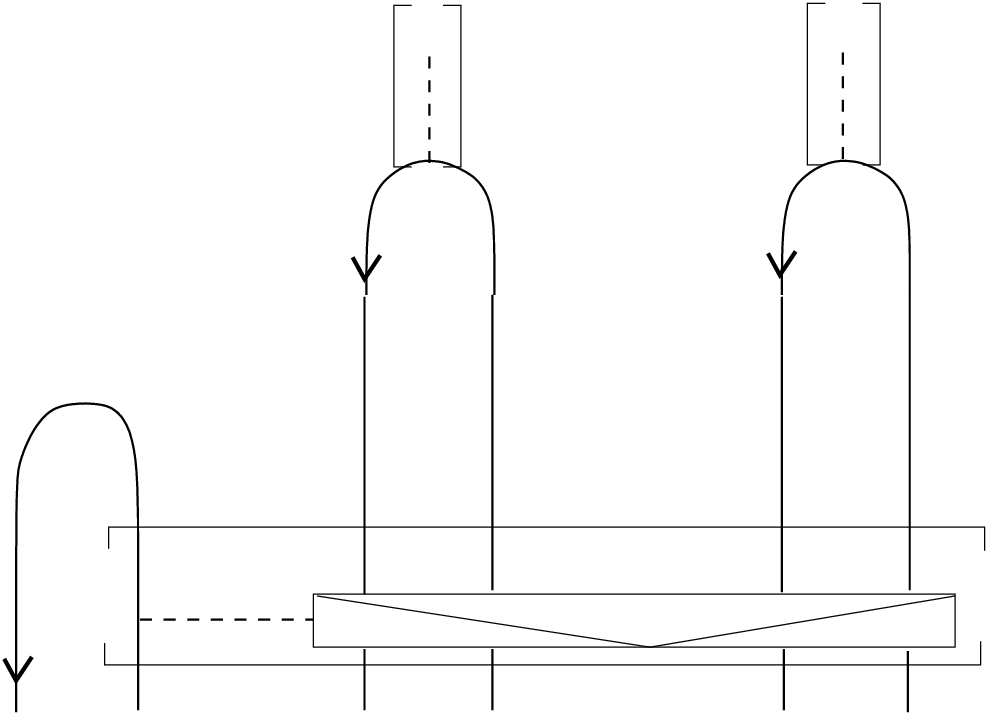}
\end{array}\right)
\mod \homotopy(r).
\end{equation}
By developing the $(g+1)$ exponentials and by applying $\chi^{-1}$, we obtain\\

$$
\Ztilde(B_g) \equiv  \sum_{m} \frac{1}{m!}  \quad \quad \begin{array}{c}
\labellist \small \hair 2pt
\pinlabel{$\vdots$} at 92 140
\pinlabel {$\cdots$}  at  228 66
\pinlabel {$\cdots$}  at  228 208
\pinlabel {$\cdots$}  at  163 58
\pinlabel {$\cdots$}  at  295 58
\pinlabel{$1^-$} [t] at 146 45
\pinlabel{$1^-$} [t] at 185 45
\pinlabel{$g^-$} [t] at 278 45
\pinlabel{$g^-$} [t] at  318 45
\pinlabel{$1^+$} [b] at 146 229
\pinlabel{$1^+$} [b] at 185 229 
\pinlabel{$g^+$} [b] at 278 230 
\pinlabel{$g^+$} [b] at 316 230 
\pinlabel{$r$} [r] at  55 171 
\pinlabel{$r$} [r] at 55 98 
\pinlabel{$m_0$} [r] at 0 138 
\pinlabel{$m_1$} [t] at 162 0
\pinlabel{$m_g$} [t] at 292 0
\endlabellist
\includegraphics[scale=0.4]{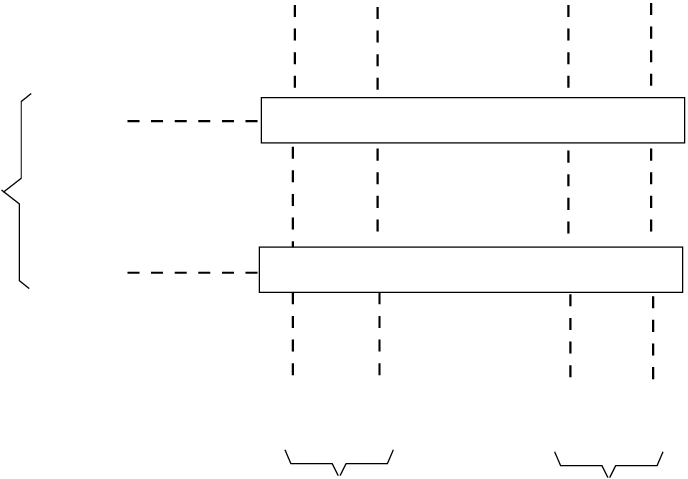}
\end{array} \mod \homotopy(r)
$$
\vspace{0.1cm}

\noindent
where the sum is over all $(g+1)$-uplets of non-negative integers $m=(m_0,m_1,\dots,m_g)$.
This can be reduced modulo $\homotopy(r)$ to
$$
\sum_m  \frac{1}{m!} \sum_{\substack{n_1\leq m_1,\dots,   n_g\leq m_g\\ n_1 + \cdots + n_g = m_0}}
\prod_{i=1}^g n_i! \cdot \prod_{i=1}^g \binom{m_i}{n_i} \cdot \binom{m_0}{n_1,\dots,n_g} 
\cdot \bigsqcup_{i=1}^g {\Ygraphbottoptop{r}{i^+}{i^-}}^{_\sqcup n_i} \sqcup 
\bigsqcup_{i=1}^g {\strutgraph{i^-}{i^+}}^{\ \sqcup (m_i-n_i)}.
$$
Thus, we obtain
$$
\Ztilde(B_g) \equiv \exp_\sqcup\left(\sum_{i=1}^g \strutgraph{i^-}{i^+}\right) 
\sqcup \exp_\sqcup\left(\sum_{i=1}^g \Ygraphbottoptop{r}{i^+}{i^-}\right)  \mod \homotopy(r),
$$
which is equivalent to
$$
\varkappa(B_g) = \sum_{i=1}^g \Ygraphbottoptop{r}{i^+}{i^-}
= \sum_{i=1}^g [b_i,a_i] = - \omega \ \in \Liehat(H). 
$$
This concludes the proof of the symplecticity of $\thetaLMO$.
\end{proof}

\begin{remark}
The construction of the LMO functor $\Ztilde$ in \cite{CHM} assumes two choices:
one has to fix a system of meridians and parallels $(\alpha,\beta)$ on the surface $\Sigma$ 
and, since the definition of $\Ztilde$ is based on the Kontsevich integral, one has to specify a Drinfeld associator.
Therefore, the symplectic expansion $\thetaLMO$ should depend on those two choices.
\end{remark}

\subsection{The total Johnson map defined by the LMO functor}

The total Johnson map relative to the symplectic expansion $\thetaLMO$ of $\pi$ is denoted by
$$
\tauLMO: \cyl \longrightarrow \Hom(H,\Liehat_{\geq 2})
$$
and, as explained at the end of \S \ref{subsec:extension}, this map is equivalent to a monoid homomorphism
$$
\rhoLMO : \cyl \to \IAut_{\omega}(\Liehat).
$$
Besides, the \emph{LMO homomorphism} is defined\footnote{ 
Homology cylinders over $\Sigma$ are equipped in \cite{CHM} with left-handed non-associative words
$\left(\cdots \left( \left(\bullet \bullet\right) \bullet \right) \cdots \bullet\right)$. 
Here, we prefer to equip them with  right-handed non-associative words, 
which affects none of the properties shown in \cite{CHM} for $\ZtildeY$.}
in \cite{CHM} as the ``Y-part'' of the LMO functor restricted to $\cyl$, and this is a monoid homomorphism
$$
\ZtildeY : \cyl \longrightarrow \A^{Y}(\set{g}^+ \cup \set{g}^-).
$$
We recall that the product $\star$ on $\A^{Y}(\set{g}^+ \cup \set{g}^-)$ 
is defined on Jacobi diagrams $D,E$ by
\begin{equation}
\label{eq:star}
D \star E := \left( \begin{array}{c} 
\hbox{sum of all ways of connecting some $i^+$-colored vertices}\\
\hbox{of $D$ to some $i^-$-colored vertices of $E$, for all $i=1,\dots,g$}
\end{array}\right).
\end{equation}
The values of $\ZtildeY$ are group-like and, 
so, they are invertible for the multiplication $\star$.
Actually, we will only need the tree-reduction of $\ZtildeY$
$$
\ZtildeYt : \cyl \longrightarrow \A^{Y,t}(\set{g}^+ \cup \set{g}^-)
$$
with values in the quotient of $\A^{Y}(\set{g}^+ \cup \set{g}^-)$
by the subspace generated by looped Jacobi diagrams.
This subspace being a Hopf ideal,  $\A^{Y,t}(\set{g}^+ \cup \set{g}^-)$ is a quotient Hopf algebra.

The next result shows that the tree-reduction of the LMO homomorphism determines 
the total Johnson map relative to the symplectic expansion $\thetaLMO$.

\begin{theorem}
\label{th:LMO_to_Kawazumi}
Let $C \in \cyl$ and let $Z := \ZtildeYt(C)$. 
We  denote by $Z^{-1}$ the inverse of $Z$ 
with respect to the multiplication $\star$. Then, for all $y \in \Liehat(H)$, we have
\begin{equation}\label{eq:LMO_to_Kawazumi}
\rhoLMO(C)(y)= \log_{\sqcup} \left( Z \star \exp_{\sqcup}(y) \star Z^{-1} \mod \homotopy(r) \right) \ \in \Liehat(H). 
\end{equation}
Here, $\Liehat(H)$ is seen as a subspace of $\A(\set{g}^+ \cup \set{g}^-\cup \{ r\})$ by inclusion (\ref{eq:inclusion_Lie}), 
and $\star$ is the multiplication on $\A(\set{g}^+ \cup \set{g}^-\cup \{ r\})$ defined by (\ref{eq:star}).
\end{theorem} 

\noindent
This result is inspired by the work of Habegger and Masbaum  \cite[\S 12]{HM}.
In this work, they first show that the Kontsevich integral defines an expansion 
of the free group of rank $n$, with respect to which one can compute Milnor's $\mu$ invariants 
of an $n$-strand string link $\beta$. 
Then, they prove a ``global formula'' giving Milnor's $\mu$ invariants of $\beta$ in terms of its Kontsevich integral.
Theorem \ref{th:LMO_to_Kawazumi} is very close in spirit to that formula.

\begin{proof}[Proof of Theorem \ref{th:LMO_to_Kawazumi}]
We start with the case when $C$ is the mapping cylinder of an $h\in \I$, which is  simpler.
For all bottom knots $K$ in $\Sigma \times [-1,1]$, we have the identity  
$$
(\Id_1 \otimes h) \circ K \circ h^{-1} = h(K) \circ h \circ h^{-1} = h(K)
$$
in the category of cobordisms $\LCob$, hence the identity 
$$
\Ztilde(h(K)) = (\Id_1 \otimes \Ztilde(h)) \circ \Ztilde(K) \circ \Ztilde(h^{-1})
$$
in the category of diagrams $\tsA$.  By expliciting the composition law of  $\tsA$, this writes
$$
\Ztilde(h(K)) =
\begin{array}{|c|}
\hline 
\left(\Ztilde(h^{-1})\left| j^- \mapsto j^*, \forall j=1,\dots,g \right.\right) \\
\hdashline 
\set{g}^*\\
\hdashline
\left(\Ztilde(K)\left| \begin{array}{l} r \mapsto s \\ 
 j^+ \mapsto j^*, \forall j=1,\dots,g \\
  j^- \mapsto j^\triangle , \forall j=1,\dots,g
\end{array} \right.\right)  \\
\hdashline
\{s\} \cup \set{g}^\triangle\\
\hdashline
\exp_\sqcup\left( \strutgraph{r}{s}\right) \sqcup 
\left(\Ztilde(h)\left|j^+ \mapsto j^\triangle, \forall j=1,\dots,g \right.\right)\\
\hline
\end{array}.
$$
By reducing modulo $\homotopy(r)$, we obtain
\begin{equation}
\label{eq:Z_h_K_reduced}
\Ztilde(h(K)) \mod \homotopy(r) =
\begin{array}{|c|}
\hline 
\exp_\sqcup\left({\displaystyle \sum_{j=1}^g \strutgraph{j^*}{j^+}}\right) \sqcup
\left(Z^{-1}\left| j^- \mapsto j^*, \forall j=1,\dots,g \right.\right) \\
\hdashline 
\set{g}^*\\
\hdashline
\left(\Ztilde(K) \mod \homotopy(r)\left| \begin{array}{l} r \mapsto s \\ 
 j^+ \mapsto j^*, \forall j=1,\dots,g \\
  j^- \mapsto j^\triangle , \forall j=1,\dots,g
\end{array} \right.\right)  \\
\hdashline
\{s\} \cup \set{g}^\triangle\\
\hdashline
\exp_\sqcup\left( \strutgraph{r}{s}+ {\displaystyle \sum_{j=1}^g \strutgraph{j^-} {j^\triangle}}\right) \sqcup 
\left(Z \left|j^+ \mapsto j^\triangle, \forall j=1,\dots,g \right.\right)\\
\hline
\end{array}.
\end{equation}
As before, we associate to each bottom knot $B$ the Lie series
$$
\varkappa(B):= \log_\sqcup\left(\Ztilde(B) \mod \homotopy(r)\right) - \sum_{i=1}^g  \strutgraph{i^-}{i^+}
 \quad \in \Liehat(H) \subset \A(\set{g}^+ \cup \set{g}^-\cup \{ r\}),
$$
which satisfies $\varkappa(B) = \thetaLMO \log \ell(B) \in \Liehat(H) \subset \Tenshat(H)$.  We have 
\begin{eqnarray*}
\varkappa h(K) &= &\thetaLMO \log \ell(h(K))\\
& =& \thetaLMO \log h_* \ell(K)\\
&=&  \thetaLMO \MLie(h_*) \log \ell(K) =  \rhoLMO(h) \thetaLMO  \log \ell(K).
\end{eqnarray*}
Then, equation (\ref{eq:Z_h_K_reduced}) gives 
the following identity where $y$ is the Lie series $\thetaLMO \log \ell(K)$ 
and where $C=h$ is assumed to belong to $\I$:

\begin{eqnarray}
\label{eq:log_pi}
&&\exp_\sqcup\left(\sum_{j=1}^g \strutgraph{j^-}{j^+}\right) 
\sqcup \exp_\sqcup \left(\rhoLMO(C)(y) \right) \mod \homotopy(r) \\
\nonumber & = &
\begin{array}{|c|}
\hline 
\exp_\sqcup\left({\displaystyle \sum_{j=1}^g \strutgraph{j^*}{j^+}}\right) \sqcup
\left(Z^{-1}\left| j^- \mapsto j^*, \forall j=1,\dots,g \right.\right) \\
\hdashline 
\set{g}^*\\
\hdashline
\exp_\sqcup\left({\displaystyle \sum_{j=1}^g \strutgraph{j^\triangle}{j^*}}\right)
\sqcup \left(\exp_\sqcup(y) \left| \begin{array}{l} r \mapsto s \\ 
 j^+ \mapsto j^*, \forall j=1,\dots,g \\
 j^- \mapsto j^\triangle , \forall j=1,\dots,g
\end{array} \right.\right)  \\
\hdashline
\{s\} \cup \set{g}^\triangle\\
\hdashline
\exp_\sqcup \left( \strutgraph{r}{s}+ {\displaystyle \sum_{j=1}^g \strutgraph{j^-} {j^\triangle}}\right) \sqcup 
\left(Z \left|j^+ \mapsto j^\triangle, \forall j=1,\dots,g \right.\right)\\
\hline
\end{array}
\end{eqnarray}
\vspace{0.1cm}

Let us now prove formula (\ref{eq:log_pi}) for any homology cylinder $C$.
Our arguments are based on Goussarov and Habiro's calculus of claspers, 
which allows us to proceed as before \emph{up to} a fixed degree $d \geq 1$.
Fundamental in their works is the notion of $Y_d$-equivalence for $3$-manifolds
\cite{Goussarov_clovers,Habiro,Goussarov_note,GGP}.
In particular, they prove that there exists a homology cylinder $\overline{C}$ (depending on $d$)
such that $\overline{C} \circ C$ and $C \circ \overline{C}$ are $Y_d$-equivalent to the trivial cylinder. 
Thus, there exists a disjoint union $G$  of  connected tree claspers with $d$ nodes in $\Sigma \times [-1,1]$,
such that  surgery along $G$ transforms $\Sigma \times [-1,1]$ to  $C \circ \overline{C}$.
Let us consider the Lagrangian cobordism
\begin{equation}
\label{eq:def_L}
L := (\Id_1 \otimes C) \circ K \circ \overline{C}.
\end{equation}
If we regard $L$ as a connected framed oriented tangle  in the homology cylinder $C \circ \overline{C}$,
we see that there exists a bottom knot $L'$ in $\Sigma \times [-1,1]$ disjoint from $G$
such that  surgery along $G$ transforms $L'$ to $L$.
On the one hand, a property of the LMO functor shown in \cite{CHM}
implies that $\Ztilde(L')$ differs from $\Ztilde(L)$ by a series of Jacobi diagrams with internal  degree $\geq d$.
So, we have
\begin{equation}
\label{eq:one}
\kappa(L') \equiv \log_\sqcup\left(\Ztilde(L) \mod \homotopy(r)\right) - \sum_{i=1}^g  \strutgraph{i^-}{i^+}
\mod \Liehat_{\geq d+1}.
\end{equation}
On the other hand, $L' \circ C$ is $Y_d$-equivalent to $L\circ C$
and so is $Y_d$-equivalent to $(\Id_1 \otimes C) \circ K$, from which we deduce that
$$
\{ \ell(L') \} = \rho_d(C)\big(\{ \ell(K)\}\big) \in \pi/\Gamma_{d+1} \pi.
$$
Then, a short computation gives
\begin{equation}
\label{eq:other}
\kappa(L') \equiv \rhoLMO(C) \thetaLMO \log \ell(K) \mod \Liehat_{\geq d+1}.
\end{equation}
By comparing (\ref{eq:one}) to (\ref{eq:other}), we get
\begin{equation}
\label{eq:Z_L_reduced}
\Ztilde(L) \mod \homotopy(r) = 
\exp_{\sqcup}\left(\sum_{j=1}^g \strutgraph{j^-}{j^+}\right) 
\sqcup \exp_\sqcup\left( \rhoLMO(C) \thetaLMO \log \ell(K) \right) + (\ideg \geq d).
\end{equation}
If we now come back to (\ref{eq:def_L}) and apply to it the LMO functor, we get
\begin{equation}
\label{eq:Z_L}
\Ztilde(L) = \left(\Id_1 \otimes \Ztilde(C)\right) \circ \Ztilde(K) \circ \Ztilde(\overline{C}).
\end{equation}
Since $\overline{C}$ is inverse to $C$ up to $Y_d$-equivalence,
$\ZtildeY(\overline{C})$ is inverse to $\ZtildeY(C)$ with respect to the $\star$ product up to some terms of internal degree at least $d$.
So, $\ZtildeYt(\overline{C})$ is equal to $Z^{-1}$ modulo  diagrams of internal degree  at least $d$.
Then, by reducing (\ref{eq:Z_L}) modulo $\homotopy(r)$ and by using (\ref{eq:Z_L_reduced}),
we obtain equation (\ref{eq:log_pi})  modulo diagrams of internal degree at least $d$.
By passing to the limit $d\to + \infty$,
we conclude that (\ref{eq:log_pi}) is valid for any homology cylinder $C$ and for $y=\thetaLMO \log \ell(K)$.

Next, since the bottom knot $K$ is arbitrary in the above discussion,
formula (\ref{eq:log_pi}) holds true for any $C \in \cyl$ and for any $y\in \thetaLMO \log(\pi)$.
Besides, we observe that both sides of (\ref{eq:LMO_to_Kawazumi}) are linear in $y$,
the right side being the connected part of $Z\star y\star Z^{-1} \mod \homotopy(r)$.
We deduce from this observation and  the next claim 
that it is enough to prove (\ref{eq:LMO_to_Kawazumi})  for any $y \in \thetaLMO \log(\pi)$.

\begin{claim}
\label{claim:generation_Malcev_Lie_algebra}
Let $F$ be a finitely generated free group. 
Then, any element $x \in \MLie(F)$ can be written as
$$
x= \sum_{i=1}^{+ \infty} q_i x_i \quad
\hbox{where $x_i \in \widehat{\Gamma}_i \MLie(F) \cap \log(F)$ and $q_i\in \Q$}.
$$
\end{claim}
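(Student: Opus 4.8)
The plan is to construct the summands $x_i$ one degree at a time, peeling off the leading term of $x$ at each stage, so that everything is controlled by the associated graded Lie algebra $\Gr \MLie(F)$. Throughout I would work inside the Malcev completion $\MGroup(F)$, exploiting the canonical inclusion $\iota \colon F \hookrightarrow \MGroup(F)$ (injective since free groups are residually torsion-free nilpotent) together with the bijection $\log \colon \MGroup(F) \to \MLie(F)$. Since $\MLie(F)$ is complete for its filtration $\widehat{\Gamma}_\bullet$, an infinite sum whose terms lie in increasing filtration degree automatically converges, so the only real content will be matching leading terms.

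First I would set $r_1 := x$ and proceed inductively. Assume $x_1, \dots, x_{d-1}$ have been chosen with $x_j \in \widehat{\Gamma}_j \MLie(F) \cap \log(F)$ and with $r_d := x - \sum_{j=1}^{d-1} x_j \in \widehat{\Gamma}_d \MLie(F)$. Let $v_d \in \Gr_d \MLie(F) = \widehat{\Gamma}_d \MLie(F)/\widehat{\Gamma}_{d+1}\MLie(F)$ denote the class of $r_d$. The step I must justify is that $v_d$ can be realized as the leading term of the logarithm of a single element $\gp{h}_d \in \widehat{\Gamma}_d \MGroup(F)$. Granting this, I put $x_d := \log(\gp{h}_d) \in \widehat{\Gamma}_d \MLie(F)$, so that $r_{d+1} := r_d - x_d \in \widehat{\Gamma}_{d+1}\MLie(F)$, which closes the induction.

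To produce $\gp{h}_d$ I would invoke Quillen's identification (Theorem \ref{th:Quillen}): the map $\Gr \log \iota$ induces an isomorphism $\Gr_d F \otimes \Q \xrightarrow{\simeq} \Gr_d \MLie(F)$. Thus $v_d$ is the image of a class in $(\Gamma_d F / \Gamma_{d+1} F) \otimes \Q$, and the unique divisibility of the Malcev completion (Theorem \ref{th:divisible_closure}) lets me lift this rational class to an honest element $\gp{h}_d \in \widehat{\Gamma}_d \MGroup(F)$ whose logarithm has leading term exactly $v_d$; in this rational setting the resulting $x_d$ is what is meant by an element of $\log(F)$. Convergence is then immediate: the partial sums satisfy $x - \sum_{j \leq d} x_j = r_{d+1} \in \widehat{\Gamma}_{d+1}\MLie(F)$, which tends to $0$, so $\sum_{i} x_i = x$.

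The hard part is precisely this surjectivity in the inductive step — that every graded class $v_d$, a priori only a \emph{rational} combination of iterated commutators, actually arises as the leading term of the logarithm of a single group-like element. This is exactly the combination that Quillen's theorem and the divisibility of $\MGroup(F)$ supply, and it is where the passage to rational coefficients is essential: over $\Z$ the leading term of $\log(\gp{g})$ is forced into the commutator lattice, and the analogous decomposition would fail already in the lowest degree.
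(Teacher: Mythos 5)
Your overall strategy (peel off leading terms degree by degree and use completeness of the filtration for convergence) is reasonable, but the key inductive step has a genuine gap: the element $\gp{h}_d$ you produce via unique divisibility of the Malcev completion lies in $\MGroup(F)$, \emph{not} in $F$. Consequently $x_d=\log(\gp{h}_d)$ lies in $\log(\MGroup(F))=\MLie(F)$ rather than in $\log(F)=\{\log\iota(\gp{g}):\gp{g}\in F\}$, which is what the claim asserts and what its application requires. Declaring that ``in this rational setting the resulting $x_d$ is what is meant by an element of $\log(F)$'' does not close the gap; it turns the statement into a triviality (if the $x_i$ are only required to lie in $\widehat{\Gamma}_i\MLie(F)$, the decomposition is nothing but the completeness of the filtration) and it defeats the purpose of the claim, which is to extend formula (\ref{eq:log_pi}) --- established only for $y\in\thetaLMO\log(\pi)$, i.e$.$ for logarithms of honest elements of $\pi$ realized by bottom knots --- to arbitrary $y\in\Liehat(H)$.

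The paper's proof stays inside $F$: choose a basis $(\gp{b}_1,\dots,\gp{b}_n)$, so that $\MLie(F)$ is the complete free Lie algebra on $b_i=\log(\gp{b}_i)$, and observe via the Baker--Campbell--Hausdorff formula that each iterated Lie bracket $[b_{i_1},[b_{i_2},[\dots,b_{i_r}]\cdots]]$ agrees with the logarithm of the corresponding iterated \emph{group} commutator modulo $\widehat{\Gamma}_{r+1}\MLie(F)$; the decomposition is then obtained by writing each graded piece of $x$ in such brackets and peeling off. You are right that a rational class in $\Gr_d\MLie(F)$ need not be the leading term of a single $\log(\gp{g})$ with $\gp{g}\in F$ (those leading terms form the integral lattice $\Gamma_dF/\Gamma_{d+1}F$), but the correct repair is not to enlarge $F$ to $\MGroup(F)$: it is to allow each $x_i$ to be a finite $\Q$-linear combination of elements of $\widehat{\Gamma}_i\MLie(F)\cap\log(F)$ --- equivalently, to show that the closed $\Q$-linear span of $\log(F)$ is all of $\MLie(F)$. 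This is exactly what the basis-and-commutator argument delivers, and it suffices for the application, since the identity being extended is linear and continuous in $y$ once reduced modulo $\homotopy(r)$.
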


\begin{proof}[Proof of Claim \ref{claim:generation_Malcev_Lie_algebra}]
Let $(\gp{b}_1,\dots,\gp{b}_n)$ be a basis of $F$. By Example \ref{ex:basis}, 
$\MLie(F)$ is the complete free Lie algebra generated by $({b}_1,\dots,{b}_n)$ where $b_i:= \log(\gp{b}_i)$.
The Baker--Campbell--Hausdorff formula  implies that, for all $r\geq 1$ and
for all $i_1,\dots,i_r \in \{1,\dots,n\}$, 
$$
[b_{i_1}, [b_{i_2},[ \dots, b_{i_r}] \cdots ]] \equiv 
\log\left([\gp{b}_{i_1}, [\gp{b}_{i_2},[ \dots, \gp{b}_{i_r}] \cdots ]] \right)
\mod \widehat{\Gamma}_{r+1} \MLie(F).
$$
The claim is easily deduced from that fact.
\end{proof}

We now finish the proof by developing (\ref{eq:log_pi}):
$$
\exp_\sqcup\left(\sum_{j=1}^g \strutgraph{j^-}{j^+}\right) 
\sqcup \exp_\sqcup \left(\rhoLMO(C)(y) \right) \mod \homotopy(r)
$$
$$
= \begin{array}{|c|}
\hline 
\exp_\sqcup\left({\displaystyle \sum_{j=1}^g \strutgraph{j^*}{j^+}}\right) \sqcup
\left(Z^{-1}\left| j^- \mapsto j^*, \forall j=1,\dots,g \right.\right) \\
\hdashline 
\set{g}^*\\
\hdashline
\exp_{\sqcup}\left({\displaystyle \sum_{j=1}^g \strutgraph{j^\Delta}{j^*} }\right)
\sqcup
\left(\exp_\sqcup(y) \left| \begin{array}{l}
 j^+ \mapsto j^*, \forall j=1,\dots,g \\
 j^- \mapsto j^\triangle , \forall j=1,\dots,g
\end{array} \right.\right)  \\
\hdashline
 \set{g}^\triangle\\
\hdashline
\exp_\sqcup \left(  {\displaystyle \sum_{j=1}^g \strutgraph{j^-}{j^\triangle}}\right) \sqcup 
\left(Z \left|j^+ \mapsto j^\triangle, \forall j=1,\dots,g \right.\right)\\
\hline
\end{array}
$$
$$
= \exp_\sqcup\left({\displaystyle \sum_{j=1}^g \strutgraph{j^-}{j^+}}\right)
\sqcup \left(Z \star \exp_{\sqcup}(y) \star Z^{-1} \right).
$$
The last identity is obtained by an easy combinatorial argument \cite[Example 4.5]{CHM}.
We conclude that 
$\rhoLMO(C)(y)= \log_{\sqcup}\left(Z \star \exp_{\sqcup}(y) \star Z^{-1} \mod \homotopy(r)  \right)$.
\end{proof}

We can now prove that, on the submonoid $\cyl[k]$, the degree $[k,2k[$ part of $\ZtildeYt$
coincides after fission with the $k$-th infinitesimal Morita homomorphism.
For this, let us note that the space $\T(H)$ defined in \S \ref{subsec:fission}
embeds into  $\A^{Y,t}(\set{g}^+ \cup \set{g}^-)$ by 
$$
T \longmapsto \left(T \left| 
\begin{array}{l} 
b_i \mapsto i^+, \ \forall i=1,\dots,g \\ 
a_i \mapsto i^-, \ \forall i=1,\dots,g
\end{array}
\right. \right).
$$
In that way, $\T(H)$ is identified with the subspace of $\A^{Y,t}(\set{g}^+ \cup \set{g}^-)$  
spanned by \emph{connected} tree-shaped Jacobi diagrams.

\begin{theorem}
\label{th:LMO_to_Kawazumi_truncated}
Let $C \in \cyl$ and let $k\geq 1$ be such that $\ZtildeYt_i(C)= 0$ for all $i\in [1,k[$.
Then, $C$ belongs to $\cyl[k]$ and we have
$$
\eta\left( \ZtildeYt_{[k,2k[}(C) \right) = - \tauLMO_{[k,2k[}(C)
\ \in \bigoplus_{i=k}^{2k-1} H \otimes \Lie_{i+1}.
$$
Equivalently, we have
$$
\Phi\left( \ZtildeYt_{[k,2k[}(C) \right)
=   \thetaLMO_*\left(m_k(C)\right) \ \in H_3(\Lie/\Lie_{\geq k+1})
$$
where $\thetaLMO_*$ is induced by the isomorphism $\thetaLMO: \MLie(\pi/\Gamma_{k+1}\pi) \to \Lie/\Lie_{\geq k+1}$ in homology.
\end{theorem}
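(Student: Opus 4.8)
The plan is to deduce everything from the $\star$-conjugation formula of Theorem~\ref{th:LMO_to_Kawazumi} and then read off its low-degree part. Throughout I would write $Z=\ZtildeYt(C)=\exp_\sqcup(z)$, where $z=\log_\sqcup Z$ is the connected (primitive) part; the hypothesis $\ZtildeYt_i(C)=0$ for $i\in[1,k[$ says exactly that $z$ starts in internal degree $k$, so that up to internal degree $2k-1$ one has $Z=\varnothing+\sum_{i=k}^{2k-1}z_i$ with each $z_i=\ZtildeYt_i(C)$ \emph{connected}. I would also note first that the two displayed conclusions are equivalent: applying $\Phi\eta^{-1}$ to the first identity and invoking Theorem~\ref{th:Kawazumi_to_Morita} (which gives $\Phi\eta^{-1}\tauLMO_{[k,2k[}(C)=-\thetaLMO_*\,m_k(C)$), together with the injectivity of $\Phi$ from Theorem~\ref{th:fission_iso}, turns the first into the second. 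Hence it suffices to establish $C\in\cyl[k]$ and the diagrammatic identity $\eta(\ZtildeYt_{[k,2k[}(C))=-\tauLMO_{[k,2k[}(C)$.

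First I would dispose of the membership $C\in\cyl[k]$. Since $z$ starts in degree $k$, Theorem~\ref{th:LMO_to_Kawazumi} applied to a degree-one element $y=h\in H$ gives $\rhoLMO(C)(h)=h+(\text{internal degree}\geq k)$, so that $\tauLMO_m(C)=0$ for all $m<k$, i.e. $\tauLMO_{[1,k[}(C)=0$. As $\Lie/\Lie_{\geq k+1}$ is generated in degree one, this forces $\varrho_k(C)=\Id$, whence $C\in\Ker(\varrho_k)=\cyl[k]$ by~(\ref{eq:Ker_rho_k}). Because $\thetaLMO$ is symplectic, Proposition~\ref{prop:Kawazumi_diagrammatic} then guarantees that each $\tauLMO_m(C)$ with $m\in[k,2k[$ has trivial Lie bracket, so that $\eta^{-1}$ is defined on it.

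The core is the computation of $\tauLMO_m(C)$ for $m\in[k,2k[$ from $\rhoLMO(C)(h)=\log_\sqcup\big(Z\star\exp_\sqcup(h)\star Z^{-1}\bmod\homotopy(r)\big)$. The key bookkeeping is that, in this degree range, only terms linear in $z$ and linear in the strut $h$ contribute: quadratic contributions in $z$ have internal degree $\geq 2k$, while terms using two or more $h$-struts either place two $r$-coloured vertices on one component (hence vanish modulo $\homotopy(r)$) or are disconnected (hence are discarded on passing to the connected part of $\log_\sqcup$). Writing $z_m=\ZtildeYt_m(C)\in\T_m(H)$, the degree-$m$ part of $\rhoLMO(C)(h)-h$ therefore equals the connected, single-$r$-vertex part of $z_m\star h-h\star z_m$. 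Expanding $h=\sum_i(x_i\,a_i+y_i\,b_i)$ and using that $\star$ glues $i^+$ to $i^-$, each admissible gluing attaches the $r$-end of the strut to a leaf $v$ of $z_m$, producing the tree rooted at $v$ with Lie value $\comm((z_m)_v)\in\Lie_{m+1}(H)$; the coefficient is $\pm\omega(h,\col(v))$, with the signs from $z_m\star h$ (for $b$-coloured leaves) and $h\star z_m$ (for $a$-coloured leaves) combining to give
$$
\rhoLMO(C)(h)-h\ \equiv\ \sum_{v}\omega\big(h,\col(v)\big)\,\comm\big((z_m)_v\big)\pmod{\Liehat_{\geq 2k}},
$$
the sum over all univalent vertices $v$ of $z_m$. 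Comparing with the definition~(\ref{eq:eta}) of $\eta$ and the duality $H\simeq H^*$, $h\mapsto\omega(h,-)$, identifies the right-hand side with $-\big(\eta(z_m)\big)(h)$, which yields $\tauLMO_m(C)=-\eta(\ZtildeYt_m(C))$ for each $m\in[k,2k[$, i.e. the first identity; the second then follows by the equivalence noted above.

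The main obstacle is precisely this last computation: keeping the $\sqcup$- and $\star$-structures apart while isolating the connected, one-$r$-vertex contribution, and above all pinning down the signs—both the $\pm\omega$ arising from the two orderings in $z_m\star h-h\star z_m$ and the sign hidden in the $\omega$-duality (normalised so as to be consistent with Proposition~\ref{prop:Kawazumi_diagrammatic})—so that they conspire into the single minus sign of the statement. Everything else, namely the membership $C\in\cyl[k]$ and the passage between the two forms of the conclusion, is formal once Theorems~\ref{th:LMO_to_Kawazumi}, \ref{th:Kawazumi_to_Morita} and~\ref{th:fission_iso} are available.
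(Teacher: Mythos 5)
Your proof is correct and follows essentially the same route as the paper: both deduce the identity by specializing Theorem \ref{th:LMO_to_Kawazumi} to degree-one elements, using group-likeness to see that $\ZtildeYt_{[k,2k[}(C)$ is connected and that $Z^{-1}=\varnothing-Z_{[k,2k[}+(\ideg\geq 2k)$, discarding the quadratic-in-$z$ and multi-strut terms, and then invoking Theorem \ref{th:Kawazumi_to_Morita} for the second formulation. The only cosmetic differences are that you phrase the leaf-by-leaf bookkeeping via the symplectic form $\omega(h,\col(v))$ instead of the paper's basis computation with $a_i,b_i$ separately, and that you extract the membership $C\in\cyl[k]$ up front rather than as a byproduct of the final formula.
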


\begin{proof}
We set $Z := \ZtildeYt(C)$, which writes
$$
Z = \varnothing + Z_k + \cdots + Z_{2k-1} + (\ideg\geq 2k).
$$
Since $\ZtildeY(C)$ is group-like, $Z$ is group-like and, so, is the exponential with respect to $\sqcup$  of a primitive element.
Since $2k-1$ is strictly less than $k+k$,  we deduce that $Z_k, \dots, Z_{2k-1}$ only consist of connected diagrams, 
so that the map $\eta$ or $\Phi$ can indeed be applied to $Z_{[k,2k[}$.
The same  argument shows that the inverse of $Z$ with respect to $\star$ can be written as follows:
$$
Z^{-1} = \varnothing - Z_k - \cdots - Z_{2k-1} + (\ideg\geq 2k).
$$
According to Theorem \ref{th:LMO_to_Kawazumi} , we  have
\begin{eqnarray*}
  &&  {\tauLMO(C)(b_i)} \mod \Liehat_{\geq 2k+1}\\
  & = &  \log_{\sqcup}\left( Z \star \exp_{\sqcup}\left(\strutgraph{r}{i^+}\right) \star Z^{-1}\right) -b_i + (\ideg\geq 2k) \mod \homotopy(r)\\
 & = & {\log_\sqcup\left( Z \star (Z^{-1} \vert i^- \mapsto i^- +r)\right)  + (\ideg\geq 2k) \mod \homotopy(r)}\\
& = & (Z_k+\cdots + Z_{2k-1}) - (Z_k + \cdots + Z_{2k-1}|i^- \mapsto i^- + r ) \mod \homotopy(r)  \\
 & = & - (Z_k + \cdots + Z_{2k-1}|i^- \mapsto  r \hbox{ \dots exactly one time !} ) 
\end{eqnarray*}
and, similarly, we have
\begin{eqnarray*}
 && {\tauLMO(C)(a_i)} \mod \Liehat_{\geq 2k+1}\\
& = & {\log_\sqcup\left( (Z \vert i^+ \mapsto  i^+ +r) \star Z^{-1}\right) + (\ideg\geq 2k) \mod \homotopy(r)}  \\
 & = &  (Z_k+\cdots + Z_{2k-1}|i^+\mapsto i^+ +r) - (Z_k + \cdots + Z_{2k-1}) \mod \homotopy(r) \\
 & = & (Z_k + \cdots + Z_{2k-1}|i^+ \mapsto  r \hbox{ \dots exactly one time !} ).
\end{eqnarray*}
Thus, we conclude that 
$$
\tauLMO(C) \mod H \otimes \Liehat_{\geq 2k+1} = - \eta\left( Z_{[k,2k[} \right).
$$
In particular, this shows that $C$ belongs to $\cyl[k]$.
The second statement follows from Theorem \ref{th:Kawazumi_to_Morita}.
\end{proof}

As a consequence of Theorem \ref{th:Kawazumi_to_Johnson}, 
we recover the following result from \cite{CHM}.

\begin{corollary}
\label{cor:LMO_to_Johnson}
Let $C\in \cyl$. The lowest degree non-trivial term of $\ZtildeYt(C)$
coincides with the opposite of the first non-trivial Johnson homomorphism of $C$.
\end{corollary}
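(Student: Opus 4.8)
The plan is to extract the lowest-degree component of the identity furnished by Theorem \ref{th:LMO_to_Kawazumi_truncated} and then recognize it as a Johnson homomorphism via Theorem \ref{th:Kawazumi_to_Johnson}. First I would dispose of the degenerate case: if $\ZtildeYt(C)=\varnothing$, then Theorem \ref{th:LMO_to_Kawazumi_truncated} places $C$ in $\cyl[k]$ for every $k\geq 1$, all Johnson homomorphisms of $C$ vanish, and there is nothing to prove. So assume $\ZtildeYt(C)$ has a non-trivial positive-degree part, and let $k\geq 1$ be the smallest integer with $\ZtildeYt_k(C)\neq 0$, so that $\ZtildeYt_i(C)=0$ for all $i\in[1,k[$.

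With this choice of $k$, Theorem \ref{th:LMO_to_Kawazumi_truncated} applies and asserts that $C\in\cyl[k]$ and that $\eta\left(\ZtildeYt_{[k,2k[}(C)\right)=-\tauLMO_{[k,2k[}(C)$. Extracting the degree $k$ component of this equality yields $\eta_k\left(\ZtildeYt_k(C)\right)=-\tauLMO_k(C)\in H\otimes\Lie_{k+1}(H)$. Since $C$ belongs to $\cyl[k]$, Theorem \ref{th:Kawazumi_to_Johnson}, in its extension to homology cylinders from \S\ref{subsec:extension}, identifies the degree $k$ part of the total Johnson map relative to $\thetaLMO$ with the $k$-th Johnson homomorphism, i.e. $\tauLMO_k(C)=\tau_k(C)$. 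Combining these gives $\eta_k\left(\ZtildeYt_k(C)\right)=-\tau_k(C)$, which is precisely the statement that the lowest-degree term of $\ZtildeYt(C)$ equals, under the diagrammatic map $\eta$, the opposite of $\tau_k(C)$.

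It then remains to confirm that $\tau_k(C)$ is genuinely the \emph{first} non-trivial Johnson homomorphism of $C$, i.e. that the degrees on the two sides match. On the one hand, since the Johnson filtration is decreasing we have $C\in\cyl[k]\subset\cyl[j+1]=\Ker\left(\tau_j|_{\cyl[j]}\right)$ for every $j<k$, so the lower Johnson homomorphisms $\tau_1(C),\dots,\tau_{k-1}(C)$ all vanish. On the other hand, $\eta_k:\T_k(H)\to\D_{k+2}(H)$ is injective over $\Q$, whence the hypothesis $\ZtildeYt_k(C)\neq 0$ forces $\tau_k(C)=-\eta_k\left(\ZtildeYt_k(C)\right)\neq 0$. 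Thus $\tau_k$ is exactly the lowest non-vanishing Johnson homomorphism, and the claim follows. The argument is essentially bookkeeping; the one subtlety to watch is precisely this compatibility of indexing — that the lowest non-vanishing degree of $\ZtildeYt(C)$ coincides with the lowest non-vanishing Johnson homomorphism — which is guaranteed by the injectivity of $\eta_k$.
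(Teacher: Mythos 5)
Your argument is correct and follows exactly the route the paper intends: specialize Theorem \ref{th:LMO_to_Kawazumi_truncated} to the lowest non-vanishing degree $k$ of $\ZtildeYt(C)$ and then invoke Theorem \ref{th:Kawazumi_to_Johnson} (in its extension to homology cylinders) to identify $\tauLMO_k(C)$ with $\tau_k(C)$. Your added care about the degenerate case and about why $k$ is also the index of the first non-trivial Johnson homomorphism (via the injectivity of $\eta_k$ over $\Q$ and the inclusion $\cyl[k]\subset\cyl[j+1]$ for $j<k$) is exactly the bookkeeping the paper leaves implicit.
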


\noindent
The proof of Corollary \ref{cor:LMO_to_Johnson} given in \cite{CHM} is indirect: 
based on Habegger's correspondence
between Johnson homomorphisms and Milnor's $\mu$ invariants  \cite{Habegger}, 
it uses the connection between the latter invariants and the Kontsevich integral \cite{HM}.

To conclude this paper, we give a restatement of Theorem \ref{th:LMO_to_Kawazumi}
in which it is clear that the total Johnson map relative to $\thetaLMO$ 
tantamounts to the tree-reduction of the LMO homomorphism. 
For this, we need the following statement, 
which is well-known and is true for any Lie algebra equipped with a complete filtration.

\begin{proposition}
\label{prop:derivations_and_automorphisms}
There is a canonical bijection between the set of 
filtration-preserving derivations of $\Liehat(H)$ with values in $\Liehat_{\geq 2}(H)$,
and the set of filtration-preserving automorphisms of $\Liehat(H)$ 
that induce the identity at the graded level:
$$
\Der(\Liehat,\Liehat_{\geq 2})
\overset{\exp_\circ}{\underset{\log_\circ}{\longrightleftarrows}} \IAut(\Liehat).
$$
\end{proposition}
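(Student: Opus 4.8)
The plan is to show that $\exp_\circ$ is a well-defined bijection $\Der(\Liehat,\Liehat_{\geq 2}) \to \IAut(\Liehat)$ whose inverse is $\log_\circ$; the proposition is then the formal reflection of the fact that $\exp$ and $\log$ are mutually inverse power series. First I would record the basic degree estimates. A derivation $d \in \Der(\Liehat,\Liehat_{\geq 2})$ sends the generators $H = \Lie_1(H)$ into $\Liehat_{\geq 2}(H)$, and the Leibniz rule forces $d(\Liehat_{\geq n}) \subseteq \Liehat_{\geq n+1}$ for all $n$, whence $d^m(\Liehat_{\geq n}) \subseteq \Liehat_{\geq n+m}$. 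By completeness of the filtration the series $\exp_\circ(d) = \sum_{m \geq 0} \frac{1}{m!} d^m$ converges, and likewise $\log_\circ(\psi) = \sum_{m\geq 1} \frac{(-1)^{m+1}}{m}(\psi - \Id)^m$ converges for every $\psi \in \IAut(\Liehat)$, since $\psi - \Id$ raises degree. That $\exp_\circ(d)$ is an algebra homomorphism follows from the higher Leibniz identity $d^m([x,y]) = \sum_{i+j=m}\binom{m}{i}[d^i x, d^j y]$, which after dividing by $m!$ and summing gives $\exp_\circ(d)([x,y]) = [\exp_\circ(d)(x), \exp_\circ(d)(y)]$; this map is invertible with inverse $\exp_\circ(-d)$ and induces the identity at the graded level because $d$ strictly raises degree. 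Hence $\exp_\circ$ does land in $\IAut(\Liehat)$.

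Since $\exp$ and $\log$ are inverse formal power series, one has $\log_\circ \circ \exp_\circ = \Id$ and $\exp_\circ \circ \log_\circ = \Id$ as maps of the underlying filtered vector spaces, so in particular $\exp_\circ$ is injective. The only remaining point is surjectivity of $\exp_\circ$ onto $\IAut(\Liehat)$, equivalently that $\log_\circ(\psi)$ is again a derivation. Here I would exploit the freeness of $\Liehat(H)$: write $d = \sum_{j \geq 1} d_j$ in homogeneous parts, where $d_j$ raises degree by exactly $j$, and use that a homogeneous derivation of degree $j$ is freely and uniquely specified by an arbitrary linear map $H \to \Lie_{j+1}(H)$, so that $\Der(\Liehat,\Liehat_{\geq 2}) \cong \prod_{j\geq 1}\Hom(H,\Lie_{j+1}(H))$. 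Given $\psi \in \IAut(\Liehat)$, I would solve $\exp_\circ(d)|_H = \psi|_H$ degree by degree: the component in $\Lie_{n+1}(H)$ of $\exp_\circ(d)(h)$ equals $d_n(h)$ plus an expression in $d_1(h),\dots,d_{n-1}(h)$ alone, so matching it with the degree-$(n+1)$ part of $\psi(h)$ determines $d_n|_H$ uniquely from $\psi$ and the previously constructed pieces. Because $\exp_\circ(d)$ and $\psi$ are both automorphisms of the free complete Lie algebra, each is determined by its restriction to the generating space $H$, so this matching on $H$ forces $\exp_\circ(d) = \psi$ globally.

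Combining these steps, $\exp_\circ : \Der(\Liehat,\Liehat_{\geq 2}) \to \IAut(\Liehat)$ is a bijection, and the formal-inverse relation identifies its inverse with $\log_\circ$; in particular $\log_\circ(\psi)$ is a derivation for every $\psi$. I expect the real content to be concentrated in the surjectivity step: the convergence of the series and the homomorphism property of $\exp_\circ(d)$ are routine, whereas exhibiting an arbitrary automorphism inducing the identity on the graded level as the exponential of a derivation is precisely what requires the inductive solvability afforded by the freeness of $\Liehat(H)$.
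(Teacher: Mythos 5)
Your proof is correct, but the key step is handled quite differently from the paper. Both arguments agree on the routine parts: the degree estimates guaranteeing convergence of the two series, the higher Leibniz rule showing $\exp_\circ(d)$ is a bracket-preserving automorphism inducing the identity on the graded level, and the formal inverse relation between $\exp$ and $\log$. Where you diverge is in proving that every $\psi\in\IAut(\Liehat)$ is hit, equivalently that $\log_\circ(\psi)$ is a derivation. The paper (following Praagman) proves this \emph{directly}: writing $\widetilde{\psi}=\psi-\Id$, the automorphism property gives $\widetilde{\psi}^n([x,y])=\sum_{i,j}a^n_{i,j}[\widetilde{\psi}^i(x),\widetilde{\psi}^j(y)]$ with the trinomial coefficients $a^n_{i,j}$ defined by $(X+Y+XY)^n=\sum a^n_{i,j}X^iY^j$, and the formal identity $\log(1+X+Y+XY)=\log(1+X)+\log(1+Y)$ yields exactly the linear relations among the $a^n_{i,j}$ needed for $\log_\circ(\psi)$ to satisfy the Leibniz rule. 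You instead exploit the freeness of $\Liehat(H)$: a continuous derivation is freely determined by its restriction to $H$, so you solve $\exp_\circ(d)|_H=\psi|_H$ degree by degree (the degree-$(n{+}1)$ component isolates $d_n|_H$ modulo data already constructed) and then invoke the fact that continuous endomorphisms of the free complete Lie algebra are determined on generators to conclude $\exp_\circ(d)=\psi$. Your route is more elementary, avoiding the combinatorial identity entirely, but it is genuinely less general: the paper explicitly remarks that the statement "is true for any Lie algebra equipped with a complete filtration," and its proof delivers that generality, whereas yours is tied to the free case (which is, admittedly, all the paper needs). One small imprecision: the correction term in your inductive step is an expression in the derivations $d_1,\dots,d_{n-1}$ applied iteratively (e.g.\ $d_i(d_j(h))$), not literally in the values $d_1(h),\dots,d_{n-1}(h)$; since each $d_j$ is determined by $d_j|_H$, the induction still closes, but the phrasing should reflect this.
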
 

\begin{proof}
We follow \cite{Praagman} which deals with the (commutative)  associative algebra case.
Let $\psi \in  \IAut(\Liehat)$. Then, $\widetilde{\psi}:= \psi - \Id$ is a $\Q$-linear map $\Liehat \to \Liehat$
which sends $\Liehat_{\geq n}$ to $\Liehat_{\geq n+1}$. 
Thus, the series $\sum_{n\geq 1} (-1)^{n+1}/n \cdot \widetilde{\psi}^n(x)$
 converges in $\Liehat$ for all $x \in \Liehat$, so that 
$$
\log_\circ (\psi) := \sum_{n\geq 1} \frac{(-1)^{n+1}}{n} \cdot \widetilde{\psi}^n
$$
defines a filtration-preserving $\Q$-linear map $\Liehat \to \Liehat$ valued into $\Liehat_{\geq 2}$. 
For any integer $n \geq 1$ and for all $i,j \in \Z$, we consider the integers $a_{i,j}^n \geq 0$ 
defined inductively by the relation $a_{i+1,j+1}^{n+1}= a_{i+1,j}^n + a_{i,j+1}^n + a_{i,j}^n$ starting with
$$ 
a^{1}_{i,j} = \left\{\begin{array}{ll}
1 & \hbox{if } (i,j)\in \{(1,0),(0,1),(1,1)\},\\
0 & \hbox{otherwise}.
\end{array}\right.
$$
These  integers $a_{i,j}^n$ satisfy
$$
\widetilde{\psi}^n\left([x,y]\right) = \sum_{i,j\in \Z}
a_{i,j}^n \cdot \left[\widetilde{\psi}^i(x),\widetilde{\psi}^j(y)\right] \quad \forall x,y \in \Liehat.
$$
In addition, the integers $a_{i,j}^n$ can be defined by the formula
$$
(X+Y+XY)^n = \sum_{i,j\in \Z} a_{i,j}^n \cdot  X^i Y^j \ \in \Q[X,Y].
$$
Then, the identity $\log(1+X+Y+XY)= \log(1+X) + \log(1+Y)$ 
implies certain linear relations among the trinomial coefficients $a_{i,j}^n$ with $(i,j)$ fixed.
We deduce from these relations that $\log_\circ(\psi)$ is a derivation.

Conversely, let $\delta \in \Der(\Liehat,\Liehat_{\geq 2})$. 
Then, $\delta$ sends $\Liehat_{\geq n}$ to $\Liehat_{\geq n+1}$.
Thus, the series $\sum_{n\geq 0} 1/n! \cdot \delta^n(x)$ converges in $\Liehat$ 
for all $x \in \Liehat$, so that 
$$
\exp_\circ (\delta) := \sum_{n\geq 0} \frac{1}{n!} \cdot \delta^n
$$
defines a filtration-preserving $\Q$-linear map $\Liehat \to \Liehat$ which is the identity at the graded level
and, so, is an isomorphism.
It is easily checked that $\exp_\circ(\delta)$ preserves the Lie bracket.
\end{proof}

\noindent
Consequently, we have a one-to-one correspondence
$$
\Der_\omega(\Liehat,\Liehat_{\geq 2})
\mathop{\longrightleftarrows}^{\exp_\circ}_{\log_\circ}
\IAut_\omega(\Liehat)
$$
between derivations that vanish on $\omega$ and automorphisms that fix $\omega$. 
Moreover, the canonical isomorphism
$$
\Der(\Liehat,\Liehat_{\geq 2}) \simeq \Hom(H,\Liehat_{\geq 2}) \simeq H \otimes \Liehat_{\geq 2}
$$
sends $\Der_\omega(\Liehat,\Liehat_{\geq 2})$ to the kernel of the Lie bracket.
Then, using the isomorphism $\eta$ defined at (\ref{eq:eta}), we obtain an isomorphism 
$$
\eta: \T(H) \stackrel{\simeq}{\longrightarrow} \Der_\omega(\Liehat,\Liehat_{\geq 2})
$$
which appears in Kontsevich's work \cite{Kontsevich_Gelfand,Kontsevich_ECM}
and where $\T(H)$ stands here for its degree completion.
So, given any symplectic expansion $\theta$ of $\pi$, we can consider the composition
\begin{equation}
\label{eq:tree_invariant}
\cyl \stackrel{\varrho^\theta}{\longrightarrow} \IAut_\omega(\Liehat)
\mathop{\longrightarrow}^{\log_\circ}_\simeq \Der_\omega(\Liehat,\Liehat_{\geq 2})
\mathop{\longrightarrow}^{\eta^{-1}}_\simeq \T(H).
\end{equation}

Besides, the LMO homomorphism takes values in the group-like part 
of the Hopf algebra $\A^Y(\set{g}^+\cup \set{g}^-)$, whose product $\star$ is defined at (\ref{eq:star}) 
and whose coproduct $\Delta$ is defined at (\ref{eq:coproduct}).
The same is true for the tree-reduction of the LMO homomorphism.
Thus, we can consider the composition
$$
\cyl \stackrel{\ZtildeYt}{\longrightarrow} \GLike( \A^{Y,t}(\set{g}^+\cup \set{g}^-) ) 
\mathop{\longrightarrow}^{\log_\star}_\simeq  
\Prim( \A^{Y,t}(\set{g}^+\cup \set{g}^-) ) =  \T(H).
$$
Our last result asserts that this map $\cyl \to \T(H)$ is an instance of (\ref{eq:tree_invariant}).

\begin{theorem}
\label{th:Kawazumi_to_LMO}
For all $C \in \cyl$, we have
$$
\log_\circ \rhoLMO (C) = - \eta \log_\star \ZtildeYt(C) \ \in \Der(\Liehat,\Liehat_{\geq 2}).
$$
\end{theorem}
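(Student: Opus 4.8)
The plan is to deduce everything from the explicit conjugation formula of Theorem \ref{th:LMO_to_Kawazumi} by differentiating a one-parameter family of maps. I would write $z := \log_\star \ZtildeYt(C) \in \Prim(\A^{Y,t}(\set{g}^+\cup\set{g}^-)) = \T(H)$, so that $\ZtildeYt(C) = \exp_\star(z)$; since $\ZtildeYt_0(C)=\varnothing$, the series $z$ has internal degree $\geq 1$. For a formal parameter $t$ I would set
$$
\Psi_t : \Liehat(H) \to \Liehat(H), \quad \Psi_t(y) := \log_\sqcup\big(\exp_\star(tz) \star \exp_\sqcup(y) \star \exp_\star(-tz) \bmod \homotopy(r)\big),
$$
which makes sense because $\star$-conjugation by a group-like element preserves $\sqcup$-group-likeness modulo $\homotopy(r)$ (this is the combinatorial identity closing the proof of Theorem \ref{th:LMO_to_Kawazumi}, used now for an arbitrary conjugator). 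By Theorem \ref{th:LMO_to_Kawazumi}, $\Psi_1 = \rhoLMO(C)$. Since $\exp_\star((s+t)z) = \exp_\star(sz)\star\exp_\star(tz)$ and $\exp_\sqcup(\Psi_t(y))$ is the conjugate itself, the family $\{\Psi_t\}$ is a one-parameter group of filtration-preserving linear maps that is unipotent at the graded level; hence $\Psi_1 = \sum_{n\geq 0}\tfrac{1}{n!}\delta^{\circ n}$, where $\delta := \tfrac{d}{dt}\Psi_t|_{t=0}$.

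The core step is to compute $\delta$ in closed form. Differentiating and using that $\sqcup$ is commutative (so $\tfrac{d}{dt}\log_\sqcup W(t)|_0 = \dot W(0)\sqcup W(0)^{-1}$), I obtain $\delta(y) = [z,\exp_\sqcup(y)]_\star \sqcup \exp_\sqcup(-y) \bmod \homotopy(r)$, where $[\,\cdot,\cdot\,]_\star$ is the $\star$-commutator. Then comes a reduction modulo $\homotopy(r)$: because $z$ is a single connected tree whose legs are colored in $\set{g}^\pm$ only, any $\star$-contraction of two or more legs between $z$ and one component of $\exp_\sqcup(y)$ produces a loop, while any contraction touching two distinct copies of $y$ produces a component with two $r$-colored vertices; both kinds of terms die modulo $\homotopy(r)$. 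Hence only single-leg contractions with exactly one copy of $y$ survive, giving $[z,\exp_\sqcup(y)]_\star \equiv (z\triangleleft y - y\triangleright z)\sqcup\exp_\sqcup(y)$, where $z\triangleleft y$ (resp. $y\triangleright z$) is the sum over all gluings of a $+$-colored leg of $z$ to a $-$-colored leg of $y$ (resp. of a $+$-colored leg of $y$ to a $-$-colored leg of $z$). Cancelling the factors $\exp_\sqcup(\pm y)$ yields the closed formula
$$
\delta(y) = z\triangleleft y - y\triangleright z \ \in \Liehat(H), \qquad \forall\, y\in\Liehat(H).
$$

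Finally I would identify this with $-\eta(z)$. The operations $\triangleleft$ and $\triangleright$ glue a $j^+$-leg to a $j^-$-leg; under $i^+\mapsto b_i$ and $i^-\mapsto a_i$ this gluing is precisely the contraction by the symplectic bivector $\omega=\sum_i[a_i,b_i]$, so the map $y\mapsto z\triangleleft y - y\triangleright z$ is exactly the symplectic derivation assigned to the tree $z$ by the Kontsevich isomorphism $\eta$ of (\ref{eq:eta}), up to the overall sign coming from the $\star$-commutator and the logarithmic derivative. Thus $\delta = -\eta(z)$ as endomorphisms of $\Liehat(H)$; since $-\eta(z)\in\Der(\Liehat,\Liehat_{\geq 2})$, it follows that $\rhoLMO(C)=\Psi_1=\exp_\circ(-\eta(z))$, and applying $\log_\circ$ from Proposition \ref{prop:derivations_and_automorphisms} gives $\log_\circ \rhoLMO(C) = -\eta(z) = -\eta\log_\star\ZtildeYt(C)$. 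I expect the main obstacle to be precisely this combinatorial reduction modulo $\homotopy(r)$ together with the sign bookkeeping: one must check carefully that no multi-leg or multi-component contractions contribute, and that the single-leg gluing matches the $\omega$-contraction in (\ref{eq:eta}) with exactly the sign dictated by the conventions for $\eta$, $\log_\circ$ and $\log_\star$.
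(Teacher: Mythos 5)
Your argument is correct and reaches the same conclusion, but it organizes the key combinatorial step differently from the paper. The paper's proof also starts from Theorem \ref{th:LMO_to_Kawazumi}, but it then expands $\exp_\star(z)\star y\star\exp_\star(-z)$ directly as $\sum_{n}\frac{1}{n!}\sum_{i=0}^n\binom{n}{i}\,z_+^{\star i}\star y\star z_-^{\star(n-i)}$ (with $z_\pm:=\pm z$) and identifies the connected part of the $n$-th term with $(-\eta(z))^{\circ n}(y)$ in one stroke, via a count of parenthesizations of the iterated $\star$-product and the splitting $\eta(z)=\eta_+(z)+\eta_-(z)$ coming from $H=H_+\oplus H_-$. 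You instead isolate the first-order term by differentiating the family $\Psi_t$ and then exponentiate; your reduction modulo $\homotopy(r)$ of the commutator $[z,\exp_\sqcup(y)]_\star$ to single-leg contractions is exactly the $n=1$ case of the paper's identity, and your sign check against $\eta$ is consistent with the paper's conventions (the Poincar\'e duality $h\mapsto\omega(h,-)$ produces precisely the minus sign you need on the $a_i$ and $b_i$). What your route buys is a cleaner conceptual statement ($\delta=-\eta(z)$ as a derivation, then $\Psi_1=\exp_\circ\delta$); what it costs is that the one-parameter group property $\Psi_{s+t}=\Psi_s\circ\Psi_t$ is not free. You must check that $\homotopy(r)$ is an ideal for $\star$ (gluing legs can neither remove a loop nor decrease the number of $r$-colored vertices on a component) and that $\exp_\star(tz)\star\exp_\sqcup(y)\star\exp_\star(-tz)$ is again group-like modulo $\homotopy(r)$, so that $\exp_\sqcup\Psi_t(y)$ recovers the conjugate before you iterate; you should also note that $\delta(y)=z\triangleleft y-y\triangleright z$ is linear in $y$ so that the degree-by-degree, polynomial-in-$t$ solution of $\dot\Psi_t=\delta\circ\Psi_t$ is $\exp_\circ(t\delta)$. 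All of these facts are true and implicit in the paper's proof of Theorem \ref{th:LMO_to_Kawazumi}, but in your organization they carry the weight of the argument and need to be stated explicitly.
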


\noindent
Thus $\ZtildeYt$ is essentially the same invariant of homology cylinders 
as the infinitesimal Dehn--Nielsen representation $\rhoLMO$ or, equivalently, as the total Johnson map $\tauLMO$.

\begin{proof}[Proof of Theorem \ref{th:Kawazumi_to_LMO}] 
We set $Z:= \ZtildeYt(C)$ and $z:=\log_\star(Z)$.
Let $y\in \Liehat$ which we can regard as an element of $\A(\set{g}^+ \cup \set{g}^- \cup \{r\})$.
Then, Theorem \ref{th:LMO_to_Kawazumi} gives
\begin{eqnarray*}
\rhoLMO(C)(y) & = &  \log_{\sqcup} \left(  \exp_\star(z) \star \exp_{\sqcup}(y) \star  \exp_\star(-z) \mod \homotopy(r) \right) \\
&=&  \hbox{\scriptsize non-empty connected part of }\ \exp_\star(z) \star \exp_{\sqcup}(y) \star \exp_\star(-z) \mod \homotopy(r) \\
&=&  \hbox{\scriptsize connected part of }\  \exp_\star(z) \star y \star \exp_\star(-z) \mod \homotopy(r)\\
&=&  \hbox{\scriptsize  connected part of }\  \sum_{n=0}^{+ \infty} \frac{1}{n!}
 \sum_{i=0}^n \binom{n}{i}\ {z_+}^{\star i} \star y \star {z_-}^{\star(n-i)} \mod \homotopy(r),
\end{eqnarray*}
where we have denoted $z_\pm := \pm z$.
Observe that there are $ \binom{n}{i}$ ways 
of parenthesizing the $n$-iterated product ${z_+}^{\star i} \star y \star {z_-}^{\star(n-i)}$ ``starting from the inside'' with $y$.
For instance, for $n=4$ and $i=2$, these $\binom{4}{2}=6$ ways are
$$
\big(((z_+ \star (z_+ \star y)) \star z_-) \star z_-\big), \
\big((z_+ \star ((z_+ \star y) \star z_-)) \star z_-\big), \
\big(z_+ \star (((z_+ \star y) \star z_-) \star z_-)\big), \
$$
$$
\big((z_+ \star (z_+ \star (y \star z_-))) \star z_-\big), \
\big(z_+ \star ((z_+ \star (y \star z_-)) \star z_-)\big), \
\big(z_+ \star (z_+ \star ((y \star z_-) \star z_-))\big).
$$
Besides,  the derivation $\eta(z)$ can be written as the sum of two derivations $\eta_+(z)$ and $\eta_-(z)$ 
by using the decomposition
$$
\Der(\Liehat,\Liehat_{\geq 2}) \simeq \Hom(H,\Liehat_{\geq 2}) \simeq H \otimes \Liehat_{\geq 2}
= (H_+ \otimes \Liehat_{\geq 2}) \oplus (H_- \otimes \Liehat_{\geq 2}),
$$
where $H_+$ and $H_-$ denote the subspaces of $H$ spanned by the longitudes
$b_1,\dots,b_g$ and the meridians $a_1,\dots,a_g$ respectively.
We deduce that, for any integer $n\geq 0$,
\begin{eqnarray*}
(-\eta(z))^{\circ n}(y) &=& \left(-\eta_+(z) -\eta_-(z) \right)^{\circ n}(y)\\
&=&(-1)^n \cdot \! \! \! \! \! \!  \! \! \!  \sum_{p:\{1,\dots,n\}\to\{+,-\}}\eta_{p(1)}(z)\circ  \cdots \circ \eta_{p(n)}(z) (y)\\
&=& \hbox{\scriptsize connected part of }\ \sum_{i=0}^n \binom{n}{i}\ {z_+}^{\star i} \star y \star {z_-}^{\star(n-i)} \mod \homotopy(r)
\end{eqnarray*}
where the last identity follows from the previous observation and  the definition of the multiplication $\star$.
We conclude that $\rhoLMO(C) = \exp_\circ(-\eta(z))$. 
\end{proof}

\vspace{0.5cm}

%
%
%
%
%
%

\bibliographystyle{abbrv}

\bibliography{IMH}

\def\cprime{$'$}
\begin{thebibliography}{10}

\bibitem{Bar-Natan}
D.~Bar-Natan.
\newblock On the {V}assiliev knot invariants.
\newblock {\em Topology}, 34(2):423--472, 1995.

\bibitem{Bar-Natan_homotopy}
D.~Bar-Natan.
\newblock Vassiliev homotopy string link invariants.
\newblock {\em J. Knot Theory Ramifications}, 4(1):13--32, 1995.

\bibitem{BGRT1}
D.~Bar-Natan, S.~Garoufalidis, L.~Rozansky, and D.~P. Thurston.
\newblock The {Aa}rhus integral of rational homology 3-spheres. {I}. {A} highly
  non trivial flat connection on {$S\sp 3$}.
\newblock {\em Selecta Math. (N.S.)}, 8(3):315--339, 2002.

\bibitem{BGRT2}
D.~Bar-Natan, S.~Garoufalidis, L.~Rozansky, and D.~P. Thurston.
\newblock The {Aa}rhus integral of rational homology 3-spheres. {II}.
  {I}nvariance and universality.
\newblock {\em Selecta Math. (N.S.)}, 8(3):341--371, 2002.

\bibitem{BKP}
A.~J. Bene, N.~Kawazumi, and R.~C. Penner.
\newblock Canonical extensions of the {J}ohnson homomorphisms to the {T}orelli
  groupoid.
\newblock {\em Adv. Math.}, 221(2):627--659, 2009.

\bibitem{Bourbaki}
N.~Bourbaki.
\newblock {\em \'{E}l\'ements de math\'ematique. {F}asc. {XXXVII}. {G}roupes et
  alg\`ebres de {L}ie. {C}hapitre {II}: {A}lg\`ebres de {L}ie libres.
  {C}hapitre {III}: {G}roupes de {L}ie}.
\newblock Hermann, Paris, 1972.
\newblock Actualit\'es Scientifiques et Industrielles, No. 1349.

\bibitem{CHM}
D.~Cheptea, K.~Habiro, and G.~Massuyeau.
\newblock A functorial {LMO} invariant for {L}agrangian cobordisms.
\newblock {\em Geom. Topol.}, 12:1091--1170, 2008.

\bibitem{CGO}
T.~D. Cochran, A.~Gerges, and K.~Orr.
\newblock Dehn surgery equivalence relations on 3-manifolds.
\newblock {\em Math. Proc. Cambridge Philos. Soc.}, 131(1):97--127, 2001.

\bibitem{Day}
M.~B. Day.
\newblock Extending {J}ohnson's and {M}orita's homomorphisms to the mapping
  class group.
\newblock {\em Algebr. Geom. Topol.}, 7:1297--1326, 2007.

\bibitem{GGP}
S.~Garoufalidis, M.~Goussarov, and M.~Polyak.
\newblock Calculus of clovers and finite type invariants of 3-manifolds.
\newblock {\em Geom. Topol.}, 5:75--108 (electronic), 2001.

\bibitem{GL}
S.~Garoufalidis and J.~Levine.
\newblock Tree-level invariants of three-manifolds, {M}assey products and the
  {J}ohnson homomorphism.
\newblock In {\em Graphs and patterns in mathematics and theoretical physics},
  volume~73 of {\em Proc. Sympos. Pure Math.}, pages 173--203. Amer. Math.
  Soc., Providence, RI, 2005.

\bibitem{Goussarov_note}
M.~Goussarov.
\newblock Finite type invariants and {$n$}-equivalence of {$3$}-manifolds.
\newblock {\em C. R. Acad. Sci. Paris S\'er. I Math.}, 329(6):517--522, 1999.

\bibitem{Goussarov_clovers}
M.~Goussarov.
\newblock Variations of knotted graphs. {T}he geometric technique of
  {$n$}-equivalence.
\newblock {\em Algebra i Analiz}, 12(4):79--125, 2000.

\bibitem{Habegger}
N.~Habegger.
\newblock Milnor, {J}ohnson, and tree level perturbative invariants.
\newblock Preprint, 2000.

\bibitem{HM}
N.~Habegger and G.~Masbaum.
\newblock The {K}ontsevich integral and {M}ilnor's invariants.
\newblock {\em Topology}, 39(6):1253--1289, 2000.

\bibitem{Habiro}
K.~Habiro.
\newblock Claspers and finite type invariants of links.
\newblock {\em Geom. Topol.}, 4:1--83 (electronic), 2000.

\bibitem{Heap}
A.~Heap.
\newblock Bordism invariants of the mapping class group.
\newblock {\em Topology}, 45(5):851--886, 2006.

\bibitem{IO}
K.~Igusa and K.~E. Orr.
\newblock Links, pictures and the homology of nilpotent groups.
\newblock {\em Topology}, 40(6):1125--1166, 2001.

\bibitem{Jennings}
S.~A. Jennings.
\newblock The group ring of a class of infinite nilpotent groups.
\newblock {\em Canad. J. Math.}, 7:169--187, 1955.

\bibitem{Johnson}
D.~Johnson.
\newblock A survey of the {T}orelli group.
\newblock In {\em Low-dimensional topology (San Francisco, Calif., 1981)},
  volume~20 of {\em Contemp. Math.}, pages 165--179. Amer. Math. Soc.,
  Providence, RI, 1983.

\bibitem{KM}
M.~I. Kargapolov and J.~I. Merzljakov.
\newblock {\em Fundamentals of the theory of groups}, volume~62 of {\em
  Graduate Texts in Mathematics}.
\newblock Springer-Verlag, New York, 1979.
\newblock Translated from the second Russian edition by Robert G. Burns.

\bibitem{Kawazumi}
N.~Kawazumi.
\newblock Cohomological aspects of {M}agnus expansions.
\newblock Preprint \texttt{arXiv.org:math/0505497}, 2005.

\bibitem{Kawazumi_harmonic}
N.~Kawazumi.
\newblock Harmonic magnus expansion on the universal family of riemann
  surfaces.
\newblock Preprint \texttt{arXiv.org:math/0603158}, 2006.

\bibitem{Kerler}
T.~Kerler.
\newblock Towards an algebraic characterization of 3-dimensional cobordisms.
\newblock In {\em Diagrammatic morphisms and applications (San Francisco, CA,
  2000)}, volume 318 of {\em Contemp. Math.}, pages 141--173. Amer. Math. Soc.,
  Providence, RI, 2003.

\bibitem{Kontsevich_Gelfand}
M.~Kontsevich.
\newblock Formal (non)commutative symplectic geometry.
\newblock In {\em The Gel\cprime fand Mathematical Seminars, 1990--1992}, pages
  173--187. Birkh\"auser Boston, Boston, MA, 1993.

\bibitem{Kontsevich_ECM}
M.~Kontsevich.
\newblock Feynman diagrams and low-dimensional topology.
\newblock In {\em First European Congress of Mathematics, Vol.\ II (Paris,
  1992)}, volume 120 of {\em Progr. Math.}, pages 97--121. Birkh\"auser, Basel,
  1994.

\bibitem{LMO}
T.~T.~Q. Le, J.~Murakami, and T.~Ohtsuki.
\newblock On a universal perturbative invariant of {$3$}-manifolds.
\newblock {\em Topology}, 37(3):539--574, 1998.

\bibitem{Levine_addendum}
J.~Levine.
\newblock Addendum and correction to: ``{H}omology cylinders: an enlargement of
  the mapping class group''.
\newblock {\em Algebr. Geom. Topol.}, 2:1197--1204 (electronic), 2002.

\bibitem{Levine_quasi-Lie}
J.~Levine.
\newblock Labeled binary planar trees and quasi-{L}ie algebras.
\newblock {\em Algebr. Geom. Topol.}, 6:935--948 (electronic), 2006.

\bibitem{Lin}
X.-S. Lin.
\newblock Power series expansions and invariants of links.
\newblock In {\em Geometric topology (Athens, GA, 1993)}, volume~2 of {\em
  AMS/IP Stud. Adv. Math.}, pages 184--202. Amer. Math. Soc., Providence, RI,
  1997.

\bibitem{MKS}
W.~Magnus, A.~Karrass, and D.~Solitar.
\newblock {\em Combinatorial group theory: {P}resentations of groups in terms
  of generators and relations}.
\newblock Interscience Publishers [John Wiley \& Sons, Inc.], New
  York-London-Sydney, 1966.

\bibitem{Morita}
S.~Morita.
\newblock Abelian quotients of subgroups of the mapping class group of
  surfaces.
\newblock {\em Duke Math. J.}, 70(3):699--726, 1993.

\bibitem{Ohtsuki}
T.~Ohtsuki.
\newblock {\em Quantum invariants}, volume~29 of {\em Series on Knots and
  Everything}.
\newblock World Scientific Publishing Co. Inc., River Edge, NJ, 2002.
\newblock A study of knots, 3-manifolds, and their sets.

\bibitem{Papadima}
{\c{S}}.~Papadima.
\newblock Finite determinacy phenomena for finitely presented groups.
\newblock In {\em Proceedings of the 2nd Gauss Symposium. Conference A:
  Mathematics and Theoretical Physics (Munich, 1993)}, Sympos. Gaussiana, pages
  507--528, Berlin, 1995. de Gruyter.

\bibitem{Pickel}
P.~F. Pickel.
\newblock Rational cohomology of nilpotent groups and {L}ie algebras.
\newblock {\em Comm. Algebra}, 6(4):409--419, 1978.

\bibitem{Praagman}
C.~Praagman.
\newblock Iterations and logarithms of formal automorphisms.
\newblock {\em Aequationes Math.}, 30(2-3):151--160, 1986.

\bibitem{Quillen_graded}
D.~Quillen.
\newblock On the associated graded ring of a group ring.
\newblock {\em J. Algebra}, 10:411--418, 1968.

\bibitem{Quillen_rht}
D.~Quillen.
\newblock Rational homotopy theory.
\newblock {\em Ann. of Math. (2)}, 90:205--295, 1969.

\bibitem{Sakasai}
T.~Sakasai.
\newblock Homology cylinders and the acyclic closure of a free group.
\newblock {\em Algebr. Geom. Topol.}, 6:603--631 (electronic), 2006.

\bibitem{Stallings}
J.~Stallings.
\newblock Homology and central series of groups.
\newblock {\em J. Algebra}, 2:170--181, 1965.

\bibitem{SW}
A.~A. Suslin and M.~Wodzicki.
\newblock Excision in algebraic {$K$}-theory.
\newblock {\em Ann. of Math. (2)}, 136(1):51--122, 1992.

\bibitem{Turaev}
V.~G. Turaev.
\newblock Nilpotent homotopy types of closed {$3$}-manifolds.
\newblock In {\em Topology (Leningrad, 1982)}, volume 1060 of {\em Lecture
  Notes in Math.}, pages 355--366. Springer, Berlin, 1984.

\end{thebibliography}

\end{document}